\numberwithin{equation}{section}
\newcommand\KL[2]{D_{\mathrm{KL}}\bc{{\left.{#1}\right\|{#2}}}}
\newcommand\trans{^*}
\newcommand\vd{\vec d}
\newcommand\vn{\vec n}
\newcommand\hvn{\hat\vn}
\newcommand\vm{\vec m}
\newcommand\hvm{\hat\vm}
\newcommand\vnu{\vec\nu}
\newcommand\vmu{\vec\mu}
\newcommand{\vN}{\vec N}
\newcommand{\vM}{\vec M}
\newcommand\nix{\,\cdot\,}
\newcommand{\edgesethat}{\bar E(\hat\G)}
\newcommand{\eKathrin}{\edgesethat}
\newcommand{\auxH}{\bar E_{1}(\hat\G)}
\newcommand{\Sdir}{\bar{G}(S)}
\newcommand{\Sundir}{G(S)}
\newcommand{\stir}{\mathcal S}
\newcommand\sign{\mathrm{sign}}
\newcommand\G{\vec G}
\newcommand\br[1]{\left(#1\right)}
\def\vec#1{\mathchoice{\mbox{\boldmath$\displaystyle#1$}}
{\mbox{\boldmath$\textstyle#1$}}
{\mbox{\boldmath$\scriptstyle#1$}}
{\mbox{\boldmath$\scriptscriptstyle#1$}}}
\DeclareMathOperator{\pr}{\mathbb P}
\newtheorem{definition}{Definition}[section]
\newtheorem{claim}[definition]{Claim}
\newtheorem{theorem}[definition]{Theorem}
\newtheorem{lemma}[definition]{Lemma}
\newtheorem{proposition}[definition]{Proposition}
\newtheorem{corollary}[definition]{Corollary}
\newtheorem{fact}[definition]{Fact}
\newcommand\cA{\mathcal{A}}
\newcommand\cB{\mathcal{B}}
\newcommand\cD{\mathcal{D}}
\newcommand\cF{\mathcal{F}}
\newcommand\cE{\mathcal{E}}
\newcommand\cN{\mathcal{N}}
\newcommand\cQ{\mathcal{Q}}
\newcommand\cM{\mathcal{M}}
\newcommand\cV{\mathcal{V}}
\def\cE{{\mathcal E}}
\newcommand\eul{\mathrm{e}}
\newcommand\eps{\varepsilon}
\newcommand\NN{\mathbb{N}}
\newcommand\Var{\mathrm{Var}}
\newcommand\Erw{\mathbb{E}}
\newcommand{\vecone}{\vec{1}}
\newcommand{\Po}{{\rm Po}}
\newcommand{\Bin}{{\rm Bin}}
\newcommand{\diag}{\operatorname{diag}}
\newcommand{\bink}[2] {{{#1}\choose {#2}}}
\newcommand\bc[1]{\left({#1}\right)}
\newcommand\cbc[1]{\left\{{#1}\right\}}
\newcommand\bcfr[2]{\bc{\frac{#1}{#2}}}
\newcommand{\bck}[1]{\left\langle{#1}\right\rangle}
\newcommand\brk[1]{\left\lbrack{#1}\right\rbrack}
\newcommand\scal[2]{\bck{{#1},{#2}}}
\newcommand{\whp}{w.h.p.}
\newcommand{\stacksign}[2]{{\stackrel{\mbox{\scriptsize #1}}{#2}}}
\newcommand{\Erdos}{Erd\H{o}s}
\newcommand{\Renyi}{R\'enyi}
\newcommand{\Bollobas}{Bollob\'as}
\newcommand{\Luczak}{\L uczak}
\newcommand\Lem{Lemma}
\newcommand\Prop{Proposition}
\newcommand\Thm{Theorem}
\newcommand\Cor{Corollary}
\newcommand\Sec{Section}
\newcommand\RSA{Random Structures and Algorithms}
\newcommand\JCTB{Journal of Combinatorial Theory, Series~B}
\newcommand{\conds}{\mathcal{E}}
\newcommand{\setB}{\hat \cN_+}
\newcommand{\sizeB}{\hat n_+}
\begin{document}

\title{Core Forging and local limit theorems for the $k$-core of random graphs}

\author{Amin Coja-Oghlan$^*$, Oliver Cooley$^{**}$, Mihyun Kang$^{**}$ and Kathrin Skubch}
\thanks{$^*$The research leading to these results has received funding from the European Research Council under the European Union's Seventh 
Framework Programme (FP/2007-2013) / ERC Grant Agreement n.\ 278857--PTCC\\
$^{**}$Supported by Austrian Science Fund (FWF): P26826 and W1230, Doctoral Program ``Discrete Mathematics''.}

\address{Amin Coja-Oghlan, {\tt acoghlan@math.uni-frankfurt.de}, Goethe University, Mathematics Institute, 10 Robert Mayer St, Frankfurt 60325, Germany.}

\address{Oliver Cooley, {\tt cooley@math.tugraz.at}, Graz University of Technology, Institute of Discrete Mathematics, Steyrergasse 30, 8010 Graz, Austria}

\address{Mihyun Kang, {\tt kang@math.tugraz.at}, Graz University of Technology, Institute of Discrete Mathematics, Steyrergasse 30, 8010 Graz, Austria}

\address{Kathrin Skubch, {\tt skubch@math.uni-frankfurt.de}, Goethe University, Mathematics Institute, 10 Robert Mayer St, Frankfurt 60325, Germany.}

\begin{abstract}
\noindent
We establish a multivariate local limit theorem for the order and size as well as several other parameters of the $k$-core of the \Erdos-\Renyi\ random graph.
The proof is based on a novel approach to the $k$-core problem that replaces the meticulous analysis of the `peeling process' by a generative model of graphs with a core of a given order and size.
The generative model, which is inspired by the Warning Propagation message passing algorithm, facilitates the direct study of properties of the core and its connections with the mantle and should therefore be of interest in its own right.

\bigskip
\noindent
\emph{Mathematics Subject Classification:} 05C80.
\end{abstract}

\maketitle

\section{Introduction}\label{Sec_intro}

\subsection{The $k$-core problem}
\noindent
The {\em $k$-core} of a graph $G$ is the largest subgraph of minimum degree at least $k$.
It can be determined algorithmically by the {\em peeling process} that removes an arbitrary vertex of degree less than $k$ while there is one.
In one of the most influential contributions to the theory of random graphs Pittel, Spencer and Wormald
analysed the peeling process on the \Erdos-\Renyi\ random graph via the method of differential equations~\cite{Pittel}.
They determined the precise threshold $d_k$ from where the $k$-core is non-empty \whp\
as well as the asymptotic order (number of vertices) and size (edges) of the $k$-core for $d>d_k$, $k\geq3$.
The case $k\geq3$ is very different from the case $k=2$, as the $2$-core simply emerges continuously along with the giant component.
By contrast, 
a most remarkable feature of the case  $k\geq3$, first observed by \Luczak~\cite{Tomasz1,Tomasz2},
is that the order of the $k$-core leaps from $0$ to a linear number of vertices at the very moment that the $k$-core becomes non-empty.

Since the seminal work of Pittel, Spencer and Wormald several alternative 
derivations of the $k$-core threshold have been put forward
~\cite{Cooper, Fernholz1, Fernholz2, JansonLuczak, Kim, MolloyCores,Riordan,Sato}.
Some of these extend to hypergraphs and/or given degree sequences.
Additionally, establishing a bivariate central limit theorem, Janson and Luczak~\cite{JansonLuczak2} studied the joint limiting distribution of the order and size of the $k$-core.
Further aspects of the problems that have been studied include the `depth' of the peeling process as well as the width of the
critical window~\cite{DMcore,Gao,GaoMolloy}.

The great interest in the $k$-core problem is  due not least to the many connections that the  problem has with
other questions in combinatorics and computer science.
For example, coinciding with the largest $k$-connected subgraph \whp,
the $k$-core problem is a natural generalisation of the `giant component' problem~\cite{BB}.
Cores also play a very important role in the study of {random constraint satisfaction problems}
such as random $k$-SAT or random graph colouring.
In these problems the emergence of a core-like structure causes freezing, a particular kind of long-range correlations that has been associated with the algorithmic difficulty of finding solutions~\cite{Barriers, Molloy}.
In addition, the hypergraph version of the $k$-core holds the key to understanding problems such as random XORSAT, hypergraph orientability and cuckoo hashing~\cite{Dietzfelbinger,Nikos,PittelSorkin}.
The problem plays an important role in coding theory as well~\cite{Luby}.

While most of the previous work on the $k$-core problem has been based on tracing the peeling process,
	the only exception being~\cite{Riordan}, reliant on branching processes,
in the present paper we develop a very different approach.
We devise a generative model for
random graphs with a $k$-core of a given order and size.
Formally, we develop a randomised sampling algorithm {\tt Forge} that produces a graph with a core of a given desired order and size 
(under certain reasonable assumptions on the input parameters).
The output distribution of {\tt Forge} converges in total variation to the distribution of an \Erdos-\Renyi\ random graph
given the order and size of the $k$-core.
Because the randomised construction employed by {\tt Forge} is surprisingly simple, we can immediately read off
the asymptotic number of graphs with a $k$-core of a given order and size.
As an application, we obtain a bivariate {\em local limit theorem} for the distribution of the order and size of the $k$-core of the \Erdos-\Renyi\ random graph.
This result substantially sharpens the central limit theorem of Janson and Luczak~\cite{JansonLuczak2}.
Additionally, the sampling algorithm completely elucidates the way the $k$-core is embedded into the random graph, a question on which we obtained
partial results in an earlier paper via the formalism of local weak convergence~\cite{localcore}.
We expect that this structural insight will facilitate the future study of the $k$-core and of similar structures arising in random constraint satisfaction problems.

The paper  is almost entirely self-contained and most of the proofs are elementary.
The only (mildly) advanced ingredient that we use is a local limit theorem for sums of independent random variables~\cite{McD}.
In particular, we do not rely on any of the previous results on the $k$-core, not even the one on the location of the $k$-core threshold.

\subsection{A  local limit theorem}\label{Sec_LLT}
Let $\G=\G(n,m)$ be the random graph with $n$ vertices and $m=\lceil dn/2 \rceil$ edges, where
$d>0$ is independent of $n$.
Moreover, for an integer $k\geq3$ consider the function
	\begin{equation}\label{eqmain}
	\phi_{d,k}:[0,1]\to[0,1],\qquad x\mapsto\pr\brk{\Po(dx)\geq k-1}=1-\exp(-dx)\sum_{j=0}^{k-2}\frac{(dx)^j}{j!}\enspace.
	\end{equation}
Clearly, $\phi_{d,k}$ is continuous and $\phi_{d,k}(0)=0$. 
Let $p=p(d,k)\in[0,1]$ be the largest fixed point of $\phi_{d,k}$ and set
	\begin{equation}\label{eqdk}
	d_k=\inf\{d>0:p(d,k)>0\}.
	\end{equation}
In addition, define
	\begin{align}\label{eq:q}
	q&=q(d,k)=\pr\brk{\Po(dp)=k-1|\Po(dp)\geq k-1}=\frac{d^{k-1}p^{k-2}\exp(-dp)}{(k-1)!}.
	\end{align}		

\begin{theorem}\label{Cor_LLT}
Suppose that $k\geq3$, $d>d_k$ and fix any number $\xi>0$.
Then $1-(k-1)q>0$ and the $2\times 2$ matrix
	\begin{align}\label{eqCor_LLT}
	\cQ&={(1-(k-1)q)}^{-2}\begin{pmatrix}\cQ_{11}&\cQ_{12}\\\cQ_{21} &\cQ_{22}\end{pmatrix}
	\end{align}
with
	\begin{align*}
\cQ_{11}&=
-{\left(d k^{2} - 2 \, d k + d\right)} p^{2} q^{4} - {\left(2 \, {\left(d^{2} k - d^{2}\right)} p^{3} - {\left(2 \, d k^{2} - d^{2} + {\left(d^{2} - 2 \, d\right)} k\right)} p^{2} + {\left(d k^{2} - 2 \, d k + d\right)} p\right)} q^{3} - d p^{2}
 \nonumber\\
 &\quad
- {\left({\left(d^{3} + 2 \, d^{2}\right)} p^{4} - {\left(d^{3} + 2 \, d^{2} k\right)} p^{3} + {\left(d k^{2} - d^{2} + 2 \, {\left(d^{2} + d\right)} k - 2 \, d\right)} p^{2} - {\left(d k^{2} - d\right)} p\right)} q^{2} + d p
\nonumber
 \\
 &\quad
- {\left(2 \, d^{2} p^{3} - 2 \, {\left(d^{2} + d k\right)} p^{2} + {\left(2 \, d k - d\right)} p\right)} q,
\nonumber\\
\cQ_{12}&=\cQ_{21}=
-2 \, d p^{3} + 2 \, d p^{2} - 2 \, {\left({\left(d k - d\right)} p^{4} + {\left(d k - d\right)} p^{3}\right)} q^{2} - 2 \, {\left({\left(d^{2} + d\right)} p^{4} - {\left(d^{2} + d k\right)} p^{3} + {\left(d k - d\right)} p^{2}\right)} q,
\nonumber\\
\cQ_{22}&=
-4 \, {\left(k - 1\right)} p^{4} q - 2 \, {\left(2 \, d + 1\right)} p^{4} + 4 \, d p^{3} - 2 \, {\left({\left(k^{2} - 2 \, k + 1\right)} p^{4} + {\left(k^{2} - 2 \, k + 1\right)} p^{2}\right)} q^{2} + 2 \, p^{2}\nonumber
	\end{align*}
is regular.
Further, let $X$ be the order of the $k$-core of $\G$ and let $Y$ be its size.
Then uniformly for all integers $x,y$ such that
$|x-np(1-q)|+|y-mp^2|\leq\xi\sqrt n$ we have
	\begin{align}\label{eqLLT}
	 \pr\brk{X=x,Y=y}
		& \sim\frac{\sqrt{\det\cQ}}{\pi d n} \exp\br{ -\frac n2  \scal{\cQ\bink{x/n-p(1-q)}{y/m-p^2}} {\bink{x/n-p(1-q)}{y/m-p^2}}}.
	\end{align}
\end{theorem}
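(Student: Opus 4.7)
The plan is to deduce the local limit theorem from the generative algorithm {\tt Forge} announced in the introduction, combined with a multivariate local limit theorem for sums of independent random variables such as the one cited as \cite{McD}. The approach has two main ingredients: first, extract from Forge an explicit asymptotic formula for $\pr[X=x,Y=y]$; second, recognise that formula as the point probability of a sum of i.i.d.\ random vectors, to which the classical local limit theorem applies.

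For the first step, Forge produces every $G\in\G(n,m)$ whose $k$-core has order $x$ and size $y$ with an explicit weight, and its output law converges in total variation to the conditional distribution $\pr[\,\cdot\,\mid X=x,Y=y]$. Consequently, the number of such graphs equals the reciprocal of Forge's per-graph weight up to a factor $(1+o(1))$, which gives a closed-form expression for $\pr[X=x,Y=y]$. A direct inspection of the construction reveals that, up to lower-order corrections, this probability equals the probability that a sum $\sum_{v=1}^n \vec Z_v$ of i.i.d.\ integer-valued random vectors hits the point $(x,y,2m)$, where $\vec Z_v$ encodes whether vertex $v$ lies in the core and how many of its neighbours lie in the core and in the mantle. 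The law of each $\vec Z_v$ is driven by a Poisson variable of mean $dp$ via the Warning Propagation fixed point encoded by \eqref{eqmain}.

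The second step applies the local limit theorem of \cite{McD} to this i.i.d.\ sum and uses the Schur complement to extract the conditional Gaussian density at $(x,y)$ given that the third (degree-sum) coordinate equals $2m$. The prefactor $(1-(k-1)q)^{-2}$ in \eqref{eqCor_LLT} is the residual variance from the regression of $(X,Y)$ on the total degree: a short differentiation of \eqref{eqmain} yields $(k-1)q=\phi_{d,k}'(p)$, and the inequality $\phi_{d,k}'(p)<1$, valid at the largest fixed point of $\phi_{d,k}$ for $d>d_k$, both produces $1-(k-1)q>0$ and underlies the nondegeneracy of $\cQ$. The explicit polynomial entries $\cQ_{ij}$ then follow from computing in closed form the covariance matrix of a single $\vec Z_v$ (first and second moments of a truncated Poisson) and performing the Schur complement.

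The main technical obstacle is twofold. First, one must show that Forge approximates $\pr[\,\cdot\,\mid X=x,Y=y]$ in total variation with an error sharp enough to preserve pointwise probabilities, uniformly over pairs $(x,y)$ in the window $|x-np(1-q)|+|y-mp^2|\leq\xi\sqrt n$; this rests on the detailed analysis of Forge developed earlier in the paper. Second, although mechanical, the algebraic derivation of the polynomials $\cQ_{11},\cQ_{12},\cQ_{22}$ involves several intermediate sums arising from the joint law of Warning Propagation messages, which is why the closed-form expressions in \eqref{eqCor_LLT} are unavoidably lengthy.
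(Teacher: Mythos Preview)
Your overall strategy --- reduce via {\tt Forge} to an event about sums of independent random variables and then invoke the local limit theorem of \cite{McD} --- is exactly the route the paper takes. But the paper does \emph{not} go directly to a bivariate statement: it first establishes the four-variable local limit theorem, \Thm~\ref{Thm_LLT}, for $(n_\star,n_1,m_{10},m_{11})$, and then obtains \Thm~\ref{Cor_LLT} in one line by summing out $n_\star$ and $m_{10}$. The extra two coordinates are not a convenience but a necessity: the analysis of {\tt Forge} (specifically \Thm~\ref{thm:contig2} and \Prop~\ref{Prop_success}) conditions on the event $\hat\cF(\vN,\vM)$, which includes the constraint $\hat m_{01}=\hat m_{10}$ as well as the parities of $\hat m_{00},\hat m_{11}$. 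None of these is expressible through the single linear constraint ``total degree $=2m$''. Consequently your claim that ``up to lower-order corrections, this probability equals the probability that a sum $\sum_v\vec Z_v$ of i.i.d.\ integer-valued random vectors hits the point $(x,y,2m)$'' does not follow from an inspection of {\tt Forge}; a three-dimensional sum is too coarse. To make your argument go through you would have to track at least $(\hat n_\star,\hat n_1,\hat m_{01},\hat m_{10},\hat m_{11})$ jointly and condition on the full event $\hat\cF(\vN,\vM)$, at which point you have essentially reproduced the proof of \Thm~\ref{Thm_LLT} (via \Prop~\ref{Prop_entropy}, \Lem~\ref{Lemma_entropy}, \Cor~\ref{Cor_master} and \Lem~\ref{Prop_degrees}).

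A smaller but related point: your description of $\vec Z_v$ as encoding ``whether vertex $v$ lies in the core and how many of its neighbours lie in the core and in the mantle'' is circular. Core membership is a global functional of $\G$ and is certainly not independent across $v$. What \emph{is} independent is the {\tt Forge} pseudo-type together with the pseudo-degrees $\hat d_{ab}(v)$; the entire content of \Prop~\ref{Prop_success} --- which you rightly flag as the main technical obstacle --- is that these pseudo-quantities coincide with the genuine Warning Propagation data with probability asymptotically $\zeta$, uniformly over the window~(\ref{eqthm:contig2}). Without keeping the full vector $(\hat n_\star,\hat n_1,\hat m_{10},\hat m_{11})$ you cannot even state that uniformity, let alone invoke it.
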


The formula (\ref{eqLLT}) determines the asymptotic probability that the order and size $X,Y$ of the $k$-core attain specific values
within $O(\sqrt n)$ of their expectations.
Hence, \Thm~\ref{Cor_LLT} provides a bivariate {\em local limit theorem} for the order and size of the $k$-core.
This result is significantly stronger than a mere {\em central limit theorem} stating that
$X,Y$ converge jointly to a bivariate Gaussian because (\ref{eqLLT}) actually yields the asymptotic point probabilities.
Still it is worthwhile pointing out that \Thm~\ref{Cor_LLT} immediately implies a central limit theorem.

\begin{corollary}\label{Cor_CLT}
Suppose that $k\geq3$ and $d>d_k$, let $\cQ$ be the matrix from (\ref{eqCor_LLT}) and let
$X,Y$ be the order and size of the $k$-core of $\G$.
	Then $n^{-1/2}((X-np(1-q)),2(Y-mp^2)/d)$
converges in distribution to a bivariate Gaussian with mean $0$ and covariance matrix $\cQ^{-1}$.
\end{corollary}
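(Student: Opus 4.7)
The plan is to derive \Cor~\ref{Cor_CLT} from \Thm~\ref{Cor_LLT} by the standard Riemann-sum argument that converts a local limit theorem into a central limit theorem. The three ingredients are the explicit point-probability asymptotics of \Thm~\ref{Cor_LLT}, a change of variables that matches those point probabilities to Gaussian density values times cell areas, and a tightness argument to pass from bounded rectangles to the whole plane.

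First, rescale: set $U = (X - np(1-q))/\sqrt n$ and $V = 2(Y - mp^2)/(d\sqrt n)$, so that $(U,V)$ is precisely the random vector whose limit I want to identify. Because $X$ and $Y$ are integer-valued, $(U,V)$ takes values on a translated grid with horizontal and vertical mesh widths $\Delta u = 1/\sqrt n$ and $\Delta v = 2/(d\sqrt n)$; each cell has area $\Delta u\,\Delta v = 2/(dn)$. A direct computation, using $m = dn/2 + O(1)$ to replace $y/m - p^2$ by $V/\sqrt n$ up to relative error $o(1)$, shows that the quadratic form in (\ref{eqLLT}) equals $\tfrac12 \scal{\cQ\bink{u}{v}}{\bink{u}{v}} + o(1)$ uniformly for $(u,v)$ in any compact subset of $\RR^2$, while the prefactor satisfies $\sqrt{\det\cQ}/(\pi dn) = \Delta u\,\Delta v \cdot \sqrt{\det\cQ}/(2\pi)$. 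Consequently \Thm~\ref{Cor_LLT} rewrites as $\pr\brk{X=x,Y=y} \sim \Delta u\,\Delta v \cdot g(u,v)$, where $g$ is exactly the density of the bivariate Gaussian with mean $0$ and covariance $\cQ^{-1}$; the regularity of $\cQ$ asserted in \Thm~\ref{Cor_LLT} ensures that $\cQ^{-1}$ is well defined.

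Second, I would establish convergence on bounded rectangles. For any bounded rectangle $R\subset \RR^2$, choose $\xi$ so large that $R$ lies entirely in the range where \Thm~\ref{Cor_LLT} applies. Then $\pr\brk{(U,V)\in R}$ equals a Riemann sum for $\int_R g(u,v)\,\dd u\,\dd v$, and the uniform $(1+o(1))$-approximation from the LLT combined with the uniform continuity of $g$ on $R$ yields $\pr\brk{(U,V)\in R} \to \int_R g$. Tightness then follows immediately: given $\eps > 0$, choose $\xi$ large enough that the Gaussian mass of $\{\max(|u|,|v|) > \xi\}$ is below $\eps/2$, and apply the previous step to $R = [-\xi,\xi]^2$ to obtain $\pr\brk{\max(|U|,|V|) > \xi} \leq \eps$ for all sufficiently large $n$. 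Finite-dimensional convergence on bounded rectangles plus tightness yields convergence in distribution to the Gaussian with density $g$, which is the claim.

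No step presents a real obstacle once \Thm~\ref{Cor_LLT} is in hand; the only substantive content is the bookkeeping in the first paragraph, which verifies that the two rescalings absorb exactly the right powers of $n$ to turn the prefactor and exponent in (\ref{eqLLT}) into a Gaussian density times cell area.
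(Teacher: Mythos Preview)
Your proposal is correct and is precisely the standard argument the paper has in mind: the paper does not give a separate proof of \Cor~\ref{Cor_CLT} but simply states that \Thm~\ref{Cor_LLT} ``immediately implies a central limit theorem.'' Your Riemann-sum bookkeeping (matching the prefactor $\sqrt{\det\cQ}/(\pi dn)$ to $\Delta u\,\Delta v\cdot\sqrt{\det\cQ}/(2\pi)$ and the exponent to the Gaussian quadratic form) together with the tightness step supplies exactly the details that were left implicit.
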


A statement similar to \Cor~\ref{Cor_CLT} was previously established by  Janson and Luczak \cite{JansonLuczak2} via a careful analysis of the peeling process.
However, they did not obtain an explicit formula for the covariance matrix.
Indeed, although the formula for $\cQ$ is a bit on the lengthy side, the only 
non-algebraic quantity is $p=p(d,k)$, the solution to the fixed point equation.
By contrast, the formula of Janson and Luczak implicitly characterises the covariance matrix in terms of another stochastic process, and
they do not provide a {\em local} limit theorem.

The number $d_k$ from (\ref{eqdk}) does, of course, coincide with the $k$-core threshold first derived in~\cite{Pittel}.
The formula given in that paper looks a bit different but we pointed out the equivalence in~\cite{localcore}.
In fact, it is {\em very} easy to show that the $k$-core is empty \whp\ if $d<d_k$.
On the other hand,  \Cor~\ref{Cor_CLT} implies immediately that for $d>d_k$ the $k$-core contains $n(p(1-q)+o(1))=\Omega(n)$ vertices \whp\
Since the proofs of \Thm~\ref{Cor_LLT} and \Cor~\ref{Cor_CLT} do not assume knowledge of the $k$-core threshold, we thus obtain a new derivation of the threshold result.

\subsection{Warning Propagation}\label{Sec_WP}
A key idea of the present paper is to investigate not merely the $k$-core itself but
 also the ``surrounding structure'' of the graph from the right angle.
As it turns out, the necessary additional parameters can be set out concisely by way of the Warning Propagation message passing algorithm
introduced in non-rigorous physics work on random constraint satisfaction problems~\cite{MM}.
The link between Warning Propagation and the $k$-core problem is well known~\cite{localcore,Ibrahimi,MM}.
The important feature that we highlight and exploit here is that the Warning Propagation messages allow us to describe succinctly how the $k$-core is embedded into the rest of the random graph, the {\em mantle}.
More precisely, as we pointed out in~\cite{localcore} Warning Propagation gives rise naturally to a few further parameters apart from the order and size of the $k$-core that are of combinatorial significance
but that, unfortunately, get lost in the peeling process.
The main result of the paper, \Thm~\ref{Thm_LLT} below, provides a local limit theorem for the joint distribution of all these parameters.

Warning Propagation assigns \emph{messages} to edges, one in either direction, and \emph{marks} to vertices.
The messages and the marks are $\{0,1\}$-valued.
Initially all messages are set to $1$.
Thus, for a graph $G=(V(G),E(G))$ we let $\mu_{v\to w}(0|G)=1$ for all pairs $(v,w)\in V(G)\times V(G)$ such that $\{v,w\}\in E(G)$.
Subsequently the messages get updated in parallel rounds. 
That is,  writing $\partial v=\partial_Gv$ for the neighbourhood of vertex $v$
and abbreviating $\partial v\setminus w
=\partial v\setminus\{w\}$, we inductively define
\begin{equation}\label{WP_messageupdate}
\textstyle \mu_{v\to w}(t+1|G)
 =\vecone\cbc{\sum_{u\in\partial v\setminus w}\mu_{u\to v}(t|G)\geq k-1}\qquad\mbox{for integers }t\geq0.
\end{equation}
We emphasise that the messages are directed and quite possibly $\mu_{v\to w}(t|G)\neq\mu_{w\to v}(t|G)$.
Additionally, the mark of $v\in [n]$ at time $t\geq0$ is defined as
\begin{equation}\label{WP_markupdate}
\textstyle \mu_v(t|G)=\vecone\cbc{\sum_{u\in\partial v}\mu_{u\to v}(t|G)\geq k}.
\end{equation}
Clearly, $\mu_{v\to w}(t+1|G)\leq\mu_{v\to w}(t|G)$ for all $t\geq0$ and all $v,w$.
Hence, $\mu_{v}(t+1|G)\leq\mu_v(t|G)$ for all $v$ and the limits
	$$\mu_v(G)=\lim_{t\to\infty}\mu_{v}(t|G),\qquad\mu_{v\to w}(G)=\lim_{t\to\infty}\mu_{v\to w}(t|G)$$
exist for all $v,w$. 
Denote by 
$$\mu(G)=\br{\mu_v(G),\mu_{v\to w}(G)}_{v\in V(G),\{v,w\} \in E(G)}.$$
The following observation is immediate from the construction.

\begin{fact}[{\cite[\Lem~3.1]{localcore}}]\label{fact_coreWP}
Let $G$ be a graph.
\begin{enumerate}
\item A vertex $u$ belongs to the $k$-core of $G$ iff $\mu_u(G)=1$.
\item An edge $\{v,w\}$ links two vertices of the $k$-core iff $\mu_{v\to w}(G)=\mu_{w\to v}(G)=1$.
\end{enumerate}
\end{fact}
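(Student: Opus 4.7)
My plan is to prove both parts by exploiting the monotonicity $\mu_{v\to w}(t+1|G)\leq\mu_{v\to w}(t|G)$ together with a short induction. Throughout, let $C$ denote the $k$-core of $G$ and set $S=\{v:\mu_v(G)=1\}$; the goal of (1) is precisely $S=C$.

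For the ``only if'' direction of (1), I would prove by induction on $t$ that $\mu_{v\to w}(t|G)=1$ for every $v\in C$ and every neighbour $w$ of $v$. The base case $t=0$ is built into the initialisation. For the inductive step, since $v\in C$ has at least $k$ neighbours inside $C$, at least $k-1$ of them lie in $C\setminus\{w\}$. The inductive hypothesis yields $\mu_{u\to v}(t|G)=1$ for each such $u$, so the sum in \eqref{WP_messageupdate} is at least $k-1$ and $\mu_{v\to w}(t+1|G)=1$. Passing to the limit and then invoking \eqref{WP_markupdate} with the $\geq k$ in-core neighbours of $v$ yields $\mu_v(G)=1$, so $C\subseteq S$.

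The converse is the slightly more delicate half. The idea is to show that the subgraph induced on $S$ has minimum degree at least $k$, for then the maximality of $C$ forces $S\subseteq C$. Fix $v\in S$; by \eqref{WP_markupdate} there is a set $N^+(v)\subseteq\partial v$ with $|N^+(v)|\geq k$ such that $\mu_{u\to v}(G)=1$ for every $u\in N^+(v)$. The key claim is $N^+(v)\subseteq S$. First observe that for any $u\in\partial v$ one has $\sum_{w\in\partial v\setminus u}\mu_{w\to v}(G)\geq k-1$ (since the full sum is at least $k$ and each term is at most $1$), and therefore $\mu_{v\to u}(G)=1$ by \eqref{WP_messageupdate} at the limit. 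Now for $u\in N^+(v)$, the equality $\mu_{u\to v}(G)=1$ gives $\sum_{w\in\partial u\setminus v}\mu_{w\to u}(G)\geq k-1$, and adding the $\mu_{v\to u}(G)=1$ just obtained yields $\sum_{w\in\partial u}\mu_{w\to u}(G)\geq k$, i.e.\ $u\in S$. Thus every vertex of $S$ has at least $k$ neighbours in $S$, establishing $S\subseteq C$.

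Part (2) then drops out almost for free. If $\{v,w\}$ joins two core vertices, the forward direction of (1) already supplies $\mu_{v\to w}(G)=\mu_{w\to v}(G)=1$. Conversely, if both directed messages equal $1$, then $\sum_{u\in\partial v\setminus w}\mu_{u\to v}(G)\geq k-1$ together with $\mu_{w\to v}(G)=1$ gives $\mu_v(G)=1$, and symmetrically for $w$, so both endpoints lie in $C$ by part (1). The only mildly tricky point in the whole argument is the converse half of (1): one has to simultaneously propagate ``$v\in S$'' and ``every outgoing message from $v$ equals $1$'' to each $1$-in-neighbour of $v$, which is why the two intermediate inequalities must be chained in the specific order above.
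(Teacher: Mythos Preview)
Your argument is correct. The paper does not actually prove this fact; it is quoted from \cite{localcore} and merely labelled ``immediate from the construction,'' so there is no in-paper proof to compare against. Your write-up supplies exactly the standard verification: the induction showing core vertices keep sending $1$'s, and the fixed-point observation that $\{v:\mu_v(G)=1\}$ induces a subgraph of minimum degree $\geq k$ (hence lies inside the $k$-core by maximality). The one implicit step worth making explicit is that the limit messages $\mu_{v\to w}(G)$ indeed satisfy \eqref{WP_messageupdate} as an equality --- this is what lets you read off $\sum_{w\in\partial u\setminus v}\mu_{w\to u}(G)\geq k-1$ from $\mu_{u\to v}(G)=1$ --- but that follows at once since the $\{0,1\}$-valued decreasing sequences stabilise in finitely many rounds.
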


The messages encode how the $k$-core is embedded into the mantle.
To see this, we introduce
	\begin{align*}
		\cN_0(G)&=\textstyle\cbc{v:\sum_{u\in\partial v}\mu_{u\to v}(G)
		\leq  k-2},\\
	\cN_\star(G)&=\textstyle\cbc{v:\sum_{u\in\partial v}\mu_{u\to v}(G)= k-1},\\
	\cN_1(G)&=\textstyle\cbc{v:\sum_{u\in\partial v}\mu_{u\to v}(G)\geq k},\\
	\cM_{xy}(G)&=\cbc{(v,w)\in V(G)^2:\{v,w\}\in E(G),\, \mu_{w\to v}(G)=x,\,\mu_{v\to w}(G)=y}
		\qquad\qquad\qquad\qquad(x,y\in\{0,1\}).
	\end{align*}
Fact~\ref{fact_coreWP} shows that $\cN_1(G)$ is just the vertex set of the $k$-core.
Moreover, the vertices in $\cN_\star(G)$ miss out on core membership by just one incoming $1$-message.
In effect, if they receive a $0$ message from a neighbour, they send back a $1$, and vice versa.
By contrast, the vertices in $\cN_0(G)$ send out $0$ messages to all their neighbours, although they may receive up to $k-2$ many $1$-messages.
Further, Fact~\ref{fact_coreWP} implies that $(v,w)\in\cM_{11}(v)$ iff the edge $\{v,w\}$ connects two vertices inside the $k$-core.
Similarly, if $(v,w)\in\cM_{10}(G)$, then $v\in\cN_\star(G)\cup\cN_0(G)$ and $w\in\cN_1(G)\cup\cN_\star(G)$, and
 $(v,w)\in\cM_{10}(G)$ iff $(w,v)\in\cM_{01}(G)$.
Finally, if $(v,w)\in\cM_{00}(G)$, then $v,w\in\cN_0(G)$.

Given this Warning Propagation-inspired decomposition of the vertices and edges, the
key parameters of the $k$-core problem are
	$$n_0(G)=|\cN_0(G)|,\qquad \ n_\star(G)=|\cN_\star(G)|,\qquad \ n_1(G)=|\cN_1(G)|,\qquad \ m_{xy}(G)=|\cM_{xy}(G)|.$$
Of course, by Fact~\ref{fact_coreWP} the order of the $k$-core equals $n_1(G)$ and its size is equal to $m_{11}(G)/2$.
Further, both $m_{00}(G)$ and $m_{11}(G)$ are even and
	\begin{align}\label{eqaffine}
	n_0(G)+n_1(G)+n_\star(G)&=|V(G)|,&
	m_{01}(G)&=m_{10}(G),&m_{00}(G)+m_{01}(G)+m_{10}(G)+m_{11}(G)&=2|E(G)|.
	\end{align}
In effect, the seven parameters
$$\vn(G)=(n_0(G),n_\star(G),n_1(G))\quad\mbox{and}\quad\vm(G)=(m_{00}(G),m_{01}(G),m_{10}(G),m_{11}(G))$$
boil down to the four variables $$\vN(G)=(n_\star(G),n_1(G))\quad\mbox{and}\quad\vM(G)=(m_{10}(G),m_{11}(G)).$$
Then we have the following local limit theorem for $\vN(\G),
\vM(\G)$.

\begin{figure}\small
	\begin{align*}
	Q_{11}
	&=-
	\frac 1d\left(
	 {\left(d k^{2} - 2 \, d k + d\right)} p^{2} q^{4} + {\left(2 \, {\left(d^{2} k - d^{2}\right)} p^{3} - {\left(2 \, d k^{2} - d^{2} + {\left(d^{2} - 2 \, d\right)} k\right)} p^{2} + {\left(d k^{2} - 2 \, d k + d\right)} p\right)} q^{3} 
\right.	 
\\
&\quad 
\left.
- d p q + \left({\left(d^{3} + 2 \, d^{2}\right)} p^{4} - {\left(d^{3} + 2 \, {\left(d^{2} + 2 \, d\right)} k - 4 \, d\right)} p^{3} + {\left({\left(d + 2\right)} k^{2} - d^{2} + 2 \, {\left(d^{2} - 2\right)} k + 2\right)} p^{2}	 
\right.
\right.
\\
&\quad 
\left.
\left.
 - {\left(d k^{2} - 2 \, d k + d\right)} p\right) q^{2}
\right),
\\
Q_{12}&= {\left(k^{2} - 2 \, k + 1\right)} p^{2} q^{4} + {\left(2 \, {\left(d k - d\right)} p^{3} - {\left({\left(d - 2\right)} k + 2 \, k^{2} - d\right)} p^{2} + {\left(k^{2} - 2 \, k + 1\right)} p\right)} q^{3} 
\\
&\quad 
+ {\left({\left(d^{2} + 2 \, d\right)} p^{4} - {\left(d^{2} + 2 \, {\left(d + 1\right)} k - 2\right)} p^{3} 
+ {\left({\left(2 \, d + 1\right)} k + k^{2} - d - 1\right)} p^{2} - {\left(k^{2} - k\right)} p\right)} q^{2} 
\\
&\quad 
+ {\left(d p^{3} - {\left(d + k\right)} p^{2} + {\left(k - 1\right)} p\right)} q,
\\
Q_{13}
&= -
\frac 1d\left(
{\left(2 \, {\left(d k - d\right)} p^{4} + 2 \, {\left({\left(d + 2\right)} k - k^{2} - d - 1\right)} p^{3} - 3 \, {\left(d k - d\right)} p^{2} + {\left({\left(d - 2\right)} k + k^{2} - d + 1\right)} p\right)} q^{2} 
\right.	 
\\
&\quad 
\left.
+ {\left(2 \, {\left(d^{2} + d\right)} p^{4} - {\left(3 \, d^{2} + 2 \, {\left(d + 1\right)} k + 2 \, d - 2\right)} p^{3} + {\left(d^{2} + {\left(3 \, d + 2\right)} k - 2\right)} p^{2} - {\left({\left(d + 1\right)} k - 1\right)} p\right)} q
\right),
\\
Q_{14}
&= \frac 2d\left({\left({\left(d k - d\right)} p^{4} + {\left({\left(d + 2\right)} k - k^{2} - d - 1\right)} p^{3}\right)} q^{2} + {\left({\left(d^{2} + d\right)} p^{4} - {\left(d^{2} + {\left(d + 1\right)} k - 1\right)} p^{3} + {\left(d k - d\right)} p^{2}\right)} q\right),
\\
Q_{22}&= -{\left(k^{2} - 2 \, k + 1\right)} p^{2} q^{4} - {\left(2 \, {\left(d k - d\right)} p^{3} - {\left({\left(d - 2\right)} k + 2 \, k^{2} - d\right)} p^{2} + {\left(k^{2} - 2 \, k + 1\right)} p\right)} q^{3}
\\
&\quad 
 - {\left({\left(d^{2} + 2 \, d\right)} p^{4} - {\left(d^{2} + 2 \, d k\right)} p^{3} + {\left(2 \, {\left(d + 1\right)} k + k^{2} - d - 2\right)} p^{2} - {\left(k^{2} - 1\right)} p\right)} q^{2} - p^{2} 
\\
&\quad  
 - {\left(2 \, d p^{3} - 2 \, {\left(d + k\right)} p^{2} + {\left(2 \, k - 1\right)} p\right)} q + p,
\\
Q_{23}
&= 2 \, p^{3} + {\left(2 \, {\left(k - 1\right)} p^{4} + 2 \, {\left(k - 1\right)} p^{3} - 3 \, {\left(k - 1\right)} p^{2} + {\left(k - 1\right)} p\right)} q^{2} - 3 \, p^{2} 
\\
&\quad 
+ {\left(2 \, {\left(d + 1\right)} p^{4} - {\left(3 \, d + 2 \, k + 2\right)} p^{3} + {\left(d + 3 \, k\right)} p^{2} - k p\right)} q + p,
\\
Q_{24}
&= -2 \, p^{3} - 2 \, {\left({\left(k - 1\right)} p^{4} + {\left(k - 1\right)} p^{3}\right)} q^{2} + 2 \, p^{2} - 2 \, {\left({\left(d + 1\right)} p^{4} - {\left(d + k\right)} p^{3} + {\left(k - 1\right)} p^{2}\right)} q,
\\
Q_{33}&= -
\frac 1d\left(
2 \, {\left(2 \, d + 1\right)} p^{4} - 4 \, {\left(2 \, d + 1\right)} p^{3} + {\left(5 \, d + 3\right)} p^{2} + {\left(2 \, {\left(k^{2} - 2 \, k + 1\right)} p^{4} - 2 \, {\left(k^{2} - 2 \, k + 1\right)} p^{2} + {\left(k^{2} - 2 \, k + 1\right)} p\right)} q^{2} 
\right.	 
\\
&\quad 
\left.
- {\left(d + 1\right)} p + {\left(4 \, {\left(k - 1\right)} p^{4} - 4 \, {\left(k - 1\right)} p^{3} + {\left(k - 1\right)} p^{2}\right)} q\right),
\\
Q_{34}&= 
\frac 2d\left(
{\left(k^{2} - 2 \, k + 1\right)} p^{4} q^{2} + {\left(2 \, d + 1\right)} p^{4} - {\left(3 \, d + 1\right)} p^{3} + d p^{2} + {\left(2 \, {\left(k - 1\right)} p^{4} - {\left(k - 1\right)} p^{3}\right)} q
\right),
\\
Q_{44}&= -
\frac 2d\left(
2 \, {\left(k - 1\right)} p^{4} q + {\left(2 \, d + 1\right)} p^{4} - 2 \, d p^{3} + {\left({\left(k^{2} - 2 \, k + 1\right)} p^{4} + {\left(k^{2} - 2 \, k + 1\right)} p^{2}\right)} q^{2} - p^{2}\right)
\end{align*}
	\caption{The matrix entries $Q_{ij}$.}\label{Fig_Sage}
\end{figure}

\begin{theorem}\label{Thm_LLT}
Suppose that $k\geq3$, $d>d_k$ and $\xi>0$.
Then the symmetric $4\times 4$-matrix
	\begin{align}\label{eqQmatrix}
	Q&=
	\frac 1{(1-(k-1)q)^2}
	\bc{Q_{ij}}_{1\leq i,j\leq 4}
	\end{align}
with $Q_{ij}$ from Figure~\ref{Fig_Sage}
is regular and uniformly for all integer vectors $\vN=(n_\star,n_1),\vM=(m_{10},m_{11})$ 
 such that $m_{11}$ is even and  	\begin{equation}\label{eqthm:contig2}
	|n_\star-n\nu_\star|+|n_1-n\nu_1|+|m_{10}-2m\mu_{10}|+|m_{11}-2m\mu_{11}|\leq\xi\sqrt n
	\end{equation}
 we have
	\begin{align}\nonumber
	\pr\brk{
	\vN(\G)=\vN,\vM(\G)=\vM}
    &=\frac{1}{2(\pi d n)^{2}\sqrt{\det Q}}
    \exp\br{-\frac n2
    \bck{Q^{-1}\Delta(\vN,\vM), \Delta(\vN,\vM)}} 
    + o(n^{-2})
    \end{align}
    where
    \begin{align}
    \Delta(\vN,\vM)&=
    \begin{pmatrix}
    \begin{array}{ccc}
    n_\star/n&-&pq,\\
    n_1/n&-&p(1-q)\\
    m_{10}/(2m)&-&p(1-p)
    \\m_{11}/(2m)&-&p^2
    \end{array}
    \end{pmatrix}
    .\label{eqDELTANM}
	\end{align}
\end{theorem}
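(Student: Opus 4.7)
The plan is to combine the generative \texttt{Forge} algorithm announced in the abstract with a local central limit theorem for sums of independent integer vectors. Since $\G$ is uniform on graphs with $n$ vertices and $m$ edges,
\begin{equation*}
\pr\brk{\vN(\G)=\vN,\vM(\G)=\vM}=\frac{Z(\vN,\vM)}{\binom{N}{m}},\qquad N=\binom{n}{2},
\end{equation*}
where $Z(\vN,\vM)$ counts labelled graphs with $m$ edges whose WP decomposition matches $(\vN,\vM)$. The role of \texttt{Forge} is to express $Z(\vN,\vM)$, up to a $1+o(1)$ factor, as a product over three nearly independent construction stages: partitioning $[n]$ into the classes $\cN_0,\cN_\star,\cN_1$ of prescribed sizes; building a graph on $\cN_1$ of minimum degree $k$ with exactly $m_{11}/2$ core edges, together with $k-1$ further stubs attached to each vertex of $\cN_\star$; and finally placing the $m_{00}/2$ remaining mantle edges inside $\cN_0\cup\cN_\star$ without creating stray $1$-messages.

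Step one is therefore to prove total-variation contiguity: conditioned on $(\vN(\G),\vM(\G))=(\vN,\vM)$, the graph $\G$ is asymptotically indistinguishable from the output of \texttt{Forge} on the same input, uniformly over the window \eqref{eqthm:contig2}. Granting this, each stage contributes an explicit product of binomial coefficients and falling factorials, and Stirling's formula converts the ratio $Z(\vN,\vM)/\binom{N}{m}$ into $\exp(nF(\vN/n,\vM/n)+O(\log n))$ for a real-analytic function $F$ whose gradient vanishes precisely at the point $(pq,p(1-q),p(1-p),p^2)$ prescribed by the WP fixed-point equations \eqref{WP_messageupdate}--\eqref{WP_markupdate} and the formulas for $p,q$.

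Step two extracts a Gaussian density from this exponential formula. Taylor-expanding $F$ around its maximiser produces the quadratic form $-\tfrac n2\bck{Q^{-1}\Delta(\vN,\vM),\Delta(\vN,\vM)}$ in the exponent, with $Q$ determined by the inverse Hessian of $F$ relative to the affine constraints \eqref{eqaffine}. To upgrade this saddle-point asymptotic to a \emph{local} limit theorem with the correct polynomial prefactor $\bigl(2(\pi dn)^2\sqrt{\det Q}\bigr)^{-1}$, one applies the local CLT for sums of i.i.d.\ $\mathbb{Z}^4$-valued vectors from \cite{McD} to the natural vertex- and edge-indexed decomposition of $(\vN(\G),\vM(\G))$ provided by \texttt{Forge}. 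The regularity of $Q$ reduces to showing that the summand vector is genuinely four-dimensional, which should follow from inspection of the WP update rule together with the fact that $0<p<1$ and $0<q<1/(k-1)$.

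I expect the main technical obstacle to be the \emph{self-consistency} of \texttt{Forge}: one has to show that the graph produced by the three-stage construction does have its actual WP decomposition equal to the targeted $(\vN,\vM)$ with asymptotically the correct probability, rather than inducing unintended promotions to or demotions from the $k$-core. This will require a coupling with a configuration-model variant, together with a careful pruning argument bounding the probability that a mantle edge triggers a cascade of $1$-messages. Once this is in hand, confirming the explicit formula for $Q$ in Figure~\ref{Fig_Sage} is a mechanical covariance computation for a small family of Poisson-like sums, and the look of the entries makes it clear that the symbolic manipulation can safely be delegated to a computer algebra system.
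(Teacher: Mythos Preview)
Your plan matches the paper's approach closely: \texttt{Forge}, its self-consistency (handled in the paper via an analysis of ``flipping structures''), a counting formula, Stirling, and the local CLT from \cite{McD}, with the final matrix entries delegated to computer algebra. Two points where the paper's execution differs from your sketch are worth flagging. First, rather than total-variation contiguity, the paper shows that \texttt{Forge}'s output given success is \emph{exactly} uniform on $\Gamma_{n,m}(\vN,\vM)$: the Poisson weights on the typed degree sequence and the factorial overcounting from the configuration model cancel precisely, so the probability of any particular output depends only on $(\vN,\vM)$; this exact uniformity is what turns the success probability into a count. Second, the local CLT is not a post-hoc sharpening of a Stirling saddle-point estimate; it is used to compute the factor $u(\vn,\vm)=\pr[\hat\vm=\vm\mid\hat\vn=\vn]$---the probability that the random typed-degree sums in \texttt{Forge} hit their targets---and this factor carries its own Gaussian quadratic form (via matrices $\Sigma,L$ in the paper) which is then combined with the Kullback--Leibler terms coming from Stirling to produce $Q^{-1}$, so the local CLT contributes to the exponent as well as the prefactor.
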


\noindent
\Thm~\ref{Cor_LLT} is immediate from \Thm~\ref{Thm_LLT} by just projecting
on $n_{1}(\G)$ and $m_{11}(\G)/2$.

\subsection{Techniques, outline and further related work}
We do {\em not} prove \Thm~\ref{Thm_LLT} by analysing Warning Propagation on $\G$.
Instead, we are going to employ the seven parameters supplied by Warning Propagation in order to set up a generative process {\tt Forge} for creating a random graph with a core of a given order and size and, more specifically, with given values of $\vN,\vM$.
The proof of \Thm~\ref{Thm_LLT} is then  based on simply counting the number of graphs that {\tt Forge} can produce for given $\vN,\vM$.

In a prior paper~\cite{localcore} we used Warning Propagation to describe the {\em local} structure of the core, the mantle and the interactions between the two.
More specifically, take a random graph $\G$ with average degree $d>d_k$ and colour the vertices inside the core black and those outside white.
Then it is clear that each black vertex has at least $k$ black neighbours, while a white vertex has at most $k-1$ black neighbours.
But how are the white vertices interconnected?
Clearly a white vertex can easily have more than $k$ white neighbours.
Yet the connections between the white vertices are subject to seemingly complicated constraints.
An obvious one is that no two white vertices with precisely $k-1$ black neighbours can be adjacent.
Indeed, if we tried to get by with just the two ``types'' black and white then an infinite set of such constraints arises.
In~\cite{localcore} we showed that these local interactions can be described neatly and elegantly in terms of a 5-type branching process, with the types inspired by Warning Propagation, and established a corresponding local weak convergence result.
Thus, the offspring matrix of this 5-type branching process succinctly expresses the infinite set of constraints on the connections between the white vertices.
A similar result about local weak convergence was established in~\cite{Ibrahimi} for the $2$-core of random hypergraphs.
However, these methods do not suffice to obtain a global generative process such as {\tt Forge}.

Kim~\cite{Kim} provided a very simple generative model, the Poisson cloning model, of the {\em internal} structure of the $k$-core.
This model has been used to study properties of the $k$-core itself (see, for example, ~\cite{Nikos}).
The generative model behind {\tt Forge} can be seen as a substantial extension of the Poisson cloning model that
encompasses both the core and the mantle.
In effect, {\tt Forge} greatly facilitates the direct analysis of properties of the core, the mantle and the connections between them.
For example, it would be very easy to read results on the ``depth'' of the peeling process off the generative model.
We believe that this approach is much simpler than the direct analysis of the peeling process as performed, e.g., in \cite{AMXOR} for the hypergraph $2$-core, and that it will find future applications, e.g., in the theory of random constraint satisfaction problems.

In \Sec~\ref{Sec_Forge} we present Warning Propagation and the sampling algorithm {\tt Forge}.
In \Sec~\ref{Sec_strategy}
we outline the analysis of {\tt Forge} and the counting argument that yields the asymptotic number of graphs with a given outcome of $\vN,\vM$.
The details of that analysis follow in the remaining sections.

\subsection{Notation and preliminaries}\label{Sec_prelims}
With respect to general notation, we let $G[S]$ denote the subgraph of a graph $G=(V(G),E(G))$ induced on $S\subset V(G)$.
Moreover, the transpose of a matrix $A$ is denoted by $A^*$ and
 for reals  $a_1,\ldots,a_s$ we let  $\diag(a_1,\ldots,a_s)$ be the  $s\times s$ diagonal matrix with diagonal entries $a_1,\ldots,a_s$.

In addition to the parameters $p=p(d,k)$, which we defined as the largest fixed point of the function $\phi_{d,k}$ from (\ref{eqmain}),
and $q$ from (\ref{eq:q}) we introduce
	\begin{align}\label{eq:qbar}
	\bar q&=\bar q(d,k)=\pr\brk{\Po(dp)=k-2|\Po(dp)\leq k-2}.
	\end{align}		
The definitions of $p$ and $q$ ensure that
\begin{align}
\label{eq:qbarexp}
\bar q &=\frac{(k-1)q}{(1-p)d}.
\end{align}
\noindent 
Furthermore,
a bit of calculus reveals the following.

\begin{fact}[{\cite[Lemma 2.3.]{localcore}}]\label{fact_FPana}
Let $k\geq 3$ and $d>d_k$ and 
let $p$ be the largest fixed point of
$\phi_{d,k}$. Then 
\begin{enumerate}
 \item $p\geq\frac{k-2+\sqrt{k-2}}{d}$;
 \item $\frac{\partial}{\partial x}
 \left.\phi_{d,k}(x)\right|_{x=p}=q(k-1)=\bar q(1-p)d<1$.
\end{enumerate}
\end{fact}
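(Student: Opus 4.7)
The plan is to read both parts directly off the definition of $\phi_{d,k}$ and the maximality of $p$. Part (2) is essentially a calculation, while part (1) extracts a quadratic constraint on $\lambda:=dp$ by combining the fixed-point equation with the derivative inequality.

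For part (2), I first compute $\phi_{d,k}'(x)$. The telescoping identity $\frac{d}{d\lambda}\sum_{j=0}^{k-2}\eul^{-\lambda}\lambda^{j}/j! = -\eul^{-\lambda}\lambda^{k-2}/(k-2)!$ gives $\phi_{d,k}'(x)=d\,\pr\brk{\Po(dx)=k-2}$. Evaluating at $x=p$ and applying the Poisson recursion $\lambda\,\pr\brk{\Po(\lambda)=j-1}=j\,\pr\brk{\Po(\lambda)=j}$ with $j=k-1$, this rewrites as $(k-1)\pr\brk{\Po(dp)=k-1}/p$, which by the definition of $q$ equals $(k-1)q$. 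The alternative form $\bar q(1-p)d$ follows because the definition of $\bar q$ gives $\pr\brk{\Po(dp)=k-2}=\bar q(1-p)$ directly. The weak inequality $\phi_{d,k}'(p)\leq 1$ is then immediate: since $\phi_{d,k}(1)=\pr\brk{\Po(d)\geq k-1}<1$ and $p$ is the largest fixed point, $\phi_{d,k}(x)-x<0$ throughout $(p,1]$, forcing $\phi_{d,k}'(p)\leq 1$. To upgrade this to a strict inequality, I argue that $\phi_{d,k}'(p)=1$ would make $p$ a tangency point of the graph $y=\phi_{d,k}(x)$ with $y=x$. Because $d\mapsto\phi_{d,k}(x)$ is strictly increasing in $d$ for every fixed $x>0$, any slight decrease of $d$ would then eliminate all positive fixed points simultaneously, which would pin $d$ itself to the infimum defining $d_k$ --- contradicting $d>d_k$. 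Hence $\phi_{d,k}'(p)=(k-1)q<1$.

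For part (1), I combine $\phi_{d,k}(p)=p$ and $\phi_{d,k}'(p)\leq 1$, which read $\pr\brk{\Po(\lambda)\geq k-1}=\lambda/d$ and $d\,\pr\brk{\Po(\lambda)=k-2}\leq 1$. Multiplying and applying the Poisson recursion once more produces $(k-2)\pr\brk{\Po(\lambda)=k-1}\leq\pr\brk{\Po(\lambda)\geq k}$. Expanding the right-hand side as $\pr\brk{\Po(\lambda)=k-1}\cdot\sum_{i\geq 1}\lambda^i/\prod_{\ell=0}^{i-1}(k+\ell)$ and cancelling, the inequality becomes
\[
k-2 \;\leq\; \sum_{i\geq 1}\frac{\lambda^i}{k(k+1)\cdots(k+i-1)}.
\]
The first two terms already contribute $\lambda/k+\lambda^2/(k(k+1))$; bounding the remaining tail via a geometric comparison and solving the resulting quadratic in $\lambda$ yields $\lambda\geq k-2+\sqrt{k-2}$, i.e., $p\geq(k-2+\sqrt{k-2})/d$.

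The main obstacle is the strict inequality $\phi_{d,k}'(p)<1$ in part (2): one has to separate the tangency case, which characterises $d=d_k$, from the transverse-crossing case $d>d_k$. The cleanest route is the monotonicity argument sketched above, which pins the tangency scenario exclusively to $d=d_k$. A secondary delicacy in part (1) is selecting enough terms in the series bound to capture the quadratic $\sqrt{k-2}$ correction rather than a weaker purely linear lower bound: a naive geometric estimate that retains only the leading term gives a bound of the shape $\lambda\gtrsim k(k-2)/(k-1)$, which falls short of what is needed.
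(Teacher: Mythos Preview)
The paper does not prove this fact; it merely cites \cite[Lemma~2.3]{localcore} and remarks that ``a bit of calculus'' suffices. So there is no in-paper argument to compare against, and I will assess your proposal on its own terms.

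Your treatment of part~(2) is essentially correct. The derivative computation and the two rewritings $(k-1)q=\bar q(1-p)d$ are clean, and the weak bound $\phi_{d,k}'(p)\le 1$ follows exactly as you say. The strictness step has a small gap: you assert that a tangency at $p$ would force \emph{all} positive fixed points to vanish when $d$ is decreased, but you do not justify why $p$ is the unique positive fixed point in the tangency scenario. This does follow once one knows $dp>k-2$, i.e.\ that $p$ lies in the strictly concave region of $\phi_{d,k}$: then $\phi_{d,k}(x)-x$ has a strict local maximum of value $0$ at $p$, and the single inflection point forces $\phi_{d,k}(x)<x$ on all of $(0,1]\setminus\{p\}$. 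Your monotonicity-in-$d$ argument then goes through.

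Part~(1) has a genuine gap. From $(k-1)q\le 1$ you correctly derive
\[
  (k-2)\,\pr\brk{\Po(\lambda)=k-1}\;\le\;\pr\brk{\Po(\lambda)\ge k},
\]
but the step ``bounding the remaining tail via a geometric comparison and solving the resulting quadratic yields $\lambda\ge k-2+\sqrt{k-2}$'' cannot work. For $k=3$ the displayed inequality is equivalent to $e^{\lambda}\ge 1+\lambda+\lambda^2$, which holds for $\lambda\ge\lambda_*\approx 1.793$, strictly below $k-2+\sqrt{k-2}=2$. No estimate on the tail series can do better, because the inequality you extracted is \emph{equivalent} to $(k-1)q\le 1$, and at $d=d_k$ one has $(k-1)q=1$ with $dp=\lambda_*\approx 1.793$. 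Hence for $d$ just above $d_k$ one has $dp<2$, so the bound $dp\ge k-2+\sqrt{k-2}$ as stated actually fails near the threshold. (The weaker bound $dp>k-2$, which is all the present paper ever invokes, does hold and follows from your inequality.) So either the constant in~(1) is mis-stated here, or its proof in \cite{localcore} uses more than the fixed-point and derivative conditions you isolated; in any case, your route cannot reach the stated conclusion.
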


Throughout the paper we will frequently encounter truncated Poisson distributions.
To be precise, for real numbers $y,z>0$ we let $\Po_{\geq z}(y)$ denote the Poisson distribution $\Po(y)$ conditioned on the event that the outcome is at least $z$.
Thus,
	$$\pr\brk{\Po_{\geq z}(y)=\ell}=\frac{\vecone\cbc{\ell\geq z}y^\ell\exp(-y)}{\ell!\pr\brk{\Po(y)\geq z}}\qquad\mbox{for any integer $\ell\geq0$}.$$
The distributions $\Po_{> z}(y)$, $\Po_{\leq z}(y)$, $\Po_{< z}(y)$ are defined analogously.
We will also occasionally encounter the function
	\begin{equation}\label{eq:varphi}
	\varphi_{\ell}:[0,1]\to[0,1],\qquad 
	y\mapsto\pr\brk{\Po(y)\geq \ell-1}\qquad\qquad(\ell\geq3),
	\end{equation}
whose derivatives work out to be
 	\begin{align}\label{eq:deriv}
 	\frac{\partial}{\partial y}\varphi_\ell(y)
 	&=\frac{y^{\ell-2}}{(\ell-2)!\exp(y)},\qquad
	\frac{\partial^2}{\partial y^2}\varphi_\ell(y)
	=\frac{(\ell-y-2)y^{\ell-3}}{(\ell-2)!\exp(y)}.
	\end{align}
In particular, recalling $\phi_{d,k}$ from \eqref{eqmain}, we see that
$\phi_{d,k}(x)=\varphi_k(dx)$ for 
all $x\in [0,1]$
and
\begin{align*}
\frac{\partial^i}{\partial x^i}\phi_{d,k}(x)
=d^i\frac{\partial^i}{\partial y^i}\left.\varphi_\ell(y)
\right|_{y=d\cdot x}\qquad\qquad(i\geq 0,\ k\geq 3).
\end{align*}

The following standard result shows that joint convergence to a family of independent Poisson variables can be established by way of calculating joint factorial moments.

\begin{theorem}[{\cite{BB}}]\label{Cor_mom}
Let $(X_n^{(i)})_{i\geq 1}$ be a family of random variables. If
$\lambda_i$, $i\geq 0$ are such that for all $r_1,\ldots,r_m\geq 0$, 
\begin{equation*}
\lim_{n\to\infty}\Erw\brk{(X_n^{(1)})_{r_1}\cdots (X_n^{(m)})_{r_m}}=\lambda_1^{r_1}\cdots\lambda_m^{r_m}, 
\end{equation*}
then $(X_n^{(i)})_{i\geq 1} \to (Z_i)_{i\geq 1}$ in distribution, where $Z_i$ are independent with distribution $\Po(\lambda_i)$.
\end{theorem}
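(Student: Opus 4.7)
The plan is to establish convergence of all finite-dimensional marginals, which is equivalent to convergence of the infinite family under the product topology on $\NN^\infty$. Fix $m\geq 1$ and consider the vector $(X_n^{(1)}, \ldots, X_n^{(m)})$; it suffices to show $(X_n^{(1)}, \ldots, X_n^{(m)}) \to (Z_1, \ldots, Z_m)$ in distribution. The strategy proceeds in three stages: establish tightness and extract subsequential limits via Prohorov, identify the factorial moments of any such limit, and deduce the distribution from the probability generating function.

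Taking $r_j = 1$ and $r_\ell = 0$ for $\ell \neq j$ in the hypothesis gives $\Erw\brk{X_n^{(j)}} \to \lambda_j$, so each marginal is tight on $\NN$ by Markov's inequality, and hence so is the joint law on $\NN^m$. By Prohorov's theorem every subsequence admits a weakly convergent further subsequence with $\NN^m$-valued limit $(Y_1, \ldots, Y_m)$. Along such a subsequence I would claim $\Erw\brk{(Y_1)_{r_1}\cdots (Y_m)_{r_m}} = \lambda_1^{r_1}\cdots \lambda_m^{r_m}$ for every choice of $r_i$. The portmanteau theorem applied to the non-negative function $(y_1, \ldots, y_m)\mapsto \prod_i (y_i)_{r_i}$ (continuous on the discrete space $\NN^m$) yields Fatou's inequality $\Erw\brk{\prod_i (Y_i)_{r_i}} \leq \liminf_n \Erw\brk{\prod_i (X_n^{(i)})_{r_i}} = \prod_i \lambda_i^{r_i}$. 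For the matching lower bound I would use uniform integrability of $W_n := \prod_i (X_n^{(i)})_{r_i}$: since $(X)_r \leq X^r$ and $X^s$ is a non-negative linear combination of $(X)_0, \ldots, (X)_s$ via Stirling numbers of the second kind, the random variable $W_n^2 \leq \prod_i (X_n^{(i)})^{2r_i}$ expands into a finite non-negative combination of joint factorial moments $\Erw\brk{\prod_i (X_n^{(i)})_{s_i}}$ with $s_i \leq 2r_i$, each of which converges by hypothesis. Hence $\sup_n \Erw\brk{W_n^2} < \infty$, uniform integrability follows, and $\Erw\brk{W_n} \to \Erw\brk{\prod_i (Y_i)_{r_i}}$.

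The final step identifies the distribution from the factorial moments via the probability generating function. From the identity $z^X = \sum_{r\geq 0}\binom{X}{r}(z-1)^r$ valid for every $X\in \NN$, the joint PGF of $(Y_1, \ldots, Y_m)$ expands formally as
\begin{equation*}
\Erw\brk{\prod_i z_i^{Y_i}} = \sum_{r_1, \ldots, r_m \geq 0} \frac{\Erw\brk{\prod_i (Y_i)_{r_i}}}{\prod_i r_i!}\prod_i(z_i - 1)^{r_i}.
\end{equation*}
For $z_i \in [0,1]$ we have $|z_i - 1| \leq 1$, so the absolute series is dominated by $\sum_{r_1, \ldots, r_m} \prod_i \lambda_i^{r_i}/\prod_i r_i! = \prod_i e^{\lambda_i}<\infty$, and Fubini yields $\Erw\brk{\prod_i z_i^{Y_i}} = \prod_i e^{\lambda_i(z_i - 1)}$, which is the PGF of independent $\Po(\lambda_i)$ random variables. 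Since the PGF on $[0,1]^m$ uniquely determines the law of an $\NN^m$-valued random vector, $(Y_1, \ldots, Y_m)$ has the same distribution as $(Z_1, \ldots, Z_m)$. As every subsequential limit agrees, the full sequence converges, and since $m$ was arbitrary the whole family converges jointly. The main obstacle is the uniform-integrability step that upgrades Fatou's inequality into an equality of factorial moments, which rests on the observation that convergence of all joint factorial moments forces uniform boundedness of all joint polynomial moments via Stirling-number expansions.
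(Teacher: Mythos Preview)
The paper does not prove this statement at all: it is quoted as a standard result from \Bollobas' textbook~\cite{BB} and used as a black box in \Sec~\ref{sec_cycl}. Your argument is a correct self-contained proof. The tightness-plus-subsequence reduction, the uniform integrability via the Stirling-number expansion of powers into falling factorials, and the PGF identification all go through cleanly; the Fatou step is in fact redundant once you have uniform integrability, but it does no harm. One minor remark: invoking the portmanteau theorem for the unbounded function $\prod_i (y_i)_{r_i}$ is slightly informal, since portmanteau is usually stated for bounded continuous test functions, but the inequality you want follows by truncation and is standard. There is nothing to compare against in the paper itself.
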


\noindent
Furthermore, in Section \ref{section_LLTproofs} we will need the following local limit theorem for sums of independent random variables.

\begin{theorem}[{\cite[Theorem 2.1]{McD}}]\label{Thm_McD}
Let $\ell\geq 1$. For $n\geq 1$ let $X_{1,n},\ldots,X_{n,n}$ be a sequence of independent $\mathbb N^\ell$-valued random variables.
Let $\vecone_r\in\mathbb N^\ell$ denote the vector whose $r$-th component is $1$ 
and whose other components are $0$.
Assume that there is a constant $c>0$ such that 
for all $r\leq \ell$ and $n\geq 1$,
\begin{equation*}
\max_{\vec k\in \mathbb N^\ell}\min\left\{\pr\br{X_{i,n}=\vec k},\pr\br{X_{i,n}=\vec k+\vecone_r}\right\}\geq c.
\end{equation*}
Then for $S_n=\sum_{i=1}^nX_{i,n}$ the following holds.
Suppose that there is a 
vector $\vec a$ in $\mathbb R^\ell$ such that 
$n^{-1/2}(S_n-\vec a)$ converges in distribution to 
a multivariate normal distribution with mean $0$ and 
covariance matrix $D$. Then uniformly for all 
vectors $\vec k\in\mathbb N^\ell$,
\begin{align*}
\pr\br{S_n=\vec k}=\frac{1}{\sqrt{(2\pi n)^\ell\det D}}\exp\br{-\frac n2
\scal{D^{-1}\br{\frac{\vec k}n - \vec a}}{\br{\frac{\vec k}n - \vec a}}}+\mbox{o}\br{n^{-\ell/2}}.
\end{align*}
\end{theorem}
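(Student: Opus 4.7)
My plan is to prove the local limit theorem by a direct counting argument: for each admissible signature $(\vN,\vM)$ satisfying (\ref{eqthm:contig2}), I count the labelled graphs in $\G(n,m)$ whose WP-signature equals $(\vN,\vM)$ and divide by $\binom{\binom{n}{2}}{m}$. The central tool is the generative algorithm {\tt Forge} introduced in Section~\ref{Sec_Forge}: for each target $(\vN,\vM)$, {\tt Forge} constructs a random graph with that signature by an explicit recipe (select the three WP-classes $\cN_0,\cN_\star,\cN_1$ in the vertex set, assign local degree statistics compatible with the WP update rules, and pair up half-edges across the appropriate classes). A contiguity argument shows that, conditioned on producing a simple graph, the output distribution of {\tt Forge} is asymptotically uniform over simple graphs with signature $(\vN,\vM)$, so that the probability in question is asymptotically the reciprocal normalisation of the number of distinct {\tt Forge}-outputs.

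The {\tt Forge}-construction yields a factorised count: a multinomial $\binom{n}{n_0,n_\star,n_1}$ for the vertex classes; a sum over in-core degree sequences with every core vertex having degree $\geq k$ summing to $m_{11}$; sums over boundary-degree sequences on $\cN_\star\cup\cN_0$ realising $m_{10}=m_{01}$ and $m_{00}$ subject to the WP caps (exactly $k-1$ on $\cN_\star$, at most $k-2$ on $\cN_0$); and three configuration-model pairings on the respective half-edge families, corrected by the $\Theta(1)$ probability of producing a simple graph. The evenness hypothesis on $m_{11}$ is precisely the parity required by the pairing on core half-edges, while evenness of $m_{00}$ follows from (\ref{eqaffine}). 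Substituting the truncated-Poisson formulas $\Po_{\geq k}$ and $\Po_{\leq k-2}$ for the internal degree sums reduces the count to a product of factorials times an $\Theta(1)$ simplicity correction.

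Inserting Stirling's formula into each factorial yields an expression of the form $C_n\exp(n\Psi(\vN/n,\vM/m))$ with $C_n=\Theta(n^{-2})$ and an explicit real-analytic function $\Psi$ on a neighbourhood of the centring point $(pq,p(1-q),p(1-p),p^2)$. By the definition of $p$ as the largest fixed point of $\phi_{d,k}$ together with (\ref{eq:q}) and (\ref{eq:qbar}), this centring point is a critical point of $\Psi$, so a second-order Taylor expansion eliminates the linear term and leaves $-\tfrac n2\scal{H\Delta(\vN,\vM)}{\Delta(\vN,\vM)}$ for the Hessian $H$ of $-\Psi$. Matching $H=Q^{-1}$ and reassembling the prefactor $(2\pi d n)^{-2}(\det Q)^{-1/2}$ (combining $C_n$ with the Jacobian of the change of variables $\vM/m\mapsto\vM/n$, using $m\sim dn/2$) produces the desired asymptotic.

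The principal obstacle is the explicit symbolic identification of $H$ with $Q^{-1}$ and the verification that $Q$ is regular. Regularity reduces to the strict inequality $1-(k-1)q>0$ supplied by Fact~\ref{fact_FPana}(ii), which accounts for the scalar prefactor $(1-(k-1)q)^{-2}$ in (\ref{eqQmatrix}), together with strict positivity of the truncated-Poisson variances; these imply positive-definiteness of the Hessian of $-\Psi$. Extracting the closed forms for $Q_{ij}$ of Figure~\ref{Fig_Sage} from $H$ is a lengthy but mechanical computation that systematically exploits the fixed-point identity $\phi_{d,k}(p)=p$, the derivative formula $\phi_{d,k}'(p)=(k-1)q$ from Fact~\ref{fact_FPana}(ii), and the conversion (\ref{eq:qbarexp}) between $q$ and $\bar q$; this is where the structural simplicity of {\tt Forge} pays off, since every term of $\Psi$ is an elementary function of $(p,q,\bar q)$. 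A secondary technical point is uniform control of the Stirling remainders and of the TV-error in the {\tt Forge} contiguity across the full window $|\cdot|\leq\xi\sqrt n$, producing the $o(n^{-2})$ error in the statement.
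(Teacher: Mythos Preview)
Your proposal does not address the stated theorem. Theorem~\ref{Thm_McD} is the Davis--McDonald local limit theorem for sums of independent $\mathbb N^\ell$-valued random variables; it is a general probabilistic result cited from~\cite{McD} and is not proved in the paper at all. The paper simply quotes it in the preliminaries and later invokes it as a black box (in the proof of Lemma~\ref{Prop_degrees}) to obtain the asymptotics of $u(\vn,\vm)=\pr[\hvm=\vm\mid\hvn=\vn]$.

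What you have written is instead a proof sketch for Theorem~\ref{Thm_LLT}, the paper's main local limit theorem for the $k$-core parameters $(\vN(\G),\vM(\G))$. Your outline for \emph{that} result is broadly in line with the paper's strategy (count via {\tt Forge}, apply Stirling, Taylor-expand to second order, identify the Hessian with $Q^{-1}$), though the paper organises the computation somewhat differently: it separates out the conditional probability $u(\vn,\vm)$ and handles it via Theorem~\ref{Thm_McD} rather than absorbing everything into a single Stirling expansion. But none of this bears on Theorem~\ref{Thm_McD} itself, whose proof would require Fourier-analytic or characteristic-function arguments on lattice-valued sums and has nothing to do with $k$-cores, Warning Propagation, or {\tt Forge}.
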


Additionally, we need a few basic combinatorial counting results.
We recall that for an integer $\ell$ the number of perfect matchings of the complete graph of order $2\ell$ is equal to
	\begin{align}\label{eq!!}
	(2\ell-1)!!=\frac{(2\ell)!}{2^\ell \ell!}\enspace.
	\end{align}
	
\noindent
Further,  for $s,t\in\NN$ let $\stir(s,t)$ denote the Stirling number of the second kind.

\begin{theorem}[{\cite[Theorem 3]{stirling_new}}]\label{thm_stir}
For all $s,t\in\NN$ we have
$
\stir(s,t)\leq \frac 12 t^{s-t} \bink{s}{t}.
$
\end{theorem}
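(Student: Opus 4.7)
The plan is to prove the inequality by double counting surjections from $[s]$ to $[t]$. I will focus on the case $s>t$, since for $s<t$ one has $\stir(s,t)=0$ and the case $s=t$ can be checked directly (or is implicitly excluded in the source). A surjection $f:[s]\to[t]$ is the same datum as an ordered partition of $[s]$ into $t$ non-empty fibres $f^{-1}(1),\dots,f^{-1}(t)$, so the number of surjections equals $t!\,\stir(s,t)$.

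I plan to overcount the same set by means of triples $(R,\sigma,g)$, where $R\subseteq[s]$ with $|R|=t$ is intended to pick out one representative from each fibre, $\sigma:R\to[t]$ is a bijection specifying the image of each representative, and $g:[s]\setminus R\to[t]$ is an arbitrary function specifying where the remaining $s-t$ elements are sent. Each such triple determines a surjection, and conversely, every surjection $f$ arises from exactly $\prod_{i=1}^{t}|f^{-1}(i)|$ triples, since $R$ ranges over all systems of distinct representatives of the fibres while $\sigma$ and $g$ are then forced by $f$. Since the total number of triples is $\binom{s}{t}\,t!\,t^{s-t}$, this yields
\begin{equation*}
\sum_{f\text{ surjection}}\prod_{i=1}^{t}|f^{-1}(i)|\;=\;\binom{s}{t}\,t!\,t^{s-t}.
\end{equation*}

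The decisive observation is that for $s>t$ the pigeonhole principle forces at least one fibre of every surjection to have size at least two, hence $\prod_{i=1}^{t}|f^{-1}(i)|\geq 2$ for every term on the left. Therefore $2\cdot t!\,\stir(s,t)\leq\binom{s}{t}\,t!\,t^{s-t}$, and dividing by $2\,t!$ yields the claim. The only delicate point is this lower bound of $2$ on the overcounting factor; without it one only obtains the weaker inequality $\stir(s,t)\leq\binom{s}{t}t^{s-t}$, and the improvement by a factor of $\tfrac12$ is precisely what the pigeonhole argument buys.
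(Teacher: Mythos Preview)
Your proof is correct. The paper does not supply its own proof of this statement; it is quoted from Rennie and Dobson~\cite{stirling_new} and used as a black box (specifically, to bound the quantity $C$ in Section~\ref{Sec_claim_fssmall}). So there is no argument in the paper to compare against.

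Two minor remarks. First, your caveat about $s=t$ is apt: the inequality is false there since $\stir(t,t)=1>\tfrac12$, and indeed the original source states the bound only for $1\le t\le s-1$. The paper's blanket ``for all $s,t\in\NN$'' is slightly loose, but in its application the arguments of $\stir$ satisfy the strict inequality (one has $\stir(b-i-c,b-i-\ell)$ with $\ell\ge 2c>c$), so this causes no harm. Second, your argument is essentially the one given by Rennie and Dobson themselves: count surjections via ordered systems of distinct representatives and observe that each surjection is hit at least twice when $s>t$.
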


\noindent
We need the following upper bound on the number of labelled forests that comes in terms of the Stirling number.

\begin{theorem}[{\cite[\Cor~3.1]{Peter}}]\label{thm_peter}
The number of labelled forests on $v$ vertices with exactly $\ell$ leaves and exactly $c$ components 
is upper bounded by
	$$\frac{v!}{\ell!}\bink{v-1}{c-1}\stir(v-c,v-\ell).$$
\end{theorem}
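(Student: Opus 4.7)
The plan is to prove the bound by exhibiting an injection from labeled forests on $[v]$ with exactly $c$ components and $\ell$ leaves into a Cartesian product of three combinatorial sets whose cardinalities are the three factors on the right-hand side. I first interpret the factors: $v!/\ell!=\bink{v}{\ell}(v-\ell)!$ counts a choice of an $\ell$-element leaf-set together with a linear ordering of the remaining $v-\ell$ vertices; $\bink{v-1}{c-1}$ counts compositions of $v$ into $c$ positive parts; and $\stir(v-c,v-\ell)$ counts set partitions of $[v-c]$ into $v-\ell$ non-empty blocks.

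The encoding would then be built from a canonical rooting and traversal. Given a forest $F$ with non-leaf set $I$ of size $v-\ell$, I would root each component at its smallest-labelled vertex and perform a canonical depth-first traversal (visiting children in increasing label order and components in increasing root order) to obtain a linear ordering $\sigma$ of $[v]$. The sequence of component sizes along $\sigma$ is the composition in $\bink{v-1}{c-1}$; the restriction of $\sigma$ to $I$ (together with $I$ itself, determined as the complement of the leaf-set) is the datum counted by $v!/\ell!$. Finally, each of the $v-c$ non-root vertices $u$ has a unique parent $\pi(u)\in I$, so the indexing of non-roots $1,\dots,v-c$ by their DFS order turns $\pi$ into a surjection from $[v-c]$ onto a subset of $I$, inducing a partition of $[v-c]$ into at most $v-\ell$ non-empty blocks, contributing at most $\stir(v-c,v-\ell)$ options.

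Injectivity would follow by describing the inverse explicitly: the composition splits $\sigma$ into consecutive component-blocks whose first vertex is the root; the partition, combined with the ordering of $I$, reconstructs $\pi$; and the edges of $F$ are then exactly $\{u,\pi(u)\}$ for non-root $u$. The main obstacle is handling degenerate components in which every vertex is a leaf, namely isolated vertices and $K_2$'s, since for these the parent would fall outside $I$ and the partition slot cannot accommodate the edge. My plan is to read off such components directly from the composition—a block of size $1$ is an isolated vertex, and a block of size $2$ contributes its single edge automatically—so that the partition only needs to encode non-degenerate parent pointers. Because Theorem~\ref{thm_peter} is an inequality and $\stir(v-c,v-\ell)$ is already loose compared with the tighter Prüfer count $\stir(v-2,v-\ell)$ in the tree case $c=1$, there is ample slack to absorb this bookkeeping without disturbing the claimed upper bound.
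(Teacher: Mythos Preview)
The paper does not prove this statement; it is quoted from~\cite{Peter} as a black box, so there is no in-paper argument to compare against. Evaluating your sketch on its own merits, there is a genuine gap.

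Your central claim that every non-root $u$ has $\pi(u)\in I$ is false whenever a component's root is a leaf, and this is \emph{not} confined to isolated vertices and $K_2$'s. In the path $1\text{--}2\text{--}3$, your rule roots at $1$, which has degree~$1$ and hence is a leaf; then $\pi(2)=1\notin I$. Any component of size $\ge 2$ whose smallest-labelled vertex has degree~$1$ exhibits the same failure. Worse, the number of fibres of $\pi$ equals the number of vertices with at least one child, which is $(v-\ell)$ \emph{plus} the number of such leaf-roots: every non-leaf has a child, and so does every degree-$1$ root. Hence the induced partition of $[v-c]$ can have \emph{more} than $v-\ell$ blocks, not ``at most'' $v-\ell$; either way the image does not lie in the set counted by $\stir(v-c,v-\ell)$, which counts partitions into exactly $v-\ell$ blocks.

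Your proposed patch of reading off size-$1$ and size-$2$ blocks from the composition does not address this, since leaf-roots arise in components of every size. A salvage would be to root each component of size $\ge 3$ at its smallest-labelled \emph{non-leaf}; then $\pi$ genuinely takes values in $I$ and the block count is exactly $v-\ell$ on those components, leaving only $K_2$'s to treat separately. Even so, your reconstruction is underspecified: from $\sigma|_I$, the composition, and an unlabelled partition of $[v-c]$ you must recover both the positions of the leaves in the full DFS order $\sigma$ and the assignment of blocks to specific parents in $I$, neither of which your recorded data determines.
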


\noindent
The entropy of a probability distribution $\rho$ on a finite set $\Omega\neq\emptyset$ is defined as 
	\begin{align}\label{eqHDef}
	H(\rho)=-\sum_{\omega\in\Omega}\rho(\omega)\ln(\rho(\omega)).
     \end{align}
Further, we recall that for two probability distributions $\rho,\rho'$ on the  same finite set $\Omega\neq\emptyset$ the Kullback-Leibler 
divergence is defined as
	\begin{align}\label{eqKLDef}
	\KL{\rho}{\rho'}=\sum_{\omega\in\Omega}\rho(\omega)\ln{\frac{\rho(\omega)}{\rho'(\omega)}},
	\end{align} 
with the convention that $0\ln 0=0\ln\frac00=0$ and $\KL{\rho}{\rho'}=\infty$ if there is $\omega\in\Omega$ such that 
	$\rho(\omega)=0<\rho'(\omega)$.
The derivatives of a generic summand on the right hand side of (\ref{eqKLDef}) work out to be
\begin{align}\label{eqDiffKL}
 \frac{\partial}{\partial x} x\ln{\frac xy}
 &= 1+\ln{\frac{x}{y}} ,& 
 \frac{\partial^2}{\partial x^2} x\ln{\frac xy}
 &=\frac 1x.
\end{align}

\bigskip\noindent
{\bf\em From here on we tacitly assume that $k\geq3$ and $d>d_k$. We continue to use the notation from \Sec s~\ref{Sec_LLT} and~\ref{Sec_prelims} throughout the paper.}

\section{Core forging}\label{Sec_Forge}

\noindent
The key insight of the present paper is that the extra information provided by the Warning Propagation algorithm can easily be turned into a generative process for creating random graphs with a core of a given order and size (under certain reasonable assumptions).
To set up this generative process, we need a few further parameters: let
	\begin{align}
	\mu_{00}&=(1-p)^2,&\mu_{01}&=\mu_{10}=p(1-p),&\mu_{11}&=p^2,\nonumber\\
		\nu_0&=1-p,
	&\nu_\star&=pq,
	&\nu_1&=p(1-q)\nonumber\\
	\vnu&= (\nu_0,\nu_\star,\nu_1),&\vmu&=(\mu_{00},\mu_{01},\mu_{10},\mu_{11}).
	&	 \label{eq_bar}
	\end{align}
In light of \Thm~\ref{Thm_LLT} the (intended) semantics of $\vnu,\vmu$ is clear: $\nu_z$ is going to emerge as the expectation of $n_z(\G)/n$ for $z\in\{0,1,\star\}$ and $\mu_{yz}$ as that of $m_{yz}(\G)/(2m)$ for $y,z\in\{0,1\}$.

Further, let us write $d_G(v)$ for the degree of vertex $v$ in a graph $G$ and let $d_{G,ab}(v)$
be the number of vertices $w\in\partial_Gv$ such that $\mu_{w\to v}(G)=a$ and $\mu_{v\to w}(G)=b$ for $a,b\in\{0,1\}$.
Then it is immediate from the definitions (\ref{WP_messageupdate}), (\ref{WP_markupdate}) of the Warning Propagation marks and messages that the sets $\cN_0(G),\cN_\star(G),\cN_1(G)$ can be characterised in terms of the degrees $d_{G,ab}$ as follows.

\begin{fact}\label{Fact_WP}
Let $G$ be a graph.
\begin{enumerate}
\item $v\in\cN_0(G)$ iff $d_{G,10}(v)\leq k-2$ and $d_{G,11}(v)=d_{G,01}(v)=0$.
\item $v\in\cN_\star(G)$ iff $d_{G,10}(v)=k-1$ and $d_{G,11}(v)=d_{G,00}(v)=0$.
\item $v\in\cN_1(G)$ iff $d_{G,11}(v)\geq k$ and $d_{G,10}(v)=d_{G,00}(v)=0$.
\end{enumerate}
\end{fact}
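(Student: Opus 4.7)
The plan is to read off all three equivalences directly from the fixed-point version of the Warning Propagation update rule~\eqref{WP_messageupdate}. Writing $s(v)=\sum_{u\in\partial v}\mu_{u\to v}(G)$ for the total number of incoming $1$-messages at $v$, observe that $s(v)=d_{G,10}(v)+d_{G,11}(v)$ by the definition of $d_{G,ab}$, and that by definition $\cN_0(G), \cN_\star(G), \cN_1(G)$ correspond precisely to $s(v)\le k-2$, $s(v)=k-1$, $s(v)\ge k$. So what needs to be shown is that membership in each $\cN_\bullet(G)$ forces the claimed pattern of vanishing degrees.

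The key observation, obtained by passing to the limit $t\to\infty$ in~\eqref{WP_messageupdate}, is that for any neighbour $w$ of $v$,
\[
\mu_{v\to w}(G)=\vecone\cbc{s(v)-\mu_{w\to v}(G)\ge k-1}.
\]
First I would dispatch the case $s(v)\le k-2$: then $s(v)-\mu_{w\to v}(G)\le k-2<k-1$ for every $w$, so $\mu_{v\to w}(G)=0$ uniformly; hence $d_{G,01}(v)=d_{G,11}(v)=0$ and $d_{G,10}(v)=s(v)\le k-2$. Next, for $s(v)\ge k$ the inequality $s(v)-\mu_{w\to v}(G)\ge k-1$ holds regardless of $\mu_{w\to v}(G)$, so all outgoing messages are $1$, yielding $d_{G,00}(v)=d_{G,10}(v)=0$ and $d_{G,11}(v)=s(v)\ge k$.

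The only slightly delicate case is the boundary $s(v)=k-1$: here the condition $s(v)-\mu_{w\to v}(G)\ge k-1$ is equivalent to $\mu_{w\to v}(G)=0$, so the outgoing message is exactly the complement of the incoming one. This forces $d_{G,00}(v)=d_{G,11}(v)=0$, and consequently $d_{G,10}(v)=s(v)=k-1$. The converse directions are immediate: given the degree conditions in any of the three bullets, one simply reads off $s(v)=d_{G,10}(v)+d_{G,11}(v)$ and checks it lies in the stipulated range.

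There is no real obstacle in this argument, which is why the statement is recorded as a Fact rather than a Lemma; the only subtlety is the interplay at $s(v)=k-1$ between the direction of an edge and the message travelling along it, captured cleanly by the identity $\mu_{v\to w}(G)=1-\mu_{w\to v}(G)$ in that case.
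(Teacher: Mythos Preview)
Your argument is correct and is exactly the unfolding of the definitions that the paper has in mind; the paper states the Fact without proof, remarking only that it is ``immediate from the definitions~\eqref{WP_messageupdate}, \eqref{WP_markupdate}'', and your write-up supplies precisely those details via the fixed-point identity $\mu_{v\to w}(G)=\vecone\{s(v)-\mu_{w\to v}(G)\ge k-1\}$.
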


\noindent
Finally, introducing
	\begin{align}\label{eqlambda}
	 \lambda_{00} =\lambda_{01} &= d(1-p),& \lambda_{10} = \lambda_{11} &= dp,
	 \end{align}
we will see that the parameters $\lambda_{ab}$ govern the distributions of the degrees $d_{\G,ab}(v)$, subject to the conditions listed in Fact~\ref{Fact_WP}.
	
We can now  describe the randomised algorithm {\tt Forge} that 
generates a graph $\hat\G$ along with a set of `supposed' Warning Propagation  messages $\hat\vmu$, see Figure~\ref{Fig_Forge}.
In the first step {\tt Forge} randomly assigns each vertex a type $0,\star,1$ independently according to the distribution $\vnu$.
The second step generates a sequence $(\hat d_{ab}(v))_{a,b,v}$ of `pseudo-degrees' by independently
sampling from the conditional Poisson distributions with parameters $\lambda_{ab}$.
Of course, in order to ultimately generate a graph with $m$ edges it had better be the case that the total degree sum come to $2m$, which step (3) checks.
In addition, we require that the total $00$ and $11$-degree sums be even and that $\hat m_{10}=\hat m_{01}$.
Hence, if $\hat m_{00},\hat m_{01},\hat m_{10},\hat m_{11}$ fail to satisfy any of the conditions from~(\ref{eqaffine}), then the algorithm aborts.
Since the $\hat m_{ab}$ are sums of independent random variables, we verify easily that the success probability of step (3) is $\Theta(n^{-1})$.

The next two steps of {\tt Forge} use the $(\hat d_{ab}(v))_{a,b,v}$ to generate a random graph from an enhanced version of
the configuration model of graphs with given degree distributions.
More precisely, for each vertex $v$ we create $\hat d_{ab}(v)$ {half-edges} of type $ab$ for every $a,b\in\{0,1\}$.
Then we create a random matching of the half-edges that respects the types.
That is, a half-edge of type $11$ has to be matched to another one of type $11$, a half-edge of type $00$ gets matched to another $00$ half-edge  and the $10$ half-edges get matched to the $01$ ones.
The conditions on $\hat m_{00},\ldots,\hat m_{11}$ from step (3) guarantee that such a matching exists.
We check right away whether the resulting graph $\hat\G$ is simple (i.e.\ contains no loops or multiple edges) and abort if it is not.

Step (6) sets up pseudo-messages $\hat\vmu_{v\to w}\in\{0,1\}$ for every pair $(v,w)$.
These reflect the intuition that guided  the construction of the graph.
That is, we set $\hat\vmu_{v\to w}$ to the value that we believe the actual Warning Propagation messages $\mu_{v\to w}(\hat\G)$ ought to take.
The final step of the algorithm checks whether the actual Warning Propagation on $\hat\G$ meet these expectations.
If $\hat\vmu_{v\to w}(\hat\G)\neq\mu_{v\to w}(\hat\G)$ for some vertex pair $v,w$, the algorithm aborts.
Otherwise it outputs $\hat\G$.

The following theorem shows that the success probability of {\tt Forge} is not too small and that given success the output distribution is close to the \Erdos-\Renyi\ random graph in total variation.

\begin{theorem}\label{thm:contig}
If $k\ge 3$ and $d>d_k$, then the success probability of\: {\tt Forge}$(n,m)$
is $\Omega(n^{-1})$ and 
	the total variation distance of $\G$ and $\hat\G$ given success is $o(1)$.
\end{theorem}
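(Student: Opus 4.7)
The theorem splits cleanly into two claims: a lower bound $\Omega(n^{-1})$ on the success probability, and an identification of the conditional law of $\hat\G$. I would prove the two assertions in that order, since the total variation bound is most naturally expressed as a ratio between the probability mass of a typical graph under $\hat\G \mid \mbox{success}$ and under $\G(n,m)$.

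\textbf{Success probability.} I would handle the three non-trivial checks (Steps 3, 5, 7) separately and multiply the resulting probabilities. For Step 3 the vector $(\hat m_{00},\hat m_{01},\hat m_{10},\hat m_{11})$ is a sum of $n$ independent random vectors with finite exponential moments (truncated Poisson), and the parameters $\vnu$ and $\vlambda$ from (\ref{eq_bar})--(\ref{eqlambda}) are calibrated so that $\Erw[\hat m_{ab}] = 2m\mu_{ab}+O(1)$. The required event is the intersection of two linear equalities $(\sum_{ab}\hat m_{ab}=2m$ and $\hat m_{10}=\hat m_{01})$ with two parity conditions. Each parity holds with probability $\tfrac12+o(1)$ and, by a two-dimensional local CLT (Theorem~\ref{Thm_McD} applied to the relevant projection), the two linear conditions contribute $\Theta(n^{-1})$, so Step 3 succeeds with probability $\Theta(n^{-1})$. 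For Step 5, conditional on the degree sequence the matching is the configuration model, whose bounded-moment degree distribution yields $\Omega(1)$ simplicity probability via the standard factorial-moment calculation for loops and multi-edges through Theorem~\ref{Cor_mom}.

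\textbf{Warning Propagation consistency.} Step 7, which checks that the pseudo-messages $\hat\vmu$ equal the true messages $\mu(\hat\G)$, is the main obstacle. By Fact~\ref{Fact_WP}, the degree profile at every vertex is locally consistent with its assigned type, so one direction of the fixed-point equation holds automatically; what must be ruled out is a propagating cascade of downward flips $1\to 0$ in the actual WP iteration on $\hat\G$. The plan is to observe that the subgraph induced on $\cN_1$ together with its $11$-edges has minimum degree $\ge k$ by construction, so it is contained in the $k$-core of $\hat\G$ and its vertices' messages are stable at $1$. Conversely a vertex in $\cN_\star \cup \cN_0$ would send a spurious $1$ only if a chain of such flips reached it; by the contraction property $q(k-1)=\bar q(1-p)d<1$ from Fact~\ref{fact_FPana}(ii), a branching-process / first-moment argument on the mantle half-edges bounds the expected length of such chains uniformly, giving constant probability that the assigned WP structure is truly a fixed point. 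Combining Steps 3, 5, 7 yields the success probability bound $\Omega(n^{-1})$.

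\textbf{Total variation.} For the contiguity statement, I would fix a simple graph $G$ with $n$ vertices and $m$ edges and compute $\pr[\hat\G=G\mid\mbox{success}]$ directly from the construction. Steps 1--2 produce a type assignment and pseudo-degree sequence whose probability depends only on $(\vN(G),\vM(G))$ through a product of independent (truncated) Poisson point probabilities; Step 4 picks a uniformly random matching consistent with these types. A direct count shows that, writing $f(\vN,\vM)$ for the exponential-linear factor coming from Steps 1--2 and 4,
\[
\pr[\hat\G=G\mid \mbox{success}]
= \frac{f(\vN(G),\vM(G))}{\pr[\mbox{success}]}\cdot \mathbf{1}\{G\ \text{has WP structure }(\vN(G),\vM(G))\}.
\]
Comparing with $\pr[\G=G]=\binom{\binom{n}{2}}{m}^{-1}$, the ratio turns out to depend on $G$ only through $(\vN(G),\vM(G))$, so both measures, conditioned on a fixed pair $(\vN,\vM)$, induce the \emph{same} distribution on graphs. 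Therefore $\dTV(\hat\G,\G)$ equals the total variation distance of the pushforwards of the two laws under $G\mapsto (\vN(G),\vM(G))$. Both pushforwards concentrate on a window of radius $O(\sqrt n)$ around $(n\vnu,2m\vmu)$ by the CLT-type inputs above, and the $\Omega(n^{-1})$ lower bound on the Forge success probability is matched asymptotically by the inverse probability of that window under $\G$, so the two pushforwards agree up to $o(1)$. This gives $\dTV(\hat\G,\G\mid\mbox{success}) = o(1)$, completing the proof.
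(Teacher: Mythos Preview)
Your overall decomposition---success probability via Steps~3, 5, 7, then total variation via conditional uniformity and a pushforward comparison---is the paper's strategy. The Step~3 and Step~5 estimates are routine and your sketches are fine. The argument has genuine gaps in Step~7 and in the final total-variation step.

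\textbf{Step 7 (the main gap).} You correctly note that $\hat\vmu$ is a Warning Propagation fixed point and that $\hat\vmu \le \mu(\hat\G)$ (this is Fact~\ref{fact_upperbound}), so the task is to rule out ``spurious $1$s'' among edges with $\hat\vmu_{v\to w}=0$. (Your sentence about ruling out ``downward flips $1\to 0$'' is backwards: the WP iteration \emph{does} flip downward from all-$1$, and one must show it flips \emph{all the way down} to $\hat\vmu$; the obstruction is a set of edges stuck at $1$.) The branching-process argument you propose, using the contraction $q(k-1)<1$, only shows that a \emph{fixed} vertex supports a spurious-$1$ structure with probability $o(1)$, hence by Markov that no such structure spans $\Omega(n)$ vertices w.h.p. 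This is precisely Lemma~\ref{claim_flip} in the paper. It does \emph{not} yield a constant lower bound on the probability that no such structure exists anywhere, because the expected number of minimal obstructions is $\Theta(1)$, not $o(1)$. The paper's argument is substantially more delicate: it introduces minimal ``flipping structures'' and proves (\Prop~\ref{Prop_fs}) that each one is either a directed cycle inside $\hat\cN_\star$, or lies entirely inside $\hat\cN_0$. A separate first-moment analysis (Lemma~\ref{claim_fssmall}) shows that within $\hat\cN_0$ the only structures that survive are undirected cycles in the set $\hat\cN_+=\{v\in\hat\cN_0:\hat d_{10}(v)=k-2\}$. The counts of these two families of ``forbidden cycles'' are then shown to be asymptotically independent Poissons with combined mean $-\tfrac32\ln(1-(k-1)q)$, and together with the loop/multi-edge Poissons they produce the explicit constant $\zeta$ in \Prop~\ref{lem:condsprob} and \Prop~\ref{Prop_cycl}. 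Without identifying this cycle structure you cannot conclude that Step~7 succeeds with probability bounded away from~$0$, let alone obtain the asymptotic constant needed later.

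\textbf{Total variation.} Your reduction to the pushforward laws on $(\vN,\vM)$ via conditional uniformity is correct and is exactly \Prop~\ref{Prop_uniform}. But the sentence ``both pushforwards concentrate on a window of radius $O(\sqrt n)\ldots$ so the two pushforwards agree up to $o(1)$'' is not an argument: two distributions supported on the same $O(\sqrt n)$-window with different covariance structures have total variation distance bounded away from~$0$. To close the gap you must actually match the two pushforwards pointwise, which in the paper's framework requires the explicit enumeration $|\Gamma_{n,m}(\vN,\vM)|$ from Lemma~\ref{Lemma_entropy}/\Prop~\ref{Prop_entropy} together with the local limit theorem for $\pr[\hat\cF(\vN,\vM)]$ from Lemma~\ref{Prop_degrees}. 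The paper states that \Thm~\ref{thm:contig} follows ``fairly easily'' from \Thm~\ref{thm:contig2} without writing this step out, but the ingredients just listed are what any such derivation would need, and your sketch does not supply them.
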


\begin{figure}
\small
{\bf Algorithm} {\tt Forge}$(n,m)$.\\
\raggedright
\begin{enumerate}
\item 
Partition the vertex set $[n]$ randomly into 
three sets $\hat\cN_0,\hat\cN_\star,\hat\cN_1$, with vertex $v$
being placed into set $\cN_x$ with probability $\nu_x$ for
$x\in\{0,\star,1\}$ independently.
Let $\hat n_0 =|\hat \cN_0|$, $\hat n_\star =|\hat \cN_\star|$,
		$\hat n_1=|\hat \cN_1|$ and 
		$\hat{\vn}=(\hat n_0,\hat n_\star, \hat n_1)$.

\item For each vertex $v$ independently let
	$$\chi_{00}(v)=\Po(\lambda_{00}),\quad
		\quad\chi_{01}(v)=\Po(\lambda_{01}),\quad
		\chi_{10}(v)=\Po_{\leq k-2}(\lambda_{10}),\quad
		\chi_{11}(v)=\Po_{\geq k}(\lambda_{11})$$
	and
	\begin{align*}
	\hat d_{00}(v)&=\chi_{00}(v)
	\vecone\{v\in\hat\cN_0\},&
	\hat d_{01}(v)&=\chi_{01}(v)
	\vecone\{v\in\hat\cN_\star\cup\hat\cN_1\},\\
	\hat d_{10}(v)&=(k-1)\vecone\{v\in\hat\cN_\star\}
	+\chi_{10}(v)\vecone\{v\in\hat\cN_0\},&
	\hat d_{11}(v)&=\chi_{11}(v)\vecone\{v\in\hat\cN_1\}.
	\end{align*}
Let 
$$
		\hat m_{00}=\sum_{v\in[n]}\hat d_{00}(v),\qquad
		\hat m_{01}=\sum_{v\in[n]}\hat d_{01}(v),\qquad
		\hat m_{10}=\sum_{v\in[n]}\hat d_{10}(v),\qquad
		\hat m_{11}=\sum_{v\in[n]}\hat d_{11}(v).
$$
and
	    $\hat{\vm} = (\hat m_{00},\hat m_{01},
	    \hat m_{10},\hat m_{11})$.
\item 
	If either  
	$\hat m_{00}$ or $\hat m_{11}$ are odd, 
	$\hat m_{01}\neq \hat m_{10}$
    or $\hat m_{00}+2\hat m_{01}+\hat m_{11}\neq2m$
    then output {\tt failure} and abort.
\item
	Else let 
		\begin{align*}
		V_{00}&=\bigcup_{v\in\hat\cN_0}\{(v,0,0)\}\times[\hat d_{00}(v)],&
		V_{01}&=\bigcup_{v\in\hat\cN_\star\cup\hat\cN_1}\{(v,0,1)\}\times[\hat d_{01}(v)],\\
		V_{10}&=\bigcup_{v\in\hat\cN_\star\cup\hat\cN_0}\{(v,1,0)\}\times[\hat d_{10}(v)],&
		V_{11}&=\bigcup_{v\in\hat\cN_1}\{(v,1,1)\}\times[\hat d_{11}(v)].
		\end{align*}
	Independently generate uniformly random perfect matchings 
	$\hat\cM_{00}$ of the complete graph $K_{V_{00}}$,
	$\hat\cM_{11}$ of $K_{V_{11}}$ and $\hat\cM_{10}$ 
	of the complete bipartite graph $K_{V_{01},V_{10}}$.
\item	Let $\hat\G$ be the multi-graph  obtained from 
	$\hat\cM_{00}\cup \hat\cM_{10}\cup \hat\cM_{11}$ by contracting the sets  $\{(v,x,y,z):x,y\in\{0,1\},z\in[d_{xy}(v)]\}$ to the single  vertex $v$.
	If $\hat\G$ fails to be simple, then output {\tt failure} and stop.
\item 
	Let $\hat{\vec\mu}_v=\vecone\{v\in\hat\cN_1\}$  for all $v\in[n]$.
	Moreover, for $(v,w)\in[n]\times[n]$ set
		\begin{align*}
		\hat{\vec\mu}_{v\to w}
		&=\vecone\{v\in\hat\cN_1,w\in\partial_{\hat\G}v\}
		+\vecone\{v\in\hat\cN_\star,
		\exists i,j:\{(v,0,1,i),(w,1,0,j)\}\in\hat \cM_{10}\}.
		\end{align*}
Let $E(\hat \G)$ be the edge set of $\hat\G$
and 
$$\hat\vmu =\br{\hat\vmu_v,
\hat\vmu_{v\to w}}_{v\in [n],\{v,w\}\in E(\hat\G)}.$$
\item  If
	  $\hat{\vec\mu}\neq \mu(\hat\G)$, then output {\tt failure}.
	  Otherwise output $\hat\G$ and declare {\tt success}.
\end{enumerate}
\caption{The algorithm {\tt Forge}.}\label{Fig_Forge}
\end{figure}

\Thm~\ref{thm:contig} makes it easy to analyse properties of the core of the \Erdos-\Renyi\ graph, the mantle and the connections between them.
Indeed, all we need to do is to investigate {\tt Forge}, which samples from a fairly accessible random graph model composed of nothing but independent random variables and random matchings.
There are ample techniques for studying such models.
In particular, \Thm~\ref{thm:contig} shows that any property that the pair $(\hat\G,\hat\vmu)$ enjoys with probability $1-o(1/n)$
holds for the pair $(\G,\mu(\G))$ \whp\
In fact, the $1/n$-factor in the success probability comes exclusively from the harmless conditioning in step~(3).
Thus, if $(\hat\G,\hat\vmu)$ has a property \whp\ given that step~(3) does not abort, then the same property holds for $(\G,\mu(\G))$ \whp\

We proceed to state an enhanced version of \Thm~\ref{thm:contig} that allows us to condition on the order and size of the $k$-core.
To this end,  given integer vectors $\vN=(n_\star,n_1)$ and $\vM=(m_{10},m_{11})$ such that $m_{11}$ is even
let $\cF(\vN,\vM)$ be the event that {\tt Forge} succeeds and 
$\hat n_\star=n_\star,\hat n_1=n_1,m_{10}=m_{10},\hat m_{11}=m_{11}.$
Further, consider the event
	$$\hat\cF(\vN,\vM)=\cbc{\hat n_\star=n_\star,\hat n_1=n_1,\hat m_{10}=\hat m_{01}=m_{10},\hat m_{11}=m_{11},
		\hat m_{00}=2m-2m_{10}-m_{11}}.$$
Additionally, set
\begin{equation}\label{eq:zeta}
	\zeta=\zeta(d,k)=(1-(k-1)q)^{3/2}
	\exp(-d/2-d^2/4).
\end{equation}
Finally, let $\Gamma_{n,m}(\vN,\vM)$ be the set of all graphs $G$ on vertex set $[n]$ with $m$ edges such that $\vN(G)=\vN$ and $\vM(G)=\vM$.

\begin{theorem}\label{thm:contig2}
Let $k\geq3, d>d_k$ and let $\xi>0$.
Then uniformly for all integer vectors  $\vN=(n_\star,n_1)$ and $\vM=(m_{10},m_{11})$
such that $m_{11}$ is even
and \eqref{eqthm:contig2} holds,
we have
  \begin{align}
	\pr\brk{\cF(\vN,\vM)|\hat\cF(\vN,\vM)}&\sim \zeta >0.
  \end{align}
Furthermore, given $\cF(\vN,\vM)$, $\hat\G$ is uniformly distributed on  $\Gamma_{n,m}(\vN,\vM)$.
\end{theorem}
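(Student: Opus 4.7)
The plan is to prove the two assertions of the theorem in sequence: first the uniform distribution of $\hat\G$ on $\Gamma_{n,m}(\vN,\vM)$ conditional on success, and then the asymptotic conditional success probability $\zeta$.

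\textbf{Uniformity.} I would fix any $G\in\Gamma_{n,m}(\vN,\vM)$ and compute $\pr\brk{\hat\G=G,\ \text{success}}$ directly. By Fact~\ref{Fact_WP} and the WP check in step 7, three things must coincide with $G$: (i) the type partition $\hat\cN$ must agree with $\cN(G)$; (ii) the pseudo-degrees must satisfy $\hat d_{ab}(v)=d_{G,ab}(v)$ for every $v$ and every $a,b$; and (iii) the matchings must realise both the edges of $G$ and the orientations on $01$/$10$ edges prescribed by $\mu(G)$. Event (i) contributes $\prod_v\nu_{\mathrm{type}(v)}$; event (ii) contributes a product of truncated Poisson point masses each carrying a $1/d_{G,ab}(v)!$ denominator; and the number of matchings achieving (iii) equals $\prod_v\prod_{a,b}d_{G,ab}(v)!$. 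The $d_{G,ab}(v)!$ factors cancel, leaving only $\prod_{a,b}\lambda_{ab}^{\hat m_{ab}}$ together with the truncation normalisations and the inverse of the total number of matchings — all functions of $\vN,\vM$ alone. Hence every $G\in\Gamma_{n,m}(\vN,\vM)$ is output with equal probability, proving uniformity.

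\textbf{Success probability.} Given $\hat\cF$, the remaining failure events are step 5 (non-simplicity) and step 7 (WP inconsistency), and I would treat these as asymptotically independent, so that $\pr\brk{\cF\mid\hat\cF}$ factorises. The simplicity factor yields $\exp(-d/2-d^2/4)$ by a standard Poisson approximation to the joint count of self-loops and double edges in the (type-respecting) configuration model: conditional on $\hat\cF$ the degree statistics are pinned down, and the factorial-moment method (Theorem~\ref{Cor_mom}) gives independent $\Po(d/2)$ and $\Po(d^2/4)$ limits. For the WP factor I would first verify vertex by vertex, using Fact~\ref{Fact_WP}, that $\hat\vmu$ is always a fixed point of WP on $\hat\G$ by construction; since $\mu(\hat\G)$ is the greatest fixed point reached from $\vecone$, the check fails precisely when a strictly larger fixed point exists, i.e.\ when some nonempty $S\subseteq\hat\cN_0\cup\hat\cN_\star$ bootstraps itself into an enlarged core via the $\hat\cM_{00}$- and $\hat\cM_{10}$-matchings. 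Enumerating such parasitic sets by size and exploiting the subcriticality $(k-1)q=\phi_{d,k}'(p)<1$ supplied by Fact~\ref{fact_FPana}(2) should produce the factor $(1-(k-1)q)^{3/2}$, with the exponent $3/2$ emerging from three scalar Jacobian/alignment factors intrinsic to the constraints defining $\hat\cF$ (the parities of $\hat m_{00},\hat m_{11}$ and the balance $\hat m_{01}=\hat m_{10}$).

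\textbf{Main obstacle.} The delicate step is pinning down the $(1-(k-1)q)^{3/2}$ factor. The simplicity probability is textbook, but parasitic substructures form a large combinatorial family, so a crude first-moment bound will not suffice; a tree-like enumeration with entropy–energy accounting — using the Stirling/forest bounds in Theorems~\ref{thm_stir} and~\ref{thm_peter} and the entropy/KL identities collected in~(\ref{eqHDef})–(\ref{eqDiffKL}) — seems to be what the preliminary machinery has been staged for. The fact that $\zeta$ is an order-$1$ constant rather than decaying in $n$ reflects that the underlying branching process is subcritical and produces only bounded-size obstructions, with $(1-(k-1)q)^{3/2}$ arising from the asymptotic count of "good" configurations relative to all $\hat\cF$-compatible ones.
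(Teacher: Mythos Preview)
Your uniformity argument is sound and essentially coincides with the paper's \Prop~\ref{Prop_uniform}: the factorials from the matching count cancel the truncated-Poisson denominators, leaving only quantities that depend on $\vN,\vM$.

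The success-probability half, however, contains a genuine gap. Your proposed source for the exponent $3/2$ in $(1-(k-1)q)^{3/2}$ is wrong: the parity of $\hat m_{00},\hat m_{11}$ and the balance $\hat m_{01}=\hat m_{10}$ are already part of the conditioning event $\hat\cF(\vN,\vM)$, so they contribute nothing further to $\pr[\cF\mid\hat\cF]$. In the paper the factor arises from two \emph{distinct} cycle obstructions, each handled by a separate Poisson approximation (\Prop~\ref{lem:condsprob}):
a directed cycle within $\hat\G[\hat\cN_\star]$ occurs with asymptotic probability $1-(1-(k-1)q)$, giving the factor $1-(k-1)q$;
and an undirected cycle within $\hat\G[\setB]$, where $\setB=\{v\in\hat\cN_0:\hat d_{10}(v)=k-2\}$, gives the square-root factor $\sqrt{1-(k-1)q}$.
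These two events are independent given $\hat\cF$, and the simplicity probability $\exp(-d/2-d^2/4)$ is computed \emph{conditionally} on both cycle-freeness events. The product yields $\zeta$ exactly.

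The harder, separate task is to show that these forbidden cycles are \whp\ the \emph{only} obstruction to $\hat\vmu=\mu(\hat\G)$ (the paper's \Prop~\ref{Prop_cycl}). This is where the forest/Stirling bounds of \Thm s~\ref{thm_stir}--\ref{thm_peter} enter: not to produce the $(1-(k-1)q)^{3/2}$ constant, but to rule out all other ``flipping structures'' inside $\hat\cN_0$, including those of intermediate size. Your sketch conflates these two roles; without isolating the two specific cycle families you would have no mechanism that yields an order-one constant rather than $1-o(1)$ or $o(1)$.
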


Since $\hat n_\star,\hat n_1$ and $\hat m_{ab}$, $a,b\in\{0,1\}$ are sums of independent random variables,
it is easy to work out that under the assumption (\ref{eqthm:contig2}) we have $\pr\brk{\hat\cF(\vN,\vM)}=\Theta(n^{-1})$.
Further, \Thm~\ref{thm:contig2} shows that given the event $\hat\cF(\vN,\vM)$ the algorithm {\tt Forge} succeeds
with a probability $\zeta+o(1)$ that is bounded away from $0$ and, crucially,
given success the resulting random graph is perfectly uniformly distributed over the set of all graphs with $k$-core parameters $\vN,\vM$.
In effect, \Thm~\ref{thm:contig2} makes it easy to study the  random graph $\G$ given the order and size of its $k$-core.

In addition, since $\hat\G$ is uniform on $\Gamma_{n,m}(\vN,\vM)$ given $\cF(\vN,\vM)$,
in order to calculate the size of the set $\Gamma_{n,m}(\vN,\vM)$ we just need to compute the entropy of the output distribution of {\tt Forge} given $\cF(\vN,\vM)$.
This is fairly straightforward
because the construction involves a great degree of independence.
As we shall see in the next section this argument directly yields \Thm~\ref{Thm_LLT},  the multivariate local limit theorem.

\section{Proof strategy}\label{Sec_strategy}

\noindent
The main task is to prove \Thm~\ref{thm:contig2}, whence \Thm s~\ref{thm:contig} and~\ref{Thm_LLT} follow fairly easily.
Although some diligence is required, the proofs are completely elementary and none of the arguments are particularly difficult.
Let us begin by verifying that $\hat\G$ is uniform on $\Gamma_{n,m}(\vN,\vM)$ given success, i.e.\ that the second statement of Theorem~\ref{thm:contig2} holds.

\begin{proposition}\label{Prop_uniform}
Given $\cF(\vN,\vM)$, $\hat\G$ is uniformly distributed on  $\Gamma_{n,m}(\vN,\vM)$.
\end{proposition}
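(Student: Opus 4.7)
The plan is to compute $\pr\brk{\hat\G=G}$ for an arbitrary fixed $G\in\Gamma_{n,m}(\vN,\vM)$ and to show that this probability does not depend on the choice of $G$.  Since success of {\tt Forge} with output $G$ forces $\hat\vmu=\mu(\hat\G)=\mu(G)$ and hence, by Fact~\ref{fact_coreWP}, $\hat\cN_z=\cN_z(G)$ and $\hat m_{ab}=m_{ab}(G)$ for all $z,a,b$, the event $\{\hat\G=G\}$ automatically implies $\cF(\vN,\vM)$.  Uniformity given $\cF(\vN,\vM)$ then follows from Bayes' formula.

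Fix $G\in\Gamma_{n,m}(\vN,\vM)$.  The Warning Propagation fixed point $\mu(G)$ determines the partition $\cN_z(G)$ and the typed degree vector $(d_{G,ab}(v))_{v,a,b}$.  For {\tt Forge} to output $G$, step~1 must produce $\hat\cN_z=\cN_z(G)$, step~2 must produce $\hat d_{ab}(v)=d_{G,ab}(v)$, and the matchings of step~4 must contract to $G$ \emph{and} route every half-edge of type $(a,b)$ onto an incident edge whose type in $G$ is also $(a,b)$; otherwise step~7 aborts, because e.g.\ the $k-1$ type-$10$ half-edges at $v\in\hat\cN_\star$ must hit precisely the neighbours $w$ with $(v,w)\in\cM_{10}(G)$.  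The checks of steps~3 and~5 are automatic once $\hat d$ matches $d_G$, because $G$ is simple and its typed edge counts satisfy \eqref{eqaffine}.

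Each contribution factors cleanly.  The partition probability is $\prod_z\nu_z^{|\cN_z(G)|}$, which depends only on $\vN$.  The joint density of the pseudo-degrees equals
\[
\br{\prod_v\prod_{a,b}\frac{\lambda_{ab}^{d_{G,ab}(v)}}{d_{G,ab}(v)!}}\cdot C_1(\vN),
\]
where $C_1(\vN)$ collects the exponential normalisers $\exp(-\lambda_{ab}|\cN_z(G)|)$ and the truncation denominators for $\chi_{10}$ on $\hat\cN_0$ and $\chi_{11}$ on $\hat\cN_1$, all of which depend only on the cardinalities $|\cN_z(G)|$.  Conditional on $\hat d=d_G$, the probability that the three random matchings produce the pair $(G,\mu(G))$ is
\[
\frac{\prod_v\prod_{a,b}d_{G,ab}(v)!}{(\hat m_{00}-1)!!\,(\hat m_{11}-1)!!\,\hat m_{10}!},
\]
since for each vertex and each type $(a,b)$ the $d_{G,ab}(v)!$ permutations of the type-$(a,b)$ half-edges at $v$ yield distinct matchings, all of which contract to $G$ with the correct half-edge-to-edge type assignment.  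Multiplying the three contributions, the $d_{G,ab}(v)!$ factors cancel and one is left with
\[
\prod_{a,b\in\{0,1\}}\lambda_{ab}^{\sum_v d_{G,ab}(v)}=\prod_{a,b}\lambda_{ab}^{m_{ab}(G)}=\prod_{a,b}\lambda_{ab}^{m_{ab}}
\]
times a prefactor depending only on $\vN$ and $\vM$.  Hence $\pr\brk{\hat\G=G}$ is the same for every $G\in\Gamma_{n,m}(\vN,\vM)$, which proves the proposition.

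The only delicate step is the step~7 bookkeeping: one must verify that the $\prod_v\prod_{a,b}d_{G,ab}(v)!$ configurations producing $G$ are exactly those on which $\hat\vmu=\mu(G)$, which is where the distinction between type-$01$ and type-$10$ half-edges at vertices of $\cN_\star(G)$ has to be handled carefully.  Everything else is the standard configuration-model counting identity, coupled with the felicitous cancellation between the $d!$ in the denominator of the Poisson density $\eul^{-\lambda}\lambda^d/d!$ and the $d!$ from the half-edge permutations in the matching step.
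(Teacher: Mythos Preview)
Your proof is correct and takes essentially the same approach as the paper's: both compute the probability that {\tt Forge} produces a fixed $G\in\Gamma_{n,m}(\vN,\vM)$ and observe that the only $G$-dependent factors are the $d_{G,ab}(v)!$ in the Poisson denominators, which cancel exactly against the $\prod_{v,a,b}d_{G,ab}(v)!$ half-edge permutations from the configuration-model count. The paper organises the same computation through a chain of Bayes' rules leading to the conditional probability $\pr[\hat\G=G\mid\cF(\vN,\vM)]$, whereas you compute the unconditional joint probability $\pr[\hat\G=G,\text{success}]$ directly; these are equivalent packagings of the identical cancellation.
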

\begin{proof}
Fix $\vN,\vM$, let $n_0=n-n_\star-n_1$, $m_{01}=m_{10}$ and $m_{00}=2m-2m_{10}-m_{11}$, set
	$$\vn=(n_0,n_\star,n_1),\qquad\vm=(m_{00},m_{01},m_{10},m_{11}) $$
and let $\hat \vn
=(\hat n_0,\hat n_\star,\hat n_1)$ and $\hat \vm=(\hat m_{00},\hat m_{01},\hat m_{10},\hat m_{11})$ be as in {\tt Forge}.
Further, fix
 $G\in\Gamma_{n,m}(\vN,\vM)$ and let $\vd=(d_{G,ab}(v))_{v,a,b}$ be the corresponding degree sequence of $G$ broken down to edge types.
Moreover, let
	\begin{align}
P_0&=\prod_{v\in\cN_0(G)}\pr\brk{\Po(\lambda_{00})=d_{G,00}(v)}\prod_{v\in\cN_0(G)}\pr\brk{\Po_{\leq k-2}(\lambda_{10})=d_{G,10}(v)},&	
	P_\star&=\prod_{v\in\cN_\star(G)}\pr\brk{\Po(\lambda_{01})=d_{G,01}(v)},\nonumber\\
	P_1&=\prod_{v\in\cN_1(G)}\pr\brk{\Po_{\geq k}(\lambda_{11})=d_{G,11}(v)}\pr\brk{\Po(\lambda_{01})=d_{G,01}(v)},&
	\Pi&=\prod_{v\in V(G),a,b\in\{0,1\}}d_{G,ab}(v)!,\nonumber\\
	P&=P_1P_\star P_0\Pi.\label{eqUni0}
	\end{align}
Let $\vec{\hat d}=(\hat d_{ab}(v))_{v,a,b}$ be the random vector created by step (2) of {\tt Forge} and let $\cF(\vd)=\cF(\vN,\vM)\cap\{\vec{\hat d}=\vd\}$.
Since $\{\vec{\hat d}=\vd\}\subset\hat\cF(\vN,\vM)$ by Fact~\ref{Fact_WP}, Bayes' rule gives
	\begin{align}
	\pr\brk{\vec{\hat d}=\vd\big|\cF(\vN,\vM)}&=\frac{\pr[\cF(\vd)|\hat\cF(\vN,\vM)]}{\pr\brk{\cF(\vN,\vM)|\hat\cF(\vN,\vM)}}
		=\frac{\pr[\cF(\vN,\vM)|\vec{\hat d}=\vd]
		\pr[\vec{\hat d}=\vd|\hat\cF(\vN,\vM)]}{\pr\brk{\cF(\vN,\vM)|\hat\cF(\vN,\vM)}}.\label{eqUni1a}
	\end{align}		
Further, once more because the vertex types can be read off the degree sequence $\vec d$ by Fact~\ref{Fact_WP},
	\begin{align}\label{eqUni1b}
	\pr\brk{\vec{\hat d}=\vd|\hat\cF(\vN,\vM)}&=
		\frac{\nu_0^{n_0}\nu_1^{n_1}\nu_\star^{n_\star}P_0P_1P_\star}{\pr\brk{\hat\cF(\vN,\vM)}}
			=\frac{\nu_0^{n_0}\nu_1^{n_1}\nu_\star^{n_\star}P_0P_1P_\star}{
				\bink n{\vec n}\nu_0^{n_0}\nu_1^{n_1}\nu_\star^{n_\star}\pr\brk{\hvm=\vm|\hvn=\vn}}
			=\frac P{\bink n{\vec n}\pr\brk{\hvm=\vm|\hvn=\vn}\Pi}.
	\end{align}
Combining (\ref{eqUni1a}) and (\ref{eqUni1b}), we obtain
	\begin{align}\label{eqUni1}
	\pr\brk{\vec{\hat d}=\vd\big|\cF(\vN,\vM)}&=
		\frac{P\cdot\pr[\cF(\vN,\vM)|\vec{\hat d}=\vd]}{\bink{n}{\vn}\Pi\pr\brk{\hvm=\vm|\hvn=\vn}
		\pr\brk{\cF(\vN,\vM)|\hat\cF(\vN,\vM)}}\ .
	\end{align}
Moreover, by double counting
	\begin{align}\label{eqUni2}
	\pr\brk{\hat\G=G|\vec{\hat d}=\vd,\cF(\vN,\vM)}&=\frac{\pr[\hat\G=G|\vec{\hat d}=\vd]}{\pr[\cF(\vN,\vM)|\vec{\hat d}=\vd]}=
		\frac{\Pi}{\pr[\cF(\vN,\vM)|\vec{\hat d}=\vd](m_{00}-1)!!(m_{11}-1)!!m_{10}!}.
	\end{align}
Combining (\ref{eqUni1}) and (\ref{eqUni2}), we find
	\begin{align}\label{eqUni3}
	\pr\brk{\hat\G=G|\cF(\vN,\vM)}&=\frac{P}{\bink n{\vn}(m_{00}-1)!!(m_{11}-1)!!m_{10}!\pr[\cF(\vN,\vM)|\hat{\vn}=\vn]}.
	\end{align}
Crucially, in the expression (\ref{eqUni0}) that defines $P$ the factorials cancel, whence $P$ depends on $\vN,\vM$ but not on $\vd$.
Therefore, so does the right hand side of (\ref{eqUni3}), which means that the expression is independent of $G$.
\end{proof}

As a next step, in \Sec~\ref{Sec_Olly} we calculate the success probability of {\tt Forge}, confirming the first statement of Theorem~\ref{thm:contig2}, which is thus immediate from  Propositions~\ref{Prop_uniform} and~\ref{Prop_success}.

\begin{proposition}\label{Prop_success}
Suppose that $k\geq 3$, $d>d_k$ and let $\xi>0$.
Assume that $\vN,\vM$ are such that (\ref{eqthm:contig2}) holds and that $m_{11}$ is even.
Then uniformly $\pr\brk{\cF(\vN,\vM)|\hat\cF(\vN,\vM)}\sim \zeta.$
\end{proposition}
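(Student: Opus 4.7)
The plan is to compute the conditional success probability of {\tt Forge} given $\hat\cF(\vN,\vM)$ by analysing the two remaining failure modes. Given $\hat\cF(\vN,\vM)$, step~(3) of {\tt Forge} does not abort by definition, so the algorithm fails only if (i) the multi-graph $\hat\G$ produced in step~(5) fails to be simple, or (ii) the Warning Propagation check in step~(7) yields $\mu(\hat\G)\neq\hat\vmu$. The two factors $\exp(-d/2-d^2/4)$ and $(1-(k-1)q)^{3/2}$ of the constant $\zeta$ will arise respectively from these two sources, and I will need to show that the corresponding events are asymptotically independent so that their product is $\zeta$.

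Conditional on $\hat\cF(\vN,\vM)$, the random multi-graph $\hat\G$ is the union of three independent uniform random perfect matchings $\hat\cM_{00}, \hat\cM_{11}, \hat\cM_{10}$ on half-edge sets of sizes fixed by $\hat\vm$. I would compute the simplicity probability by the standard configuration-model method of factorial moments (\Thm~\ref{Cor_mom}): the counts of self-loops in each of the three matchings (loops in $\hat\cM_{10}$ arising only at $\hat\cN_\star$-vertices that carry both $V_{01}$- and $V_{10}$-half-edges) and of double edges within each matching jointly converge to independent Poisson random variables; edges of different matching types cannot coincide in view of the half-edge type constraints. Summing the limiting means across all defect categories, using the formulas for $\EX\chi_{ab}$ and $\EX\chi_{ab}^2$ supplied by the (truncated) Poissons in step~(2) of {\tt Forge}, should produce the total $d/2+d^2/4$, hence $\pr\brk{\hat\G\text{ simple}|\hat\cF(\vN,\vM)}\sim\exp(-d/2-d^2/4)$.

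For the Warning Propagation check I would first verify directly from Fact~\ref{Fact_WP} and the definition of $\hat\vmu$ in step~(6) of {\tt Forge} that $\hat\vmu$ is always a fixed point of the update rules \eqref{WP_messageupdate}--\eqref{WP_markupdate}. Since WP starts from the all-one messages and is monotonically decreasing, it converges to the largest fixed point; by Fact~\ref{fact_coreWP} this corresponds to the $k$-core of $\hat\G$. As $\hat\G[\hat\cN_1]$ already has minimum degree $\geq k$ via the $\hat\cM_{11}$-edges, the $k$-core of $\hat\G$ automatically contains $\hat\cN_1$, so the event $\mu(\hat\G)=\hat\vmu$ reduces to the event that no non-empty subset $S\subseteq\hat\cN_0\cup\hat\cN_\star$ satisfies $|\partial_{\hat\G}v\cap(S\cup\hat\cN_1)|\geq k$ for every $v\in S$.

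The main obstacle is the quantitative evaluation of this ``no-rogue-extension'' probability. The intuition is that peeling on $\hat\G[\hat\cN_0\cup\hat\cN_\star]$, with $\hat\cM_{10}$-half-edges to $\hat\cN_1$ counted as external degree, is driven by a branching process whose mean offspring equals $(k-1)q<1$ by Fact~\ref{fact_FPana}(ii). A first-moment computation over rogue sets $S$, together with a careful local analysis near the core boundary and moment-method convergence of the relevant tree-like statistics, is expected to produce the explicit constant $(1-(k-1)q)^{3/2}$. Finally, the asymptotic independence of (i) and (ii) follows because simplicity defects are strictly local (pairs of colliding half-edges) whereas the WP failure is driven by the subcritical branching behaviour in the mantle; the two types of events interact on only a vanishing fraction of vertices and their joint limit factorises.
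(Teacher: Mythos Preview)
Your plan correctly isolates the two failure modes, and the simplicity calculation via factorial moments is standard. There are, however, two genuine gaps in the Warning Propagation half.

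First, your reduction of $\mu(\hat\G)=\hat\vmu$ to ``no non-empty $S\subseteq\hat\cN_0\cup\hat\cN_\star$ with $|\partial_{\hat\G}v\cap(S\cup\hat\cN_1)|\geq k$ for all $v\in S$'' is not correct as stated: vertices in $\hat\cN_\star$ also emit $1$-messages along some edges, so a vertex $v\in\hat\cN_0$ with $\hat d_{10}(v)=k-2$ already receives $k-2$ incoming $1$-messages from $\hat\cN_\star\cup\hat\cN_1$, not only from $\hat\cN_1$. The right reduction is at the level of directed \emph{messages}; the paper formalises it through ``flipping structures'', inclusion-minimal sets $S$ of directed edges such that raising the messages along $S$ to $1$ produces another WP fixed point.

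Second, and more seriously, your sketch does not explain where the exponent $3/2$ comes from; ``subcritical branching with mean $(k-1)q$'' does not by itself yield an explicit constant. In the paper $3/2$ decomposes as $1+\tfrac12$, reflecting two distinct minimal obstructions that are shown to be the only relevant ones: (a)~\emph{directed} cycles entirely within $\hat\cN_\star$ (built from $01/10$-edges), and (b)~\emph{undirected} cycles entirely within $\setB=\{v\in\hat\cN_0:\hat d_{10}(v)=k-2\}$ (built from $00$-edges). By the method of moments the two cycle counts are asymptotically Poisson with means $-\ln(1-(k-1)q)$ and $-\tfrac12\ln(1-(k-1)q)$; they are independent given $\hat\cF(\vN,\vM)$ because they live in the disjoint matchings $\hat\cM_{10}$ and $\hat\cM_{00}$, and their joint vanishing probability is $(1-(k-1)q)\cdot(1-(k-1)q)^{1/2}$. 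Beyond this one must still prove that flipping structures \emph{other} than these forbidden cycles --- those meeting $\hat\cN_0\setminus\setB$, or of linear size --- have probability $o(1)$; this is where first-moment/density bounds and a two-type branching-process fixed-point argument are actually deployed. Finally, simplicity is computed \emph{conditionally} on the absence of these cycles (which already excludes certain loops and double edges inside $\hat\cN_\star$ and $\setB$), and one verifies that the remaining defect means still sum to $d/2+d^2/4$; so the ``asymptotic independence'' you invoke is not free but has to be checked.
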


\noindent
The proof of \Prop~\ref{Prop_success} is based on the insight that given $\hat\cF(\vN,\vM)$ the algorithm is very likely to succeed
unless the random graph $\hat\G$ contains certain small substructures.
For example, in order to calculate the probability that $\hat\G$ is simple we just need to calculate the probability that
the random matchings from step (4) produces
 loops or multiple edges, a standard computation.
Similarly, it emerges that the most likely reason for step (7) to fail is the existence of certain bounded-sized subgraphs
within the subgraph of $\hat\G$ induced on $\hat\cN_0\cup\hat\cN_\star$, an event whose  probability we calculate by the method of moments.
The only aspect that requires a bit of technical work is ruling out troublesome sub-structures of intermediate sizes (unbounded but of lower order than $n$).

Further, in \Sec~\ref{section_LLTproofs} we use \Prop s~\ref{Prop_uniform} and~\ref{Prop_success} to determine 
$|\Gamma_{n,m}(\vN,\vM)|$  asymptotically.

\begin{proposition}\label{Prop_entropy}
Suppose that $k\geq 3$, $d>d_k$. Let $\xi>0$ and let $Q$ be the matrix from (\ref{eqQmatrix}).
{Then $Q$ is regular.}
Moreover, let $\vN,\vM$ be such that (\ref{eqthm:contig2}) holds and that $m_{11}$ is even.
Then uniformly
\begin{align*}
	|\Gamma_{n,m}(\vN,\vM)|&	\sim 
    \frac{1}
    {2\pi^2 d^2n^{2}\sqrt{\det Q}}
    \exp\br{-\frac n2
    \bck{Q^{-1}\Delta(\vN,\vM), \Delta(\vN,\vM)}}
	\bink{\bink n2}m.
\end{align*}
\end{proposition}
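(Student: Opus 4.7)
The approach is to invert the uniformity of \Prop~\ref{Prop_uniform}. Equation (\ref{eqUni3}) in the proof of that proposition asserts that for any $G_0\in\Gamma_{n,m}(\vN,\vM)$,
\begin{align*}
|\Gamma_{n,m}(\vN,\vM)|=\frac{\bink{n}{\vn}(m_{00}-1)!!(m_{11}-1)!!\,m_{10}!\;\pr\brk{\cF(\vN,\vM)|\hvn=\vn}}{P},
\end{align*}
where $P=P_0P_\star P_1\Pi$ is as in (\ref{eqUni0}). \Prop~\ref{Prop_success} supplies
$\pr\brk{\cF(\vN,\vM)|\hvn=\vn}=\pr\brk{\cF(\vN,\vM)|\hat\cF(\vN,\vM)}\cdot\pr\brk{\hvm=\vm|\hvn=\vn}\sim\zeta\,\pr\brk{\hvm=\vm|\hvn=\vn}$,
uniformly in the range (\ref{eqthm:contig2}). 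The task thus reduces to asymptotic analysis of each remaining factor. As a first step the Poisson denominator factorials in $P_0,P_\star,P_1$ cancel against $\Pi$, reducing $P$ to an explicit closed-form product of powers $\lambda_{ab}^{m_{ab}}$, exponentials linear in $\vn$, and normalising constants of $\Po_{\leq k-2}(\lambda_{10})$ and $\Po_{\geq k}(\lambda_{11})$; hence $P$ is an explicit function of $(\vN,\vM)$ alone.

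For $\pr\brk{\hvm=\vm|\hvn=\vn}$, I would observe that given $\hvn=\vn$ the four components of $\hvm$ split as $\hat m_{00}=\sum_{v\in\hat\cN_0}\chi_{00}(v)$, $\hat m_{11}=\sum_{v\in\hat\cN_1}\chi_{11}(v)$, $\hat m_{10}=(k-1)n_\star+\sum_{v\in\hat\cN_0}\chi_{10}(v)$ and $\hat m_{01}=\sum_{v\in\hat\cN_\star\cup\hat\cN_1}\chi_{01}(v)$, four \emph{independent} sums of i.i.d.\ (possibly truncated) Poisson variables. Hence $\pr\brk{\hvm=\vm|\hvn=\vn}$ factorises into four point probabilities, each asymptotically a Gaussian of scale $n^{-1/2}$ by the one-dimensional case of \Thm~\ref{Thm_McD}. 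The non-degeneracy hypothesis is immediate from elementary Poisson atom ratios, and the individual means and variances follow from straightforward Poisson moment calculations that depend on $p,q,d,k$ through the rates $\lambda_{ab}$ and the truncation points.

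Next I would expand around the central values $\vn=n\vnu$, $\vm=2m\vmu$. Writing $\vec\eta_n=\vn-n\vnu$ and $\vec\eta_m=\vm-2m\vmu$ (each of order $O(\sqrt n)$ by (\ref{eqthm:contig2})), Stirling applied to $\bink{n}{\vn},(m_{00}-1)!!,(m_{11}-1)!!,m_{10}!$ yields Gaussian factors in $\vec\eta_n,\vec\eta_m$ with explicit polynomial-in-$n$ prefactors, while second-order Taylor expansion of $\ln P$ produces a further quadratic form. All quadratic contributions — from these Stirling expansions, from $\ln P$, and from the four one-dimensional LLTs above — aggregate into the single quadratic form $\tfrac n2\bck{Q^{-1}\Delta(\vN,\vM)}{\Delta(\vN,\vM)}$. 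The constant prefactors evaluated at the central point, combined with $\zeta=(1-(k-1)q)^{3/2}\exp(-d/2-d^2/4)$ from (\ref{eq:zeta}) and Stirling's formula applied to $\bink{\bink n2}m$ for $m=\lceil dn/2\rceil$, collapse to the claimed $\bink{\bink n2}m/(2\pi^2 d^2 n^2\sqrt{\det Q})$. The cancellation of the non-Gaussian exponential at $(n\vnu,2m\vmu)$ relies on the fixed-point identity $\phi_{d,k}(p)=p$ of (\ref{eqmain}) together with the definitions in (\ref{eq_bar}).

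The hard part will be the purely algebraic identification of the assembled quadratic form with the matrix $Q$ of Figure~\ref{Fig_Sage}. This amounts to combining the variance contributions of the four LLTs with the three Stirling expansions and the Taylor expansion of $\ln P$, performing the linear change of coordinates that relates the four underlying independent sums to $(\vN,\vM)$, inverting the resulting $4\times 4$ covariance matrix, and matching entries; the computation is intricate but routine, and in practice is best carried out with computer algebra. Regularity of $Q$ follows from the strict positivity of each of the four one-dimensional LLT variances, together with the positivity of $1-(k-1)q$ supplied by Fact~\ref{fact_FPana}(ii), which controls the scalar factor in (\ref{eqQmatrix}).
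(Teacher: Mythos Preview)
Your proposal is correct and follows essentially the same route as the paper: the paper's \Lem~\ref{Lemma_entropy} derives your starting formula from (\ref{eqUni3}), \Cor~\ref{Cor_master} carries out the Stirling expansions (packaged there as Kullback--Leibler divergences), \Lem~\ref{Prop_degrees} applies \Thm~\ref{Thm_McD} to $\pr\brk{\hvm=\vm|\hvn=\vn}$ with the same diagonal covariance your independence observation yields, and the final assembly and matrix identification are done via computer algebra exactly as you anticipate. The one point to tighten is your justification of the regularity of $Q$: positivity of the four one-dimensional LLT variances only gives regularity of the diagonal covariance $\Sigma$, whereas $Q^{-1}$ is built from differences such as $\Sigma^{-1}-\tfrac d2\diag(\vmu)^{-1}$, so invertibility is not automatic; the paper deduces it from the computer-algebra verification of the determinant identity linking $\det Q$ to $\det\Sigma$.
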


\noindent
The proof of \Prop~\ref{Prop_entropy} requires not much more than writing out the number of possible outcomes of $\hat\G$ given the event $\hat\cF(\vN,\vM)$
and applying Stirling's formula to obtain an asymptotic formula.
\Thm~\ref{Thm_LLT} is immediate from 
\Prop~\ref{Prop_entropy}.

\section{Proof of \Prop~\ref{Prop_success}}\label{Sec_Olly}

\noindent{\em Throughout this section we keep the assumptions of \Prop~\ref{Prop_success}.}

\subsection{Overview}
We prove Proposition~\ref{Prop_success} by calculating the success probability of steps~(5) and~(7) of {\tt Forge}.
To determine the success probability of step~(7), we need to calculate the probability
that running Warning Propagation on $\hat\G$ results in messages $\mu(\hat\G)$ that match the ``pseudo-messages'' $\vec{\hat{\mu}}$.
In Section~\ref{Sec_Flippingstructures} we will identify certain minimal structures,
called \emph{flipping structures}, which may cause this to fail.
Indeed, we show that \whp\ any flipping structure present is of a particular form, called a \emph{forbidden cycle}.
Hence, the success probability is asymptotically the same as the probability that no forbidden cycles are present.
Finally in Section~\ref{sec:successprob} we calculate the probability that $\hat{\vec G}$ is simple and 
contains no forbidden cycle.

The construction of $\hat\G$ is nothing but an enhanced configuration model.
Specifically, each vertex $v\in[n]$ receives $\hat d_{ab}(v)$ \emph{half-edges of type $ab$} for $a,b\in\{0,1\}$ and
step (4) of {\tt Forge} is a uniform matching of these  half-edges that respects the types.
To be precise, half-edges of type $00$ get matched to other half-edges of type $00$, and analogously for half-edges of type $11$.
Moreover, half-edges of type $01$ are matched to half-edges of type $10$ and vice versa. 
Each pair of matched half-edges induces an edge of the random multi-graph $\hat\G$.
We orient the edges of $\hat\G$ that result from the matching of $01$ and $10$ half-edges from $01$ to $10$.
Thus, $\hat\G$ contains some undirected edges (resulting from $00$ and $11$ half-edges) and some directed ones.
Further, let
	$$\setB=\cbc{v\in \hat\cN_0:\hat d_{10}(v)=k-2},\qquad\sizeB=|\setB|.$$
In addition, we define the events
	\begin{align*}
	\cE_1&=\cbc{\mbox{$\hat\G$ is simple (i.e.\ contains no loops or multiple edges)}},&
	\cE_2&=\cbc{\mbox{$\hat\G[\hat \cN_\star]$ contains no directed cycle}},\\
	\cE_3&=\cbc{\mbox{$\hat\G[\setB]$ contains no cycles}},&
	\cE&=\cE_1\cap\cE_2\cap\cE_3.
	\end{align*}
Moreover, we recall  from Section~\ref{Sec_Forge} that for given integer vectors $\vN=(n_\star,n_1)$ and 
$\vM=(m_{10},m_{11})$ such that $m_{11}$ is even, $\cF(\vN,\vM)$ denotes 
the event that {\tt Forge} succeeds and $\hat n_\star=n_\star, \hat n_1=n_1, \hat m_{10} =m_{10},
\hat m_{11}=m_{11}$, while
	$$\hat\cF(\vN,\vM)=\cbc{\hat n_\star=n_\star,\hat n_1=n_\star,\hat m_{10}=\hat m_{01}=m_{10},\hat m_{11}=m_{11},
		\hat m_{00}=2m-2m_{10}-m_{11}}.$$
We break the proof of Proposition~\ref{Prop_success} down into the two steps summarised by the following two propositions.

\begin{proposition}\label{lem:condsprob}
Let $\delta>0$ be any constant. Uniformly for all ${\vN},{\vM}$ such that $m_{11}$
is even and \eqref{eqthm:contig2} holds, we have
\begin{align*}
\pr \brk{\cE_2\;|\;\hat\cF(\vN,\vM)} &\sim 1-(k-1)q,&
\pr \brk{\cE_3 \; |\; \hat\cF(\vN,\vM)} &\sim\sqrt{1-(k-1)q},\\
\pr \brk{\cE_1\; | \; \cE_2\cap\cE_3 \cap \hat\cF(\vN,\vM)} &\sim\exp\left(-\frac{d}{2}-\frac{d^2}{4}\right).
\end{align*}
Furthermore, conditioned on $\hat\cF(\vN,\vM)$, the events $\cE_2$ and $\cE_3$ are independent, so
$$
\pr\brk{\conds|\hat\cF(\vN,\vM)}\sim(1-(k-1)q)^{3/2}
\exp\left(-\frac{d}{2}-\frac{d^2}{4}\right).
$$
\end{proposition}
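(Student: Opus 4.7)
The plan is to apply the method of moments (Theorem~\ref{Cor_mom}) to each of the three events. For $\cE_2$ and $\cE_3$ the relevant substructures are directed cycles of $\hat\G[\hat\cN_\star]$ and undirected cycles of $\hat\G[\setB]$; for $\cE_1$, loops and double edges of $\hat\G$. In each case I would compute asymptotic first moments, upgrade to a joint Poisson limit via the joint factorial moments, and multiply the resulting independent-Poisson probabilities. That the various substructure counts are asymptotically jointly Poisson independent is made transparent by step~(4) of {\tt Forge}, which generates $\hat\cM_{00},\hat\cM_{10},\hat\cM_{11}$ independently on disjoint sets of half-edges.

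For $\cE_2$, each $v\in\hat\cN_\star$ carries $k-1$ half-edges of type $10$ and $\chi_{01}(v)\sim\Po(d(1-p))$ half-edges of type $01$, and a directed $\ell$-cycle in $\hat\G[\hat\cN_\star]$ is a cyclic sequence of $\ell$ such vertices with $\hat\cM_{10}$ matching one $01$-half-edge of each to one $10$-half-edge of the next. Under $\hat\cF(\vN,\vM)$ one has $\hat n_\star\sim npq$, $\hat m_{10}\sim dnp(1-p)$, and $\Erw[\chi_{01}]=d(1-p)$, whence the expected number of directed $\ell$-cycles asymptotes to $\tfrac{1}{\ell}\bigl((k-1)q\bigr)^\ell$. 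Because $(k-1)q<1$ by Fact~\ref{fact_FPana}, the series $\sum_{\ell}\tfrac{1}{\ell}((k-1)q)^\ell=-\ln(1-(k-1)q)$ converges, and Theorem~\ref{Cor_mom} (combined with a first-moment tail bound on long cycles) yields $\pr[\cE_2\mid\hat\cF(\vN,\vM)]\sim 1-(k-1)q$. The analysis of $\cE_3$ is structurally identical, with $\hat\cN_\star$ replaced by $\setB$ of size $|\setB|\sim n(1-p)\bar q=n(k-1)q/d$ via~\eqref{eq:qbarexp}, $\hat\cM_{10}$ replaced by $\hat\cM_{00}$, and $\chi_{01}(v)$ replaced by $\chi_{00}(v)\sim\Po(d(1-p))$; the expected number of undirected $\ell$-cycles comes out to $\tfrac{1}{2\ell}((k-1)q)^\ell$ (the extra factor of two reflecting undirectedness), so $\pr[\cE_3\mid\hat\cF(\vN,\vM)]\sim\sqrt{1-(k-1)q}$. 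The independence of $\cE_2$ and $\cE_3$ given $\hat\cF(\vN,\vM)$ is forced by construction, since $\hat\cM_{00},\hat\cM_{10}$ are generated independently, the sets $\hat\cN_\star$ and $\setB\subseteq\hat\cN_0$ are disjoint, and the variables $(\chi_{01}(v))_{v\in\hat\cN_\star}$ and $(\chi_{00}(v))_{v\in\setB}$ are mutually independent even after the aggregate conditioning in $\hat\cF(\vN,\vM)$.

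For $\pr[\cE_1\mid\cE_2\cap\cE_3\cap\hat\cF(\vN,\vM)]$, I would catalogue the loops and multi-edges of $\hat\G$ by edge type ($00,11,10$) and endpoint class, compute each category's Poisson mean, and apply Theorem~\ref{Cor_mom} to obtain joint Poisson convergence of the entire vector of cycle, loop and multi-edge counts. The decisive observation is that several defect categories coincide with short-cycle versions of the counts underlying $\cE_2$ and $\cE_3$: the $10$-loops at $\hat\cN_\star$-vertices and the opposite-direction $10$-multi-pairs between two $\hat\cN_\star$-vertices are exactly the length-$1$ and length-$2$ directed cycles of $\cE_2$ (Poisson means $(k-1)q$ and $\tfrac12((k-1)q)^2$), while the $00$-loops and $00$-double-edges on $\setB$ are the length-$1$ and length-$2$ undirected cycles of $\cE_3$ (means $\tfrac12(k-1)q$ and $\tfrac14((k-1)q)^2$). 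Because joint Poisson independence turns conditioning on $\cE_2\cap\cE_3$ into the annihilation of the corresponding coordinates, the desired conditional probability equals $\exp(-\Sigma)$, where $\Sigma$ is the sum of the Poisson means of the remaining defect categories.

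The principal obstacle is the bookkeeping that shows $\Sigma=d/2+d^2/4$. The remaining means involve $p,q,\bar q$ together with $\Erw[\chi_{00}(\chi_{00}-1)]=(d(1-p))^2$ and $\Erw[\chi_{11}(\chi_{11}-1)]=dp(dp+(k-1)q)/(1-q)$, and their collapse to the clean $G(n,m)$ configuration-model values $d/2$ and $d^2/4$ forces repeated use of the fixed-point identity $p=\phi_{d,k}(p)$ and the relation $(1-p)d\bar q=(k-1)q$ from~\eqref{eq:qbarexp}. A secondary technical nuisance is the first-moment tail bound controlling cycles of length growing with $n$ in both $\cE_2$ and $\cE_3$, which is handled by standard configuration-model estimates exploiting the geometric decay of the $\ell$-cycle means.
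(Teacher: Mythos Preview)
Your proposal is correct and follows essentially the same approach as the paper: compute the Poisson means $\tfrac{1}{\ell}((k-1)q)^\ell$ and $\tfrac{1}{2\ell}((k-1)q)^\ell$ for the directed and undirected cycle counts, establish joint Poisson convergence via factorial moments, and then handle $\cE_1$ by computing the loop and multi-edge means conditional on $X_\star=X_+=0$. Your framing of the $\cE_1$ step as ``annihilating'' the Poisson coordinates corresponding to length-$1$ and length-$2$ forbidden cycles is exactly equivalent to the paper's direct computation of $\Erw[Y\mid X_\star=X_+=0]$ and $\Erw[Z\mid X_\star=X_+=0]$, which splits into the same case analysis by edge type and endpoint class and collapses to $d/2+d^2/4$ via the identity $(1-p)d\bar q=(k-1)q$.
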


\begin{proposition}\label{Prop_cycl}
Uniformly for all ${\vN},{\vM}$ such that $m_{11}$ is even and \eqref{eqthm:contig2} holds, we have
$$\pr\brk{\cF(\vN,\vM) |\hat \cF (\vN,\vM)}\sim\pr\brk{\cE |\hat \cF(\vN,\vM)}.$$
\end{proposition}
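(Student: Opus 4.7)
The plan is to show that, conditional on $\hat\cF(\vN,\vM)$, the event $\cF(\vN,\vM)$ differs from $\cE$ only on a set of vanishing conditional probability, which together with $\pr[\cE|\hat\cF(\vN,\vM)]=\Theta(1)$ from \Prop~\ref{lem:condsprob} yields the claim. The starting observation is that Warning Propagation, initialised at the all-one state, is monotone non-increasing in $t$ and therefore converges to the pointwise largest fixed point dominated by $\vecone$. A direct case check via Fact~\ref{Fact_WP} shows that the pseudo-assignment $\hat\vmu$ built in step~(6) of {\tt Forge} is itself a fixed point of the WP update on $\hat\G$: for each vertex class and each half-edge type, the update rule~(\ref{WP_messageupdate}) reproduces the pseudo value exactly. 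Consequently, step~(7) succeeds, i.e.\ $\mu(\hat\G)=\hat\vmu$, iff $\hat\vmu$ is the pointwise largest WP fixed point on $\hat\G$.

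The second step is to translate this into a combinatorial obstruction. Following the flipping-structure analysis of \Sec~\ref{Sec_Flippingstructures}, a strictly larger fixed point exists iff some pseudo-$0$ messages can be consistently promoted to~$1$; any minimal subgraph of $\hat\G$ supporting such a promotion --- a \emph{flipping structure} --- must, by a case analysis on the three vertex classes, decompose into directed cycles in $\hat\G[\hat\cN_\star]$ (with edges oriented according to the $01\to 10$ matching), cycles in $\hat\G[\setB]$, and possibly short trees of $\hat\cN_\star$- and $\setB$-vertices linking such cycles. In particular, the smallest flipping structures are precisely the \emph{forbidden cycles}, so the presence of any directed cycle in $\hat\G[\hat\cN_\star]$ or any cycle in $\hat\G[\setB]$ already causes step~(7) to fail. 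Combined with the fact that step~(5) succeeds iff $\cE_1$ holds, this yields the inclusion $\cF(\vN,\vM)\cap\hat\cF(\vN,\vM)\subseteq\cE\cap\hat\cF(\vN,\vM)$.

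It remains to establish
$$\pr\brk{\cE\setminus\cF(\vN,\vM)\,\big|\,\hat\cF(\vN,\vM)}=o(1),$$
which I would do by a first-moment bound on the number of flipping structures that are not forbidden cycles. For a prescribed candidate $F$ on $s$ vertices with specified types and matched half-edge assignments, the probability of embedding $F$ in $\hat\G$ factorises into truncated Poisson point probabilities from step~(2) and matching factors of order $|V_{ab}|^{-1}$ per matched edge from step~(4); expressing the result in terms of the parameters $\lambda_{ab}$ from~(\ref{eqlambda}) and the class proportions from~(\ref{eq_bar}) and using the subcriticality estimate $(k-1)q=\bar q(1-p)d<1$ from Fact~\ref{fact_FPana}, each tree-branch appended to a cycle contributes a factor strictly less than~$1$, so bounded-size non-cycle flipping structures contribute $o(1)$ in expectation. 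The main obstacle will be flipping structures of intermediate (unbounded but sublinear) size: the same subcritical contraction factor must be converted into a uniform per-vertex geometric decay, so that summing the expected count over all sizes still yields $o(1)$. This step requires careful bookkeeping of the conditioning on $\hat\cF(\vN,\vM)$ and on the truncated Poisson laws governing step~(2), and is the technical heart of the argument; once it is in place, $\pr[\cF(\vN,\vM)|\hat\cF(\vN,\vM)]\sim\pr[\cE|\hat\cF(\vN,\vM)]$ follows.
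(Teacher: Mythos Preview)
Your high-level strategy --- show $\cF(\vN,\vM)\subset\cE$ on $\hat\cF(\vN,\vM)$ and then prove $\pr[\cE\setminus\cF(\vN,\vM)\,|\,\hat\cF(\vN,\vM)]=o(1)$ by ruling out non-cycle flipping structures --- matches the paper's. But your structural description of flipping structures is incorrect, and the first-moment argument you sketch is built on that wrong picture.

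Specifically, \Prop~\ref{Prop_fs}(iv) says that a flipping structure lies \emph{entirely} in $\hat\cN_\star\times\hat\cN_\star$ or entirely in $\hat\cN_0\times\hat\cN_0$; the two classes never mix, so there are no ``trees of $\hat\cN_\star$- and $\setB$-vertices linking cycles''. More importantly, you have overlooked the role of $\hat\cN_-=\hat\cN_0\setminus\setB$. A flipping structure in $\hat\cN_0$ that is not a forbidden cycle must contain at least one vertex of $\hat\cN_-$, and by \Prop~\ref{Prop_fs}(vii) every such vertex has at least \emph{three} distinct neighbours in $G(S)$. So the obstruction is not a cycle with pendant tree-branches --- it is a subgraph whose $\hat\cN_-$-part has minimum degree $3$, glued to an acyclic $\hat\cN_+$-part. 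The ``each tree-branch contributes a factor $<1$'' heuristic does not apply here, and a straightforward first-moment enumeration over such structures does not converge without substantial further work.

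The paper handles this via two separate lemmas. For \emph{small} flipping structures (order $\le\eps_1 n$) it does not enumerate flipping structures directly at all; instead it exploits that $\hat\G[\hat\cN_0]$ is a uniform sparse random graph (\Lem~\ref{lem_Nzero}) and runs a delicate case analysis on the parameters $(a,b,c,i,\ell,x)$ --- the counts of $\hat\cN_-$-vertices, $\hat\cN_+$-vertices, components, isolates, leaves and internal edges --- using density obstructions (\Cor s~\ref{claim:avdeg},~\ref{claim:avdeg2},~\ref{Claim_claim_fssmall_path}) together with a first-moment bound involving the forest count of \Thm~\ref{thm_peter}. For \emph{large} flipping structures (order $\ge\eps_1 n$) the paper abandons enumeration entirely and instead analyses a two-type branching process describing the neighbourhood of a vertex in $\hat\cN_0$, showing via a fixed-point argument that the probability a given vertex participates in any flipping structure is $o(1)$. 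Neither of these two pieces is captured by your sketch.
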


\noindent
After formally introducing flipping structures in \Sec~\ref{Sec_Flippingstructures}
and investigating the subgraph $\hat\G[\hat\cN_0]$ in \Sec~\ref{Sec_subc},
we will prove \Prop~\ref{lem:condsprob} in Section~\ref{sec_cycl} and
Proposition~\ref{Prop_cycl} in Section~\ref{sec:wellconstructed}.
Proposition~\ref{Prop_success} follows immediately from
Propositions~\ref{lem:condsprob} and~\ref{Prop_cycl}.

\subsection{Flipping structures}\label{Sec_Flippingstructures}
Recall that $\hat\cN_0,\hat\cN_\star,\hat\cN_1$ denote the 
random partition of $[n]$ constructed in step (1) of 
{\tt Forge}.
Further recall that given success in step (5), in step (6)
for $(v,w)\in[n]\times[n]$ we defined pseudo-messages
		\begin{align*}
		\hat{\vec\mu}_{v\to w}
		&=\vecone\{v\in\hat\cN_1,w\in\partial_{\hat\G}v\}
		+\vecone\{v\in\hat\cN_\star,
		\exists i,j:\{(v,0,1,i),(w,1,0,j)\}\in\hat\cM_{10}\}
		\end{align*}
and our aim is to calculate the probability that $\mu(\hat\G)=\hat{\vec\mu}$.
We begin with some basic observations. 

\begin{fact}\label{fact_upperbound}
If $\hat\G$ is simple, then 
$\hat{\vec\mu}_{v\to w}\leq \mu_{v\to w}(\hat\G)$ for all $(v,w)\in[n]\times[n]$.
\end{fact}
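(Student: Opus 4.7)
\medskip
\noindent\textbf{Proof plan.}
The goal is to show that the pseudo-messages produced in step~(6) are always dominated by the true Warning Propagation messages computed on $\hat\G$. Since the WP updates in \eqref{WP_messageupdate} are monotone non-increasing and initialised at $\mu_{v\to w}(0|\hat\G)=1$, it is enough to prove by induction on $t\geq 0$ that
\begin{equation*}
\mu_{v\to w}(t|\hat\G)\geq\hat{\vec\mu}_{v\to w}\qquad\text{for all }(v,w)\in[n]\times[n],
\end{equation*}
and then take $t\to\infty$. The base case $t=0$ is immediate since all initial messages equal $1$. For the inductive step I only need to handle $(v,w)$ with $\hat{\vec\mu}_{v\to w}=1$, and for such pairs produce at least $k-1$ neighbours $u\in\partial_{\hat\G}v\setminus\{w\}$ whose pseudo-message satisfies $\hat{\vec\mu}_{u\to v}=1$; the inductive hypothesis then gives $\mu_{u\to v}(t|\hat\G)\geq 1$ and the update rule yields $\mu_{v\to w}(t+1|\hat\G)=1$.

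The case analysis is driven by the type partition of step~(1) combined with the half-edge bookkeeping. If $v\in\hat\cN_1$, the construction guarantees $\hat d_{11}(v)\geq k$, and each $11$-neighbour $u$ of $v$ lies in $\hat\cN_1$, hence has $\hat{\vec\mu}_{u\to v}=1$ by definition; removing at most one such neighbour $w$ leaves $\geq k-1$ of them, which is what we need. If $v\in\hat\cN_\star$ and $\hat{\vec\mu}_{v\to w}=1$, then $w$ is necessarily attached to $v$ via a $01$-half-edge of $v$, whereas the $k-1$ ``constant'' $10$-half-edges of $v$ connect $v$ to vertices $u\in\hat\cN_\star\cup\hat\cN_1$ that carry $\hat{\vec\mu}_{u\to v}=1$ (either because $u\in\hat\cN_1$, or because $u\in\hat\cN_\star$ sends a $1$ along the $01$-half-edge that was matched to $v$'s $10$-half-edge); these $k-1$ neighbours are disjoint from $w$, so the required lower bound follows. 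Finally, if $v\in\hat\cN_0$, then $\hat{\vec\mu}_{v\to w}=0$ by the definition in step~(6), so there is nothing to check.

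The only place where the simplicity assumption enters is in translating the half-edge count $\hat d_{ab}(v)$ into the neighbourhood $\partial_{\hat\G}v$ appearing in the WP update. In a simple graph, distinct half-edges at $v$ correspond to distinct neighbours, so removing the single neighbour $w$ from the sum in \eqref{WP_messageupdate} discards at most one contribution from any given half-edge type; without simplicity a loop or a multi-edge could collapse several ``good'' half-edges into a single element of $\partial_{\hat\G}v$, breaking the counting. I expect no real obstacle here beyond carefully recording what ``$w$ is a neighbour of type $ab$'' means after contraction in step~(5); once simplicity is in force, the three cases above fit together immediately and the induction closes, proving $\hat{\vec\mu}_{v\to w}\leq\mu_{v\to w}(\hat\G)$ as claimed.
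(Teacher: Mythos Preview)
Your proof is correct and takes essentially the same approach as the paper: the paper's proof is a one-liner stating that ``a straightforward induction shows that $\hat{\vec\mu}_{v\to w}\leq \mu_{v\to w}(t|\hat\G)$ for all $t\geq0$,'' and what you have written is precisely that induction spelled out carefully, including the case split over $v\in\hat\cN_1,\hat\cN_\star,\hat\cN_0$ and the role of simplicity in turning half-edge counts into distinct neighbours.
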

\begin{proof}
A straightforward induction shows that $\hat{\vec\mu}_{v\to w}\leq \mu_{v\to w}(t|\hat\G)$ for all $t\geq0$.
\end{proof}

\noindent
In contrast to $\hat\cN_0,\hat\cN_\star,\hat\cN_1$, which are defined in terms of the pseudo-messages $\hat{\vec\mu}$, the partition
$\cN_0(\hat\G)$, $\cN_\star(\hat\G),$ $\cN_1(\hat\G)$ is induced by the 
actual Warning Propagation messages on $\hat\G.$

\begin{fact}\label{claim_disc}
If $\hat\G$ is simple, then we 
have $\hat{\vec \mu}=\mu(\hat\G)$ if and only if
$\hat \cN_x=\cN_x(\hat\G)$ for all $x\in\{0,\star,1\}$.
\end{fact}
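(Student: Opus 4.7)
The plan is to unpack both implications by comparing, at each vertex, the local structure of the pseudo-messages produced by step~(6) of {\tt Forge} with the characterisations of $\cN_0(\hat\G),\cN_\star(\hat\G),\cN_1(\hat\G)$ supplied by Facts~\ref{fact_coreWP} and~\ref{Fact_WP}. Since $\hat\G$ is simple, each half-edge created by {\tt Forge} corresponds to a unique incident edge, so the half-edge type $00,01,10,11$ at an endpoint of an edge uniquely determines the pair of pseudo-messages on that edge.

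For the forward direction, I assume $\hat{\vec\mu}=\mu(\hat\G)$. Because $\hat{\vec\mu}_v=\vecone\{v\in\hat\cN_1\}$ by construction, the first clause of Fact~\ref{fact_coreWP} instantly gives $\hat\cN_1=\cN_1(\hat\G)$. To separate $\hat\cN_\star$ from $\hat\cN_0$, I would note that for $v\in\hat\cN_\star$ step~(2) supplies exactly $k-1$ half-edges of type $10$ (each matched to a $01$-half-edge at a vertex in $\hat\cN_\star\cup\hat\cN_1$) and $\chi_{01}(v)$ half-edges of type $01$ (each matched to a $10$-half-edge at a vertex in $\hat\cN_\star\cup\hat\cN_0$); a direct read-off of the definition of $\hat{\vec\mu}_{v\to w}$ then shows that along the first kind of edge $v$ receives pseudo-message $1$ and sends $0$, while along the second kind $v$ receives $0$ and sends $1$. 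Under the hypothesis the same must hold for the true WP messages, so $d_{\hat\G,10}(v)=k-1$ and $d_{\hat\G,00}(v)=d_{\hat\G,11}(v)=0$, which places $v\in\cN_\star(\hat\G)$ by Fact~\ref{Fact_WP}. The analogous inspection for $v\in\hat\cN_0$, where all outgoing pseudo-messages vanish, yields $v\in\cN_0(\hat\G)$, and hence $\hat\cN_x=\cN_x(\hat\G)$ for every $x$.

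For the backward direction I assume $\hat\cN_x=\cN_x(\hat\G)$ for all $x$. The vertex marks then coincide via Fact~\ref{fact_coreWP}, so only the edge messages require attention, and Fact~\ref{fact_upperbound} already yields $\hat{\vec\mu}_{v\to w}\leq\mu_{v\to w}(\hat\G)$; it thus suffices to rule out strict inequality. I split into cases according to the class of $v$: if $v\in\cN_1(\hat\G)=\hat\cN_1$ then Fact~\ref{Fact_WP} forces every outgoing WP message from $v$ to equal $1$, matching the pseudo-messages; if $v\in\cN_0(\hat\G)=\hat\cN_0$ then Fact~\ref{Fact_WP} forces every outgoing WP message to equal $0$, again matching. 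The only case that requires more than a definition-check is $v\in\cN_\star(\hat\G)=\hat\cN_\star$: Fact~\ref{Fact_WP} tells us that exactly $d_{\hat\G}(v)-(k-1)=\chi_{01}(v)$ outgoing WP messages equal $1$, whereas by the construction of $\hat{\vec\mu}_{v\to w}$ exactly $\chi_{01}(v)$ outgoing pseudo-messages equal $1$; combined with Fact~\ref{fact_upperbound} this forces the two sets of outgoing $1$-edges to coincide.

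I do not anticipate a genuine obstacle here: once $\hat\G$ is known to be simple, both directions reduce to a bookkeeping comparison of half-edge types with WP types at each vertex, with Fact~\ref{fact_upperbound} removing any need to revisit the iterative update~(\ref{WP_messageupdate}) itself.
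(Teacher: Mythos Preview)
Your argument is correct and follows essentially the same route as the paper: both hinge on the observation that the half-edge types produced by {\tt Forge} make $\hat d_{xy}(v)$ count exactly the neighbours $w$ with $\hat\vmu_{w\to v}=x$, $\hat\vmu_{v\to w}=y$, so that $\hat\cN_0,\hat\cN_\star,\hat\cN_1$ admit the same incoming-message characterisation as $\cN_0(\hat\G),\cN_\star(\hat\G),\cN_1(\hat\G)$, after which Fact~\ref{fact_upperbound} does the work. The paper compresses your case analysis into that single observation and leaves the verification to the reader; your version simply unpacks the $\cN_\star$ case (the only nontrivial one) via the counting argument $d_{\hat\G,01}(v)=\chi_{01}(v)$, which is exactly what the terse ``immediate from Fact~\ref{fact_upperbound}'' is hiding.
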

\begin{proof}
The construction of $\hat\G$ guarantees that $\hat d_{xy}(v)$ equals the number of neighbours $w$ of $v$ in $\hat\G$
such that $\hat\vmu_{w\to v}=x$ and $\hat\vmu_{v\to w}=y.$
Hence,
	\begin{align*}
		\hat\cN_0&=\textstyle\cbc{v:\sum_{u\in\partial v}
		\hat\vmu_{u\to v}(G)\leq k-2},&
	\hat\cN_\star&=\textstyle\cbc{v:\sum_{u\in\partial v}
	\hat\vmu_{u\to v}(G)= k-1},&
	\hat\cN_1&=\textstyle\cbc{v:\sum_{u\in\partial v}
	\hat\vmu_{u\to v}(G)\geq k},
	\end{align*}
and thus the assertion is immediate from Fact~\ref{fact_upperbound}.	
\end{proof}

Suppose that $\hat\G$ is simple but $\hat{\vec \mu}\neq \mu(\hat\G)$.
By Fact~\ref{claim_disc} there is $x\in\{0,\star,1\}$ with  $\hat \cN_x\neq \cN_x(\hat\G).$
We would like to identify a minimal structure that is ``responsible'' for the discrepancy.
To this end we introduce a modified version of Warning Propagation.
Let us write $\edgesethat$ for the set of ordered pairs of  adjacent vertices in $\hat\G$
	(i.e., $\edgesethat$ contains the pairs $(v,w)$, $(w,v)$ iff $v,w$ are connected by an edge in $\hat\G$).
For a subset $S\subset\edgesethat$ we define the modified Warning Propagation with messages
 $\mu_{v\to w}(t|\hat \G,S)$ and marks  $\mu_{v}(t|\hat \G,S)$ 
as follows.
Initially, we set
$$
\mu_{v\to w}(0|\hat \G,S)=
\begin{cases}
1 & \text{if } \hat\vmu_{v\to w}=1 \text{ or } (v,w)\in S,\\
0 & \text{otherwise.}
\end{cases}
$$
In other words, we initialise according to the pseudo-messages, except possibly on $S$,
where all messages are initially $1$.
Further, we use 
 the same update rules~\eqref{WP_messageupdate}
as in Section~\ref{Sec_WP}, namely
\begin{equation*}
\textstyle \mu_{v\to w}(t+1|\hat\G,S)
 =\vecone\cbc{\sum_{u\in\partial_{\hat \G} v\setminus w}
 \mu_{u\to v}(t|\hat\G,S)\geq k-1}\qquad\mbox{for integers }t\geq0.
\end{equation*}
Additionally, the mark of $v\in [n]$ is defined as
\begin{equation*}
\textstyle \mu_v(t|\hat\G,S)=\vecone\cbc{\sum_{u\in\partial_{\hat\G} v}
\mu_{u\to v}(t|\hat\G,S)\geq k}\qquad\mbox{for integers }t\geq0.
\end{equation*}
As in the original Warning Propagation algorithm, all messages are monotonically decreasing
and we set 
	$$\mu_{v\to w}(\hat \G,S) = \lim_{t\to \infty} \mu_{v\to w}(t|\hat \G,S).$$
Furthermore, let
	\begin{align*}
	\hat\cN_0(S)&=\textstyle\cbc{v:\sum_{u\in\partial v}\mu_{u\to v}(\hat \G,S)\leq k-2},\\
	\hat\cN_\star(S)&=\textstyle\cbc{v:\sum_{u\in\partial v}\mu_{u\to v}(\hat \G,S)= k-1},\\
	\hat\cN_1(S)&=\textstyle\cbc{v:\sum_{u\in\partial v}\mu_{u\to v}(\hat \G,S)\geq k}.
	\end{align*}
We make three simple but important observations.

\begin{fact}\label{Fact_flip1}
\begin{enumerate}
\item $\hat{\cN}_x(\emptyset) = \hat \cN_x$ for all $x\in\{0,\star,1\}$.
\item $\hat{\cN}_x(\edgesethat)=\cN_x(\hat \G)$ for all $x\in\{0,\star, 1\}$.
\item $\hat \cN_1 \subset \hat \cN_1(S)\subset \cN_1(\hat \G)$ and $\hat \cN_1 \cup \hat \cN_\star \subset \hat \cN_1(S)\cup \hat \cN_\star(S)\subset \cN_1(\hat \G)\cup \cN_\star(\hat\G)$ for any $S\subset\edgesethat$.
\end{enumerate}
\end{fact}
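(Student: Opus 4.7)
All three claims reduce to routine properties of the modified Warning Propagation recursion, and the only substantive step is part~(1), where I would verify that the pseudo-messages $\hat{\vec\mu}$ defined in step~(6) of {\tt Forge} are already a fixed point of the ordinary WP update~(\ref{WP_messageupdate}) on $\hat\G$. Since the initialisation satisfies $\mu_{v\to w}(0\mid\hat\G,\emptyset)=\hat{\vec\mu}_{v\to w}$, it then suffices to show that one WP step leaves $\hat{\vec\mu}$ unchanged. The crucial structural observation is that by construction in steps~(4)--(5) the half-edges of each vertex are matched type-by-type, so $\hat d_{ab}(v)$ equals the number of neighbours $w$ with $\hat{\vec\mu}_{w\to v}=a$ and $\hat{\vec\mu}_{v\to w}=b$. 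A case distinction then settles the fixed-point property: if $v\in\hat\cN_1$, step~(2) forces $\hat d_{11}(v)\ge k$ and $\hat d_{00}(v)=\hat d_{10}(v)=0$, so $\sum_{u\in\partial v\setminus w}\hat{\vec\mu}_{u\to v}\ge k-1$ for every $w$ and the update returns $1=\hat{\vec\mu}_{v\to w}$; if $v\in\hat\cN_0$, then $\hat d_{01}(v)=\hat d_{11}(v)=0$ and $\hat d_{10}(v)\le k-2$, so the sum is at most $k-2$ and the update returns $0=\hat{\vec\mu}_{v\to w}$; if $v\in\hat\cN_\star$, the $k-1$ incoming pseudo-$1$-messages arrive precisely along $(1,0)$-type half-edges, so excluding a $(1,0)$-neighbour drops the sum to $k-2$ while excluding a $(0,1)$-neighbour leaves it at $k-1$, matching $\hat{\vec\mu}_{v\to w}$ in both subcases. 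Hence the recursion is stationary, giving $\mu_{v\to w}(\hat\G,\emptyset)=\hat{\vec\mu}_{v\to w}$, and summing over $u\in\partial v$ and comparing with the thresholds defining $\hat\cN_0(\emptyset),\hat\cN_\star(\emptyset),\hat\cN_1(\emptyset)$ recovers $\hat\cN_x(\emptyset)=\hat\cN_x$.

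Part~(2) is immediate from the definitions: the initialisation $\mu_{v\to w}(0\mid\hat\G,\edgesethat)\equiv 1$ coincides with the initialisation of ordinary Warning Propagation in~(\ref{WP_messageupdate}) and the update rules agree, so $\mu_{v\to w}(\hat\G,\edgesethat)=\mu_{v\to w}(\hat\G)$ pointwise, whence $\hat\cN_x(\edgesethat)=\cN_x(\hat\G)$. For part~(3) I would exploit monotonicity of the modified WP in its initial condition: a straightforward induction on $t$ shows that if $\mu_{v\to w}(0\mid\hat\G,S)\le\mu_{v\to w}(0\mid\hat\G,S')$ for all pairs $v,w$, then the same inequality propagates through every round and hence persists in the limit. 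Applying this to the chain $\emptyset\subseteq S\subseteq\edgesethat$ and invoking parts~(1) and~(2) to identify the endpoints yields $\hat{\vec\mu}_{u\to v}\le\mu_{u\to v}(\hat\G,S)\le\mu_{u\to v}(\hat\G)$ for every directed pair. Summing over $u\in\partial v$ preserves the ordering, and the class-membership thresholds ``$\sum\ge k$'' and ``$\sum\ge k-1$'' are both monotone in this sum, so both chains of inclusions in~(3) follow at once.

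The only place any real care is needed is the case $v\in\hat\cN_\star$ in part~(1), where one has to reconcile the half-edge labels $(0,1)$ and $(1,0)$ produced by step~(4) with the pseudo-message conventions introduced in step~(6) of {\tt Forge}; once that translation is spelled out, the remaining bookkeeping is purely mechanical and no probabilistic input is required.
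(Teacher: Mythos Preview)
Your proposal is correct and follows essentially the same approach as the paper. The paper's proof is terser: for (1) it simply asserts that $\hat{\vec\mu}$ is a fixed point ``by construction'' without the explicit case distinction you provide, for (2) it argues identically to you, and for (3) it cites Fact~\ref{fact_upperbound} (whose proof is the same one-line monotonicity induction you spell out). Your version is a faithful unpacking of what the paper leaves implicit.
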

\begin{proof}
To obtain the first claim we observe that $\mu_{v\to w}(0|\hat\G,\emptyset)=\hat{\vec\mu}_{v\to w}$ and that by construction 
$\hat{\vec\mu}$ is a fixed point of the 
modified Warning Propagation algorithm for $S=\emptyset$, i.e. 
 $\mu_{v\to w}(\hat\G,\emptyset)=\hat{\vec\mu}_{v\to w}$
 for all $v,w$.
With respect to the second assertion, since $\mu_{v\to w}(0|\hat\G,\edgesethat)=1$ for all $v,w$, we have 
$\mu_{v\to w}(\hat\G,\edgesethat)=\mu_{v\to w}(\hat\G)$ for all $v,w$.
{The third assertion is immediate from Fact~\ref{fact_upperbound}.}
\end{proof}

\begin{definition}
A \emph{flipping structure} of $\hat\G$ is an inclusion-minimal set $S\subset\edgesethat$ such that there exists $x\in\{0,\star,1\}$ such that $\hat \cN_x \neq \hat \cN_x(S)$.
\end{definition}

\noindent
Facts~\ref{Fact_flip1} shows that, unless $\hat \cN_x \neq \cN_x(\hat \G)$ for all $x\in\{0,\star,1\}$, there exists a flipping structure.

Hence, we are left to calculate the probability that $\hat\G$ contains a flipping structure.
To this end we point out a few (deterministic) properties of a flipping structure.
Let $\auxH$  be the  set  of all pairs  $(v,w)\in\edgesethat$ with $\hat\vmu_{v\to w}=1$.
{Recall that we oriented the edges within $\hat\G[\hat \cN_\star]$.}
For a set $S\subset\eKathrin$ let $V(S)$ be the set of vertices $v\in[n]$ such that there is a neighbour $w$ of $v$ in $\hat\G$ with
$(v,w)\in S$ or $(w,v)\in S$. We denote by $\Sdir$  the directed graph
on vertex set $V(S)$ and edge set $S$ and 
let $\delta^-(\Sdir),\delta^+(\Sdir)$  be
the minimum in- and out-degree of this directed graph.
Similarly, denote by $\Sundir$ the undirected graph  on $V(S)$ with edge set $\{\{v,w\}: (v,w)\in S\}$.

\begin{proposition}\label{Prop_fs}
{Given that $\hat\G$ is simple,}
any flipping structure $S$ of $\hat\G$ enjoys the following eight properties. 
\begin{enumerate}[(i)]
\item $\auxH\cap S = \emptyset$.\label{claim_fs-3}
\item For any edge $\{u,v\}$ we have $\mu_{v\to w}(\hat \G,S)=\vecone\{(v,w)\in \auxH\cup S\}$.
In other words, the initialisation of the modified Warning Propagation algorithm
with input $S$ is already a fixed point.
\label{claim_fs-2}
\item 
$\Sdir$ is strongly connected -- in particular,
$\delta^-(\Sdir),\delta^+(\Sdir)\ge 1$.\label{claim_fs-1}
\item 
Either $S\subset \hat\cN_0\times\hat\cN_0$ or $S\subset\hat\cN_\star\times\hat\cN_\star.$
\label{claim_fs2}
\item If $S\subset\hat\cN_\star
\times \hat\cN_\star$, then $\Sdir$ forms a directed cycle
in $\hat\G[\hat\cN_\star]$.\label{claim_fs1}
\item If $S\subset\hat\cN_+\times\hat\cN_+$ then $\Sundir$ 
forms a cycle in $\hat\G[\hat\cN_+]$.
\label{claim_fs3}
\item 
Any vertices of $\Sundir$ in $\hat \cN_0\setminus \setB$ have at least $3$ distinct
neighbours in
$\Sundir$.\label{claim_fs4}
\item Any vertices of $\Sundir$have at least $2$ distinct
neighbours in $\Sundir$.\label{claim_fsnew}
\end{enumerate}
\end{proposition}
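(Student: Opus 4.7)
My plan is to prove the eight claims in sequence via inclusion-minimality arguments that -- thanks to Fact~\ref{Fact_flip1} and claim~(ii) itself -- reduce to elementary local checks on the degrees produced by {\tt Forge}. The central meta-observation is this: given any $S'\subseteq S$ for which the configuration $\vecone\{(x,y)\in\auxH\cup S'\}$ satisfies the WP threshold $\sum_{u\in\partial x\setminus y}\vecone\{(u,x)\in\auxH\cup S'\}\geq k-1$ at every $(x,y)\in\auxH\cup S'$, the modified WP initialised on $\auxH\cup S'$ converges to the same fixed point $\mu(\hat\G,S)$ as under $S$ (bounded above by $\mu(\hat\G,S)$ via monotonicity from $\auxH\cup S'\leq\auxH\cup S$, and reaching it via a cascading argument that raises the missing edges of $S\setminus S'$ back to $1$). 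Hence $\hat\cN_x(S')=\hat\cN_x(S)$, contradicting minimality unless $S'=S$. Every subsequent minimality step therefore reduces to counting pseudo-$1$ in-neighbours against $S$-in-neighbours at the relevant vertices.

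Claims~(i) and~(ii) are immediate applications. For~(i), any $(v,w)\in\auxH\cap S$ is removable since $\auxH\cup S$ is unchanged. For~(ii), take $S'=\{(v,w)\in S:\mu_{v\to w}(\hat\G,S)=1\}$ and verify the thresholds directly from the fixed-point property of $\mu(\hat\G,S)$. For~(iii), every $(v,w)\in S$ has $v\in\hat\cN_0\cup\hat\cN_\star$ (otherwise $\hat\vmu_{v\to w}=1$ contradicts~(i)), and comparing the threshold at $v$ against the pseudo-$1$ in-count ($\leq k-2$ for $\hat\cN_0$, $=k-1$ for $\hat\cN_\star$) forces $v$ to have at least one incoming $S$-edge. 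For strong connectivity, if $\Sdir$ splits into $\geq 2$ SCCs take a source SCC $D$ and set $S_D=\{(u,v)\in S:u,v\in D\}$; because $D$ has no $S$-incoming from outside, the threshold at every $(u,v)\in S_D$ is identical under $S$ and under $S_D$, so the meta-observation makes $S_D\subsetneq S$ a strictly smaller flipping structure.

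Claim~(iv) combines~(iii) with the edge-orientation structure of $\hat\G$: because $\hat\cN_0$-vertices carry only $00$- and $10$-half-edges, every $\hat\G$-edge between $\hat\cN_0$ and $\hat\cN_\star$ is oriented $\hat\cN_\star\to\hat\cN_0$ in $\auxH$. An $S$-edge $(u,v)$ with $u\in\hat\cN_\star$, $v\in\hat\cN_0$ therefore lies in $\auxH$ (contradicting~(i)), while the reverse $(v,u)$ with $v\in\hat\cN_0$, $u\in\hat\cN_\star$ forces via~(iii) a directed return path in $\Sdir$ that must traverse a forbidden $\hat\cN_\star\to\hat\cN_0$ step somewhere, giving the same contradiction. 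Strong connectivity then confines $V(S)$ entirely to $\hat\cN_0$ or to $\hat\cN_\star$. For~(v) and~(vi), the same fixed-point calculus shows that every vertex of $\Sdir$ has in-degree and out-degree exactly one (removing one incoming $S$-edge at a vertex of in-degree $\geq 2$ still leaves the threshold $\geq k-1$, yielding a smaller flipping structure), which together with~(iii) turns $\Sdir$ into a single directed cycle; in the $\setB$-case a length-$2$ ``double edge'' would fail the threshold (only $k-2$ residual pseudo-$1$'s remain in $\partial v\setminus w$), so $\Sundir$ is an undirected cycle. For~(vii), $v\in V(S)\cap(\hat\cN_0\setminus\setB)$ has $\hat d_{10}(v)\leq k-3$, so for any outgoing $(v,w)\in S$ the threshold forces $\geq k-1-\hat d_{10}(v)\geq 2$ incoming $S$-edges at $v$ from $\partial v\setminus w$, and a short case split on whether $w$ is itself an in-neighbour (which bumps the required count by one) produces $\geq 3$ distinct $\Sundir$-neighbours of $v$ in either case. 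Claim~(viii) is then immediate from~(v), (vi), and~(vii).

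The main obstacle is the non-monotonicity of the modified WP when started from $\auxH\cup S$ ``from below'', which prevents a naive direct simulation under candidate reductions $S'$. The remedy, made possible by~(ii) together with monotonicity of the WP update itself, is to verify the fixed-point threshold directly at each $(v,w)\in\auxH\cup S'$, turning every minimality step into elementary counting on the pseudo-degrees $\hat d_{ab}$ produced by {\tt Forge}, while the cascading argument handles the equality $\mu(\hat\G,S')=\mu(\hat\G,S)$ whenever the threshold conditions propagate consistently through $S\setminus S'$.
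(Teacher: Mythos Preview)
Your strategy—packaging the minimality arguments into a single meta-observation about self-sustaining sub-configurations $S'\subseteq S$—is essentially the paper's approach, and claims (i)–(iv) and (vii) go through as you sketch. One quibble: your meta-observation as stated claims $\mu(\hat\G,S')=\mu(\hat\G,S)$, which is stronger than what the threshold hypothesis alone delivers; the honest conclusion is only that $S'$ is again flipping, but that is all you need for the contradiction.

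The real gap is in (vi). Your edge-removal argument (``removing one incoming $S$-edge at a vertex of in-degree $\geq 2$ still leaves the threshold $\geq k-1$'') works for (v) because in $\hat\cN_\star$ every edge has exactly one direction in $\auxH$, so an $S$-out-neighbour of $v$ is never an $S$-in-neighbour. In $\hat\cN_+$ this breaks: the edges are of type $00$, so both $(v,w)$ and $(w,v)$ may lie in $S$. If $v\in\hat\cN_+$ has in-degree $2$ in $\Sdir$ with in-neighbours $u_1,u_2$ that are \emph{both} also out-neighbours, then removing $(u_1,v)$ leaves the threshold at $(v,u_2)$ equal to $(k-2)+|\{u_2\}\setminus\{u_2\}|=k-2$, and symmetrically for $(u_2,v)$; no single removal preserves all thresholds, so your minimality contradiction does not fire. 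The paper instead extracts a directed cycle $S'\subseteq S$ (which exists by (iii)) and shows it already shifts the partition, forcing $S=S'$; with a little care (walk backwards, using (ii) at each step to pick an in-neighbour distinct from the previous vertex) one obtains a cycle of length $\geq 3$, on which the threshold $(k-2)+1=k-1$ is automatic. Alternatively, in-degree $\leq 1$ follows directly by applying the \emph{failure} side of (ii) at any of $v$'s $k-2\geq 1$ neighbours $w'\in\hat\cN_\star\cup\hat\cN_1$: since $(v,w')\notin\auxH\cup S$ by (iv), the fixed-point property forces $(k-3)+\text{in-deg}_{\Sdir}(v)<k-1$.

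A smaller point: (viii) does not follow from (v), (vi), (vii) alone—when $S\subseteq\hat\cN_0\times\hat\cN_0$ with $V(S)\not\subseteq\hat\cN_+$, statement (vi) is inapplicable and (vii) says nothing about vertices of $V(S)\cap\hat\cN_+$. The paper handles those vertices by a direct argument; the fix is simply to rerun your (vii) threshold count with $\hat d_{10}(v)=k-2$ in place of $\leq k-3$, which yields one $S$-in-neighbour $u\neq w$ and hence two distinct $\Sundir$-neighbours.
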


\begin{proof}
For $S\subset \edgesethat$ let  
	$$d_S^-(v)=|\{w : \mu_{w\to v} (\hat\G,S)=1\}|,\qquad d_S^+(v)=|\{w : \mu_{v\to w} (\hat\G,S)=1\}|.$$
\begin{enumerate}[(i)]
\item 
This simply follows from the minimality of $S$, since an edge of $\auxH$ would be
initialised with a message of $1$ in the modified Warning Propagation algorithm regardless of whether it lies in $S$ or not.
\item
Since the messages of the modified Warning Propagation algorithm 
are monotonically decreasing, we have 
$\mu_{v\to w}(\hat \G,S)\leq \vecone\{(v,w)\in \auxH\cup S\}$.
Further, 
by construction 
$\hat{\vec\mu}$ is a fixed point of the 
modified Warning Propagation algorithm for $S=\emptyset.$
Therefore, for $(v,w)\in\auxH$ we have 
$\mu_{v\to w}(\hat \G,S)\geq\mu_{v\to w}(\hat \G,\emptyset)=
\hat\vmu_{v\to w}=1.$ 
Let $S'$ consist of those directed edges $(v,w)\notin \auxH$ such that $\mu_{v\to w}(\hat \G,S)=1$. Then $S' \subset S$ and for any $v,w$, $\mu_{v\to w}(\hat \G,S')=\mu_{v\to w}(\hat \G,S)$. By the minimality of $S$ we have $S=S'$.
\item
Suppose there is a partition $X\dot\cup Y$
of the vertex set of $\Sdir$ such that $X$ and $Y$ are both non-empty
and there are no edges in $\Sdir$ from $X$ to $Y$. Then let 
$S'=\{(v,w)\in S: v,w\in Y\}$. For any $y \in Y$ and $v\in V(\hat \G)$ we have $\mu_{v\to y}(\hat \G,S)=\mu_{v\to y}(\hat \G,S')$, and therefore also
$\mu_{y\to v}(\hat \G,S)=\mu_{y\to v}(\hat \G,S')$. In other words, $X$ has no effect on the
messages sent out by $Y$. But then $S'$ would be a smaller flipping structure, contradicting the minimality of $S$.
\item
By (i) no edge $(v,w)$ where $v\in \hat \cN_1$ lies in $S$, for such a directed
edge lies in $\auxH$. But since $\delta^+(\Sdir)\ge 1$ by~\eqref{claim_fs-1}, no vertex
of $\hat \cN_1$ can lie in $S$.
Similarly, for any $u\in \hat \cN_\star$ and $v\in \hat \cN_0$ we have $(u,v)\in \auxH$ and therefore $(u,v)\notin S$.
Thus the result follows by~\eqref{claim_fs-1}.
\item
By construction a vertex $v\in \hat\cN_\star$ has 
$d^-_{\emptyset}(v)=k-1$. By~\eqref{claim_fs-1}, $\Sdir$ contains a directed cycle. On the other hand, if $S'\subset S$ is such that $S'$ forms
a directed cycle within $\hat \cN_\star$, then for each $v\in S'$ we have
$d^-_{S'}(v)\ge k$, meaning $v\in \hat \cN_1(S')$. Therefore by the minimality
of $S$ we have $S=S'$.
\item
By~\eqref{claim_fs-1}, $\Sdir$ must contain a directed cycle. On the other hand, if $S'\subset S$ 
forms a directed cycle, then for $v\in S'$ we have $d^-_{S'}(v)=k-1$. Therefore such vertices
are in $\hat \cN_\star(S')$ and by the minimality of $S$ we have $S=S'$
and the assertion follows since $S'$ forms a cycle in 
$\Sundir$.
\item
Let $v\in  \hat \cN_0 \setminus \setB$ be a vertex in $\Sundir$, then it holds that $d^-_{\emptyset}(v)\le k-3$. If $v$ has only one 
in-neighbour in $\Sdir$, then by~\eqref{claim_fs-2} we have
$d^-_{S}(v)\le k-2$ and $\mu_{v\to w}(\hat \G,S)=0$ for all neighbours $w$ of $v$ in $\hat\G$, i.e. $d_S^+(v)=0$
so by~\eqref{claim_fs-2}, we obtain that $v$
has no out-neighbour in $\Sdir$ and therefore $\delta^+(\Sdir)=0$. 
But this contradicts~\eqref{claim_fs-1}.
Therefore, $v$ has at least $2$ in-neighbours in $\Sdir$. By~\eqref{claim_fs-1}, $v$ has at least one out-neigbhour in $\Sdir$.
Now we just need to exclude the possibility that equality holds in both cases and one of the in-neighbours 
of $v$ in $\Sdir$ is also the out-neighbour. For if equality holds, 
i.e. $v$ has exactly two in-neighbours, then
we have $d^-_{S}(v)\leq k-1$. But this means that 
if $w$ is such that $\mu_{w\to v}(\hat \G,S)=1,$
then $\mu_{v\to w}(\hat \G,S)=0$. That is, no vertex 
$w$ can simultaneously be in- and out-neighbour of $v$, as required.
\item 
Let $v\in\hat\cN_+$ be a vertex in $\Sundir$, so $d^-_{\emptyset}(v)=k-2$. Assume that $v$
does only have one neighbour $w$ in $\Sundir$. By~\eqref{claim_fs-1}
$w$ is an in- and out-neighbour of $v$ in $\Sdir$. By~\eqref{claim_fs-2},
in this case we have that 
$d^-_{S}(v) = k-1$, so again 
 we can never have
$\mu_{v\to w}(\hat \G,S) = \mu_{w\to v}(\hat \G,S)=1$.
\qedhere
\end{enumerate}
\end{proof}

In light of \Prop~\ref{Prop_fs} (v) and (vi) we call a flipping structure $S$ a {{\em forbidden cycle}} if 
either $S\subset\hat\cN_\star\times \hat\cN_\star$ or $S\subset\hat\cN_+\times\hat\cN_+$.

\subsection{The subgraph $\hat\G[\hat\cN_0]$}\label{Sec_subc}
We proceed to analyse the structure of the induced subgraphs  $\hat\G[\hat\cN_0]$ and $\hat\G[\hat\cN_+]$ to facilitate the proofs of \Prop s~\ref{lem:condsprob} and~\ref{Prop_cycl}.
{We condition on the event $\cE_1\supset\cE$ that $\hat\G$ is simple.}
The following lemma determines the precise distribution of  $\hat\G[\hat\cN_0]$ given $\hat\cF(\vN,\vM)\cap\cE_1$.

\begin{lemma}\label{lem_Nzero}
Let $\vN, \vM$ be such that $m_{11}$ is even and \eqref{eqthm:contig2} holds. 
Given $\hat\cF(\vN,\vM)\cap\cE_1$  the induced subgraph  $\hat\G[\hat\cN_0]$ 
is a uniform random graph on $\hat n_0$ vertices with $\hat m_{00}/2$ edges.
\end{lemma}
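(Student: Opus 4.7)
The plan is to prove three points in turn: (i) that $\hat\G[\hat\cN_0]$ is determined purely by the matching $\hat\cM_{00}$ together with the degree vector $(\hat d_{00}(v))_{v\in\hat\cN_0}$; (ii) that, conditional on the partition $(\hat\cN_0,\hat\cN_\star,\hat\cN_1)$ and on $\hat\cF(\vN,\vM)$, this pair is stochastically independent of all other randomness in $\hat\G$, so conditioning on $\cE_1$ reduces to conditioning $\hat\cM_{00}$ on producing a simple graph; and (iii) that under this conditioning, every simple graph on $\hat\cN_0$ with $\hat m_{00}/2$ edges arises with the same probability.

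Claim (i) is immediate from step (4) of {\tt Forge}: the $00$-half-edges reside exclusively at vertices of $\hat\cN_0$ and are matched amongst themselves, each $10$-half-edge at a vertex of $\hat\cN_0$ is matched to a $01$-half-edge at a vertex of $\hat\cN_\star\cup\hat\cN_1$, and $\hat\cM_{11}$ contributes only edges inside $\hat\cN_1$. Hence no edge outside $\hat\cM_{00}$ has both endpoints in $\hat\cN_0$, and $\hat\G[\hat\cN_0]$ coincides with the multigraph produced by $\hat\cM_{00}$ on vertex set $\hat\cN_0$.

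For claim (ii), note that the degrees $\hat d_{ab}(v)$ are functions of the independent Poisson variables $\chi_{ab}(v)$, and the three matchings $\hat\cM_{00},\hat\cM_{10},\hat\cM_{11}$ are drawn independently in step (4). Given the partition, the four sums $\hat m_{00},\hat m_{01},\hat m_{10},\hat m_{11}$ involve disjoint families of these independent Poissons, so conditioning on them---which is precisely what $\hat\cF(\vN,\vM)$ does---leaves the four degree sub-vectors and the three matchings mutually independent. The event $\cE_1$ decomposes as the intersection of ``$\hat\cM_{00}$ produces a simple graph'' and ``the remainder of $\hat\G$ is simple'', only the first of which involves $(\hat\cM_{00},(\hat d_{00}(v))_v)$. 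Hence, for the purposes of $\hat\G[\hat\cN_0]$, conditioning on $\cE_1\cap\hat\cF(\vN,\vM)$ is equivalent to conditioning on simplicity of $\hat\G[\hat\cN_0]$ alone.

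For claim (iii), the degree sequence $(\hat d_{00}(v))_{v\in\hat\cN_0}$, being a sum-conditioned family of iid $\Po(\lambda_{00})$'s, follows the multinomial distribution: for any $(d(v))_v$ summing to $\hat m_{00}$,
\[
\pr\bigl[(\hat d_{00}(v))_v=(d(v))_v\bigm|\hat\cF(\vN,\vM),\,\text{partition}\bigr]
 =\frac{\hat m_{00}!}{\hat n_0^{\hat m_{00}}\prod_v d(v)!}.
\]
Given this degree sequence, a standard configuration-model count shows that each simple graph $G$ on $\hat\cN_0$ with these degrees is produced by exactly $\prod_v d(v)!$ of the $(\hat m_{00}-1)!!$ equally likely pairings realised by $\hat\cM_{00}$. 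Multiplying, $\pr[\hat\G[\hat\cN_0]=G\mid\hat\cF(\vN,\vM),\text{partition}]=\hat m_{00}!/(\hat n_0^{\hat m_{00}}(\hat m_{00}-1)!!)$, the same value for every such $G$. Normalising by the conditional probability of simplicity then yields uniformity on the set of simple graphs on $\hat\cN_0$ with $\hat m_{00}/2$ edges. No substantial obstacle is anticipated; the argument is essentially bookkeeping in the configuration model combined with the explicit independence structure baked into {\tt Forge}.
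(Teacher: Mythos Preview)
Your proposal is correct and follows essentially the same approach as the paper: identify $\hat\G[\hat\cN_0]$ with the output of a configuration model on $\hat\cN_0$ with sum-conditioned Poisson (hence uniformly assigned) half-edges, then condition on simplicity. Your treatment is in fact more careful than the paper's, which states the conclusion in one line without justifying why conditioning on the full event $\cE_1$ rather than only on simplicity within $\hat\cN_0$ is harmless; your explicit decomposition $\cE_1=\{\hat\cM_{00}\text{ simple}\}\cap\{\text{rest simple}\}$ together with the independence of the three matchings and the disjointness of the $\chi_{ab}$ families fills that gap cleanly.
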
 
\begin{proof}
Given $\hat\cF(\vN,\vM)$, $\hat\G[\hat\cN_0]$ clearly has $\hat n_0$ vertices.
Further, by step (2) of {\tt Forge} we have $\hat d_{00}(v)=0$
for all $v\not\in\hat\cN_0.$ That is, all $\hat m_{00}$ half-edges of type $00$ are assigned to vertices in $\hat\cN_0.$
Given $\hat m_{00}$ each such half-edge is assigned to a vertex in $\hat\cN_0$ uniformly at random,
and subsequently $\hat\G[\hat\cN_0]$ is formed by matching the half-edges randomly.
In effect, given $\cE_1$ the random graph $\hat\G[\hat\cN_0]$ is uniformly distributed.
\end{proof}

\begin{corollary}\label{claim:avdeg}
For any $\delta >0$ there exists $\eps=\eps(\delta,d,k)>0$ such that for all $\vN, \vM$ such that $m_{11}$ is even and \eqref{eqthm:contig2} holds the following is true.
	\begin{quote}Given $\hat\cF(\vN,\vM)\cap\cE_1$, \whp~ $\hat\G[\hat\cN_0]$ does not contain a subgraph on fewer than $	\eps n$ vertices with average degree at least 
	 $2(1+\delta)$.
	 \end{quote}
\end{corollary}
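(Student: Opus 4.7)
My plan is to apply the first moment method after reducing to a uniform random graph via \Lem~\ref{lem_Nzero}. Conditioned on $\hat\cF(\vN,\vM)\cap\cE_1$, the subgraph $\hat\G[\hat\cN_0]$ is distributed as the uniform $G(\hat n_0,\hat m_{00}/2)$, where $\hat n_0=n-n_\star-n_1$ and $\hat m_{00}=2m-2m_{10}-m_{11}$ are determined by the conditioning. Using~\eqref{eqthm:contig2} together with the definitions in~\eqref{eq_bar} and the fact that $p<1$ (which holds because $\phi_{d,k}(1)=\pr\brk{\Po(d)\geq k-1}<1$, so the largest fixed point lies strictly below $1$), one finds $\hat n_0=(1-p)n+O(\sqrt n)=\Theta(n)$ and $\hat m_{00}=(1-p)^2 dn+O(\sqrt n)=\Theta(n)$. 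Consequently, the edge density $p_0:=(\hat m_{00}/2)/\binom{\hat n_0}{2}$ is bounded above by $C/n$ for some constant $C=C(d,k)$, uniformly across the range permitted by~\eqref{eqthm:contig2}.

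Let $X_s$ denote the number of $s$-element subsets of $\hat\cN_0$ spanning at least $\lceil(1+\delta)s\rceil$ edges in $\hat\G$; the statement to prove is that $\sum_{s<\eps n}X_s=0$ \whp. For $s$ below some threshold $s_0=s_0(\delta)$ we have $X_s=0$ deterministically, since $\lceil(1+\delta)s\rceil$ already exceeds $\binom{s}{2}$. For $s\geq s_0$, the standard first moment estimate in $G(\hat n_0,\hat m_{00}/2)$ together with $\binom{\hat n_0}{s}\leq(en/s)^s$ and $\binom{\binom{s}{2}}{\lceil(1+\delta)s\rceil}\leq(es/(2(1+\delta)))^{(1+\delta)s}$ yields
\begin{align*}
\Erw\brk{X_s\,\big|\,\hat\cF(\vN,\vM)\cap\cE_1}\;\leq\;\bink{\hat n_0}{s}\bink{\binom{s}{2}}{\lceil(1+\delta)s\rceil}p_0^{\lceil(1+\delta)s\rceil}\;\leq\;A^s\bcfr{s}{n}^{\delta s}
\end{align*}
for a suitable constant $A=A(\delta,d,k)$.

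I then choose $\eps>0$ small enough that $A\eps^\delta<1/2$ and split $\sum_{s=s_0}^{\lfloor\eps n\rfloor}\Erw\brk{X_s}$ at $s=\lfloor\log n\rfloor$. For $s_0\leq s\leq\log n$ the individual bound $A^s(s/n)^{\delta s}=n^{-\delta s+o(1)}$ is much smaller than $1$, and since there are only $O(\log n)$ terms this partial sum is $o(1)$. For $\log n< s\leq \eps n$ one instead uses $A^s(s/n)^{\delta s}\leq (A\eps^\delta)^s\leq 2^{-s}$, yielding a geometric tail of order $2^{-\log n}=o(1)$. Hence $\Erw\brk{\sum_s X_s}=o(1)$, and Markov's inequality delivers the conclusion.

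The only substantive point to secure is the uniform upper bound $p_0\leq C/n$; everything else is a routine first-moment calculation for sparse random graphs. This uniform bound is essentially an immediate consequence of the explicit formulas for $\hat n_0,\hat m_{00}$ under the conditioning, together with $p<1$, so I do not anticipate any genuine difficulty. The only reason to spell the calculation out at all is to ensure the conclusion is uniform in $\vN,\vM$ satisfying~\eqref{eqthm:contig2}, which is what will be needed in the applications of this corollary.
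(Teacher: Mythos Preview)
Your proposal is correct and follows the same route as the paper: reduce to a uniform sparse random graph via \Lem~\ref{lem_Nzero} and then invoke the standard fact that such graphs \whp\ contain no small subgraph of average degree strictly exceeding $2$. The paper simply cites this fact as ``well-known'' without details, whereas you have written out the first-moment argument explicitly; the extra care you take to verify the uniform density bound $p_0\le C/n$ across all $\vN,\vM$ satisfying~\eqref{eqthm:contig2} is exactly the point needed to make the conclusion uniform, and your observation that the probability of a fixed edge set being present in $G(\hat n_0,\hat m_{00}/2)$ is at most $p_0^e$ (for $e$ edges) is what justifies using the $G(n,p)$-style bound directly.
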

\begin{proof}
Since a sparse uniformly random graph is well-known to feature no small subgraphs of average degree strictly greater than two,
the assertion is immediate from \Lem~\ref{lem_Nzero}.
\end{proof}

\begin{corollary}\label{claim:avdeg2}
For any $d,k$ there exists $\delta(d,k)>0$ such that for all $\vN, \vM$ such that $m_{11}$ is even and \eqref{eqthm:contig2} holds, the following is true.
	\begin{quote}Given $\hat\cF(\vN,\vM)\cap\cE_1$, \whp~ $\hat\G[\hat\cN_0]$ does not contain a pair of disjoint non-empty subsets $S,T\subset\hat\cN_0$ such that $|S|\leq\delta|T|$ and such that every vertex in $T$ has at least two neighbours in $S$.
	 \end{quote}
\end{corollary}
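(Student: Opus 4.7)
The plan is to reduce, via Lemma~\ref{lem_Nzero}, to a first-moment calculation in a uniform random graph on $N:=\hat n_0$ vertices with $M:=\hat m_{00}/2$ edges. From (\ref{eqthm:contig2}) one has $N=(1-p)n+O(\sqrt n)$ and average degree $2M/N=c+o(1)$ with $c:=d(1-p)>0$. I would choose $\delta=\delta(d,k)\in(0,1)$ small enough to mesh two complementary arguments: Corollary~\ref{claim:avdeg} for bad pairs $(S,T)$ with $|S\cup T|$ of sublinear order, and a direct union bound for those with $|T|$ linear in $N$.

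The first observation is elementary: for any bad pair with $|S|=s$, $|T|=t$, $s\leq\delta t$, the induced subgraph $\hat\G[S\cup T]$ has at most $(1+\delta)t$ vertices and at least $e(S,T)\geq 2t$ edges, so average degree at least $4/(1+\delta)>2$. Picking $\delta_0>0$ with $2(1+\delta_0)<4/(1+\delta)$, Corollary~\ref{claim:avdeg} then furnishes $\eps=\eps(\delta_0,d,k)>0$ such that w.h.p.\ no subgraph of $\hat\G[\hat\cN_0]$ on fewer than $\eps n$ vertices has average degree at least $2(1+\delta_0)$; this rules out all bad pairs with $|S\cup T|<\eps n$.

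For the remaining range $|T|\geq\alpha N$ with $\alpha:=\eps/((1-p)(1+\delta))$ (for $n$ large enough), I would bound the first moment directly. For fixed $S,T$ of sizes $s,t$, the event holds only if, for some choice of two distinct $S$-neighbours $\{u_1(v),u_2(v)\}$ for each $v\in T$, the $2t$ resulting edges are all present in $\hat\G[\hat\cN_0]$; these edges are pairwise distinct because they have distinct $T$-endpoints. In the uniform model $G(N,M)$, the probability that $2t$ fixed distinct edges are simultaneously present is at most $(M/\binom{N}{2})^{2t}(1+o(1))^{2t}\leq (c/N)^{2t}(1+o(1))^{2t}$, so a union bound over the $\binom{s}{2}^t$ choices gives $\pr\brk{(S,T)\text{ bad}}\leq O(1)\cdot(s^2c^2/(2N^2))^t$. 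Using $\binom{N}{s}\leq(eN/s)^s$ and $\binom{N}{t}\leq(eN/t)^t$ and writing $x=s/N$, $y=t/N$, the expected number of such bad pairs is at most
$$\mathrm{poly}(N)\cdot\exp\!\Big(N\sup_{(x,y)\in R}\Phi(x,y)\Big),\qquad\Phi(x,y):=x(1-\log x)+y\log\!\bigl(ec^2x^2/(2y)\bigr),$$
where $R=\{(x,y):0<x\leq\delta y,\;y\geq\alpha,\;x+y\leq 1\}$.

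The main technical hurdle will be showing $\sup_R\Phi<0$ for sufficiently small $\delta$. On $R$ we have $x\leq\delta$ and $x^2/y\leq\delta x\leq\delta^2$, whence $\log(ec^2x^2/(2y))\leq\log(ec^2\delta^2/2)$; taking $\delta<\sqrt{2}/(ec)$ forces this $\log$ below $-1$, so $y\log(ec^2x^2/(2y))\leq -y\leq -\alpha$. Meanwhile $x(1-\log x)\leq\delta(1+|\log\delta|)\to 0$ as $\delta\downarrow 0$. Thus for $\delta$ small enough (depending only on $d,k$) one obtains $\sup_R\Phi\leq -\alpha/2$, the first moment is $o(1)$, and Markov's inequality finishes the proof.
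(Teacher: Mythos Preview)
Your proof is correct and rests on the same first-moment idea as the paper's, but is organized differently. The paper carries out a single union bound over all sizes $s=|S|$, $t=|T|$: after passing by monotonicity to the binomial model it bounds the expected number of bad pairs of given sizes by $\binom{n}{s}\binom{n}{t}(O(s)/n)^{2t}$, simplifies this to $(O(\delta))^t(t/n)^{t/2}$, and sums directly over all $s,t$ --- no case split is needed. You instead dispose of sublinear $|S\cup T|$ via the average-degree observation and Corollary~\ref{claim:avdeg}, reserving the explicit first-moment bound (and the optimization of the rate function $\Phi$) for the linear regime $|T|\geq\alpha N$. Both routes work; the paper's is a bit shorter, while yours makes the role of the constant $\alpha$ in the linear regime more transparent.

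One point of care in your version: as written, $\delta_0$, $\eps$, and $\alpha$ all depend on $\delta$, and at the end you need $\delta$ small in terms of $\alpha$, which looks circular. The fix is to observe that any fixed $\delta_0\in(0,1)$ (say $\delta_0=1/4$) satisfies $2(1+\delta_0)<4/(1+\delta)$ for all sufficiently small $\delta$; this pins down $\eps=\eps(\delta_0,d,k)$ and hence a lower bound $\alpha_0>0$ on $\alpha$ depending only on $d,k$, after which $\delta$ may be chosen small relative to $\alpha_0$ and $c=d(1-p)$. With that order of choices your argument goes through.
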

\begin{proof}
We claim that the probability that there exist such sets $S,T$ of sizes $s,t$ is bounded by
	\begin{align*}
	\bink ns\bink nt\bcfr{O(s)}n^{2t},
	\end{align*}
with the $O(\nix)$-term depending on $d$.
Indeed, the binomial coefficients bound the number of ways of choosing $S,T$.
Due to monotonicity we may bound the probability term via the binomial random graph of bounded average degree, and thus the probability that a given $v\in T$ has two neighbours in $S$ is bounded by $(O(s)/n)^2$.
Further,
	\begin{align*}
	\bink ns\bink nt\bcfr{O(s)}n^{2t}\leq\bcfr{\eul n}{s}^s\bcfr{\eul n}{t}^t\bcfr{O(s)}n^{2t}
	& \leq\exp(s+O(t))\bcfr{s}{t}^t\bcfr{s}{n}^{t-s} 
	 \le \left(O(\delta)\right)^t \left(\frac{t}{n}\right)^{t/2}.
	\end{align*}
Summing over all $s,t$, we obtain
\begin{align*}
\sum_t\sum_{s\le \delta t}\left(O(\delta^2)\frac{t}{n}\right)^{t/2}
& \le \sum_{t\le \ln n} \delta \ln n \frac{1}{\sqrt{n}} + \sum_{t\ge \ln n} n\left(O(\delta^2)\right)^{\ln n} = o(1),
\end{align*}
as desired.
\end{proof}

As a next step we establish that the subgraph induced on $\hat\cN_+$ is subcritical, i.e. has average degree less than $1$.
In effect,  there is no large component \whp\

\begin{lemma}\label{lem:subcritical}
Let ${\vN},{\vM}$  be such that $m_{11}$ is even and \eqref{eqthm:contig2} holds. 
Given $\hat\cF(\vN,\vM)\cap \cE_1$ the average degree of $\hat\G[\setB]$ converges in probability to
$
\gamma_+=\bar{q}(1-p)d =(k-1)q <1.
$
\end{lemma}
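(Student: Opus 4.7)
The plan is to compute the numerator $|E(\hat\G[\setB])|$ and the denominator $|\setB|$ of the average degree separately up to an $o(n)$ error, then take their ratio. Recall that $\setB=\{v\in\hat\cN_0:\hat d_{10}(v)=k-2\}$ is determined by the variables $\chi_{10}(v)$ from step~(2) of {\tt Forge}, whereas $\hat\G[\hat\cN_0]$ is determined by $(\chi_{00}(v))_v$ together with the random matching $\hat\cM_{00}$ from step~(4); crucially, these are built from independent pieces of the randomness of {\tt Forge}.

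For the denominator, I would show that $|\setB|/n\to(1-p)\bar q$ in probability given $\hat\cF(\vN,\vM)\cap\cE_1$. Conditional on $\hat\cF(\vN,\vM)$ the degrees $(\hat d_{10}(v))_{v\in\hat\cN_0}$ are iid copies of $\Po_{\leq k-2}(dp)$ conditioned on their sum being $\hat m_{10}-(k-1)\hat n_\star$, which by \eqref{eqthm:contig2} lies within $O(\sqrt n)$ of its unconditional expectation. A standard local central limit argument (along the lines of \Thm~\ref{Thm_McD}) then shows that the marginal $\pr[\hat d_{10}(v)=k-2\mid\hat\cF(\vN,\vM)]\to\bar q$, while by exchangeability the pairwise correlations of the indicators $\vecone\{v\in\setB\}$ are $O(1/n)$, so Chebyshev yields the claimed concentration.

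For the numerator, I would invoke \Lem~\ref{lem_Nzero}: given $\hat\cF(\vN,\vM)\cap\cE_1$, the graph $\hat\G[\hat\cN_0]$ is uniformly distributed on the set of simple graphs with vertex set $\hat\cN_0$ and $\hat m_{00}/2$ edges. Since $\setB$ depends only on $\hat d_{10}$, it is independent of $\hat\G[\hat\cN_0]$ conditional on the vertex types and on the sum $\hat m_{10}$; combined with the exchangeability argument of the previous paragraph, this means that given $|\setB|=s$ the set $\setB$ is a uniform random $s$-subset of $\hat\cN_0$ independent of $\hat\G[\hat\cN_0]$. The number of edges of a uniform graph with $\hat n_0$ vertices and $\hat m_{00}/2$ edges that fall inside a uniform random $s$-subset is therefore hypergeometrically distributed with mean $\binom{s}{2}(\hat m_{00}/2)/\binom{\hat n_0}{2}$ and variance $O(n)$; substituting the concentrated values $s=n(1-p)\bar q+o(n)$, $\hat n_0=n(1-p)+o(n)$, $\hat m_{00}=2m\mu_{00}+o(n)=nd(1-p)^2+o(n)$ and applying Chebyshev yields $|E(\hat\G[\setB])|/n\to\tfrac12\bar q^2d(1-p)^2$ in probability.

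Combining these two estimates, the average degree of $\hat\G[\setB]$ converges in probability to $\bar q\,d(1-p)=(k-1)q$, where the final equality is \eqref{eq:qbarexp}, and Fact~\ref{fact_FPana}(ii) guarantees that $(k-1)q<1$. The only real technical obstacle is rigorously handling the conditioning on $\hat\cF(\vN,\vM)$: here one must control both the marginal of $\hat d_{10}(v)$ under the conditioning (via local limit asymptotics for sums of iid truncated Poissons) and the clean split of {\tt Forge}'s randomness that keeps $\setB$ independent of $\hat\G[\hat\cN_0]$ once the types and half-edge totals are fixed; both reduce to standard computations.
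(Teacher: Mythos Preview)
Your proposal is correct and reaches the same conclusion, but the paper's route is more elementary in one respect and organised differently.

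For the denominator, the paper does not go through a local limit theorem for the conditional marginals of $\hat d_{10}(v)$. Instead it observes (Claim~\ref{claim_nplus1}) that \emph{unconditionally} on $\hat m_{10}$, given $\hat n_0$, the membership events $\{v\in\setB\}$ are iid $\Be(\bar q)$, so $\sizeB\sim\Bin(\hat n_0,\bar q)$. The conditioning on $\hat\cF(\vN,\vM)$ is then handled by a perturbation argument (Claim~\ref{claim_nplus}): one first shows concentration given only $\hat\cF(\vN)$, notes that $\pr[\hat\cD(\xi)\mid\hat\cF(\vN)]=\Omega(1)$ by the CLT, and finally observes that shifting $\vM$ by $O(\sqrt n)$ can shift $\sizeB$ by at most $O(\sqrt n)$. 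This avoids invoking Theorem~\ref{Thm_McD} here.

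For the numerator, the paper again works inside the configuration model rather than via Lemma~\ref{lem_Nzero}: given $\hat n_+$, each type-$00$ half-edge is independently at a vertex of $\setB$ with probability $\hat n_+/\hat n_0$, so $\hat m_+\sim\Bin(\hat m_{00}/2,(\hat n_+/\hat n_0)^2)$, and a Chernoff bound finishes. Your hypergeometric argument via Lemma~\ref{lem_Nzero} (fix $\setB$, sample the uniform graph; the edge count inside is hypergeometric on the population of $\binom{\hat n_0}{2}$ pairs) is equally valid and arguably cleaner once that lemma is in hand. The trade-off is that the paper's binomial characterisation of $\hat m_+$ is reused verbatim in later estimates (e.g.\ Corollary~\ref{Claim_claim_fssmall_path}), so deriving it directly pays off downstream.
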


We proceed to prove \Lem~\ref{lem:subcritical}.
We recall that $\hat n_+=|\hat\cN_+|$ and further let $\hat m_+$ be the number  of edges spanned by $\hat\cN_+.$ 
Let $\hat \cF(\vN,\vM,n_+)=\hat\cF(\vN,\vM)\cap\{\sizeB=n_+\}$ and 
$\hat \cF(\vN,\vM,n_+,m_+)=\hat\cF(\vN,\vM)\cap\{\sizeB=n_+,\hat m_+=m_+\}$.
The following two claims facilitate the proof of \Lem~\ref{lem:subcritical}.

\begin{claim}\label{claim_nplus1}
Let ${\vN},{\vM}$  be such that $m_{11}$ is even and \eqref{eqthm:contig2} holds. 
Then $\hat n_+$ has distribution $\Bin(\hat n_0, \bar q)$.
Moreover, given $\hat \cF(\vN,\vM,n_+)$, $\hat m_+$ has distribution
$\Bin(\hat m_{00}/2, (\hat n_+/\hat n_0)^2)$.
Further, given $\hat\cF(\vN,\vM,n_+,m_+)\cap \cE_1$, $\hat\G[\hat\cN_+]$
is a uniformly random graph on $\hat n_+$ vertices with $\hat m_+$ edges.
\end{claim}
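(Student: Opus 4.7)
The three parts of the claim isolate three nested layers of the randomness built into {\tt Forge}, and I would peel them off in turn, using only the independences already present in the algorithm together with Lemma~\ref{lem_Nzero}. For part~(1), I condition on the partition $\hat\cN_0\cup\hat\cN_\star\cup\hat\cN_1$ produced in step~(1). The variables $(\chi_{10}(v))_{v\in\hat\cN_0}$ are then i.i.d.\ with law $\Po_{\leq k-2}(\lambda_{10})$; since $\lambda_{10}=dp$ by~\eqref{eqlambda}, the definition~\eqref{eq:qbar} of $\bar q$ yields $\pr[\chi_{10}(v)=k-2]=\bar q$. As $\hat n_+=\sum_{v\in\hat\cN_0}\vecone\{\chi_{10}(v)=k-2\}$ is a sum of $\hat n_0$ independent Bernoulli$(\bar q)$ variables, $\hat n_+\sim\Bin(\hat n_0,\bar q)$. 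The remaining coordinates of $\hat\cF(\vN,\vM)$ depend only on the independent variables $\chi_{00},\chi_{01},\chi_{11}$ or on vertices outside $\hat\cN_0$, so conditioning on them further does not affect this law.

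For part~(2), I condition on $\hat\cF(\vN,\vM,n_+)$, which in particular fixes $\hat m_{00}$. The crucial step is the standard Poisson-multinomial correspondence: conditional on $\hat\cN_0$ and $\sum_{v\in\hat\cN_0}\chi_{00}(v)=\hat m_{00}$, the vector $(\hat d_{00}(v))_{v\in\hat\cN_0}$ is multinomially distributed with $\hat m_{00}$ trials and uniform cell probabilities over $\hat\cN_0$. Equivalently, steps~(2) and~(4) restricted to type-$00$ objects may be reformulated as placing $\hat m_{00}$ labelled half-edges i.i.d.\ uniformly into $\hat\cN_0$ and then pairing them according to a uniform matching, with the placement and the matching independent. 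By exchangeability of the half-edge labels I may fix the matching to the canonical pairing $\{1,2\},\{3,4\},\dots,\{\hat m_{00}-1,\hat m_{00}\}$. Each pair contributes to $\hat m_+$ iff both of its half-edges land in $\hat\cN_+$, which happens with probability $(\hat n_+/\hat n_0)^2$, and the events for distinct pairs are independent because all $\hat m_{00}$ placements are. Thus $\hat m_+\sim\Bin(\hat m_{00}/2,(\hat n_+/\hat n_0)^2)$.

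For part~(3), I invoke Lemma~\ref{lem_Nzero} to get that, given $\hat\cF(\vN,\vM)\cap\cE_1$, the graph $\hat\G[\hat\cN_0]$ is uniform among simple graphs on $\hat\cN_0$ with $\hat m_{00}/2$ edges. Since $(\chi_{10}(v))_{v\in\hat\cN_0}$ is i.i.d., the set $\hat\cN_+\subset\hat\cN_0$ is exchangeable given $|\hat\cN_+|=n_+$; by symmetry it therefore suffices to condition on $\hat\cN_+$ being any fixed $n_+$-subset of $\hat\cN_0$. For a simple graph $H$ on $\hat\cN_+$ with $m_+$ edges, the number of simple graphs on $\hat\cN_0$ with $\hat m_{00}/2$ edges that restrict to $H$ on $\hat\cN_+$ equals $\binom{\binom{\hat n_0}{2}-\binom{\hat n_+}{2}}{\hat m_{00}/2-m_+}$, which depends only on $m_+$. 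Hence, by uniformity, $\hat\G[\hat\cN_+]$ given $\hat\cF(\vN,\vM,n_+,m_+)\cap\cE_1$ is uniform over simple graphs on $\hat n_+$ vertices with $m_+$ edges.

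None of the steps is genuinely hard and the overall argument is elementary book-keeping of independences and symmetries. The only subtle point is part~(2): a naive exchangeability argument would yield merely that the $\hat m_{00}/2$ edge indicators are exchangeable, and full independence requires the Poisson-multinomial move replacing the conditional multinomial degree assignment by i.i.d.\ uniform placement of half-edges.
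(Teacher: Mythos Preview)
Your argument is correct and follows essentially the same approach as the paper's proof: both use the independence of $\chi_{10}$ from the other step-(2) variables for part~(1), the Poisson--multinomial correspondence (half-edges land i.i.d.\ uniformly in $\hat\cN_0$ given the sum $\hat m_{00}$) for part~(2), and the reduction to Lemma~\ref{lem_Nzero} for part~(3). You are somewhat more explicit than the paper about why the $\hat m_{00}/2$ edge indicators are genuinely independent rather than merely exchangeable, and about the counting argument showing that the number of extensions of $H$ to a simple graph on $\hat\cN_0$ depends only on $|E(H)|$; the paper simply asserts these steps.
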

\begin{proof}
We recall that $\hat \cN_+$ is the set of all $v\in\hat\cN_0$
such that $\hat d_{10}(v)=k-2.$
By the definition of $\hat d_{10}$,
	$$\pr\brk{v\in \setB | v \in \hat\cN_0} = \pr\brk{ \hat d_{10}(v)= k-2 | \hat d_{10}(v)\le k-2} = \bar q$$
independently for all for all $v\in[n]$. 
Hence, given $\hat n_0$, the parameter $\sizeB $ has distribution  $\Bin (\hat n_0,\bar q)$.

Since $\hat\cN_+\subset\hat\cN_0$, all edges spanned by $\setB$ are of type $00$. 
Moreover, the construction in steps (2)--(3) of {\tt Forge} ensures that given $\hat n_+$ and  $\hat n_0$, for each of the $\hat m_{00}$ half-edges of type $00$ the probability of being assigned to a vertex in $\setB$ is just $\sizeB/{\hat n_0}$.
Further, each of the $\hat m_{00}/2$ edges constructed  from the matching of half-edges of type $00$ forms a 
edge within $\hat\G[\hat\cN_+]$ iff both of the corresponding half-edges were assigned to a vertex from $\hat\cN_+$.
Therefore, the number $\hat m_+$ of edges  within $\setB$ is distributed as $\Bin(\hat m_{00}/2,(\sizeB/\hat n_0)^2)$. 
Finally, given $\hat m_+$, steps (5) and (6) of {\tt Forge} generate a random multi-graph on $\hat\cN_+$ and given
the event $\cE_1$, this graph is uniformly distributed given its order and size by the same token as in the proof of \Lem~\ref{lem_Nzero}.
\end{proof}

\begin{claim}\label{claim_nplus}
Suppose that $\omega=\omega(n)\to\infty.$
Uniformly for all ${\vN},{\vM}$ such that $m_{11}$ is even and \eqref{eqthm:contig2} holds, 
given $\hat\cF(\vN,\vM)$ \whp~we have $|\sizeB-(1-p)\bar q n | \le \omega\sqrt{n}$.
\end{claim}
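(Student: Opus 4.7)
The plan is to combine the exact distributional identification of Claim~\ref{claim_nplus1} with a standard second-moment concentration bound. First I would observe that under the event $\hat\cF(\vN,\vM)$ the quantity $\hat n_0 = n-\hat n_\star-\hat n_1 = n-n_\star-n_1$ is a deterministic function of $\vN$. Using the identity $\nu_0=1-p=1-(\nu_\star+\nu_1)$ and the hypothesis \eqref{eqthm:contig2}, this yields the deterministic estimate
\begin{align*}
|\hat n_0 - (1-p)n|\le |n_\star-n\nu_\star|+|n_1-n\nu_1|\le \xi\sqrt n.
\end{align*}

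Next, Claim~\ref{claim_nplus1} asserts that $\sizeB\mid \hat\cF(\vN,\vM)$ has distribution $\Bin(\hat n_0,\bar q)$. In particular the conditional expectation equals $\hat n_0 \bar q$ and the conditional variance equals $\hat n_0\bar q(1-\bar q)\le n$. Chebyshev's inequality therefore gives, for any $t>0$,
\begin{align*}
\pr\brk{|\sizeB-\hat n_0\bar q|\ge t\sqrt n\;\big|\;\hat\cF(\vN,\vM)}\le t^{-2}.
\end{align*}

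Choosing $t=\omega/2\to\infty$, we obtain $|\sizeB-\hat n_0\bar q|\le \omega\sqrt n/2$ w.h.p. Combining this with the deterministic estimate $|\hat n_0\bar q-(1-p)\bar q n|\le \xi\bar q\sqrt n$ and using that $\omega\to\infty$ (so $\omega\sqrt n/2\ge \xi\bar q\sqrt n$ for all sufficiently large $n$), the triangle inequality yields $|\sizeB-(1-p)\bar q n|\le \omega\sqrt n$ w.h.p. Uniformity in $\vN,\vM$ satisfying \eqref{eqthm:contig2} is automatic, since every bound in the argument depends on $\vN,\vM$ only through the fixed constant $\xi$. There is no real obstacle here; the only point to watch is that the $O(\sqrt n)$ slack between $\hat n_0$ and $(1-p)n$ has to be absorbed into the $\omega\sqrt n$ window, which is precisely why the claim requires $\omega\to\infty$.
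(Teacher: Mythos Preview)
Your argument has a genuine gap at the step where you invoke Claim~\ref{claim_nplus1}. That claim (and its proof) only establishes that $\sizeB\sim\Bin(\hat n_0,\bar q)$ given the partition, i.e.\ given $\hat\cF(\vN)=\{\hat n_\star=n_\star,\hat n_1=n_1\}$. It does \emph{not} assert this conditional on the full event $\hat\cF(\vN,\vM)$, and in fact the binomial law fails under that stronger conditioning: $\hat\cF(\vN,\vM)$ fixes $\hat m_{10}=m_{10}$, and since $\hat m_{10}-(k-1)n_\star=\sum_{v\in\hat\cN_0}\chi_{10}(v)$ while $\sizeB=\sum_{v\in\hat\cN_0}\vecone\{\chi_{10}(v)=k-2\}$, the two quantities are correlated through the common family $(\chi_{10}(v))_{v\in\hat\cN_0}$. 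Conditioning on the sum destroys the independence you need for a binomial, so your Chebyshev step is applied to the wrong law.

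This is precisely the obstacle the paper's proof is built to overcome. The paper first applies the binomial concentration under the weaker conditioning $\hat\cF(\vN)$ to get $\pr[\hat\cA(\omega/2)\mid\hat\cF(\vN)]=1-o(1)$, then shows $\pr[\hat\cD(\xi)\mid\hat\cF(\vN)]=\Omega(1)$ via the CLT so that $\pr[\hat\cA(\omega/2)\mid\hat\cF(\vN)\cap\hat\cD(\xi)]=1-o(1)$, and finally passes from the aggregate event $\hat\cD(\xi)$ to a specific $\hat\cF(\vN,\vM)$ by arguing that an $O(\sqrt n)$ perturbation of $\vM$ can shift $\sizeB$ by at most $O(\sqrt n)$. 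Your deterministic bound on $|\hat n_0-(1-p)n|$ is fine and is also implicitly used in the paper, but you still need some mechanism---either this perturbation argument or a direct analysis of the conditioned sum---to bridge from $\hat\cF(\vN)$ to $\hat\cF(\vN,\vM)$.
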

\begin{proof}
To estimate $\sizeB$
denote by $\hat\cA(\omega)$ the event that $|\sizeB-(1-p)\bar q n | \le \omega\sqrt{n}$ and
let $\hat\cF(\vN)=\{\hat n_\star=n_\star, \hat n_1=n_1\}$. 
By Claim~\ref{claim_nplus1} given $\hat n_0$, the parameter $\sizeB $ has distribution  $\Bin (\hat n_0,\bar q)$.
Hence,
	\begin{equation}\label{eqclaim_nplus1}
	\pr\brk{\hat\cA(\omega/2)|\hat\cF(\vN)}=1+o(1).
	\end{equation}

To prove the desired bound given $\hat\cF(\vN,\vM)$, consider the event 
	$$\hat\cD(\xi)=\cbc{|\hat n_\star - n\nu_\star|
		+|\hat n_1-n\nu_1 |+|\hat m_{10}-2m\mu_{10}|+|\hat m_{11}-2m\mu_{11}|\leq \xi \sqrt{n}}.$$
To estimate its probability, we calculate
\begin{align*}
 \Erw\brk{\chi_{10}(v)}
 =\frac{1}{1-p}\sum_{i\leq k-2} \frac{i(dp)^i}{i!\exp(dp)}
 =\frac{dp}{1-p}\pr\brk{\Po(dp) \leq k-3}
 =dp(1-\bar q),\\
 \Erw\brk{\chi_{11}(v)}
 =\frac{1}{p(1-q)}\sum_{i\geq k}\frac{ i(dp)^i}{i!\exp(dp)}
 =\frac{dp}{p(1-q)}\pr\brk{\Po(dp) \geq k-1}
 =\frac{dp}{1-q}.
\end{align*}
Recalling the definitions of $\mu_{10},\mu_{11},\nu_1, \nu_0$
{we obtain that $\Erw[\hat m_{10}|\hat\cF(\vN)]=2m\mu_{10}$
and $\Erw[\hat m_{10}|\hat\cF(\vN)]=2m\mu_{10}.$}
Given $\hat\cF(\vN)$, the parameters $\hat m_{10}$ and $\hat m_{11}$ are sums of independent random variables with a bounded second moment by the construction in step (2) of {\tt Forge}.
Thus, the central limit theorem shows that $\pr[\hat\cD(\xi)|\hat\cF(\vN)]=\Omega(1)$ for any fixed $\xi>0$.
Therefore, (\ref{eqclaim_nplus1}) implies that
\begin{align}\label{eq:sizeB}
\pr\brk{\hat\cA (\omega/2)| 
\hat\cF(\vN)\cap\hat\cD(\xi) }=1+o(1).
\end{align}
Furthermore, conditioned on $\hat\cF(\vN)$,
perturbing $\vM$ by at most $O(\sqrt{n})$ in each coordinate 
will change $\sizeB$ by at most $O(\sqrt{n})$.
This implies that
for $\vN,\vM$ such that \eqref{eqthm:contig2} holds we have 
\begin{align*}
\pr\brk{\hat\cA(\omega) | 
\hat\cF(\vN,\vM)}
=1+o(1)
\end{align*}
by \eqref{eq:sizeB}.
\end{proof}

\begin{proof}[Proof of \Lem~\ref{lem:subcritical}]
Let $\omega=\omega(n)\to\infty$ sufficiently slowly. Let 
$\hat\cA(\omega)$ be 
the event that 
$|\sizeB-(1-p)\bar q n | \le \omega\sqrt{n}$.
By Claim~\ref{claim_nplus1}, the number {$\hat m_+$} of edges within 
$\hat\G[\setB]$ is distributed as $\Bin(\hat m_{00}/2,(\sizeB/\hat n_0)^2)$.
Hence,
$${\Erw[\hat m_+
|\hat\cF(\vN,\vM)\cap\cE_1\cap\hat\cA(\omega)]=}\frac{\hat m_{00}}{2}\left(\frac{\sizeB}{\hat n_0}\right)^2 \sim\frac{(1-p)\bar q d}{2}\sizeB.$$
Claim~\ref{claim_nplus} shows that  given $\hat\cF(\vN,\vM)$, the event $\hat \cA(\omega)$ occurs  \whp\
The Chernoff bound therefore shows that conditioned on
$\hat\cF(\vN,\vM)\cap\cE_1$ we have 
$\hat m_+\sim\frac{(1-p)\bar q d}{2}\sizeB$ \whp\
Therefore \whp\ the average degree of $\hat\G[\cN_+]$ is 
$(1-p)\bar q d+o(1)$. The assertion thus follows from  Fact~\ref{fact_FPana} (2). 
\end{proof}
\begin{corollary}\label{Claim_claim_fssmall_path}
Let $\vN,\vM$ be such that $m_{11}$ is even and 
\eqref{eqthm:contig2} holds.
Then there exists $\eps=\eps(d,k)$ such that given 
$\hat\cF(\vN,\vM)\cap \cE_1,$ \whp~there is no set $T\subset\hat\cN_0$ with the following properties:
	\begin{enumerate}[(1)]
	\item $t=|T|\leq \eps n,$
	\item there are $0.99 |T|\leq y\leq 1.01 |T|$ edges in 
	$\hat\G[T]$,
	\item there are $s\geq 0.1 |T|$ vertex-disjoint paths of length at  
	least $2$ whose internal vertices lie 
	in $\hat\G[\hat\cN_+]\setminus T$
	and that each join two vertices in $T$.
	\end{enumerate}
\end{corollary}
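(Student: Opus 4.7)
The plan is to apply Corollary~\ref{claim:avdeg} to the subgraph of $\hat\G[\hat\cN_0]$ induced by $T$ together with the internal vertices of the $s$ paths, with a dichotomy on the total path length $L=\sum_i\ell_i$ (where $\ell_i\geq 2$ is the length of the $i$-th path). Fix a small $\delta>0$, set $K=\lceil 0.09/\delta-0.9\rceil$ and let $\eps_0=\eps(\delta,d,k)$ be the threshold from Corollary~\ref{claim:avdeg}; I will take $\eps\leq\eps_0/(K+1)$. Given a putative $(T,P_1,\ldots,P_s)$, let $S:=\bigcup_i V(P_i)\setminus T\subset\hat\cN_+\setminus T$, so that $|S|=L-s$ by vertex-disjointness. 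Since $\hat\cN_+\subset\hat\cN_0$, the induced subgraph $H:=\hat\G[T\cup S]$ is a subgraph of $\hat\G[\hat\cN_0]$ with $|V(H)|=t+L-s$ and $|E(H)|\geq y+L$ (the $y$ edges inside $T$ and the $L$ path edges being pairwise distinct).

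If $L\leq Kt$, then $|V(H)|\leq(K+1)t\leq\eps_0 n$ and, using $s\geq 0.1t$, the average degree of $H$ satisfies
\begin{align*}
\frac{2|E(H)|}{|V(H)|} \;\geq\; \frac{2(0.99\,t+L)}{0.9\,t+L} \;=\; 2+\frac{0.18\,t}{0.9\,t+L} \;\geq\; 2+\frac{0.18}{0.9+K} \;=\; 2(1+\delta).
\end{align*}
Corollary~\ref{claim:avdeg} then rules out $H$ \whp, giving the desired contradiction. If on the other hand $L>Kt$, then at least $L-2s\geq(K-1)t$ of the path edges lie entirely within $\hat\G[\hat\cN_+]$, which by Lemma~\ref{lem:subcritical} and Fact~\ref{fact_FPana}(2) has average degree $\gamma_+=(k-1)q<1$ \whp. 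I would conclude via a first-moment enumeration over $T$, $y$, the $s$ endpoint pairs in $T$, the lengths $\ell_i$, and the internal vertices in $\hat\cN_+$: using that edges within $\hat\cN_+$ have effective density $\gamma_+$ under the Forge construction, one obtains an exponential factor $\gamma_+^L$ that, for $K$ chosen large enough in terms of $d$ and $k$, dominates the polynomial combinatorial factors and makes the expected count $o(1)$ after summing over $L>Kt$ and $t\leq\eps n$.

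The main obstacle is the long-path regime: the density argument via Corollary~\ref{claim:avdeg} is straightforward when $L=O(t)$, but ruling out very long path systems requires careful combinatorial enumeration of vertex-disjoint paths with prescribed endpoint sets in the subcritical graph $\hat\G[\hat\cN_+]$. The essential saving input is $\gamma_+<1$ (via Fact~\ref{fact_FPana}(2)), which furnishes the exponential suppression $\gamma_+^L$ needed to kill the combinatorial blow-up uniformly over the allowed range of $t$.
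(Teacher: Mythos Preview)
Your dichotomy on the total path length $L$ gives a genuinely different organisation from the paper's argument. In the regime $L\le Kt$, your density argument via \Cor~\ref{claim:avdeg} is clean and works: the subgraph $H$ on $T\cup S$ has $\le (K+1)t\le \eps_0 n$ vertices and average degree at least $2+0.18/(0.9+K)$, so the corollary applies. (One minor slip: with $K=\lceil 0.09/\delta-0.9\rceil$ the inequality goes the wrong way and you get average degree $\ge 2+$ something $\le 2\delta$; you should instead fix $K$ first---large enough for the second regime---and then set $\delta=0.09/(0.9+K)$ before invoking \Cor~\ref{claim:avdeg}.) The paper does not use this shortcut at all: it runs a first-moment computation throughout.

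Where your proposal becomes thin is the regime $L>Kt$. You correctly identify $\gamma_+<1$ (Fact~\ref{fact_FPana}(2)) as the crucial input and note that at least $L-2s$ edges lie inside $\hat\G[\hat\cN_+]$, but the phrase ``dominates the polynomial combinatorial factors'' undersells what is needed: the enumeration over $T$, $y$, and path data produces factors that are exponential in $t$, including a troublesome $(n/t)^{t-y}\le (n/t)^{0.01t}$ from the gap $y\ge 0.99t$, and one must also control the sum over compositions $(\ell_1,\ldots,\ell_s)$ and over $s$. The paper handles exactly this computation, but with a different case split---on $s\ge 2t$ versus $s<2t$ rather than on $L$---and extracts the decay from the factor $(t/n)^{s+y-t}$ (which is $\le (t/n)^{0.09t}$ in the second case, using $s\ge 0.1t$ and $y\ge 0.99t$), while the geometric sum over path lengths contributes only an $O(1)$ factor via $\gamma_+<1$. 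Your $\gamma_+^{Kt}$-based route is plausible, but to make it rigorous you would still need the careful bookkeeping that the paper carries out; as written, the second half is a sketch rather than a proof.
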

\begin{proof}
Let us  define 
$\nu_+=(1-p)\bar q$ and $\nu_-=(1-p)(1-\bar q)$
and pick a slowly growing $\omega=\omega(n)\to\infty$.
By Claim~\ref{claim_nplus} and
\Prop~\ref{lem:condsprob},
because $\hat n_++\hat n_-=\hat n_0$ we have 
 	\begin{equation*}
	\pr\brk{|\sizeB-\nu_+ n |+ |\hat n_--\nu_- n |\le 3\omega\sqrt{n}|\hat \cF(\vN,\vM)\cap \cE_1}=1-o(1).
		\end{equation*}
Let $\cA(3\omega)$ denote the event that 
$|\sizeB-\nu_+ n |+ |\hat n_--\nu_- n |\le 3\omega\sqrt{n}$ holds.
By Claim~\ref{claim_nplus1}, the number {$\hat m_+$} of edges within 
$\hat\G[\setB]$ is distributed as $\Bin(\hat m_{00}/2,(\sizeB/\hat n_0)^2)$.
Hence,
$$\Erw[\hat m_+
|\hat\cF(\vN,\vM)\cap\cE_1\cap\hat\cA(3\omega)]=
\frac{\hat m_{00}}{2}\left(\frac{\sizeB}{\hat n_0}\right)^2 \sim\frac{\gamma_+}{2}\sizeB.$$
Further, the Chernoff bound implies 
that conditioned on 
$\hat\cF(\vN,\vM)\cap\cE_1\cap\hat\cA(3\omega)$
\whp~we have 
 	\begin{equation}
	|2\hat m_+-\gamma_+\hat n_+|\le \omega\sqrt{n}.\label{eq:approx+}
		\end{equation}
		
Let $Y(k_1,\ldots,k_s)$ denote 
the number of subsets $T\subset \hat\cN_0$ with properties (1) -- (3) of size $t$ with paths of lengths $k_1,\ldots,k_s$.
We aim to use the first moment method 
for $Y(k_1,\ldots,k_s)$ conditioned on
	$\hat \cB=\hat A(3\omega)\cap\hat \cF(\vN,\vM)\cap \cE_1$.

Since the appearance of the given subgraph is a monotone graph property,
by \Lem~\ref{lem_Nzero} it suffices to estimate the probability 
of the existence of a subgraph with properties (1)--(3) 
in the binomial random graph on $\hat n_0$ vertices with average degree
$\gamma_0=\hat m_{00}/\hat n_0$; we will merely lose a constant factor.
Therefore, conditioned on $\hat\cF(\vN,\vM)\cap\cE_1$ 
 the expected number  of sets $T\subset\hat\cN_0$ of size $t$ that span $y$ edges is approximated up to a constant factor by
\begin{align}\label{eq:boundD1}
D=D(t,y,s)=\bink{\hat n_0}{t}\bink{\bink t2}{y}
\br{\frac{\gamma_0}{\hat n_0}}^y
\leq 
\br{\frac{\eul \hat n_0}{t}}^t\br{\frac{\eul t^2}{2y}}^y
\br{\frac{\gamma_0}{\hat n_0}}^y.
\end{align}

Similarly, by Claim~\ref{claim_nplus1} conditioned on 
$\hat\cF(\vN,\vM,n_+,m_+)\wedge\cE_1$ 
the expected number of paths of lengths $k_1,\ldots,k_s$ in $\hat \cN_+\setminus T$ whose endpoints are adjacent to a vertex in $T$ is upper bounded up to a constant 
by 
\begin{align*}
B(k_1,\ldots,k_s)
\le\bink{\hat n_+}{s}
\prod_{i=1}^s
\brk{
\bink{\hat n_+}{k_i}k_i!\br{\frac{2\hat m_+}{\hat n_+^2}}^{k_i}
\br{\frac{\gamma_0 t}{\hat n_0}}^{2}} \le
\bink{\hat n_0}{s}
\prod_{i=1}^s
\brk{\br{\frac{2\hat m_+}{\hat n_+}}^{k_i}
\br{\frac{\gamma_0 t}{\hat n_0}}^{2}}.
\end{align*}
Let
$$B_+(k_1,\ldots,k_s)=\prod_{i=1}^s\gamma_+^{k_i}.$$

For $\vN,\vM$ such that \eqref{eqthm:contig2} holds and 
$\omega\to\infty$ slowly enough, 
we have $(\nu_0n/\hat n_0)^t\leq \exp(O(\omega t/\sqrt n)).$ 
Therefore, from \eqref{eq:boundD1}
we obtain that conditioned on $\hat \cB$
\begin{align}\label{eq:boundD}
D
\leq
\br{\frac{\eul\nu_0n}{t}}^t\br{\frac{\eul t}{2y}}^y
\br{\frac{\gamma_0t}{\nu_0n}}^y\exp\br{O\br{\frac{\omega t}{\sqrt n}}}.
\end{align}
Similarly, by \eqref{eq:approx+}, conditioned on $\hat\cB$ 
we have  
$(2\hat m_+/\gamma_+\hat n_+)^{k_i}\leq 
\exp(O(\omega k_i/\sqrt n))$. Therefore,
conditioned on $\hat \cB$ we have
\begin{align}\label{eq:boundB}
B(k_1,\ldots,k_s)&
\leq
\br{\frac{\eul\nu_0n}{s }}^s 
\br{\frac{\gamma_0 t}{\nu_0 n}}^{2s}
B_+(k_1,\ldots,k_s)
\exp\br{O\br{\frac{\omega\sum_i k_i}{\sqrt n}}}.
\end{align}

Note also that conditioned on $\hat\cB$, we have $\frac{\gamma_0}{\nu_0}\sim d >1$.
To apply the first moment method
for $Y(k_1,\ldots,k_s)$ we consider
two cases.
\begin{description}
\item[Case 1: $s\geq 2t$]
Denote by  $Y'(k_1,\ldots,k_s)$ the 
number of subsets $T\subset\hat\cN_0$
with properties (1)--(3) and $s\geq 2t.$
Using $y\leq 1.01 t\leq 3s,$
from \eqref{eq:boundD} and~\eqref{eq:boundB} we obtain
\begin{align*}
\Erw[Y'(k_1,\ldots,k_s)|\hat\cB]
&\leq
\eul^{s+y+t}
\br{\frac{\gamma_0}{\nu_0}}^{2s+y}
\br{\frac{t}{y}}^y
\br{\frac{t}{s }}^s 
\br{\frac{t}{n}}^{s+y-t}
B_+(k_1,\ldots,k_s)
\exp\br{O\br{\frac {\omega\sum_i k_i+\omega t}{\sqrt n}}}
\\
&\leq
\eul^{5s}
\br{\frac{\gamma_0}{\nu_0}}^{5s}
\br{\frac{100}{99}}^{3s}
\left(\frac12\right)^s
\br{\frac{t}{n}}^{s+y-t}
B_+(k_1,\ldots,k_s)
\exp\br{O\br{\frac {\omega\sum_i k_i+\omega t}{\sqrt n}}}
\end{align*}
Further, since $\gamma_+<1$, for 
$\omega\to \infty$ slowly enough   
we have 
$$\sum_{k_1,\ldots,k_s}B_+(k_1,\ldots,k_s)
\exp\br{O\br{\frac {\omega \sum_i k_i}{\sqrt n}}}=O(1).$$ 
Therefore
using $s-t+y\geq s-t\geq 0.5 s$
 we obtain that for $\eps>0$ small enough,
\begin{align}\label{eq:boundY'}
\Erw[Y'(k_1,\ldots,k_s)|\hat\cB]=o(1).
\end{align}
\item[Case 2: $s<2t$]
Denote by  $Y''(k_1,\ldots,k_s)$ the 
number of subsets $T\subset\hat\cN_0$
with properties (1)--(3) and $s< 2t.$
Using $y\leq 1.01 t$,
from \eqref{eq:boundD} and~\eqref{eq:boundB} we obtain
\begin{align*}
\Erw[Y''(k_1,\ldots,k_s)|\hat\cB]
&\leq
\eul^{5t}
\br{\frac{\gamma_0}{\nu_0}}^{6t}
\br{\frac{100}{99}}^{2t}
10^{2t}
\br{\frac{t}{n}}^{s+y-t}
B_+(k_1,\ldots,k_s)
\exp\br{O\br{\frac {\omega\sum_i k_i+\omega t}{\sqrt n}}}
\end{align*}
Similarly as in Case 1, from $\gamma_+<1$
and $s+y-t\geq 0.09t$, we obtain that 
for $\omega\to\infty$ slowly enough 
and $\eps>0$ small enough,
\begin{align}\label{eq:boundY''}
\Erw[Y''(k_1,\ldots,k_s)|\hat\cB]=o(1).
\end{align}
\end{description}
Finally, from \eqref{eq:boundY'}
and \eqref{eq:boundY''}
we obtain
$$\Erw[Y(k_1,\ldots,k_s)|\hat\cB]
=\Erw[Y'(k_1,\ldots,k_s)|\hat\cB]
+\Erw[Y''(k_1,\ldots,k_s)|\hat\cB]=o(1)$$
as desired.
\end{proof}

\subsection{Proof of \Prop~\ref{lem:condsprob}}
\label{sec:successprob}\label{sec_cycl}
Our aim is to determine the probability that we have no forbidden cycles in
$\cN_\star$ or $\setB$, and no loops or multiple edges. We do this by proving
that the number of such structures is approximately Poisson distributed with
the appropriate mean. For this we use the method of moments, that is, \Thm~\ref{Cor_mom}.

To this end, let $X_{\star,\ell}$ be the number of directed cycles of length $\ell$ 
in $\hat\cN_\star$, $X_{+,\ell}$ the number of cycles
of length $\ell$ in $\hat\cN_+$ and
define $X_\star = \sum_{\ell=1}^\infty X_{\star,\ell}$ and $X_+ = \sum_{\ell=1}^\infty X_{+,\ell}$.
Furthermore, define $Y, Z$ to be the number of loops and multiple edges in 
$\hat\G$ respectively. Our aim is to determine the (conditional) probability of the event
that $X_{\star}=X_{+}=Y=Z=0$.
Let
 $\hat \cF(\vN,\vM,n_+)=\hat\cF(\vN,\vM)\cap\{\sizeB=n_+\}$.
For $\omega\to \infty$, by  {Claim~\ref{claim_nplus}} assumption \eqref{eqthm:contig2}
implies that $|\hat n_+-(1-p)\bar q n|\leq \omega\sqrt n$ \whp

\begin{lemma}
Let $\omega\to \infty $. Further, let
$n_+$ be such that $|\hat n_+-(1-p)\bar q n|\leq \omega\sqrt n.$
Then, uniformly for all $\vN,\vM$  such that $m_{11}$ even and \eqref{eqthm:contig2} 
holds, we have
\begin{align*}
\Erw \brk{X_\star|\hat\cF(\vN,\vM,n_+)} & = -(1+o(1)) \ln (1-(k-1)q);\\
\Erw \brk{X_+|\hat\cF(\vN,\vM,n_+)} & = -\frac{1}{2}(1+o(1)) \ln (1-(k-1)q);\\
\Erw \brk{Y \; | \: X_\star=X_+=0,\hat\cF(\vN,\vM,n_+)} & = (1+o(1))\frac{d}{2};\\
\Erw \brk{Z \; | \: X_\star=X_+=0,\hat\cF(\vN,\vM,n_+)} & = (1+o(1))\frac{d^2}{4}.
\end{align*}
\end{lemma}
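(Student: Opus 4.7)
The plan is to treat each expectation via the first-moment method inside the configuration-model view of \texttt{Forge}: given the pseudo-degree sequence $\vec{\hat d}$ produced in step~(2), the three matchings $\hat\cM_{00},\hat\cM_{10},\hat\cM_{11}$ are independent and uniform, so that the probability that a prescribed collection of $\ell$ half-edge pairs all appear as matched pairs in one of them is asymptotic to $m_{10}^{-\ell}$, $\hat m_{00}^{-\ell}$ or $\hat m_{11}^{-\ell}$ as appropriate. Under the conditioning $\hat\cF(\vN,\vM,n_+)$ the aggregate counts $\hat m_{ab}$ are pinned to macroscopic values, while the internal allocation of type-$01$ half-edges between $\hat\cN_\star$ and $\hat\cN_1$, and of type-$00$ half-edges between $\hat\cN_+$ and $\hat\cN_0\setminus\hat\cN_+$, is governed by independent Poissons subject to one affine constraint; standard Chernoff and CLT bounds then supply all the concentration we need.

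For $\Erw\brk{X_{\star,\ell}|\hat\cF}$ I would enumerate directed cycles of length $\ell$ inside $\hat\cN_\star$ by choosing an ordered $\ell$-tuple $(v_1,\dots,v_\ell)$ of distinct $\star$-vertices, dividing by $\ell$ for cyclic rotations, and then selecting for each $i$ one of the $\hat d_{01}(v_i)$ type-$01$ half-edges at $v_i$ together with one of the $k-1$ type-$10$ half-edges at $v_{i+1}$. Absorbing the matching probability gives
\begin{align*}
\Erw\brk{X_{\star,\ell}|\hat\cF,\vec{\hat d}}
\sim\frac{1}{\ell}\bc{\frac{(k-1)\hat m_{01}^{(\star)}}{m_{10}}}^\ell,
\qquad \text{where }\hat m_{01}^{(\star)}=\sum_{v\in\hat\cN_\star}\hat d_{01}(v).
\end{align*}
The unconditioned means satisfy $\Erw\brk{\hat m_{01}^{(\star)}}\sim n_\star d(1-p)$ and $m_{10}\sim dnp(1-p)$ with $n_\star/(np)=q$, so the ratio concentrates around $q$ under $\hat\cF$. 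Since $(k-1)q<1$ by Fact~\ref{fact_FPana}(2), the series $\sum_{\ell\geq1}((k-1)q)^\ell/\ell=-\ln(1-(k-1)q)$ is absolutely convergent, which both justifies summation and bounds the $\ell>\log n$ tail contribution by $o(1)$.

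The expectation of $X_+$ is handled analogously on the matching $\hat\cM_{00}$ restricted to $\hat\cN_+$. By \Lem~\ref{lem:subcritical} the average degree of $\hat\G[\hat\cN_+]$ converges to $\gamma_+=(k-1)q$, and the degrees of $\hat\G[\hat\cN_+]$ are asymptotically independent $\Po(\gamma_+)$ variables, obtained by thinning the original type-$00$ Poisson degrees with inclusion probability $\bar q$. The configuration-model count of undirected cycles of length $\ell$ therefore yields $\Erw\brk{X_{+,\ell}|\hat\cF}\sim\gamma_+^\ell/(2\ell)$, with the extra factor~$2$ reflecting the orientation symmetry of undirected cycles. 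Summation gives $-\tfrac12\ln(1-(k-1)q)$.

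Finally, $Y$ and $Z$ reduce to standard configuration-model loop and double-edge formulas: summing $\binom{\hat d(v)}{2}$ over all vertices and over each of the three matchings, normalised by twice the respective matching size, and substituting $\Erw\brk{X(X-1)}=\lambda^2$ for $X\sim\Po(\lambda)$ with $\lambda\in\{\lambda_{00},\lambda_{10},\lambda_{11}\}$, collapses to a total expectation of $d/2$ for $Y$; the analogous count of pairs of matched pairs at distinct vertex pairs yields $d^2/4$ for $Z$. Because $\{X_\star=X_+=0\}$ only excludes loops and double edges lying inside $\hat\cN_\star$ or $\hat\cN_+$, whose expected contributions are $O(1)$, the conditioning shifts the expectations of $Y$ and $Z$ by only $o(1)$. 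The main technical obstacle is not the leading-order counting but the uniform transfer of the concentration estimates --- in particular for $\hat m_{01}^{(\star)}/m_{10}$ and for the second-moment sums arising from the truncated Poisson $\Po_{\geq k}(dp)$ on $\hat\cN_1$-vertices --- to the conditioned measure, which calls for a joint CLT for the independent Poisson coordinates together with the $O(\sqrt n)$ slack permitted by~\eqref{eqthm:contig2}.
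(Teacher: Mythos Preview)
Your treatment of $X_\star$ and $X_+$ is correct and essentially identical to the paper's: cyclically ordered tuples, $(k-1)$ choices for the type-$10$ half-edge, the type-$01$ half-edge count at each vertex entering multiplicatively, and the matching probability $\sim m_{10}^{-\ell}$ (resp.\ $\hat m_{00}^{-\ell}$); the resulting geometric series sums to $-\ln(1-(k-1)q)$ and $-\tfrac12\ln(1-(k-1)q)$.

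The gap is in your sketch for $Y$ and $Z$. Two claims there are false, and they do not cancel. First, the formula $\Erw[X(X-1)]=\lambda^2$ applies only to plain Poisson; for the $11$-degrees you have $\hat d_{11}(v)\sim\Po_{\ge k}(dp)$, and the correct second factorial moment is $\frac{(dp)^2}{p(1-q)}\pr[\Po(dp)\ge k-2]=\frac{(dp)^2}{p(1-q)}(p+(1-p)\bar q)$, which is what makes the type-$11$ loop contribution equal $\tfrac d2(p+(1-p)\bar q)$ rather than $\tfrac d2 p(1-q)$. Second, the conditioning $\{X_\star=X_+=0\}$ does \emph{not} shift $\Erw[Y]$ by $o(1)$: a loop at a $\star$-vertex in the $01/10$-matching \emph{is} a directed $1$-cycle (counted by $X_{\star,1}$) with expected count $(k-1)q$, and a type-$00$ loop at a vertex of $\hat\cN_+$ \emph{is} an undirected $1$-cycle (counted by $X_{+,1}$) with expected count $\tfrac12(1-p)\bar q d=\tfrac12(k-1)q$. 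Conditioning on $X_\star=X_+=0$ removes these $\Theta(1)$ contributions exactly, and only after that removal do the remaining pieces --- loops at $\hat\cN_-$ and at $\hat\cN_1$ --- sum to $\tfrac d2[(1-p)(1-\bar q)+p+(1-p)\bar q]=\tfrac d2$. The same phenomenon occurs for $Z$: the paper splits into $Z_{00},Z_{11},Z_{01}$, uses the truncated second moments, and obtains $d^2/4$ only after a non-trivial cancellation across all three pieces. Your one-line heuristic would produce the wrong constant; the actual computation is where the work lies.
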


\begin{proof}
We begin with $X_\star$, and will consider $\Erw\brk{X_{\star,\ell}}$ for bounded $\ell \ge 1$
-- this expectation tends to $0$ exponentially as $\ell\to \infty$, justifying
our choice of only considering $\ell$ bounded.
We first calculate, for bounded 
$\ell \ge 1$, the expected
number of collections of $\ell$ cyclically ordered  vertices and $2\ell$ ordered half-edges
which could conceivably form a directed cycle in $\hat \cN_\star$: we have $(\hat n_\star)_\ell/\ell$ choices for the
cyclically ordered vertices. By construction, each such vertex has $k-1$ half-edges
of type $10$.
The number of half-edges of type $01$ at each vertex is asymptotically distributed
as
$\Po(\hat m_{01}/(\hat n_1 +\hat n_\star))$
independently for each vertex.
Thus the
expected number of potential directed cycles of length $\ell$ is asymptotically
$$
\frac{1}{\ell}(\hat n_\star)_\ell \left((k-1)\frac{\hat m_{01}}{\hat n_1 +\hat n_\star}\right)^\ell.
$$
Now given such a choice of vertices and half-edges, the probability that they form a
directed cycle (with this ordering) is the probability that the relevant half-edges
are matched to each other, which is $1/(\hat m_{01})_\ell$. Thus,
by~\eqref{eq!!} the expected
number of directed cycles of length $\ell$ is
\begin{align*}
\Erw \brk{X_{\star,\ell}|\cF(\vN,\vM,n_+)} 
= (1+o(1))\frac{1}{\ell}(\hat n_\star)_\ell \left((k-1)\frac{\hat m_{01}}{\hat n_1 +\hat n_\star}\right)^\ell \frac{1}{(\hat m_{01})_\ell}
 = (1+o(1))\frac{1}{\ell} \left((k-1)q\right)^\ell.
\end{align*}
Note that $(k-1)q <1$ 
by Fact~\ref{fact_FPana} (2), and so (approximating the sum over all bounded $\ell$ by the sum to infinity) the expected total number of directed cycles in $\hat\cN_\star$ is 
$$
\Erw \brk{X_\star|\cF(\vN,\vM,n_+)} = \sum_{\ell=1}^\infty\Erw \brk{X_{\star,\ell}|\cF(\vN,\vM,n_+)}  = -(1+o(1))\ln \left(1-(k-1)q\right).
$$

The arguments for $X_+$ are similar, although the calculations are slightly different. Conditioned on 
$\hat\cF(\vN,\vM,n_+)$,  
each vertex of $\setB$ has asymptotically 
$\Po(\hat m_{00}/\hat n_0)$ half-edges of type $00$, and
therefore for $v\in\setB$ we have
 $$\Erw \brk{\hat d_{00}(v)(\hat d_{00}(v)-1)|\hat\cF(\vN,\vM,n_+), 
 v\in\hat\cN_+}= \frac{\hat m_{00}^2}{\hat n_0^2}.$$
Now the expected number of sequences of $\ell$ cyclically ordered (in either direction) vertices and $2\ell$ half-edges that could conceivably form a cycle is approximately
$$
\frac{1}{2\ell}(\sizeB)_\ell \Erw \brk{\hat d_{00}(v)(\hat d_{00}(v)-1)|\hat\cF(\vN,\vM,n_+),v\in\hat\cN_+}^\ell = (1+o(1))\frac{1}{2\ell}\sizeB^\ell \left(\frac{\hat m_{00}}{\hat n_0}\right)^{2\ell},
$$
while the probability that such a potential cycle is present (i.e.\ that the appropriate half-edges are matched together) is
$$
\frac{1}{(\hat m_{00}-1)(\hat m_{00}-3)\ldots (\hat m_{00}-2\ell+1)} = (1+o(1))\hat m_{00}^{-\ell}.
$$
Thus, conditioned on $\hat\cF(\vN,\vM,n_+)$ we obtain
$$
\Erw \brk{X_{+,\ell}|\hat\cF(\vN,\vM,n_+)}
=(1+o(1))\frac{1}{2\ell} \left( \frac{\sizeB \hat m_{00}}{\hat n_0^2} \right)^\ell
= (1+o(1)) \frac{((1-p)\bar q d)^\ell}{2\ell}.
$$
Since \eqref{eq:qbarexp} and 
Fact~\ref{fact_FPana} imply that $(1-p)\bar q d<1$, as in the previous case we have
$$
\Erw \brk{X_+|\hat\cF(\vN,\vM,n_+)} = \sum_{\ell=1}^\infty \Erw \brk{X_{+,\ell}|\hat\cF(\vN,\vM,n_+)} = -(1+o(1))\ln (1-(1-p)\bar q d)
$$
as claimed.

It remains to determine the expected number of loops and multiple
edges given $\{X_\star=X_+=0\}\cap\hat\cF(\vN,\vM,n_+).$
Conditioned on this event there are no loops or multiple 
edges in  $\hat\cN_\star$ or $\setB$.
We therefore consider the probability of having other loops or multiple edges.
Let $Y_{0},Y_1$ denote the number of loops in $\hat\cN_0 \setminus \setB$ and $\hat\cN_1$ respectively.
Conditioned on $\hat\cF(\vN,\vN,n_+)$,
for $v\in\hat\cN_-$ we have 
that $\hat d_{00}(v)$ is asymptotically distributed as
$\Po (\hat m_{00}/\hat n_0)$, and so the expected number of loops is
\begin{align}\label{eq:loops0}
\Erw\brk{Y_0 \; | \; X_\star=X_+ =0,\hat\cF(\vN,\vM,n_+)} & = (1+o(1))\Erw \brk{Y_0|\hat\cF(\vN,\vM,n_+)}\nonumber\\
& =
(1+o(1))\hat n_-
\Erw\brk{\left.\binom{\hat d_{00}(v)}{2}\right| \hat\cF(\vN,\vM,n_+), v\in\hat\cN_0\setminus\hat\cN_+} \frac{1}{\hat m_{00}-1}\nonumber\\
& = (1+o(1))(1-p)(1-\bar q)n \frac{d^2 (1-p)^2}{2}\frac{1}{(1-p)^2dn}
\nonumber\\
& = (1+o(1))(1-p)(1-\bar q)d/2.
\end{align}
To determine the expected number of loops
in $\hat\cN_1$ we aim to determine the
asymptotic distribution of 
$\hat d_{11}(v)$ for $v\in\hat\cN_1.$
We have 
\begin{align}\label{eq:bb11}
\hat n_1\Erw[\Po_{\geq k}(\lambda_{11})]
=\hat n_1 \sum_{x\geq k} x\frac{(dp)^x}{x!\exp(dp)p(1-q)} 
=\hat n_1\frac{dp}{p(1-q)}
=(1+o(1))p(1-q)n\frac{dp^2}{p(1-q)}
=(1+o(1)) \hat m_{11}.
\end{align}
Conditioned on $\hat\cF(\vN,\vM,n_+)$
step (2) of {\tt Forge} can be described 
by the following balls and bins experiment.
Each of the $\hat m_{11}$ half-edges
is distributed uniformly among 
$\hat n_1$ vertices subject to the 
constraint that each vertex receives at least $k$ half-edges. 
By \eqref{eq:bb11}, we have that 
$\Erw[\Po_{\geq k}(\lambda_{11})]\sim
\hat m_{11}/\hat n_1$. Since this 
is the distribution with highest 
entropy and this expectation,
for $v\in\hat\cN_1$ we have that 
$\hat d_{11}(v)$
asymptotically distributed as  $\Po_{\ge k}(\lambda_{11})$. Therefore, we have 
\begin{align}\label{eq:loops1}
\Erw\brk{Y_1 \; | \; X_\star=X_+ =0,\hat\cF(\vN,\vM,n_+)} = (1+o(1))\Erw \brk{Y_1}
& =(1+o(1))\hat n_1 \Erw \brk{\left.\binom{\hat d_{11}(v)}{2}\right| v\in\hat\cN_1}\frac{1}{\hat m_{11}-1}\nonumber\\
& = (1+o(1))\frac{p(1-q)n}{2p^2dn} \sum_{x\ge k} x(x-1) \frac{(dp)^x}{x!\exp(dp)p(1-q)}\nonumber\\
& = (1+o(1))\frac{1}{2p^2d} (dp)^2 \pr\brk{\Po(dp)\ge k-2}\nonumber\\
&= (1+o(1))\frac{d}{2} (p+(1-p)\bar q).
\end{align}

Summing up the two contributions from~\eqref{eq:loops0} and~
\eqref{eq:loops1} we obtain
$$
\Erw\brk{Y \; | \: X_\star=X_+=0}=(1+o(1))\frac{d}{2}
$$
as claimed.

We now calculate the expected number of multiple edges. 
Assume that there is a multiple edge 
joining two vertices in $\hat\G.$
Then the types of the edges are determined by the end-vertices.
By construction, it either holds that 
both edges must result from the same matching in step (4) 
of {\tt Forge.}
Along these lines, we will say that a multiple edge is of type $11$, $00$ or $01/10$ respectively for each possible case.
Conditioned on $X_+=0$ there are no multiple edges of type $00$
such that both end-vertices lie in $\hat\cN_+.$
Further, conditioned on $X_\star=0$ there is no multiple edge
of type $01/10$ such that both edges are oriented in the same 
direction.
Denote 
by $Z_{00}$
the number of multiple edges of type $00$ which lie within $\hat\cN_-$,
by $Z_{11}$ the number of multiple edges
of type $11$ and by $Z_{01}$ the number of multiple edges of type $01/10$ in which
the two edges are oriented in the same direction. 
Then this implies that conditioned on $\{X_\star=X_+=0\}\cap\cF(\vN,\vM, n_+)$,
 we have $Z=Z_{00}+Z_{11}+Z_{10}.$
We begin by calculating the expectation of $Z_{00}$. Multiple edges of type
$00$ can only exist within $\hat\cN_0$, and the definition of $Z_{00}$ means we can
rule out any within $\setB$. 
Conditioned on $\hat\cF(\vN,\vM,n_+)$, for $v\in\hat\cN_0 $ we have that
$\hat d_{00}(v)$ is asymptotically distributed as $\Po(\hat m_{00}/\hat n_0)$.
Therefore,
\begin{align}\label{eq:Z00}
\Erw\brk{Z_{00} \; | \; X_\star = X_+=0,
\hat\cF(\vN,\vM,n_+)}& = (1+o(1))\Erw\brk{Z_{00}|\hat\cF(\vN,\vM,n_+)}
\nonumber\\
&= (1+o(1))\left(\binom{\hat n_0-\sizeB}{2}+\hat n_0 \sizeB\right)
\Erw\brk{\left.\binom{\hat d_{00}(v)}{2}\right| v\in \hat\cN_0}^2\frac{2}{(\hat m_{00}-1)(\hat m_{00}-3)}\nonumber\\
& =(1+o(1))\frac{(1-p)^2(1-\bar q)^2n^2 + 2(1-p)^2\bar qn^2}{2}\frac{(1-p)^4 d^4}{4} \frac{2}{(1-p)^4 d^2 n^2}\nonumber\\
& = (1+o(1))\frac{d^2}{4}(1-p)^2\left(1-\bar q^2 \right).
\end{align}
Similarly, multiple edges of type $11$ can only exist within $\hat \cN_1$. For $v\in\hat\cN_1$ we have that 
conditioned on $\hat\cF(\vN,\vM,n_+)$, 
$\hat d_{11}(v)$ is asymptotically distributed as 
$\Po_{\geq k}(\lambda_{11}).$ Therefore,
\begin{align}\label{eq:Z11}
\Erw\brk{Z_{11} \; | \; X_\star = X_+=0} & =(1+o(1))\Erw\brk{Z_{11}}
\nonumber\\
& = (1+o(1))\binom{\hat n_1}{2}\Erw\brk{\left.\binom{d_{11}(v)}{2}\right| v\in\hat\cN_1}^2 \frac{2}{(m_{11}-1)(m_{11}-3)}
\nonumber\\
&= (1+o(1))\frac{p^2(1-q)^2n^2}{2}\left(\frac{(dp)^2}{2p(1-q)}\pr\brk{\Po(dp)\ge k-2}\right)^2 \frac{2}{p^4d^2n^2}
\nonumber\\
& = (1+o(1))\frac{d^2}{4}\left(p+(1-p)\bar q\right)^2.
\end{align}
Finally we calculate the number of multiple edges of type $01/10$.
To this end, we aim to determine the asymptotic 
distribution of $\hat d_{10}(v)$ for $v\in \hat\cN_0.$
By \eqref{eq:qbarexp} we have 
\begin{align}\label{eq:bb10}
\hat n_0 \Erw \brk{\Po_{\le k-2}(\lambda_{10})}+\hat n_\star(k-1) & = \hat n_0 \sum_{x=0}^{k-2}x \frac{(dp)^x}{x!\exp(dp)(1-p)} + \hat n_\star (k-1)
\nonumber\\
& = (1+o(1))\left((1-p)n pd(1-\bar q) + pqn(k-1)\right) \nonumber\\
& = (1+o(1))\left((1-p)n pd(1-\bar q) + pn(1-p)\bar q d\right)
\nonumber\\
& = (1+o(1))p(1-p)dn = (1+o(1)) \hat m_{10}.
\end{align}
Conditioned on $\hat\cF(\vN,\vM,n_+)$, step (2) of
{\tt Forge} can be described by the following 
balls and bins experiment. Each of the $\hat m_{10}$ half-edges
of type $10$
are distributed uniformly at random over $\hat n_\star +\hat n_0$ vertices
subject to the condition that $\hat n_\star$ vertices
receive exactly $k-1$ and the remaining $\hat n_0$ vertices
all receive at most $k-2$. 
By \eqref{eq:bb10}, we have that 
$\Erw\brk{\Po_{\le k-2}(\lambda_{10})}\sim \hat m_{10}/\hat n_0.$
Since this is the distribution with highest entropy and this expectation, conditioned on $\hat\cF(\vN,\vM,n_+)$ for 
$ v\in \hat \cN_0$
we have that $\hat d_{10}(v)$ is asymptotically 
distributed as $\Po_{\leq k-2}(\lambda_{10}).$
For $v\in\hat\cN_\star\cup\hat\cN_1$, $\hat d_{01}(v)$
is asymptotically distributed as
$\Po(\hat m_{01}/(\hat n_\star+\hat n_1))$.
Therefore we have
\begin{align}\label{eq:Z01}
\hspace{2cm} & \hspace{-2cm} \Erw\brk{Z_{01}\; | \; X_\star=X_+=0,\hat\cF(\vN,\vM,n_+) }\nonumber\\
& = (1+o(1))\Erw\brk{Z_{01}|\hat\cF(\vN,\vM,n_+)}\nonumber\\
 &= (1+o(1))\left(\hat n_0 \Erw\brk{\left.\binom{\hat d_{10}(v)}{2}
 \right| v\in \hat \cN_0}
+ \hat n_\star\binom{k-1}{2}
\right)(\hat n_1+ \hat n_\star)\Erw\brk{\left.\binom{d_{01}(v)}{2}
\right| v\in \cN_\star\cup\hat\cN_1}
\frac{2}{\hat m_{01}(\hat m_{01}-1)}\nonumber\\
& = (1+o(1))\left((1-p)n \sum_{x=0}^{k-2}\frac{x(x-1)}{2}\frac{(dp)^x}{x!\exp(dp)(1-p)} + pqn\frac{k(k-1)}{2}\right)(\hat n_1 + \hat n_\star) \frac{\hat m_{01}^2}{2(\hat n_1 + \hat n_\star)^2}\frac{2}{\hat m_{01}^2}\nonumber\\
& = (1+o(1))\left(
(1-p)n\frac{(dp)^2}{2(1-p)}\pr\brk{\Po(dp)\le k-4}
+pqn\frac{(k-1)(k-2)}{2}
\right)
\frac{1}{pn}\nonumber\\
& = (1+o(1))\frac{d^2}{4}\left(2p\left(1-p-(1-p)\bar q-(1-p)\bar q \frac{k-2}{dp}\right)
+ \frac{2q}{d^2}(k-1)(k-2)
\right).
\end{align}
Summing~\eqref{eq:Z00},~\eqref{eq:Z11},~\eqref{eq:Z01}, and using \eqref{eq:qbarexp}, we obtain
\begin{align*}
\frac{\Erw\brk{Z \; | \; X_\star = X_+=0,\cF(\vN,\vM,n_+)}}{(1+o(1))d^2/4}
& = (1-p)\left((1-p)(1-\bar q^2) + (1-p) \bar q^2 + 2p\left(1-\bar q - \bar q \frac{k-2}{dp}\right)+ \frac{2}{d}\bar q (k-2) + 2p\bar q \right) + p^2\\
& = (1-p)\left(1+p +\bar q\left(- 2p -\frac{2}{d}(k-2) + \frac{2}{d}(k-2)+2p \right)\right)+p^2\\
& = (1-p)(1+p) + p^2 = 1.
\end{align*}
This completes the proof of the claim.
\end{proof}

We also need to estimate higher factorial moments, which correspond to the expected number of ordered tuples of cycles, loops or multiple edges. We will give the argument only for the higher moments of $X_\star$, since those of the other variables can be argued analogously.

So consider the expected number of ordered $r$-tuples of cycles of length $\ell_1,\ldots,\ell_r$ in $\cN_\star$. Recall that the expected number of cycles of length $\ell$ was asymptotically $\frac{1}{\ell}(k-1)q$. Thus the contribution made by $r$ pairwise disjoint cycles is asymptotically
$$
\prod_{i=1}^r \frac{1}{\ell_i} \left((k-1)q\right)^{\ell_i}.
$$
Summing over all choices of the $\ell_i$ we obtain
$$
\sum_{\ell_1,\ldots,\ell_r}\prod_{i=1}^r \frac{1}{\ell_i} \left((k-1)q\right)^{\ell_i}
= \prod_{i=1}^r \sum_{\ell_i}\frac{1}{\ell_i} \left((k-1)q\right)^{\ell_i} = \left(\Erw\brk{X_\star|\cF(\vN,\vM,n_+)}\right)^r.
$$

We would like to argue that the contribution made by tuples of cycles which are not pairwise disjoint is negligible. For this we prove a more general claim.

\begin{claim}\label{claim:nullity}
Let $\vN, \vM$ be such that $m_{11}$ is even and \eqref{eqthm:contig2}
holds. Then conditioned on $\hat\cF(\vN,\vM)$
\whp~ there are no sets of $s=O(1)$ vertices in $\hat \G$ which contain at least $s+1$ edges.
\end{claim}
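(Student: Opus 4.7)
The plan is to apply the first moment method to the random multi-graph $\hat\G$ produced by the enhanced configuration model in step~(4) of {\tt Forge}, which consists of three type-respecting uniform matchings ($00$, $11$, and $01$-to-$10$) of total size $\Theta(n)$ each. Throughout, I work conditionally on $\hat\cF(\vN,\vM)$, which under \eqref{eqthm:contig2} has probability $\Theta(n^{-1})$.

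First, I would establish a uniform bound on the maximum degree, namely $\Delta(\hat\G)\le C\log n$ for a sufficiently large constant $C=C(d,k)$, with probability $1-o(n^{-1})$ \emph{unconditionally}. Indeed, by construction each $\hat d(v)$ is a sum of four independent (possibly truncated) Poisson variables with bounded parameters $\lambda_{ab}$, so it has a bounded exponential moment. A Chernoff bound gives $\pr[\hat d(v)\ge C\log n]=n^{-\omega(1)}$, and a union bound over $v\in[n]$ yields the claim. Dividing by $\pr[\hat\cF(\vN,\vM)]=\Theta(n^{-1})$ still leaves $\pr[\Delta(\hat\G)>C\log n \mid \hat\cF(\vN,\vM)]=o(1)$.

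Second, I would estimate the expected number $\Erw[Y_s\mid\hat\cF(\vN,\vM)]$ of $s$-subsets containing at least $s+1$ internal edges on the event $\{\Delta\le C\log n\}$. Fix such an $s$-subset $S$ with total degree $D_S\le sC\log n$. The number of ways to select $s+1$ unordered pairs of half-edges inside $S$ is at most $D_S^{2(s+1)}$. Given the degree sequence, within each type-restricted sub-matching of size $\Theta(n)$ the probability that any specific pair of half-edges is matched is $O(n^{-1})$, and disjoint pairs are negatively correlated; hence the probability that any prescribed collection of $s+1$ pairs is realized is $O(n^{-(s+1)})$. Summing over the at most $\binom{n}{s}=O(n^s)$ choices of $S$ gives
\begin{align*}
\Erw\brk{Y_s\,\vecone\{\Delta\le C\log n\}\mid \hat\cF(\vN,\vM)}
&\le O\bc{n^s\,(\log n)^{2(s+1)}\,n^{-(s+1)}} = O\bc{(\log n)^{2(s+1)}/n} = o(1).
\end{align*}
Markov's inequality plus a union bound over $s\in\{1,2,\ldots,S_0\}$ for any fixed $S_0$ then yields the claim.

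The main obstacle is the bookkeeping for the type-restricted sub-matchings, since the matching in step~(4) is not a single uniform matching on $2m$ half-edges but three separate matchings; within $S$ each internal edge belongs to one of these three sub-matchings, and the count of ``internal configurations'' must be split accordingly. This only affects constants, because each of the three matchings has $\Theta(n)$ half-edges, and the probability that a specified pair within a given sub-matching is matched is $\Theta(1/n)$. A secondary point is that Corollary~\ref{claim:avdeg} and Lemma~\ref{lem_Nzero} can be used to cross-check the argument on $\hat\G[\hat\cN_0]$, but the degree-moment approach above handles all three sub-matchings uniformly and therefore suffices for the full graph $\hat\G$.
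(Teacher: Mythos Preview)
Your proposal is correct and follows essentially the same first-moment argument in the configuration model as the paper. The only difference is that the paper bounds the number of half-edge configurations directly via the expected degree, writing $\binom{n}{s}((k-1+d)s)^{2s+2}O(1/n)^{s+1}=O(1/n)$, whereas you first pass to the high-probability event $\{\Delta\le C\log n\}$ and count deterministically there, picking up harmless $(\log n)^{O(1)}$ factors; both routes give $o(1)$ and the paper's is slightly more direct.
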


\begin{proof}
We first crudely bound the degree distribution of any vertex of $\hat \G$ from above by $k-1 + \Po(d)$. Now given any pair of half-edges, the probability that they are matched is $O(1/n)$. Thus for a constant $s$, the expected number of sets of size $s$ containing at least $s+1$ edges is at most
$$
\binom{n}{s} \left((k-1+d)s\right)^{2s+2} O(1/n)^{s+1} = O(1/n).
$$
Thus by Markov's inequality, with high probability there is no such set, even taking a union bound over all $s=O(1)$.
\end{proof}

In particular, if an $r$-tuple of cycles is not pairwise disjoint, 
then it forms a subgraph with fewer vertices than edges.
By Claim~\ref{claim:nullity}, the contribution to the expected number of $r$-tuples of cycles made by those which are not pairwise disjoint is negligible.

This shows that
$$
\Erw \brk{X_\star^r|\cF(\vN,\vM,n_+)} =(1+o(1))\left(\Erw\brk{X_\star|\cF(\vN,\vM,n_+)}\right)^r 
$$
for any bounded $r$, and therefore 
by \Thm~\ref{Cor_mom}, $X_\star$ is asymptotically Poisson distributed with mean $\Erw \brk{X_\star}$. Therefore the probability that there is no directed cycle in $\cN_\star$ is asymptotically
$$
\exp(-\Erw\brk{X_\star|\cF(\vN,\vM,n_+)}) = (1+o(1))(1-(k-1)q).
$$
A similar argument works for each of the other expectations, and we obtain the results of \Prop~\ref{lem:condsprob}.

\subsection{Proof of Proposition~\ref{Prop_cycl}}
\label{sec:wellconstructed}
To prove \Prop~\ref{Prop_cycl} we will then show that $\hat\G$ is very unlikely to contain a flipping structure other than a forbidden cycle.
By \Prop~\ref{Prop_fs} (iv) any flipping structure that is not a forbidden cycle lies completely within 
	$\hat\cN_0\times\hat\cN_0$ and contains at least one vertex from 
	$\hat\cN_0\setminus\hat\cN_+$.
The following two lemmas establish that given $\cE\cap\hat\cF(\vN,\vM)$,
there are no such flipping  structures \whp\
We consider two cases separately, depending on the the {order} of the
flipping structure, i.e., the number of vertices in $\Sundir$.

\begin{lemma}\label{claim_fssmall}
There exists $\eps_1=\eps_1(d,k)>0$ such that the following is true.
Let $\vN,\vM$ be such  that $m_{11}$ is even and \eqref{eqthm:contig2} holds.
Then conditioned on $\hat\cF(\vN,\vM)\cap\cE$ \whp\ there is no
flipping structure of order 
at most $\eps_1 n$  in $\hat\cN_0\times \hat\cN_0$
that contains at least one vertex from $\hat\cN_0\setminus \hat\cN_+.$
\end{lemma}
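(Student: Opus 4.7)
Assume for contradiction that $\Sundir$ is a flipping structure of order $t\le\eps_1 n$ meeting $\hat\cN_0\setminus\hat\cN_+$, and write $V_1:=V(\Sundir)\cap(\hat\cN_0\setminus\hat\cN_+)$, $V_2:=V(\Sundir)\cap\hat\cN_+$, $t_i:=|V_i|$, so that $t_1\ge 1$ and $t=t_1+t_2$. Let $e_{11},e_{12},e_{22}$ count the $\Sundir$-edges inside $V_1$, between $V_1$ and $V_2$, and inside $V_2$, and set $e:=e_{11}+e_{12}+e_{22}$. \Prop~\ref{Prop_fs}(iv) places $\Sundir\subseteq\hat\G[\hat\cN_0]$, while (vii) and (viii) force $d_{\Sundir}(v)\ge 3$ on $V_1$ and $d_{\Sundir}(v)\ge 2$ on $V_2$. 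Summing degrees yields $2e_{11}+e_{12}\ge 3t_1$ and $2e_{22}+e_{12}\ge 2t_2$, and hence
\[
e\;\ge\;\lceil (3t_1+2t_2)/2\rceil\;\ge\;t+1,
\]
while conditioning on $\cE_3$ additionally makes $\hat\G[V_2]$ a sub-forest of $\hat\G[\hat\cN_+]$, so that $e_{22}\le t_2-1$.

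I would then split on the value of $t$. If $t\le T_0$ for a sufficiently large constant $T_0$, there are only $O(1)$ admissible parameter quadruples $(t_1,e_{11},e_{12},e_{22})$; for each of them a direct first-moment calculation over the $\binom{\hat n_0}{t}\le n^t$ choices of vertex set in $\hat\G[\hat\cN_0]$ and the $O(1)$ compatible edge-sets, together with the $\Theta(n^{-1})$ appearance probability of each fixed edge of $\hat\G[\hat\cN_0]$, yields expected count $O(n^{t-e})=O(n^{-1})=o(1)$ because $e\ge t+1$. If instead $T_0<t\le\eps_1 n$, I invoke \Cor~\ref{claim:avdeg} with a small $\delta>0$ (and $\eps_1$ shrunk accordingly): since $\Sundir\subseteq\hat\G[\hat\cN_0]$ has at most $\eps n$ vertices, its average degree is at most $2(1+\delta)$, so $e\le(1+\delta)t$. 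Combined with $e\ge t+t_1/2$ this forces $t_1\le 2\delta t$, i.e.\ the bulk of $\Sundir$ lives in $V_2$.

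For this unbounded regime I then reduce to \Cor~\ref{Claim_claim_fssmall_path} applied with $T:=V_1$. Each maximal path in $\Sundir$ whose interior vertices lie in $V_2$ and have $\Sundir$-degree exactly $2$ is a length-$\ge 2$ path in $\hat\G[\hat\cN_+]\setminus V_1$ joining two vertices of $V_1$. \Cor~\ref{claim:avdeg2} with $S=V_1$ bounds by $O(t_1)$ the number of $V_2$-vertices having $\ge 2$ neighbours in $V_1$; together with $d_{\Sundir}(v)\ge 2$ on $V_2$ this forces the remaining $V_2$-vertices to connect through genuine chains, and \Lem~\ref{lem:subcritical} keeps these chains short and essentially vertex-disjoint. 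A short bookkeeping step then places $e_{11}$ into the window $[0.99\,t_1,\,1.01\,t_1]$ and exhibits at least $0.1\,t_1$ vertex-disjoint chain-paths, contradicting \Cor~\ref{Claim_claim_fssmall_path}. The main obstacle is precisely this last bookkeeping: the two windows--on $e_{11}$ and on the number of disjoint chain-paths--must be matched simultaneously, and doing so requires combining the average-degree bound (\Cor~\ref{claim:avdeg}), the disjoint-set restriction (\Cor~\ref{claim:avdeg2}), and the subcritical forest structure of $\hat\G[\hat\cN_+]$ (\Lem~\ref{lem:subcritical}).
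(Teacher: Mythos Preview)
Your bounded-$t$ regime is fine (and is essentially the content of Claim~\ref{claim:nullity} applied inside $\hat\G[\hat\cN_0]$). The gap is in your unbounded regime. Your entire reduction there is to \Cor~\ref{Claim_claim_fssmall_path} with $T=V_1$, but that corollary only applies when the number of edges inside $T$ lies in the window $[0.99\,|T|,1.01\,|T|]$. You obtain the upper bound $e_{11}\le 1.01\,t_1$ from \Cor~\ref{claim:avdeg}, but you do not establish the lower bound $e_{11}\ge 0.99\,t_1$, and in fact it need not hold: from $2e_{11}+e_{12}\ge 3t_1$ nothing prevents $e_{12}$ from being large and $e_{11}$ small (even zero). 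Calling this a ``short bookkeeping step'' hides the actual difficulty. In the paper this case ($x\le 0.99a$ in their notation) is handled not by \Cor~\ref{Claim_claim_fssmall_path} at all, but by a direct first-moment computation (Claims~\ref{Claim_claim_fssmall_Olly1} and~\ref{Claim_claim_fssmall_Olly2}) that enumerates the possible forests $G_+(S)$ via \Thm s~\ref{thm_stir} and~\ref{thm_peter} and then tracks an exponential in $b$ with base $\gamma_+<1$; this is the technical core of the argument and cannot be recovered from the tools you list.

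There is a second, related gap: even in the sub-case $e_{11}\ge 0.99\,t_1$, you need $\ge 0.1\,t_1$ vertex-disjoint paths of length $\ge 2$ in $\hat\cN_+$ joining vertices of $V_1$. Your sketch invokes \Cor~\ref{claim:avdeg2} and \Lem~\ref{lem:subcritical} but does not actually extract the paths. The paper gets them by tracking four further parameters of $G_+(S)$ --- the number $i$ of isolated vertices, the number $\ell$ of leaves, the number $c$ of non-trivial components --- and running an exhaustive case analysis (Claims~\ref{Claim_claim_fssmall_1}--\ref{Claim_claim_fssmall_3}) to force $\ell\le\eps_4(b-i)$, $i\le\eps_3 b$ and then to show that most components of $G_+(S)$ are bare paths (via $c'\ge 0.75c$ in Claim~\ref{Claim_claim_fssmall_3}). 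Without this fine-grained decomposition, \Cor~\ref{claim:avdeg2} only tells you that at most $t_1/\delta$ vertices of $V_2$ have two $V_1$-neighbours, which by itself does not produce the required disjoint paths. Your two-regime split is a reasonable starting point, but the unbounded regime needs both the forest-counting first-moment argument and the multi-parameter case split that the paper carries out.
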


\begin{lemma}\label{claim_flip}
Let $\vN,\vM$ be such  that $m_{11}$ is even and \eqref{eqthm:contig2}
holds.
Then conditioned on $\hat\cF(\vN,\vM)\cap\cE$
\whp\ there are no flipping structures of order at least 
$\eps_1 n$ in $\hat \cN_0$.
\end{lemma}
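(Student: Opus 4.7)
The plan is a first-moment calculation exploiting the structural constraints from \Prop~\ref{Prop_fs} together with the uniformity of $\hat\G[\hat\cN_0]$ from \Lem~\ref{lem_Nzero} and the subcritical-forest structure of $\hat\G[\hat\cN_+]$ provided by $\cE_3$ and \Lem~\ref{lem:subcritical}. First I reduce the claim to a combinatorial statement. \Prop~\ref{Prop_fs}(iv) places any flipping structure in $\hat\cN_0\times\hat\cN_0$ or $\hat\cN_\star\times\hat\cN_\star$; the latter must be a directed cycle by (v), ruled out by $\cE_2$. Likewise (vi) together with $\cE_3$ rules out flipping structures lying entirely within $\hat\cN_+\times\hat\cN_+$. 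So it suffices to bound the expected number of sets $T=V(S)$ decomposed as $T=T_-\cup T_+$ with $T_-:=T\setminus\hat\cN_+\neq\emptyset$, $T_+:=T\cap\hat\cN_+$ and $|T|\geq\eps_1 n$ such that, by (vii) and (viii), every $v\in T_-$ has at least three and every $v\in T_+$ has at least two neighbours in $T$ within $\hat\G$. By \Prop~\ref{lem:condsprob} conditioning on $\cE$ beyond $\hat\cF(\vN,\vM)\cap\cE_1$ shifts probabilities only by a factor of $\Theta(1)$, so I may work in $\hat\cF(\vN,\vM)\cap\cE_1$, under which \Lem~\ref{lem_Nzero} gives that $\hat\G[\hat\cN_0]$ is uniformly distributed on graphs with $\hat n_0$ vertices and $\hat m_{00}/2$ edges.

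Next I set up the first-moment bound. Writing $t_\bullet=|T_\bullet|$, $\beta_\bullet=t_\bullet/n$ and $\alpha=\beta_-+\beta_+$, the configuration-model view of $\hat\G[\hat\cN_0]$ yields that the number of $T$-neighbours of any fixed $v\in\hat\cN_0$ is asymptotically $\Po(d\alpha)$, so up to lower-order factors
\begin{align*}
\Erw\brk{\#\text{admissible }(T_-,T_+)\text{ with given }t_-,t_+}\leq\binom{\hat n_0-\hat n_+}{t_-}\binom{\hat n_+}{t_+}\pr\brk{\Po(d\alpha)\geq 3}^{t_-}\pr\brk{\Po(d\alpha)\geq 2}^{t_+}.
\end{align*}
Taking logarithms and optimising the resulting exponent over the feasible region $\cbc{(\beta_-,\beta_+):\beta_-+\beta_+\geq\eps_1,\ \beta_-\leq (1-p)(1-\bar q),\ \beta_+\leq (1-p)\bar q}$, the Taylor expansion $\pr\brk{\Po(d\alpha)\geq 2}\sim (d\alpha)^2/2$ combined with $\binom{\hat n_+}{t_+}\leq ((1-p)\bar q\eul n/t_+)^{t_+}$ reduces the leading behaviour near $\alpha=0$ to the subcriticality $\bar q(1-p)d=(k-1)q<1$ provided by \Fact~\ref{fact_FPana}(2), which makes the exponent strictly negative on a right-neighbourhood of $\alpha=0$.

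The main obstacle is extending this negativity to the whole compact range $\alpha\in[\eps_1,1-p]$, since the binomial entropy in $\binom{\hat n_+}{t_+}$ is substantial when $\beta_+$ approaches $(1-p)\bar q$ and the crude Poisson tail alone need not beat it there. I overcome this by additionally exploiting that $\cE_3$ forces $\hat\G[T_+]\subset\hat\G[\hat\cN_+]$ to be a forest, hence with at most $t_+-1$ edges, so that the degree conditions force at least $3t_-/2$ edges of $\hat\G[T]$ to be incident with $T_-$; coupled with \Lem~\ref{lem:subcritical}, this refined bound on the internal edge budget of $T_+$ tightens the probability estimate enough to keep the exponent strictly negative uniformly on the compact range for $\eps_1$ sufficiently small. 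A continuity argument together with Markov's inequality then yields the desired $o(1)$ bound on the number of flipping structures of order at least $\eps_1 n$.
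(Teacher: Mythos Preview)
Your approach is genuinely different from the paper's and, as written, has a real gap. The paper does \emph{not} do a first-moment count on degree-constrained vertex sets. Instead it observes that a linear flipping structure forces a linear set of vertices $v\in\hat\cN_0$ with $\mu_{v\to w}(\hat\G)=1$ but $\hat\vmu_{v\to w}=0$, and then shows, by coupling the local neighbourhood of a fixed $v\in\hat\cN_0$ to a two-type branching process, that each such $v$ has probability $o(1)$ of sending a spurious $1$. The key step is that the recursion $\bar u_t\le f_k(\bar u_{t-1})$ has $0$ as its only nonnegative fixed point, because $f_k$ is concave on $[0,\infty)$ with $f_k(0)=0$ and $f_k'(0)=(k-1)q<1$; Markov's inequality then yields at most $o(n)$ offending vertices, hence no flipping structure of order $\geq\eps_1 n$.

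The gap in your argument is that the first moment on sets $T=T_-\cup T_+$ satisfying only conditions (vii) and (viii) of \Prop~\ref{Prop_fs} need not be $o(1)$. Take $T_+=\emptyset$: then your forest refinement is vacuous and you would need $\hat\G[\hat\cN_-]$ to contain no linear-size set of minimum degree $\geq 3$, i.e.\ to have empty $3$-core. But the average degree of $\hat\G[\hat\cN_-]$ is $d(1-p)(1-\bar q)=d(1-p)-(k-1)q$, and one checks that $d(1-p)$ is unbounded as $k\to\infty$ along $d\downarrow d_k$ (of order $\sqrt{k/\log k}$). Thus for large $k$ this average degree exceeds the $3$-core threshold, such sets $T_-$ exist \whp, and your union bound blows up at a fixed $\alpha$ well above any choice of $\eps_1$. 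The degree conditions (vii)--(viii) are simply too coarse; what the paper exploits is the fixed-point property (ii), which encodes full Warning Propagation consistency and is precisely what drives the contractivity of $f_k$.
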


\noindent
We prove \Lem s~\ref{claim_fssmall} and~\ref{claim_flip} in \Sec s~\ref{Sec_claim_fssmall} and~\ref{Sec_claim_flip}.
But let us first point out that \Prop~\ref{Prop_cycl} is an immediate consequence of \Prop~\ref{Prop_fs} and 
\Lem s~\ref{claim_fssmall} and~\ref{claim_flip}.

\begin{proof}[Proof of \Prop~\ref{Prop_cycl}]
We have
\begin{align*}
\pr\brk{\cF(\vN,\vM)|\hat\cF(\vN,\vM)}&=
\pr\brk{\cF(\vN,\vM)|\cE\cap\hat\cF(\vN,\vM)}
\pr\brk{\cE\cap\hat\cF(\vN,\vM)|\hat\cF(\vN,\vM)}\\
&=
\pr\brk{\cF(\vN,\vM)|\cE\cap\hat\cF(\vN,\vM)}
\pr\brk{\cE|\hat\cF(\vN,\vM)}.
\end{align*}
That is, our aim is 
to show that 
$\pr[\cF(\vN,\vM)|\cE\cap\hat\cF(\vN,\vM)]=1+o(1).$
Certainly, given $\cE$ it holds that 
$\hat\G$ is simple. 
Further, given $\cE_2$, 
Proposition~\ref{Prop_fs}~\eqref{claim_fs1} and~\eqref{claim_fs2}
imply that a possible 
flipping structure must lie completely 
within $\cN_0.$
Similarly, given $\cE_3$, Proposition~\ref{Prop_fs}~\eqref{claim_fs3}
implies that there is no flipping
structure completely within $\hat\cN_+.$
Therefore  invoking {\Lem s~\ref{claim_fssmall} and~\ref{claim_flip}} we conclude that given
$\cE\cap\hat\cF(\vN,\vM)$ \whp~
$\cF(\vN,\vM)$ holds, as required.
\end{proof}

\subsubsection{Proof of \Lem~\ref{claim_fssmall}}\label{Sec_claim_fssmall}
Let $\hat\cN_-=\hat\cN_0\setminus\setB$ and for a set $S\subset[n]^2$ let $V_-(S)={V(S)}\cap\hat\cN_-$ and $V_+(S)={V(S)}\cap\hat\cN_+$.
Further, denote by $G_-(S)$ and $G_+(S)$ the subgraphs of $\Sundir$ induced on $V_-(S)$ and $V_+(S)$ respectively.
Additionally, let $a=a(S)=|V_-(S)|$ and $b=b(S)=|V_+(S)|$ and let $i=i(S)$ be the number of vertices that are isolated in $G_+(S)$.
We assume throughout that
	\begin{equation*}
	a+b\leq\eps_1 n.
	\end{equation*}
Let $\ell=\ell(S)$ be the number of leaves (i.e., vertices of degree one) in $G_+(S)$. 
{Let $c=c(S)$ denote the number of  components of order at least two in $G_+(S)$. Let $x=x(S)$ denote the number of edges  in $G_-(S).$}
Throughout this section we assume that 
$0<\eps_1 \ll \eps_2 \ll \eps_3 \ll \eps_4(d,k)$.

\begin{fact}\label{Fact_basic}
Given that $\hat\G$ is simple, the following statements hold for any flipping structure $S\subset\hat\cN_0\times\hat\cN_0$ with $V(S)\cap\hat\cN_-\neq\emptyset$.
\begin{enumerate}
\item $G_+(S)$ is acyclic.
\item 
Every leaf of $G_+(S)$ has a $G(S)$-neighbour in $V_-(S)$. 
\item 
Every isolated vertex of $G_+(S)$ has at least two $G(S)$-neighbours in $V_-(S)$.
\item Every vertex in $G_-(S)$ has at least three $G(S)$-neighbours.
\end{enumerate}
\end{fact}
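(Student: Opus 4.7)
The plan is to deduce each of the four items from the structural properties of flipping structures collected in Proposition~\ref{Prop_fs}, together with the minimality of $S$. Items (2), (3), and (4) are essentially direct translations of Proposition~\ref{Prop_fs}(vii) and~(viii), while item~(1) requires a short contradiction argument.

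For item~(4), every vertex of $G_-(S)$ is a vertex of $\Sundir$ sitting in $\hat\cN_0\setminus\setB=\hat\cN_-$, so Proposition~\ref{Prop_fs}(vii) directly supplies the three distinct $G(S)$-neighbours demanded. For items~(2) and~(3), every vertex of $G_+(S)$ lies in $V_+(S)\subset\hat\cN_+$, and Proposition~\ref{Prop_fs}(viii) furnishes at least two $G(S)$-neighbours for each such vertex. A leaf $v$ of $G_+(S)$ has exactly one $G(S)$-neighbour inside $V_+(S)$, so the remaining guaranteed $G(S)$-neighbour must lie in $V_-(S)$, proving~(2). An isolated vertex $v$ of $G_+(S)$ has no $G(S)$-neighbour in $V_+(S)$ at all, so both of its $\geq 2$ $G(S)$-neighbours sit in $V_-(S)$, proving~(3).

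For item~(1) I would argue by contradiction. Suppose $G_+(S)$ contains an undirected cycle $C$ and set $S'=S\cap(V(C)\times V(C))\subset\hat\cN_+\times\hat\cN_+$. Each undirected edge of $C$ contributes at least one directed arc to $S'$, hence $|S'|\geq|V(C)|$, so averaging produces some $v^*\in V(C)$ with in-degree at least $1$ in the digraph $(V(C),S')$. Since $v^*\in\hat\cN_+$ satisfies $d^-_\emptyset(v^*)=k-2$, the initialisation of the modified Warning Propagation with input $S'$ already has $d^-_{S'}(v^*)\geq k-1$ at time $0$, placing $v^*$ into $\hat\cN_\star(S')\cup\hat\cN_1(S')$ initially. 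The aim is to conclude that the fixed point still satisfies $\hat\cN_x(S')\neq\hat\cN_x$ for some $x$, which would make $S'$ a flipping structure; since $V_-(S)\neq\emptyset$ forces $V(S')\subset V_+(S)\subsetneq V(S)$, this would yield $S'\subsetneq S$, contradicting the minimality of $S$.

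The main obstacle is verifying that $S'$ is a genuine flipping structure, i.e.\ that the modified-WP cascade does not collapse $\mu(\hat\G,S')$ back to $\hat\vmu$. Here I would combine Proposition~\ref{Prop_fs}(ii) (the initialisation with $S$ is already a fixed point), the inclusion $S'\subset S$, monotonicity of modified WP, and the pointwise inequality $\hat\vmu_{u\to v}\leq\mu_{u\to v}(0|\hat\G,S')$ to force $\mu(\hat\G,S')>\hat\vmu$ on some edge. An equivalent route, which I expect to be cleanest to execute, is to shrink $S'$ to a minimal flipping sub-structure $S''\subset S'$ (which exists as soon as the modified-WP cascade initiated from $S'$ does not collapse) and to invoke Proposition~\ref{Prop_fs}(vi) to deduce that $S''$ is a directed cycle sitting inside $\hat\cN_+$, so in particular $S''\subsetneq S$ and the minimality of $S$ is again contradicted.
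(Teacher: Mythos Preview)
Your arguments for items (2), (3), and (4) are correct and coincide with the paper's, which likewise deduces them directly from Proposition~\ref{Prop_fs}(vii)--(viii).

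For item~(1) there is a genuine gap. Your plan is to show that $S'=S\cap(V(C)\times V(C))$ causes a flip, and you correctly flag this as the crux, but neither of your proposed routes closes it. The monotonicity sandwich $\hat\vmu\le\mu(\hat\G,S')\le\mu(\hat\G,S)$ is not available: modified Warning Propagation from an arbitrary $S'$ is \emph{not} monotone decreasing in general. Concretely, take a triangle $C=v_1v_2v_3$ in $G_+(S)$ on which the $S$-arcs are exactly $(v_1,v_2),(v_2,v_3),(v_1,v_3)$, with the mandatory $\Sdir$-in-edge to $v_1$ and out-edge from $v_3$ running to $V(S)\setminus V(C)$ (this is consistent with Proposition~\ref{Prop_fs}). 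One checks that the cascade from $S'$ is periodic of period~$3$: already $\mu_{v_3\to v_1}$ jumps from $0$ at time $0$ to $1$ at time $1$, and the configuration returns to the initial one at time~$3$. So the limit $\mu(\hat\G,S')$ does not exist, your averaging step (one vertex has in-degree $\ge1$ at time~$0$) says nothing about the long-run behaviour, and your ``equivalent route'' of shrinking $S'$ to a minimal flipping sub-structure presupposes precisely what remains to be shown.

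The paper does not pass through $S'$. Its one-line argument just observes that a consistent directed orientation $\tilde S$ of the undirected cycle $C$ (of length $\ge3$, since $\hat\G$ is simple) is itself a flipping structure: every vertex of $C$ lies in $\hat\cN_+$, gains exactly one extra incoming message from its $\tilde S$-predecessor (which is distinct from its $\tilde S$-successor), and this configuration is already a WP fixed point with each $v_i\in\hat\cN_\star(\tilde S)$. The subsequent jump to ``thus $S$ is not minimal'' is not elaborated in the paper either (and $\tilde S\not\subset S$ in general), but this does not affect anything downstream: every use of Fact~\ref{Fact_basic}(1) occurs inside the proof of Lemma~\ref{claim_fssmall}, which conditions on $\cE\subset\cE_3$, and under $\cE_3$ the graph $\hat\G[\hat\cN_+]$---hence also its subgraph $G_+(S)$---is acyclic outright.
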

\begin{proof}
If $G_+(S)$ contains a cycle, then this cycle is itself a flipping structure, and thus $S$ is not minimal.
This shows (1) and (2), (3) follow from  \Prop~\ref{Prop_fs} (viii).
Finally, (4) follows from \Prop~\ref{Prop_fs} (vii).
\end{proof}

\begin{claim}\label{Claim_claim_fssmall_1}
Let $\vN,\vM$ be such that $m_{11}$ is even and 
\eqref{eqthm:contig2} holds.
Given
$\hat\cF(\vN,\vM)\cap\cE_1$,
\whp\ $\hat\G$ does not contain a flipping structure $S\subset\hat\cN_0\times \hat\cN_0$
with $
a+b\leq\eps_1 n$ such that $a\geq\eps_2 b$.
\end{claim}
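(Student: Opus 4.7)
The plan is to apply Corollary~\ref{claim:avdeg}, which rules out subgraphs of $\hat\G[\hat\cN_0]$ on fewer than $\eps n$ vertices with average degree $2(1+\delta)$ or more. Since $S\subset\hat\cN_0\times\hat\cN_0$, the undirected graph $G(S)=\Sundir$ is a simple subgraph of $\hat\G[\hat\cN_0]$ (on the event $\cE_1$) on $a+b\leq\eps_1 n$ vertices. So it suffices to lower-bound its average degree by $2+\Omega(1)$ in terms of $\eps_2$ alone, and the Corollary then delivers the contradiction.

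First I note that the hypothesis $a\geq\eps_2 b$, together with $|V(S)|\geq 2$ (since $S$ is a non-empty flipping structure and $\hat\G$ has no loops under $\cE_1$), forces $a\geq 1$, so $V_-(S)\neq\emptyset$ and Fact~\ref{Fact_basic} is available. I then extract two minimum-degree bounds: by Fact~\ref{Fact_basic}(4), every vertex of $V_-(S)$ has at least three neighbours in $G(S)$; by Proposition~\ref{Prop_fs}~\eqref{claim_fsnew}, every vertex of $V(S)$ has at least two neighbours in $G(S)$. Summing degrees yields $2|E(G(S))|\geq 3a+2b$, hence
\begin{align*}
\frac{2|E(G(S))|}{a+b}\ \geq\ \frac{3a+2b}{a+b}\ =\ 2+\frac{a}{a+b}\ \geq\ 2+\frac{\eps_2}{1+\eps_2},
\end{align*}
where the last inequality uses $a\geq\eps_2 b$ (and holds trivially when $b=0$).

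Setting $\delta:=\eps_2/(2(1+\eps_2))>0$, the average degree of $G(S)$ is at least $2(1+\delta)$. Choosing $\eps_1\leq\eps(\delta,d,k)$, the constant produced by Corollary~\ref{claim:avdeg} for this $\delta$, is compatible with the cascade $\eps_1\ll\eps_2$ fixed at the start of the section. The existence of such a flipping structure $S$ would then violate Corollary~\ref{claim:avdeg}, so w.h.p.\ no such $S$ exists. There is no real obstacle in this argument: all of the combinatorial work has been done already in establishing Proposition~\ref{Prop_fs}, Fact~\ref{Fact_basic}, and Corollary~\ref{claim:avdeg}, and what remains is the short arithmetic computation above. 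The only point to verify carefully is that the degree lower bounds from Fact~\ref{Fact_basic}(4) and Proposition~\ref{Prop_fs}~\eqref{claim_fsnew} really do take place inside $G(S)$ rather than inside the ambient graph $\hat\G$, but this is built into the statements themselves.
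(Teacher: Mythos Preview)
Your proof is correct and follows essentially the same approach as the paper: both arguments bound the average degree of $G(S)$ from below via the minimum-degree information in Fact~\ref{Fact_basic}(4) and Proposition~\ref{Prop_fs}\eqref{claim_fsnew}, obtaining $\frac{3a+2b}{a+b}$, and then invoke Corollary~\ref{claim:avdeg}. Your write-up is slightly more explicit (you justify $a\geq 1$ so that Fact~\ref{Fact_basic} applies, and you track the constant as $2+\frac{\eps_2}{1+\eps_2}$ rather than the looser $2+\frac{\eps_2}{2}$ used in the paper), but the substance is identical.
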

\begin{proof}
Fact~\ref{Fact_basic} implies that  the induced subgraph 
$\Sundir$ of $\hat\G[\hat\cN_0]$ has average degree
at least 
	$$\frac{3a + 2b}{a+b}\ge
	2+\frac{\eps_2}{2}.$$
But by \Cor~\ref{claim:avdeg},   for $\eps_1=
\eps_1(\eps_2,d,k)>0$ small enough $\hat\G[\hat\cN_0]$ does not contain such a subgraph \whp\
\end{proof}

\begin{claim}\label{Claim_claim_fssmall_4}
Let $\vN,\vM$ be such that $m_{11}$ is even and 
\eqref{eqthm:contig2} holds.
Given $\hat\cF(\vN,\vM)\cap\cE_1$, \whp\ $\hat\G$ does not contain a flipping structure $S\subset\hat\cN_0\times \hat\cN_0$
with $a+b\leq\eps_1 n$ such that $a\leq\eps_2b$ and $i\geq\eps_3b$.
\end{claim}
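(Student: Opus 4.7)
The plan is to derive the claim as an immediate consequence of Corollary~\ref{claim:avdeg2}. That corollary supplies a constant $\delta=\delta(d,k)>0$ such that, conditioned on $\hat\cF(\vN,\vM)\cap\cE_1$, \whp\ $\hat\G[\hat\cN_0]$ contains no pair of disjoint non-empty subsets $A,B\subset\hat\cN_0$ with $|A|\leq\delta|B|$ and every vertex of $B$ having at least two neighbours in $A$. I therefore pick $\eps_3=\eps_3(d,k)$ to be any small positive constant and then impose $\eps_2\leq\delta\eps_3$.

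Now suppose $S$ is a flipping structure as in the statement of the claim. Set $A:=V_-(S)$, so $|A|=a$, and let $T$ be the set of vertices isolated in $G_+(S)$, so $|T|=i\geq\eps_3 b$. The hypothesis of Lemma~\ref{claim_fssmall} provides a vertex of $V(S)$ in $\hat\cN_-$, giving $a\geq 1$; combined with $a\leq\eps_2 b$ this forces $b\geq 1/\eps_2$, and hence $i\geq\eps_3 b\geq\eps_3/\eps_2\geq 1/\delta\geq 1$, so $T$ is also non-empty. The sets $A\subset\hat\cN_-$ and $T\subset\hat\cN_+$ are automatically disjoint subsets of $\hat\cN_0$, and
\[
|A|=a\leq\eps_2 b\leq\frac{\eps_2}{\eps_3}\,i\leq\delta|T|.
\]
By Fact~\ref{Fact_basic}(3) each vertex of $T$ has at least two $\Sundir$-neighbours in $V_-(S)=A$, and since $\Sundir$ is a subgraph of $\hat\G$ these are in particular neighbours in $\hat\G[\hat\cN_0]$. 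Thus $(A,T)$ realises precisely the configuration ruled out \whp\ by Corollary~\ref{claim:avdeg2}, a contradiction.

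There is essentially no obstacle here: the entire content of the argument is the extraction of $A$ and $T$ from $S$ and their submission to Corollary~\ref{claim:avdeg2}. The only care needed lies in the ordering of the constants, namely $\delta$ is supplied by Corollary~\ref{claim:avdeg2}, then $\eps_3$ is any fixed small positive constant, then $\eps_2\leq\delta\eps_3$, and finally $\eps_1$ is chosen small enough that the size constraint $a+b\leq\eps_1 n$ keeps us uniformly within the regime for which that corollary is stated.
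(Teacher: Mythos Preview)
Your proof is correct and follows essentially the same approach as the paper's: apply Fact~\ref{Fact_basic}(3) to see that each isolated vertex of $G_+(S)$ has two neighbours in $V_-(S)$, then invoke Corollary~\ref{claim:avdeg2} with the pair $(V_-(S),\{\text{isolated vertices of }G_+(S)\})$. Your version is simply more explicit about the ordering of the constants and about verifying non-emptiness; one small remark is that Corollary~\ref{claim:avdeg2} imposes no size bound on the sets, so $\eps_1$ plays no role in this particular claim.
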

\begin{proof}
Every isolated vertex of $G_+(S)$ has at least two $G(S)$-neighbours in $V_-(S)$.
Therefore, \Cor~\ref{claim:avdeg2} applies.
\end{proof}

\begin{claim}\label{Claim_claim_fssmall_2}
Let $\vN,\vM$ be such that $m_{11}$ is even and 
\eqref{eqthm:contig2} holds.
Given $\hat\cF(\vN,\vM)\cap\cE_1$,
\whp\ $\hat\G$ does not contain a flipping structure $S\subset\hat\cN_0\times \hat\cN_0$
with $a+b\leq\eps_1 n$ such that $a\leq\eps_2b$,  $i\leq\eps_3b$ and $\ell\geq\eps_4(b-i)$.
\end{claim}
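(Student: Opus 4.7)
The plan is to reduce the claim to \Cor~\ref{claim:avdeg} by showing that any such flipping structure $S$ forces the undirected graph $\Sundir$ -- a subgraph of $\hat\G[\hat\cN_0]$ on $a+b\leq \eps_1 n$ vertices -- to have average degree strictly larger than $2$. The corollary then immediately excludes such subgraphs \whp\ provided $\eps_1$ is chosen small enough.

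The edge count proceeds as follows. Let $y,z,x$ denote the numbers of edges in $G_+(S)$, between $V_-(S)$ and $V_+(S)$, and in $G_-(S)$ respectively, so $y=b-i-c$. By Fact~\ref{Fact_basic}, every leaf of $G_+(S)$ has at least one $G(S)$-neighbour in $V_-(S)$ and every isolated vertex at least two, giving $z\geq \ell+2i$. The crucial additional combinatorial observation is that, restricted to its non-isolated vertices, $G_+(S)$ is a forest in which every component of order at least $2$ contributes at least two leaves, and hence $c\leq \ell/2$. Combining these,
	$$E=y+z+x\;\geq\;(b-i-\ell/2)+(\ell+2i)+x\;\geq\;b+\ell/2.$$
Using $\ell\geq \eps_4(b-i)\geq \eps_4(1-\eps_3)b$ and $a\leq \eps_2 b$, the average degree of $\Sundir$ satisfies
	$$\frac{2E}{a+b}\;\geq\;\frac{2b+\ell}{a+b}\;\geq\;\frac{2+\eps_4(1-\eps_3)}{1+\eps_2}\;>\;2(1+\delta)$$
for a suitable $\delta=\delta(\eps_4)>0$ under the hierarchy $\eps_2\ll\eps_3\ll\eps_4$.

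The only real task is then to thread the constants: first fix $\eps_4$, extract $\delta>0$ from the bound above, obtain $\eps=\eps(\delta,d,k)>0$ from \Cor~\ref{claim:avdeg}, and finally take $\eps_1\leq \eps$. There is no deeper obstacle -- the forest inequality $c\leq \ell/2$ is the single combinatorial ingredient that upgrades $z\geq \ell+2i$ into the excess average degree we need. The degenerate case $b=i$ (so $\ell=c=0$ and all of $V_+(S)$ is isolated) is handled identically: here $z\geq 2i=2b$, whence $E\geq 2b+x$ and the average degree is already $\geq 4/(1+\eps_2)>2(1+\delta)$ for $\eps_2$ small.
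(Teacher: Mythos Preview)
Your proof is correct and takes essentially the same approach as the paper: both show that $\Sundir$ has average degree at least $(2b+\ell)/(a+b)\geq (2+\eps_4(1-\eps_3))/(1+\eps_2)$ and then invoke \Cor~\ref{claim:avdeg}. The only cosmetic difference is in the bookkeeping: the paper obtains the degree-sum bound $2E\geq 2b+2i+\ell$ directly from \Prop~\ref{Prop_fs}\eqref{claim_fsnew} (every vertex of $\Sundir$ has degree $\geq 2$) together with the cross-edge count, whereas you reconstruct the same bound via the explicit forest edge count $y=b-i-c$ and the tree inequality $c\leq \ell/2$.
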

\begin{proof}
We aim to determine the average degree in the induced subgraph 
$G(S)$ of $\hat\G[\hat\cN_0].$
By \Prop~\ref{Prop_fs}~\eqref{claim_fsnew} each 
vertex
in $G(S)$   
has degree at least $2$ in $G(S)$.
That is, the total
degree among the vertices of $G(S)$ in $\hat\cN_+$ is at least $2b.$
It remains to determine the total degree among the vertices 
 of $G(S)$ in $\hat\cN_-$. 
 By Fact~\ref{Fact_basic}~
every leaf of $G_+(S)$ has a 
$G(S)-$neighbour  in $V_-(S)$, and
 each isolated vertex in $G_+(S)$ has at least
 two $G(S)$-neighbours in $V_-(S)$. That is, there are at least 
 $2i +\ell$ edges between $V_+(S)$ and $V_-(S)$  in $G(S)$ and so the total
 degree among the vertices of $G(S)$ in $\hat\cN_-$ is at least
  $2i+\ell.$ Since
  $\ell\geq\eps_4(1-\eps_3)b$ and $a\leq \eps_2 b$,
  the average degree in $G(S)$ is at least
  $$
  \frac{2b+2i+\ell}{a+b}\geq\frac{2b+\ell}{a+b}
  \geq \frac{2+\eps_4(1-\eps_3)}{1+\eps_2}.
  $$
 But by \Cor~\ref{claim:avdeg},   for $\eps_3<1$,
 $\eps_2=\eps_2( d,k,\eps_3, \eps_4)$ and
 $\eps_1=
\eps_1(d,k,\eps_2)>0$ small enough $\hat\G[\hat\cN_0]$ does not contain such a subset \whp\
\end{proof}

\begin{claim}\label{Claim_claim_fssmall_x}
Let $\vN,\vM$ be such that $m_{11}$ is even and 
\eqref{eqthm:contig2} holds.
Given $\hat\cF(\vN,\vM)\cap\cE_1$
 \whp~
$\hat\G$ does not contain a flipping structure $s\subset\cN_0\times\hat\cN_0$ with 
$a+b\le\eps_1 n$ and  $x\geq 1.01 a$.
\end{claim}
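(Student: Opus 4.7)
The plan is to reduce the claim directly to \Cor~\ref{claim:avdeg}. First I would observe that since $V_-(S)\subset\hat\cN_-\subset\hat\cN_0$, the graph $G_-(S)$ is a subgraph of $\hat\G[\hat\cN_0]$ on $a$ vertices containing $x$ edges. The hypothesis $x\geq 1.01 a$ therefore forces the average degree of $G_-(S)$ to be at least $2x/a\geq 2.02 = 2(1+0.01)$. Together with the size bound $a\leq a+b\leq \eps_1 n$, this exhibits a small subgraph of $\hat\G[\hat\cN_0]$ of strictly supercritical average degree.

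The second step is to apply \Cor~\ref{claim:avdeg} with $\delta=0.01$. This yields some $\eps=\eps(0.01,d,k)>0$ with the property that, conditioned on $\hat\cF(\vN,\vM)\cap\cE_1$, \whp\ the graph $\hat\G[\hat\cN_0]$ contains no subgraph on fewer than $\eps n$ vertices with average degree at least $2.02$. Assuming $\eps_1\leq \eps$, which is consistent with the chain $0<\eps_1\ll\eps_2\ll\eps_3\ll\eps_4$ fixed at the start of \Sec~\ref{Sec_claim_fssmall} (and can in any case be enforced by shrinking $\eps_1$ if necessary), the subgraph $G_-(S)$ would violate this, completing the contradiction.

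There is essentially no obstacle here beyond bookkeeping: the random-graph input needed has already been packaged as \Lem~\ref{lem_Nzero} and then as \Cor~\ref{claim:avdeg}, and the claim is really just the statement that too many edges inside $\hat\cN_-$ push the density above the configuration-model threshold. The only point one must check is that $\eps_1$ be chosen to be at most the $\eps$ produced by \Cor~\ref{claim:avdeg} for $\delta=0.01$, so that the hypothesis $a\leq \eps_1 n$ feeds correctly into that corollary and controls the whole flipping structure.
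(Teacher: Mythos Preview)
Your proposal is correct and is essentially the paper's own proof: both observe that $G_-(S)\subset\hat\G[\hat\cN_0]$ has $a$ vertices and at least $1.01a$ edges, hence average degree at least $2.02$, and invoke \Cor~\ref{claim:avdeg} with $\delta=0.01$ (choosing $\eps_1\leq\eps(0.01,d,k)$) to rule this out.
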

\begin{proof}
If $x\geq 1.01 a$, the induced subgraph $G_-(S)$ has average degree
$2.02.$ By \Cor~\ref{claim:avdeg} for $\eps_1=\eps(0.01,d,k)$
no such subgraph exists in $\hat\G[\hat \cN_0].$
\end{proof}

\begin{claim}\label{Claim_claim_fssmall_smalli}
Let $\vN,\vM$ be such that $m_{11}$ is even and 
\eqref{eqthm:contig2} holds.
Given
$\hat\cF(\vN,\vM)\cap\cE_1$
\whp\ $\hat\G$ 
does not contain a flipping structure $S\subset\hat\cN_0\times \hat\cN_0$ with the following properties.
	\begin{enumerate}[(1)]
	\item $a+b\le\eps_1 n$,
	\item $x> 0.99 a$,
	\item $i \geq 0.1a$.
	\end{enumerate}
\end{claim}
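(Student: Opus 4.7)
The plan is to exhibit a small subgraph of $\hat\G[\hat\cN_0]$ whose average degree is bounded away from~$2$, which \Cor~\ref{claim:avdeg} rules out \whp\ once $\eps_1$ is chosen sufficiently small. Specifically, I take $W:=V_-(S)\cup I$, where $I\subseteq V_+(S)$ denotes the set of $i=i(S)\geq 0.1a$ vertices that are isolated in $G_+(S)$. Because $V_-(S)\subseteq\hat\cN_-$ and $I\subseteq\hat\cN_+$ are disjoint subsets of $\hat\cN_0$, we have $W\subseteq\hat\cN_0$ and $|W|=a+i\leq a+b\leq\eps_1 n$, so the size hypothesis of \Cor~\ref{claim:avdeg} is met automatically.

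Next, I bound $|E(\hat\G[W])|$ from below by combining two disjoint collections of edges of $\hat\G$. First, $G_-(S)$ contributes $x$ edges, all lying in $\hat\G[V_-(S)]\subseteq\hat\G[W]$, because $G(S)\subseteq\hat\G$ by the definition of a flipping structure. Second, Fact~\ref{Fact_basic}(3) guarantees that every vertex of $I$ has at least two $G(S)$-neighbours in $V_-(S)$, yielding at least $2i$ distinct edges of $\hat\G$ between $I$ and $V_-(S)$ (distinctness uses simplicity of $\hat\G$ under $\cE_1$). The two contributions are disjoint, since one lives entirely inside $V_-(S)$ while the other crosses from $I$ to the disjoint set $V_-(S)$. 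Hence $|E(\hat\G[W])|\geq x+2i>0.99a+0.2a=1.19a$ under hypotheses~(2) and~(3) of the claim.

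Writing $\alpha:=i/a\geq 0.1$, the average degree of $\hat\G[W]$ therefore exceeds $2(0.99+2\alpha)/(1+\alpha)$, a function that is increasing on $[0.1,\infty)$ and already at $\alpha=0.1$ evaluates to $2\cdot 1.19/1.1>2.16$. Consequently $\hat\G[W]$ has average degree at least $2(1+\delta)$ for some absolute constant $\delta>0$ (one may take $\delta=0.07$). Applying \Cor~\ref{claim:avdeg} with this $\delta$ and choosing $\eps_1\leq\eps(\delta,d,k)$ then shows that \whp\ $\hat\G[\hat\cN_0]$ contains no such subgraph, contradicting the assumed existence of~$S$. The only place where care is needed is in verifying the disjointness of the two edge contributions; this is immediate from $I\cap V_-(S)=\emptyset$, and the rest amounts to the bookkeeping above, making this step the shortest in the sequence of claims that together establish \Lem~\ref{claim_fssmall}.
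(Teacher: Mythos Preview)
Your proof is correct and follows essentially the same approach as the paper's own argument: both take the vertex set $W=V_-(S)\cup I$, count the $x$ edges inside $V_-(S)$ together with the at least $2i$ edges from $I$ into $V_-(S)$, and observe that the resulting average degree $(2x+4i)/(a+i)\geq 2.38/1.1$ is bounded away from $2$, so \Cor~\ref{claim:avdeg} applies. Your write-up is a bit more explicit about the disjointness of the two edge families and the containment $W\subset\hat\cN_0$, but the argument is the same.
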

\begin{proof}
We aim to determine the average degree in the induced subgraph of $G(S)$
on $V_-(S)$ and the isolated vertices of $G_+(S)$.
By Fact~\ref{Fact_basic}~
every  isolated vertex in $G_+(S)$ has at least
 two $G(S)$-neighbours in $V_-(S)$. By assumption there are $x$
 edges in $G_-(S).$ 
 Therefore the average degree is
 \begin{align*}
 \frac{2x +4 i}{a+i}\geq \frac{1.98 a+4i }{a+i}\geq 
 \frac{2.38}{1.1}.
 \end{align*}
 By \Cor~\ref{claim:avdeg} for $\eps_1=\eps_1(d,k)$
no such subgraph exists in $\hat\G[\hat \cN_0].$
\end{proof}

\begin{claim}\label{Claim_claim_fssmall_3}
Let $\vN,\vM$ be such that $m_{11}$ is even and 
\eqref{eqthm:contig2} holds.
Given $\hat\cF(\vN,\vM)\cap \cE_1$
\whp\ $\hat\G$ 
does not contain a flipping structure $S\subset\hat\cN_0\times \hat\cN_0$ with the following properties.
	\begin{enumerate}[(1)]
	\item $a+b\le\eps_1 n$,
	\item $x>0.99a$,
	\item $\ell-c+i\leq a\leq\frac{100}{99}(c+i)$.
	\end{enumerate}
\end{claim}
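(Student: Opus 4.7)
The plan is to reduce to \Cor~\ref{Claim_claim_fssmall_path} applied with $T=V_-(S)$. Since $V_-(S)\subset\hat\cN_0\setminus\hat\cN_+$, we have $T\cap\hat\cN_+=\emptyset$, so the requirement that the internal vertices of the paths lie in $\hat\G[\hat\cN_+]\setminus T$ reduces to asking that they lie in $\hat\G[\hat\cN_+]$. First, condition~(1) of \Cor~\ref{Claim_claim_fssmall_path} holds automatically from $|T|=a\le a+b\le\eps_1 n$, provided $\eps_1$ is chosen at most the constant supplied by \Cor~\ref{Claim_claim_fssmall_path}. Second, for condition~(2) we note that the number $y$ of edges in $\hat\G[T]$ is at least $x>0.99a$ by the hypothesis of the claim, while the upper bound $y\le 1.01a$ follows from \Cor~\ref{claim:avdeg} applied with $\delta=0.005$, again by shrinking $\eps_1$ if necessary.

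The heart of the argument is verifying condition~(3), i.e.\ exhibiting at least $0.1a$ vertex-disjoint paths of length at least $2$ through $\hat\G[\hat\cN_+]$ whose endpoints lie in $T=V_-(S)$. I would construct one such path per non-trivial component of $G_+(S)$ together with one per isolated vertex of $G_+(S)$. Concretely, for each non-trivial component $C$ of $G_+(S)$, pick two distinct leaves $u_1,u_2\in C$; by Fact~\ref{Fact_basic}(2) each $u_j$ has a $G(S)$-neighbour $v_j\in V_-(S)$, and the path $v_1 - u_1 -\cdots- u_2 - v_2$ within $C$ is a path in $\hat\G$ of length at least $2$ with internal vertices in $V_+(S)\subset\hat\cN_+$ and endpoints in $T$. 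For each isolated vertex $w$ of $G_+(S)$, Fact~\ref{Fact_basic}(3) gives at least two distinct $G(S)$-neighbours $v_1',v_2'\in V_-(S)$, yielding the length-$2$ path $v_1'-w-v_2'$. Distinct components of $G_+(S)$ and distinct isolated vertices produce internally vertex-disjoint paths, which is what \Cor~\ref{Claim_claim_fssmall_path} requires (inspection of the bound $B(k_1,\dots,k_s)$ in the proof of that corollary shows that endpoints in $T$ are only counted via edge probabilities and are not required to be distinct across paths). The total number of paths is $c+i$, and by the hypothesis $a\le \tfrac{100}{99}(c+i)$ we obtain
\begin{equation*}
c+i\ \ge\ \frac{99}{100}\,a\ \ge\ 0.1\,a,
\end{equation*}
so all three conditions of \Cor~\ref{Claim_claim_fssmall_path} are met. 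Consequently, given $\hat\cF(\vN,\vM)\cap\cE_1$, no such $T$ (and hence no flipping structure of the prescribed form) exists \whp

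The main obstacle is correctly accounting for the vertex-disjointness requirement of \Cor~\ref{Claim_claim_fssmall_path}. If that corollary were interpreted as demanding fully vertex-disjoint paths (including endpoints in $T$), one would need an additional argument: from the assumption $\ell-c+i\le a$ one can extract that the $c+i$ candidate paths use at most $\ell+2i\le a+c+i\le 2a$ endpoint slots in a $T$ of size $a$, so by a simple averaging/greedy pruning one can discard at most a constant fraction to obtain a sub-collection of pairwise endpoint-disjoint paths of size still $\Omega(a)$, which suffices. A secondary minor point is choosing the chain of constants so that the $\eps_1$ here is compatible with the $\eps$ of \Cor~\ref{Claim_claim_fssmall_path}, the $\delta=0.005$ of \Cor~\ref{claim:avdeg}, and the constants used in the preceding Claims~\ref{Claim_claim_fssmall_1}--\ref{Claim_claim_fssmall_smalli}; this is routine bookkeeping.
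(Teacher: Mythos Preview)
Your approach is correct and, like the paper, reduces to \Cor~\ref{Claim_claim_fssmall_path} with $T=V_-(S)$. The details differ slightly. The paper first invokes Claim~\ref{Claim_claim_fssmall_smalli} to eliminate the case $i\ge 0.1a$; then, assuming $i\le 0.1a$, it derives $c\ge 0.89a$ from $a\le\frac{100}{99}(c+i)$, shows via a leaf-count that at least $c'\ge 0.75c$ of the non-trivial components of $G_+(S)$ are actual paths (exactly two leaves), and feeds those $c'$ path-components into \Cor~\ref{Claim_claim_fssmall_path}. You instead skip the preliminary case split and use all $c+i\ge 0.99a$ components directly, manufacturing a path inside each tree component by selecting two leaves, and treating each isolated vertex as a length-$2$ path. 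Your route is shorter and avoids the auxiliary inequality $c'\ge 0.75c$; the paper's route has the mild aesthetic advantage that its $c'$ paths are genuine path-components, so no in-tree path selection is needed. Your observation that \Cor~\ref{Claim_claim_fssmall_path} only requires internal vertex-disjointness is accurate (and indeed the paper's own application relies on the same reading, since distinct path-components could still share endpoints in $T$), so your fallback pruning argument is not really needed.
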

\begin{proof}
By Claim~\ref{Claim_claim_fssmall_smalli} 
\whp~ there are no 
flipping structures with $x>0.99a$ and  $i\geq 0.1a$.
Now, assume that there is a flipping structure $S$
  with~(3) and 
$i\leq0.1a$. 
For such a flipping structure, from the assumption that $\ell-c+i \leq a \leq \frac{100}{99}(c+i)$ 
and $c\leq \ell/2$ we obtain that $c\geq \ell /2.25$.
Each component in $S$
 that is not an isolated vertex has at least two leaves.
Therefore, letting $c'=c'(S)$ be the number of components of order at least $2$ in $S$ with exactly two leaves, we conclude that
$\ell \geq 2c' + 3(c-c')$, and thus $c'\geq0.75c$.
This implies that there are at least $c'$ paths contained in 
$\hat\cN_+$ whose endpoints are adjacent to vertices in $V_-(S)$.
Consequently, \Cor~\ref{Claim_claim_fssmall_path}
completes the proof.
\end{proof}

The rest of the proof is based on the first moment method.
Let 
$\nu_+=(1-p)\bar q$ and $\nu_-=(1-p)(1-\bar q)$
and pick a slowly growing $\omega=\omega(n)\to\infty$.
By Claim~\ref{claim_nplus} and
\Prop~\ref{lem:condsprob},
because $\hat n_++\hat n_-=\hat n_0$ we have 
 	\begin{equation}
	\pr\brk{|\sizeB-\nu_+ n |+ |\hat n_--\nu_- n |\le 3\omega\sqrt{n}|\hat \cF(\vN,\vM)\cap \cE}=1-o(1).\label{eq:approxn+-}
		\end{equation}
Let $\cA(3\omega)$ denote the event that 
$|\sizeB-\nu_+ n |+ |\hat n_--\nu_- n |\le 3\omega\sqrt{n}$ holds.
By Claim~\ref{claim_nplus1}, the number {$\hat m_+$} of edges within 
$\hat\G[\setB]$ is distributed as $\Bin(\hat m_{00}/2,(\sizeB/\hat n_0)^2)$.
Hence, setting $\hat \cB=\hat\cF(\vN,\vM)\cap\cE_1\cap\hat\cA(3\omega)$
$$\Erw[\hat m_+
|\hat\cB]=
\frac{\hat m_{00}}{2}\left(\frac{\sizeB}{\hat n_0}\right)^2 \sim\frac{\gamma_+}{2}\sizeB.$$
Further, a Chernoff bound implies 
that conditioned on 
$\hat\cB$
\whp~we have 
 	\begin{equation}
	|2\hat m_+-\gamma_+\hat n_+|\le \omega\sqrt{n}.\label{eq:approxm+}
		\end{equation}

We begin with deriving an auxiliary proposition bounding the following quantity, which will appear in the rest of the proof.
Let
	\begin{align*}
	C=C(a,b,c,\ell,i)
	&=\bink{\hat n_+}{b-i}\bink{\hat n_+-(b-i)}i\bink{\hat n_-}a\bcfr{2\hat m_+}{\hat n_+^2}^{b-i-c}
		\frac{(b-i)!}{\ell!}\bink{b-i-1}{c-1}\stir(b-i-c,b-i-\ell).
	\end{align*}
Let 
\begin{align*}
B=B(a,b,c,i)=
	\bink{\hat n_-}{a}\bink{\hat n_+-(b-i)}{i}
	\br{\frac{\hat n_+}{c}}^c
	\le
	\eul^{a+i}\br{\frac{n}a}^a\br{\frac ni}^i
\br{\frac{n}{c}}^c
\end{align*}
and let $f(x)=-x\ln(x).$

\begin{proposition}
If $c \le \eps_4 b$, then conditioned on $\hat \cB$
we have
	\begin{align}
	C
	&\leq\eps_4 B
\gamma_+^{b-c-i}	\sqrt{\frac{b-i}{\ell}}
	\exp\brk{2\ell+b\br{
	f\br{\frac {\ell}{b-i}}
	+2f\br{\frac{\ell}{2(b-i)}}}+O\br{\frac{\omega b}{\sqrt n}}
	+(c-1)\ln\br{\frac{c}{c-1}}}.\label{eq:boundC}
	\end{align}
\end{proposition}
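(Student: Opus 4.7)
The plan is to extract the factors $B$ and $\gamma_+^{b-c-i}$ from $C$ and then bound the remaining combinatorial expression by Stirling's formula combined with the Stirling number estimate of Theorem~\ref{thm_stir}. First, I would use \eqref{eq:approxm+} to substitute $2\hat m_+/\hat n_+ = \gamma_+(1+O(\omega/\sqrt n))$, which yields $(2\hat m_+/\hat n_+^2)^{b-i-c} = (\gamma_+/\hat n_+)^{b-i-c}\exp(O(\omega b/\sqrt n))$, and combine this with $\binom{\hat n_+}{b-i}(b-i)!/\ell! \le \hat n_+^{b-i}/\ell!$. The powers of $\hat n_+$ consolidate to $\hat n_+^c$, which pairs with $(\hat n_+/c)^c$ in $B$, leaving an $n$-free factor $c^c$. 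Applying Theorem~\ref{thm_stir} gives $\stir(b-i-c, b-i-\ell) \le \tfrac12(b-i-\ell)^{\ell-c}\binom{b-i-c}{\ell-c}$, reducing the inequality to a combinatorial statement in $c, \ell, b-i$.

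Second, to isolate the $(c-1)\ln(c/(c-1))$ contribution, I would write $c^c = c(c-1)^{c-1}(c/(c-1))^{c-1}$ and apply Stirling to $(c-1)^{c-1}\binom{b-i-1}{c-1}$, yielding an upper bound of the form $ce^{c-1}(b-i)^{c-1}\exp((c-1)\ln(c/(c-1)))$. This extracts $(b-i)^{c-1}$ and the desired exponential term. The hypothesis $c \le \eps_4 b$ then supplies one residual factor of order $c/b \le \eps_4$, which yields the $\eps_4$ prefactor.

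Third, I would bound $e^{c-1}(b-i)^{c-1}(b-i-\ell)^{\ell-c}\binom{b-i-c}{\ell-c}/\ell!$ by $\sqrt{(b-i)/\ell}\exp(2\ell + bf(\ell/(b-i)) + 2bf(\ell/(2(b-i))))$ up to a universal constant, where $f(x) = -x\ln x$. The key algebraic identity is
\begin{align*}
bf(r) + 2bf(r/2) \;=\; (br)\ln(2/r^2) \;=\; \frac{b\ell}{b-i}\ln\!\left(\frac{2(b-i)^2}{\ell^2}\right) \qquad\text{where } r = \frac{\ell}{b-i},
\end{align*}
so $\exp(2\ell + bf(r) + 2bf(r/2))$ agrees, up to the factor $b/(b-i) = 1 + O(\eps_3)$ in the exponent, with $e^{2\ell}\cdot 2^\ell\cdot((b-i)/\ell)^{2\ell}$. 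Applying Stirling's formula to $\ell!$ and to the binomial $\binom{b-i-c}{\ell-c}$ treated together with $(b-i-\ell)^{\ell-c}$ produces exactly this expression up to the $\sqrt{(b-i)/\ell}$ prefactor arising from the Stirling corrections. The main technical hurdle is the regime $\ell\to b-i$, where $b-i-\ell$ becomes small and one must treat $(b-i-\ell)^{\ell-c}\binom{b-i-c}{\ell-c}$ jointly via the full Stirling approximation $\binom{v}{t}\sim\sqrt{v/(2\pi t(v-t))}(v/t)^t(v/(v-t))^{v-t}$ rather than bounding the two factors separately; the minor discrepancy between $b$ and $b-i$ in the exponent, of order $\eps_3\ell$, is already absorbed by the slack in writing $b$ rather than $b-i$ in the statement.
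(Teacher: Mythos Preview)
Your approach is the same as the paper's—Stirling's formula plus Theorem~\ref{thm_stir}, substitution of $2\hat m_+/\hat n_+\sim\gamma_+$ via~\eqref{eq:approxm+}, and extraction of $B$—but your bookkeeping of the $\eps_4$ prefactor is misplaced. After your Step~2 the residual factor is $c$, not $c/b$: you have
\[
c^c\binom{b-i-1}{c-1}\;\le\;c\cdot e^{c-1}(b-i)^{c-1}\Bigl(\tfrac{c}{c-1}\Bigr)^{c-1},
\]
and once $e^{c-1}(b-i)^{c-1}$ is passed to Step~3, only the bare $c$ remains. No factor $1/b$ has appeared yet, so ``residual factor of order $c/b$'' is not justified at this point.

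The missing $1/(b-i)$ is actually hiding in your Step~3, where you are being too generous. Because you used the sharp bound $\binom{\hat n_+}{b-i}(b-i)!\le\hat n_+^{b-i}$ in Step~1 (rather than the paper's Stirling bound on $(b-i)!/\ell!$, which is where the $\sqrt{(b-i)/\ell}$ prefactor originates), the Stirling corrections you collect in Step~3 yield only $O(1/\sqrt{\ell})$ from $\ell!$ and $O(1/\sqrt{b-i})$ from $\binom{b-i-c}{\ell-c}$, not $\sqrt{(b-i)/\ell}$. A direct computation shows your Step~3 left side is at most $O\bigl((b-i)^{-3/2}\bigr)$ times the right side, not ``up to a universal constant''. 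One of those factors of $1/(b-i)$ is exactly what combines with the leftover $c$ to produce $c/(b-i)\le\eps_4 b/(b-i)$, which is the $\eps_4$ prefactor (here, as in the paper, one tacitly uses $i\le\eps_3 b$ from the surrounding claims to control $b/(b-i)$). So the route works once you track that factor correctly; the paper avoids this by writing $(b-i)!/\ell!\le e^{\ell-b+i+1}(b-i)^{b-i+1/2}/\ell^{\ell+1/2}$, which produces $\sqrt{(b-i)/\ell}$ and the spare $(b-i)^{-1}$ simultaneously and leads cleanly to the intermediate form
\[
C\le e^{2\ell}\sqrt{\tfrac{b-i}{\ell}}\,B\Bigl(\tfrac{c}{c-1}\Bigr)^{c-1}\gamma_+^{b-c-i}\,\tfrac{c}{b-i}\Bigl(\tfrac{b-i}{\ell}\Bigr)^{\ell}\Bigl(\tfrac{b-i}{\ell-c}\Bigr)^{\ell-c}\exp\bigl(O(\omega b/\sqrt n)\bigr).
\]
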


\begin{proof}
Using \Thm~\ref{thm_stir} and upper bounding 
$$
\frac{(b-i)!}{\ell!}\leq \eul^{\ell-b+i+1}\frac{(b-i)^{b-i+1/2}}{\ell^{\ell+1/2}},
$$
we obtain 
\begin{align*}
C& \leq 
\eul^{\ell-b+i+1}\sqrt{\frac{b-i}{\ell}}
\bink{\hat n_+}{b-i}\bink{\hat n_+ -(b-i)}i\bink{\hat n_-}a
\bink{b-i-1}{c-1}\bink{b-i-c}{\ell-c}
\frac{(b-i)^{b-i}(b-i-\ell)^{\ell-c}}{\ell^\ell}
\bcfr{2\hat m_+}{\hat n_+^2}^{b-i-c}.
\end{align*}

From \eqref{eq:approxn+-} and ~\eqref{eq:approxm+} we obtain that conditioned on $\hat\cB$
we have $(2\hat m_+/\gamma_+\hat n_+)^{b-i-c}\leq\exp(O(\omega b/\sqrt n))$   and 
$(\hat n_+/\nu_+ n)^{2b-2i}\leq\exp(O(\omega b/\sqrt n))$.
Therefore, conditioned on $\hat\cB$
\begin{align*}
C
& \leq \eul^{\ell-b+i+1} \sqrt{\frac{b-i}{\ell}}\left(\frac{\eul\hat n_+}{b-i}\right)^{b-i}B\left(\frac{c}{\hat n_+}\right)^c \left(\frac{(b-i)\eul}{c-1}\right)^{c-1} \left(\frac{(b-i)\eul}{\ell-c}\right)^{\ell-c} \frac{(b-i)^{b-i}(b-i)^{\ell-c}}{\ell^\ell} \left(\frac{\gamma_+}{\hat n_+}\right)^{b-i-c}
\exp\br{O\br{\frac{\omega b}{\sqrt n}}}\\
& \leq \eul^{\ell-b+i+1} \sqrt{\frac{b-i}{\ell}}\left(\frac{\eul\nu_+ n}{b-i}\right)^{b-i}B\left(\frac{c}{\nu_+ n}\right)^c \left(\frac{(b-i)\eul}{c-1}\right)^{c-1} \left(\frac{(b-i)\eul}{\ell-c}\right)^{\ell-c} \frac{(b-i)^{b-i}(b-i)^{\ell-c}}{\ell^\ell} \left(\frac{\gamma_+}{\nu_+ n}\right)^{b-i-c}
\exp\br{O\br{\frac{\omega b}{\sqrt n}}}\\
&  \leq \eul^{2\ell} \sqrt{\frac{b-i}{\ell}}B
\br{\frac{c}{c-1}}^{c-1}
	\gamma_+^{b-c-i}\frac{c}{b-i}
	\br{\frac{b-i}{\ell}}^{\ell}
	\br{\frac{b-i}{\ell-c}}^{\ell -c}
	\exp\br{O\br{\frac{\omega b}{\sqrt n}}}. 
\end{align*}
The bound on $C$ follows directly from the assumption that $c \le \eps_4 b$.
\end{proof}

\begin{claim}\label{Claim_claim_fssmall_Olly1}
Let $\vN,\vM$ be such that $m_{11}$ is even and 
\eqref{eqthm:contig2} holds.
Given $\hat\cF(\vN,\vM)\cap \cE_1$
\whp\ $\hat\G$ 
does not contain a flipping structure $S\subset\hat\cN_0\times \hat\cN_0$ with the following properties.
	\begin{enumerate}[(1)]
	\item $a+b\le\eps_1 n$,
	\item $a\leq\eps_2b$,
	\item $i\leq\eps_3b$,
	\item $\ell\leq\eps_4(b-i)$,
	\item $a <\ell-c+i$
	\end{enumerate}
\end{claim}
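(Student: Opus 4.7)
The proof proceeds by the first moment method, combining the counting bound~\eqref{eq:boundC} for the forest part of $S$ with direct enumeration of the remaining edges. Parametrise $S$ by $(a,b,c,i,\ell)$ together with $x$, the number of edges inside $G_-(S)$, and $y$, the number of cross-edges between $V_+(S)$ and $V_-(S)$. Fact~\ref{Fact_basic} forces $y \ge \ell + 2i$ (each leaf of $G_+(S)$ needs at least one $V_-(S)$-neighbour and each isolated vertex at least two) and $2x + y \ge 3a$ (each $V_-(S)$-vertex has $G(S)$-degree at least $3$). Since every tree component of order $\ge 2$ has at least two leaves, $c \le \ell/2$, so hypothesis~(4) gives $c \le \eps_4 b/2$ and \eqref{eq:boundC} applies.

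The count of configurations for the non-forest edges is at most $a^\ell\cdot\binom{a}{2}^i\cdot(ab)^{y-\ell-2i}\cdot\binom{\binom{a}{2}}{x}$ (one $V_-(S)$-neighbour per leaf, two per isolated vertex, plus extra cross-edges and within-$V_-$ edges); by \Lem~\ref{lem_Nzero}, each specified edge is present with probability $O(1/n)$. The sums over $x \ge 0$ and $y - \ell - 2i \ge 0$ are convergent geometric series since each additional edge contributes a factor $O(1/n)$ times a polynomial, so it suffices to analyse the minimum case $y = \ell + 2i$ with $x$ the least admissible value. In this regime the net exponent of $n$ in the summand collects to $a + c - \ell - i - x$, which is strictly negative by condition~(5) (which reads $a + c < \ell + i$), yielding the key per-summand decay.

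The remaining $a$-dependence $a^{\ell+2i-a}$ is controlled by the decomposition $a^{\ell+2i-a} = (a/n)^{\ell+i-c-a}\cdot a^{i+c}\cdot n^{\ell+i-c-a}$ together with the estimate $(a/n)^{\ell+i-c-a} \le \eps_1^{\ell+i-c-a}$ from $a \le \eps_1 n$ and condition~(5). The $b$-dependent part is $\gamma_+^{b-c-i}\exp[b\,g(\ell/(b-i))]$, where $g(t) = f(t) + 2f(t/2) \to 0$ as $t \to 0$; combined with the bound $c + i \le (\eps_3 + \eps_4/2) b$ coming from~(3) and $c \le \eps_4 b/2$, this is summable in $b \le \eps_1 n$ provided $\eps_4$ is chosen so small that $g(\eps_4) + (\eps_3 + \eps_4/2)|\ln\gamma_+| < |\ln\gamma_+|$, which is possible since $\gamma_+ = (k-1)q < 1$ by Fact~\ref{fact_FPana}. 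The main obstacle is this balancing act between the entropy term $b\,g(\ell/(b-i))$ and the exponential decay $\gamma_+^{b-c-i}$: after absorbing the $\gamma_+^{-c-i}$ prefactor using $c + i \ll b$, it pins down the hierarchy $\eps_1 \ll \eps_2 \ll \eps_3 \ll \eps_4$, with $\eps_4$ depending explicitly on $d$ and $k$ through $|\ln\gamma_+|$, and with $\eps_1$ additionally small enough to invoke Corollary~\ref{claim:avdeg} wherever needed.
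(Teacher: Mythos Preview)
Your core argument matches the paper's: first moment via the bound~\eqref{eq:boundC} on $C$, extract from condition~(5) a strictly negative exponent of $n$ (the paper writes this as $(a/n)^{-a+\ell+i-c}$ with positive exponent), and close with the $\gamma_+^{b(1-o(1))}$ decay, the $\eps$-hierarchy absorbing the entropy corrections $f(\nix)$.

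The gap is in your handling of the extra parameters $x,y$. You assert that the sums over $x\ge 0$ and $y-\ell-2i\ge 0$ are geometric with ratio ``$O(1/n)$ times a polynomial'', but the ratio picked up by one extra cross-edge is $O(ab/n)$, and since $b$ ranges up to $\eps_1 n$ (with $a\le\eps_2 b$) this can exceed $1$; with the ordered count $(ab)^{y-\ell-2i}$ the series then diverges. The paper sidesteps this entirely: it never sums over extra edges. Instead it observes that any flipping structure with the given skeleton must have the $\ell+2i$ \emph{required} cross-edges present (one per leaf, two per isolated vertex of $G_+(S)$, by Fact~\ref{Fact_basic}), so the probability of extending a skeleton to a flipping structure is bounded by $R'=(a\gamma_0/\hat n_0)^{\ell+2i}$, yielding $\Erw[Z'\mid\hat\cB]\le O(C\cdot R')$ directly. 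Your analysis of the ``minimum case $y=\ell+2i$, $x=0$'' is exactly this bound, and the remainder of your argument then goes through; only the justification for restricting to that case needs to be replaced by this necessary-condition observation.
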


\begin{proof}

Let $Z'$ denote the number of such flipping structures $S$.
Recall that
each leaf of 
$G_+(S)$
 must have a $G(S)$-neighbour among the $a$ vertices in $V_-(S)$, and every isolated vertex must have two $G(S)$-neighbours in $V_-(S)$.
 Since the existence of these edges is a monotone graph property, by 
\Lem~\ref{lem_Nzero} the probability that all necessary edges
are present is upper bounded up to a constant by 
		$$R'=R'(a,\ell,i)=\bcfr{a\gamma_0 }{\hat n_0}^{\ell+2i}.$$
Therefore
\begin{align}\label{eq:expZ'}
\Erw[Z'|\hat  \cB]\leq O(
C\cdot R'
).
\end{align}
Conditioned on $\hat\cB$ we have 
$$
R'
\leq\bcfr{a\gamma_0 }{\nu_0 n}^{\ell+2i}
\exp\br{O\br{\frac{b}{\sqrt n}}}.$$
Since $a\leq b$, we obtain that conditioned on $\hat\cB$
	\begin{align}
B\cdot R'
&\leq
\eul^{a+i}\br{\frac{\gamma_0}{\nu_0}}^{\ell+2i}
\br{\frac{a}{i}}^i
\br{\frac{a}{c}}^c
\br{\frac an}^{-a+\ell+i-c}
\exp\br{O\br{\frac{b}{\sqrt n}}}
\nonumber\\&
\leq
\br{\frac{\gamma_0}{\nu_0}}^{\ell+2i}
\br{\frac an}^{-a+\ell+i-c}
\exp\brk{a+i+b\br{f\br{\frac cb}+f\br{\frac ib}} +O\br{\frac{b}{\sqrt n}}}
.\label{eq:boundBR}
	\end{align}
The map $f$ is continuous and monotonically
increasing on $[0,1/\eul)$ with $f(x)\to 0$ as $x\to 0$.
Therefore using $a\leq\eps_2 b,$
$i\leq \eps_3b$, $c\leq \ell /2
\leq \eps_4(1-\eps_3)b/2$
and $a\leq \ell-c+i,$
from \eqref{eq:boundC} and \eqref{eq:boundBR} we obtain
that
for $0<\eps_4<1$, $\eps_3=\eps_3(d,k,\eps_4)$, $\eps_2=\eps_2(d,k,\eps_3)>0$ small enough it holds that
\begin{align}\label{eq:case1}
	C\cdot R'
	& \stackrel{\eqref{eq:boundC}}{\le} 
	B\cdot R'\cdot \eps_4  \gamma_+^{b-c-i}	\sqrt{\frac{b-i}{\ell}}
	\exp\brk{2\ell+b\br{
	f\br{\frac {\ell}{b-i}}
	+2f\br{\frac{\ell}{2(b-i)}}}+O\br{\frac{\omega b}{\sqrt n}}+(c-1)\ln\br{\frac{c}{c-1}}}\nonumber\\
	&\stackrel{\eqref{eq:boundBR}}{\le}
	\br{\frac{\gamma_0}{\nu_0}}^{\ell+2i}
	\br{\frac an}^{-a+\ell+i-c}
	\gamma_+^{b-c-i}	\sqrt{\frac{b-i}{\ell}}\nonumber\\
	& \hspace{0.2cm}\cdot \exp\brk{2\ell+a+i+b\br{
	f\br{\frac {\ell}{b-i}}
	+2f\br{\frac{\ell}{2(b-i)}}+f\br{\frac cb}+f\br{\frac ib}}
	+(c-1)\ln\br{\frac{c}{c-1}}+O\br{\frac{\omega b}{\sqrt n}}}\nonumber\\
	& \le 	
	\br{\frac{\gamma_0}{\nu_0}}^{(2\eps_2+\eps_4)b}
	\gamma_+^{b}	\sqrt{b}
	\exp\brk{2b(\eps_3+\eps_4)+b\br{
	3f\br{2\eps_4}
	+f\br{\eps_4}+f\br{\eps_2}}+\eps_4b+O\br{\frac{\omega b}{\sqrt n}}},
	\end{align}
	where the last line follows since $\gamma_+<1$.
	For $\omega\to\infty$ slowly enough by
~\eqref{eq:expZ'} and~\eqref{eq:case1} we obtain 
\begin{align}
\Erw[Z'|\hat\cB]=O(
C\cdot R'
)=o(1)
\end{align}
as required.
\end{proof}

\begin{claim}\label{Claim_claim_fssmall_Olly2}
Let $\vN,\vM$ be such that $m_{11}$ is even and 
\eqref{eqthm:contig2} holds.
Given $\hat\cF(\vN,\vM)\cap \cE_1$
\whp\ $\hat\G$ 
does not contain a flipping structure $S\subset\hat\cN_0\times \hat\cN_0$ with the following properties.
	\begin{enumerate}[(1)]
	\item $a+b\le\eps_1 n$,
	\item $a\leq\eps_2b$,
	\item $i\leq\eps_3b$,
	\item $\ell\leq\eps_4(b-i)$,
	\item $x \le 1.01a$,
	\item $a\geq \ell-c+i$.
	\item Either $a \ge \frac{100}{99}(c+i)$ or $x \le 0.99a$.
	\end{enumerate}
\end{claim}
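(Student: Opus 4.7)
The plan is to apply the first moment method to the number $Z''$ of flipping structures satisfying (1)--(7), following the template of Claim~\ref{Claim_claim_fssmall_Olly1}. The central obstacle is hypothesis (6), $a \ge \ell - c + i$, which turns the exponent of $(a/n)^{-a+\ell+i-c}$ in~\eqref{eq:boundBR} non-positive and thereby removes the decay that drove that earlier argument. The savings must instead be recovered from two new sources: the $x$ edges inside $V_-(S)$ and the third $G(S)$-neighbour of every vertex in $V_-(S)$ guaranteed by Fact~\ref{Fact_basic}(4). Hypothesis (7) carves out precisely the parameter regime in which these new sources suffice.

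First I would refine the bound $\Erw[Z''\mid\hat\cB]=O(C\cdot R'')$ from Olly1 by setting
\[
R'' \le \binom{\binom{a}{2}}{x}(\gamma_0/\hat n_0)^x\cdot(a\gamma_0/\hat n_0)^{3a-2x},
\]
where the first factor accounts for the $x$ forced edges inside $V_-(S)$ and the exponent $3a-2x$ on the second factor is the lower bound on the number of required $V_-(S)$-$V_+(S)$ edges: the total $G(S)$-degree on $V_-(S)$ is at least $3a$ by Fact~\ref{Fact_basic}(4), and at most $2x$ of it is absorbed by internal edges. Adapting the Stirling-based derivation of~\eqref{eq:boundBR} and tracking factors, the resulting $(a/n)$-exponent becomes $-a+(3a-2x)+2x-i-c = 2a - x - (c+i)$, together with a harmless polynomial factor $(a/x)^x$ from the $x$-edge combinatorics; the $\gamma_+^{b-i-c}$ decay from $C$ and the entropy corrections $bf(c/b), bf(i/b)$ are unaffected.

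The key structural fact is that hypothesis (6) combined with the tree bound $\ell \ge 2c$ forces $c + i \le a$: indeed, $2c \le \ell \le a + c - i$ yields $c \le a - i$. In the subcase $x \le 0.99 a$ of (7), this gives an $(a/n)$-exponent of at least $2a - 0.99a - a = 0.01a$, which provides the required $(a/n)^{\Omega(a)}$ decay directly. In the subcase $a \ge \frac{100}{99}(c+i)$, the $(a/n)$-exponent may degenerate to $0$, but in return $c + i \le \frac{99}{100}a \le \frac{99}{100}\eps_2 b \ll b$ so that $b - i - c \sim b$, and the factor $\gamma_+^{b-i-c}$ from $C$ supplies the required exponential decay in $b$. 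In either subcase, the closing algebra of~(\ref{eq:case1}) adapts and, combined with $\gamma_+ < 1$ and the freedom to take $\eps_1 \ll \eps_2 \ll \eps_3 \ll \eps_4$, yields an $o(1)^b$ (or $o(1)^a$) contribution per admissible tuple.

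The hardest step will be the tight bookkeeping in the subcase $a \ge \frac{100}{99}(c+i)$, where the $(a/n)$-exponent offers no savings and the entire decay must be extracted from $\gamma_+^{b-i-c}$: one has to verify that the polynomial prefactors $(a/x)^x$, $(\gamma_0/\nu_0)^{3a-x}$, $(b/c)^c$, $(b/i)^i$ and the slight $(a/n)^{-O(a)}$ loss are all dominated by the $\gamma_+^b$-like term after an appropriate choice of the $\eps_i$. Summing over the polynomially many admissible tuples $(a, b, c, \ell, i, x)$ then yields $\Erw[Z''\mid \hat\cB] = o(1)$, as required.
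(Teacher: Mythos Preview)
Your proposal is correct and matches the paper's proof essentially line for line. The paper defines the same $R''$ (with the cosmetic difference that it writes $\binom{ab}{3a-2x}$ for the choice of crossing edges rather than your direct $(a\gamma_0/\hat n_0)^{3a-2x}$), derives the identical $(a/n)$-exponent $2a-x-c-i$ in its display~\eqref{eq:boundBR2}, and then splits into exactly your two subcases: Case~1 is $a>\tfrac{100}{99}(c+i)$ and Case~2 is $x\le 0.99a$, each yielding $2a-x-c-i>0$ so that the $(a/n)$-factor does not blow up and the $\gamma_+^{b-c-i}$ from~\eqref{eq:boundC} finishes the job. Your observation $c+i\le a$ (from $\ell\ge 2c$ and hypothesis~(6)) is precisely what the paper uses implicitly to make Case~2 work. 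One minor slip: your displayed intermediate sum $-a+(3a-2x)+2x-i-c$ actually evaluates to $2a-i-c$, not $2a-x-(c+i)$; the $\binom{\binom a2}{x}(\gamma_0/\hat n_0)^x$ factor contributes $+x$ to the $(a/n)$-exponent, not $+2x$, but your stated final exponent is the correct one.
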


\begin{proof}

Recall that in such a flipping structure $S$,
every vertex in $G_-(S)$ must have at least three neighbours
in $G(S)$.
	Since $x$ is the number of edges within $G_-(S)$, there must be $3a-2x$ other edges and we obtain  the probability that 
	all necessary edges are present is bounded up to a constant by
			\begin{align*}
		R''=R''(a,b,x)
		&=\bink{\bink a2}{x}\bink{ab}{3a-2x}\bcfr{\gamma_0}{
		\hat n_0}^{3a-x}.
		 \end{align*}
Conditioned on $\hat\cB$, from \eqref{thm:contig2} we obtain
		 \begin{align*}
		 R''
		 &\leq \bink{\bink a2}{x}\bink{ab}{3a-2x}\bcfr{\gamma_0}{
		\nu_0 n}^{3a-x}\exp\br{O\br{\frac{b}{\sqrt n}}}\\
		&\leq\eul^{3a}\br{\frac{\gamma_0}{\nu_0}}^{3a-x}
		\br{\frac{a}{n}}^{3a}
		\br{\frac{n}{x}}^x
		\bcfr{b}{3a-2x}^{3a-2x}\exp\br{O\br{\frac{b}{\sqrt n}}}.
				\end{align*}
	Hence, for $a\leq b$,
	\begin{align}
	&B\cdot R''
	\leq \eul^{4a+i}
	\br{\frac{\gamma_0}{\nu_0}}^{3a-x}
	\br{\frac ax}^x
	\br{\frac{a}{i}}^i
	\br{\frac{a}{c-1}}^c
	\br{\frac{b}{3a-2x}}^{3a-2x}
	\br{\frac an}^{2a-x-c-i}
	\exp\br{O\br{\frac{b}{\sqrt n}}}\nonumber\\
&\leq 
\br{\frac{\gamma_0}{\nu_0}}^{3a-x}
\br{\frac an}^{2a-x-c-i}
\exp\brk{
4a+i+b
\br{
f\br{\frac xb} + f\br{\frac ib}+f\br{\frac cb}+
f\br{\frac{3a-2x}{b}}}
+(c-1)\ln\br{\frac{c}{c-1}}+\br{O\br{\frac{b}{\sqrt n}}}}.
		\label{eq:boundBR2}
	\end{align}
\begin{description}
\item[Case 1: $a>(100/99)(c+i)$. ]
Let $Z''(a,b,c,i,\ell,x)$ be the number of
flipping structures satisfying the conditions of the Claim and also $a>(100/99)(c+i)$, which implies
$2a-x-c-i>0$.
From 
\eqref{eq:boundC} and \eqref{eq:boundBR2} we obtain
that
for $\eps_4>0$, $\eps_3=\eps_3(d,k,\eps_4)$, $\eps_2=\eps_2(d,k,\eps_3)>0$ small enough
\begin{align}\label{eq:expZ''}
\Erw[Z''|\hat\cB]=O(
C\cdot R''
)=o(1).
\end{align}

\item[Case 2:
$\ell-c+i\leq a \leq (100/99)(c+i),
x\leq 0.99a$. ]
Finally, denote by $Z'''(a,b,c,i,\ell,x)$  the number of
flipping structures satisfying the conditions of the claim
and  $\ell-c+i\leq a \leq 100/99(c+i), x\leq 0.99 a$.
Again we obtain $2a-x-c-i>0$ and 
\begin{align}\label{eq:expZ'''}
\Erw[Z'''|\hat\cB]=O(
C\cdot R''
)=o(1).
\end{align}
\end{description}
The assertion follows from combining~\eqref{eq:expZ''} and~\eqref{eq:expZ'''}.
\end{proof}

\begin{proof}[Proof of \Lem~\ref{claim_fssmall}]
From Claims~\ref{Claim_claim_fssmall_1}
--\ref{Claim_claim_fssmall_3}
and Claims~\ref{Claim_claim_fssmall_Olly1} and~\ref{Claim_claim_fssmall_Olly2}
 we obtain that 
conditioned of $\cF(\vN,\vM)\cap \cE_1$ \whp~ there
is no flipping structure of order at most 
$\eps_1n$. The assertion follows since from \Prop~\ref{lem:condsprob}
we have $\pr\br{\cE_2\cap\cE_3|\cF(\vN,\vM)\cap \cE_1}=\Theta(1)$.
\end{proof}

\subsubsection{Proof of \Lem~\ref{claim_flip}}\label{Sec_claim_flip}

Assume that there is a flipping structure on at least $\eps_1n$
vertices of $\hat\cN_0$, then by \Prop~\ref{Prop_fs}~ 
\eqref{claim_fs-3} for
every pair of vertices $(v,w)$ in $S$  
we have that $
\mu_{v\to w}(\hat\G)\geq \mu_{v\to w}(\hat\G,S)=1.$ 
That is, there has to be a set of $\eps_1 n$ vertices $v\in \hat \cN_0$ such that applying Warning Propagation
on $\hat\G$ would result in a message of type $\mu_{v\to w}(\hat\G)=1$, whereas $\hat\vmu_{v\to w}=0$.
 
We aim to show that given $\cF(\vN,\vM)\cap \cE_1$ \whp~such a set does not
exist in $\hat\cN_0$ by exploring the component of $v\in\hat\cN_0$
in $\hat\G$
and describing the local neighbourhood of $v$ by a two-type 
branching process. By construction $v$ can have neighbours 
 incident to half-edges of type $00$ and $10$ only. 
Further conditioned on $\cE_1,$ for each half-edge of type $00$ the 
matching in step (5) of {\tt Forge} will result in an edge from $v$ to another vertex $w\in\hat\cN_0$. Similarly,
each half-edge of type $10$ the 
matching will result in an edge from $v$ to vertex $w\in\hat\cN_\star\cup\hat\cN_1$. 

Conditioned on $\cF(\vN,\vM),$ the number $X$ of neighbours of $v$ in $\hat \cN_\star\cup \hat \cN_1$
is asymptotically distributed as $\Po_{\leq k-2}(dp)$,
and the number $Y$ of neighbours in $\hat\cN_0$ is asymptotically distributed as
$\Po(d(1-p))$ independently of $X$. We define a $2$-type branching process with these parameters,
i.e. we start from a vertex $v$ of type $\hat \cN_0$ and each vertex of type $\hat \cN_0$
has $\Po_{\le k-2}(dp)$ children of type $\hat \cN_\star \cup \hat \cN_1$ and $\Po(d(1-p))$ children of type $\hat \cN_0$ independently. 
Vertices of type $\hat \cN_\star \cup \hat \cN_1$ have no children in this branching process.

To prove \Lem~\ref{claim_flip} we show that applying Warning 
Propagation to this branching process would result in a message of type $1$ at $v$. We may
assume that a child of $v$ in $\hat \cN_\star \cup \hat \cN_1$ will always send message $1$ towards
$v$ in the tree. This is necessary because we ignored any children of such vertices.
Let 
$Y_t$ be the number of children of $v$ in $\hat\cN_0$ that send a $1$ towards $v$ after $t$ iterations of Warning Propagation.

Now, let $u_t=\vecone\cbc{X+Y_t\geq k-1}$ .
Our aim is to bound $\pr\brk{u_t=1}$ from above.
By the recursive structure of the tree, $Y_t$ has $\Po(d(1-p)\Erw u_{t-1})$ distribution independently
of $X$. Now, recall \eqref{eq:varphi}.
Setting $\bar u_t=\Erw u_t$, by the assumptions that we made it holds that 
\begin{align*}
 \Erw\brk{u_t} 
  \le \pr\brk{X+Y_t\geq k-1}
  &=\sum_{j=0}^{k-2}\frac{(dp)^j}{(1-p)j!\exp(dp)}\pr\brk{\Po(d(1-p)\bar u_{t-1})\geq k-1-j}\\
  &\stackrel{\eqref{eq:varphi}}{=}\sum_{j=0}^{k-2}\frac{(dp)^j}{(1-p)j!\exp(dp)}\varphi_{k-j}(d(1-p)\bar u_{t-1})
  =:f_k(\bar u_{t-1}).              
\end{align*}
We will prove that $f_k(x)<x$ for all $x\in (0,1]$
by showing that $f_k$ has derivative strictly less than $1$ 
on $(0,1]$.
By definition, $f_k(x)\geq 0$ with equality iff $x=0$, and
\begin{equation*}
f_k(1) \le \varphi_2(d(1-p)) = 1-\exp(-d(1-p)) <1.
\end{equation*}

Using \eqref{eq:deriv} we obtain
\begin{align}
 \frac{\partial}{\partial x} f_k(x) 
  &= \frac{d(1-p)}{1-p} \sum_{j=0}^{k-2}\frac{(dp)^j}{j!\exp(dp)}
  \frac{(d(1-p)x)^{k-2-j}}{(k-2-j)!\exp(d(1-p)x)}\nonumber\\
  &=d\pr\brk{\Po(dp) + \Po(d(1-p)x) = k-2}
  =d\frac{\partial}{\partial y}\varphi_{k}(y)|_{y=d(p+(1-p)x)}
  \label{eq:derivf}
\end{align}
and therefore
\begin{align*}
\frac{\partial^2}{\partial x^2} f_k(x)=d^2(1-p) 
\frac{\partial^2}{\partial y^2}\varphi_{k}(y)|_{y=d(p+(1-p)x)}.
\end{align*}
Since 
$\frac{\partial}{\partial y}\varphi_k(y)$
is positive for $y\geq0$, so is 
$\frac{\partial}{\partial x}f_k(x)$ for 
all $x\in [0,\infty)$,
i.e.\ $f_k$ is monotonically increasing
on $[0,\infty)$.
Similarly since 
$$\sign\br{\frac{\partial^2}{\partial y^2}\varphi_k(y)}\stackrel{\eqref{eq:deriv}}{=}\sign (k-2 -y),$$
we have that 
$\frac{\partial^2}{\partial x^2}f_k(x)\leq 0$ for 
all $x\geq (k-2-dp)/(d(1-p))\cap 0$.
By Fact~\ref{fact_FPana} (1) we have that 
$dp\geq k-2$
i.e.\ $f_k$ is concave on the entire interval 
$[0,\infty)$. 

Recalling the definition of $\phi_{d,k}$
in \eqref{eqmain}, 
we obtain that 
$\frac{\partial}{\partial x}\phi_{d,k}(x)|_{x=p}
=d\frac{\partial}{\partial y}\varphi_k(y)|_{y=dp}$. Therefore
\eqref{eq:derivf} implies that
\begin{align*}
\frac{\partial}{\partial x}f_k(x)|_{x=0}
=\frac{\partial}{\partial x}\phi_{d,k}(x)|_{x=p}.
\end{align*}
Hence, by Fact~\ref{fact_FPana} (2) we obtain that 
$\frac{\partial}{\partial x} f_k(x)|_{x=0}<1.$ Since $f_k$ is 
monotonically increasing and concave on $[0,\infty)$ this 
implies that $f_k$ has derivative
strictly less than one on $[0,\infty)$
and therefore $f_k(x)<x$ for all $x>0.$
 
We may thus conclude that $0$ is the only non-negative fixed point of the function $f_k$, and therefore $\bar u_t \to 0$.
Thus also $u_t\to 0$ \whp~ In other words, each vertex has probability $o(1)$
of lying in any flipping structure.
Thus the expected number of vertices in any flipping structure is $o(n)$ and by Markov's inequality, conditioned on $\cF(\vN,\vM)\cap \cE_1$ \whp\ there is certainly no flipping structure of order at least $\eps_1 n$.
Again the result follows since by \Prop~\ref{lem:condsprob} we have
$\pr[\cE_2\cap \cE_3|\hat\cF(\vN,\vM)\cap \cE_1]=\Theta(1).$

\section{Proof of \Prop~\ref{Prop_entropy}}\label{section_LLTproofs}

\noindent{\em We keep the notation and assumptions from \Prop~\ref{Prop_entropy}}

\medskip\noindent
In light of \Prop~\ref{Prop_uniform}  we basically need to study  the entropy of the output distribution of {\tt Forge} given $\cF(\vN,\vM)$.
Given $\vN=(n_\star, n_1)$, $\vM=(m_{10},m_{11})$ let 
	\begin{align*}
	n_0&=n-n_1-n_\star,&\vn&=(n_0,n_\star,n_1),\\
	m_{01}&=m_{10},&m_{00}&=2m-2m_{10}-m_{11},&\vm =(m_{00},m_{01},m_{10},m_{11}).
	\end{align*}
The following lemma provides an asymptotic formula for $|\Gamma_{n,m}(\vN,\vM)|$.

\begin{lemma}\label{Lemma_entropy}
Uniformly in $\vN,\vM$ we have
\begin{align}\label{eq_Lemma_entropy}
|\Gamma_{n,m}(\vN,\vM)|&\sim
\frac{\zeta\exp(dn)\eta(\vn)\kappa(\vm)u(\vn,\vm)}{\Lambda(\vm)}&&&\mbox{where}\\
	\eta(\vn)&=\bink n{\vn}\nu_0^{n_0}\nu_\star^{n_\star}\nu_1^{n_1},&\kappa(\vm)&=(m_{00}-1)!!(m_{11}-1)!!m_{01}!\enspace,\nonumber\\
	\Lambda(\vm)&=\lambda_{00}^{m_{00}}\lambda_{01}^{m_{01}}\lambda_{10}^{m_{10}}\lambda_{11}^{m_{11}},&
		u(\vn,\vm)&=\pr\brk{\hvm=\vm|\hvn=\vn}.\nonumber
	\end{align}
\end{lemma}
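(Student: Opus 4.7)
The plan is to derive \Lem~\ref{Lemma_entropy} by combining \Prop s~\ref{Prop_uniform} and~\ref{Prop_success} with an explicit algebraic identity for the quantity $P$ that appears in the proof of \Prop~\ref{Prop_uniform}. Since \Prop~\ref{Prop_uniform} asserts that $\hat\G$ conditioned on $\cF(\vN,\vM)$ is uniform on $\Gamma_{n,m}(\vN,\vM)$, for any single $G\in\Gamma_{n,m}(\vN,\vM)$ we have $|\Gamma_{n,m}(\vN,\vM)|=1/\pr\brk{\hat\G=G|\cF(\vN,\vM)}$. Reading off (\ref{eqUni3}) from that proof together with the factorisation $\pr\brk{\cF(\vN,\vM)|\hat\vn=\vn}=\pr\brk{\cF(\vN,\vM)|\hat\cF(\vN,\vM)}\,u(\vn,\vm)$ (which is just Bayes, using $\cF(\vN,\vM)\subset\hat\cF(\vN,\vM)\subset\{\hat\vn=\vn\}$) yields
\begin{align*}
\pr\brk{\hat\G=G|\cF(\vN,\vM)}=\frac{P}{\bink{n}{\vn}\kappa(\vm)\,u(\vn,\vm)\,\pr\brk{\cF(\vN,\vM)|\hat\cF(\vN,\vM)}},
\end{align*}
with $P=P_0P_\star P_1\Pi$ as defined in (\ref{eqUni0}). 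Substituting the asymptotic $\pr\brk{\cF(\vN,\vM)|\hat\cF(\vN,\vM)}\sim\zeta$ from \Prop~\ref{Prop_success} thus reduces the statement of \Lem~\ref{Lemma_entropy} to the purely algebraic identity
\begin{align*}
P=\Lambda(\vm)\exp(-dn)\,\nu_0^{-n_0}\nu_\star^{-n_\star}\nu_1^{-n_1}.
\end{align*}

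Verifying this identity is a direct computation. Writing each Poisson weight appearing in $P_0,P_\star,P_1$ in the form $\lambda_{ab}^{d_{G,ab}(v)}\exp(-\lambda_{ab})/d_{G,ab}(v)!$ and multiplying against the matching $d_{G,ab}(v)!$ from $\Pi$ cancels every factorial in the denominators of the Poisson weights; the sole surviving factorial is an extra $(k-1)!$ per vertex $v\in\cN_\star(G)$, since on $\cN_\star$ the value $d_{G,10}(v)=k-1$ is prescribed and carries no Poisson factor in $P_\star$. Summing the exponents via $\sum_v d_{G,ab}(v)=m_{ab}$, and accounting for the forced $d_{G,10}(v)=k-1$ on $\cN_\star$, assembles the $\lambda$-factors into $\Lambda(\vm)\cdot(dp)^{-(k-1)n_\star}$. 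The $\exp(-\lambda_{ab})$ factors combine, using $\lambda_{00}+\lambda_{10}=\lambda_{01}+\lambda_{11}=d$ and $\lambda_{01}=d(1-p)$, to $\exp(-(n_0+n_1)d-n_\star d(1-p))=\exp(-dn+n_\star dp)$. Finally, the truncation denominators $\pr\brk{\Po(\lambda_{10})\leq k-2}=1-p$ and $\pr\brk{\Po(\lambda_{11})\geq k}=p(1-q)$ contribute $(1-p)^{-n_0}=\nu_0^{-n_0}$ and $(p(1-q))^{-n_1}=\nu_1^{-n_1}$.

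All that remains is to show that the $n_\star$-dependent residue $[(k-1)!\exp(dp)(dp)^{-(k-1)}]^{n_\star}$ simplifies to $\nu_\star^{-n_\star}=(pq)^{-n_\star}$. This is immediate from (\ref{eq:q}), which rearranges to $pq=(dp)^{k-1}\exp(-dp)/(k-1)!$ and hence $(k-1)!\exp(dp)/(dp)^{k-1}=1/(pq)$. Collecting the four contributions then yields the required closed form for $P$, and substituting back into the asymptotic for $|\Gamma_{n,m}(\vN,\vM)|$ reproduces (\ref{eq_Lemma_entropy}) exactly. There is no real obstacle; the only subtlety is keeping track of the $\cN_\star$-contribution, where the forced $d_{G,10}(v)=k-1$ and the absence of a Poisson factor in $P_\star$ conspire via the definition of $q$ to produce precisely the missing $\nu_\star^{-1}$ factor.
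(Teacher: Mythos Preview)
Your proof is correct and follows essentially the same approach as the paper's. Both arguments rest on the same Poisson factorisation (the factorials in $\Pi$ cancel against those in the Poisson weights, the truncation denominators produce $\nu_0^{-1},\nu_1^{-1}$, and the $\cN_\star$ residue yields $\nu_\star^{-1}$ via the definition of $q$) and both invoke \Prop~\ref{Prop_success} for the factor $\zeta$. The only difference is presentational: you reuse the closed-form (\ref{eqUni3}) from the proof of \Prop~\ref{Prop_uniform} and compute $P$ directly, whereas the paper redoes the sum over degree sequences $\vd\in\cD(\vn,\vm)$ from scratch, introducing the per-sequence success probability $s(\vd)$ and averaging it at the end; your route is slightly more economical but relies on citing an equation from inside another proof.
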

\begin{proof}
For a sequence $\vd=(d_{ab}(v))_{v\in[n],a,b\in\{0,1\}}$ let
	\begin{align*}
		\cN_0(\vd)&=\{v\in[n]:d_{10}(v)\leq k-2,d_{01}(v)=d_{11}(v)=0\},\\
	\cN_\star(\vd)&=\{v\in[n]:d_{10}(v)=k-1,d_{00}(v)=d_{11}(v)=0\},\\
	\cN_1(\vd)&=\{v\in[n]:d_{11}(v)\geq k,d_{00}(v)=d_{10}(v)=0\}.
	\end{align*}
Let $\cD(\vn,\vm)$ be the set of all $\vd$ such that $|\cN_0(\vd)|=n_0$, $|\cN_\star(\vd)|=n_\star$,  $|\cN_1(\vd)|=n_1$
and $\sum_{v\in [n]}d_{ab}(v)=m_{ab}$ for all $a,b\in\{0,1\}$.
In addition, let $\cD_0(\vn,\vm)$ be the set of all $\vd$ such that $\cN_0(\vd)=\{1,\ldots,n_0\}$, $\cN_\star(\vd)=\{n_0+1,\ldots,n_0+n_\star\}$
and $\cN_1(\vd)=[n]\setminus(\cN_0(\vd)\cup\cN_\star(\vd))$.
Further, let $s(\vd)$ be the probability that the random graph $\hat\G$ constructed in step (5) 
of {\tt Forge} is simple and that $\hat\vmu=\mu(\hat\G)$.
We claim that
	\begin{align}\label{eqLemma_entropy_1}
	|\Gamma_{n,m}(\vN,\vM)|
	&=\sum_{\vd\in\cD(\vn,\vm)}
	\frac{\kappa(\vm)s(\vd)}{\prod_{v\in[n],a,b\in\{0,1\}}d_{ab}(v)!}
	=\bink{n}{\vn}\sum_{\vd\in\cD_0(\vn,\vm)}
	\frac{\kappa(\vm)s(\vd)}{\prod_{v,a,b}d_{ab}(v)!}.
	\end{align}
Indeed, by \Prop~\ref{Prop_uniform} $|\Gamma_{n,m}(\vN,\vM)|$ is equal to the number of graphs $\hat\G$ that {\tt Forge} can create given the event $\cF(\vN,\vM)$.
Step (2) of {\tt Forge} ensures that given $\cF(\vN,\vM)$ the sequence $\hat{\vec d}=(\hat d_{ab}(v))_{v\in[n],a,b\in\{0,1\}}$ belongs to the set $\cD(\vn,\vm)$.
Furthermore, given $\hat\vd$ the number of possible matchings that step (4) can create is equal to $\kappa(\vm)$, and every possible simple graph can be obtained from exactly $\prod_{v,a,b}d_{ab}(v)!$ matchings.
Thus, we obtain (\ref{eqLemma_entropy_1}).

Proceeding from (\ref{eqLemma_entropy_1}) and observing that $\sum_{a,b\in\{0,1\}}\lambda_{ab}=1$ by the definition (\ref{eqlambda}) of the $\lambda_{ab}$, we obtain
	\begin{align}\label{eqLemma_entropy_2}
	|\Gamma_{n,m}(\vN,\vM)|
	&=\frac{\exp(dn)\kappa(\vm)}{\Lambda(\vm)}\bink{n}{\vn}\sum_{\vd\in\cD_0(\vn,\vm)}s(\vd)
		\prod_{v,a,b}\pr\brk{\Po(\lambda_{ab})=d_{ab}(v)}.
	\end{align}
The definition of $p=p(d,k)$ as the largest fixed point of $\phi_{d,k}$ from (\ref{eqmain}) and the definition (\ref{eq:q}) of $q$ ensure that
	$$\pr\brk{\Po(\lambda_{10})\leq k-2}=1-p,\quad
	\pr\brk{\Po(\lambda_{10})=k-1}=pq,\quad\pr\brk{\Po(\lambda_{11})\geq k}=p(1-q).$$
Therefore, 
letting $\cV=\{\hat\cN_0=[n_0],\hat\cN_1=[n]\setminus[n_\star]\}$,
we can rewrite the product on the right hand side of (\ref{eqLemma_entropy_2}) in terms of the random variables $\hat d_{ab}(v)$ from step (2) of {\tt Forge} as
	\begin{align}
	\prod_{v,a,b}\pr\brk{\Po(\lambda_{ab})=d_{ab}(v)}&=
		{\prod_{1\leq v\leq n_0}
			\pr\brk{\hat d_{00}(v)=d_{00}(v)|\cV}\pr\brk{\hat d_{10}(v)=d_{10}(v)|\cV}\pr\brk{\Po(\lambda_{10})\leq k-2}}\nonumber\\
				&\quad\cdot\prod_{n_0< v\leq n_0+n_\star}
					\pr\brk{\hat d_{01}(v)=d_{01}(v)|\cV}
						\pr\brk{\hat d_{10}(v)=d_{10}(v)|\cV}\pr\brk{\Po(\lambda_{10})=k-1}\nonumber\\
		&\quad\cdot\prod_{n_0+n_\star<v\leq n}\pr\brk{\hat d_{01}(v)=d_{01}(v)|\cV}
				\pr\brk{\hat d_{11}(v)=d_{11}(v)|\cV}\pr\brk{\Po(\lambda_{11})\geq k}
			\nonumber\\
		&=(1-p)^{n_0}(pq)^{n_\star}(p(1-q))^{n_1}\hspace{-1mm}\prod_{v,a,b}\pr\brk{\Po(\hat d_{ab}(v))=d_{ab}(v)|
			\cV}.\label{eqLemma_entropy_3}
	\end{align}
Hence, remembering the definition of $\nu_0,\nu_\star\nu_1$ from (\ref{eq_bar}) and plugging (\ref{eqLemma_entropy_3}) into (\ref{eqLemma_entropy_2}),
we obtain
	\begin{align}\label{eqLemma_entropy_5}
	|\Gamma_{n,m}(\vN,\vM)|
	&=\frac{\eta(\vn)\kappa(\vm)\exp(dn)}{\Lambda(\vm)}
	\sum_{\vd\in\cD_0(\vn,\vm)}s(\vd)
	\prod_{v,a,b}\pr\brk{\hat d_{ab}(v)=d_{ab}(v)|\cV}.
	\end{align}
Moreover, by symmetry with respect to vertex permutations and by \Prop~\ref{Prop_success},
	\begin{align}
	\sum_{\vd\in\cD_0(\vn,\vm)}s(\vd)
		\prod_{v,a,b}\pr\brk{\hat d_{ab}(v)=d_{ab}(v)|\cV}&=
		\Erw[s(\vec{\hat{d}})|\hvm=\vm,\hvn=\vn]\pr\brk{\hvm=\vm|\hvn=\vn}\sim \zeta\pr\brk{\hvm=\vm|\hvn=\vn}.
			\label{eqLemma_entropy_6}
	\end{align}
Finally, the assertion follows from (\ref{eqLemma_entropy_5}) and (\ref{eqLemma_entropy_6}).
\end{proof}

\noindent
As a next step we use Stirling's formula to bring the expression from (\ref{eq_Lemma_entropy}) into a more manageable form.

\begin{corollary}\label{Cor_master}
Uniformly in $\vN,\vM$,
	\begin{align}\label{eq_Cor_master}
		|\Gamma_{n,m}(\vN,\vM)|&\sim
		\frac{\sqrt 2d\zeta u(\vn,\vm)}{\sqrt{pq(1-q)}}\exp\brk{-n\br{\KL{n^{-1}\vn}{\vnu}-\frac d2\KL{(dn)^{-1}\vm}{\vmu}}
		+\frac d2+\frac{d^2}4}\bink{\bink n2}{m}.
	\end{align}	
\end{corollary}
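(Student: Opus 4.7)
The plan is to apply Stirling's formula to each combinatorial factor in the expression of \Lem~\ref{Lemma_entropy}, with the two Kullback--Leibler divergences emerging respectively from $\eta(\vn)$ and from the combination $\exp(dn)\kappa(\vm)/\Lambda(\vm)$.

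For the multinomial coefficient inside $\eta(\vn)$, Stirling gives $\bink{n}{\vn}\sim\frac{1}{2\pi}\sqrt{n/(n_0n_\star n_1)}\prod_i(n/n_i)^{n_i}$, and the remaining factor $\prod_i(n\nu_i/n_i)^{n_i}=\exp(-n\KL{n^{-1}\vn}{\vnu})$ produces the first KL-divergence. Using $\nu_0\nu_\star\nu_1=p^2(1-p)q(1-q)$ from~\eqref{eq_bar}, this gives $\eta(\vn)\sim\exp(-n\KL{n^{-1}\vn}{\vnu})/(2\pi n\,p\sqrt{(1-p)q(1-q)})$. For $\kappa(\vm)$, identity~\eqref{eq!!} combined with Stirling yields $(2\ell-1)!!\sim\sqrt 2(2\ell/\eul)^\ell$, whence
\begin{align*}
\kappa(\vm)\sim 2\sqrt{2\pi m_{01}}\exp\bc{\frac12\sum_{ab}m_{ab}\ln m_{ab}-m}.
\end{align*}
By~\eqref{eqlambda} and $m_{10}=m_{01}$, also $\ln\Lambda(\vm)=2m\ln d+(m_{00}+m_{01})\ln(1-p)+(m_{01}+m_{11})\ln p$. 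A direct calculation expands
\begin{align*}
\frac{dn}{2}\KL{(dn)^{-1}\vm}{\vmu}=\frac12\sum_{ab} m_{ab}\ln m_{ab}-m\ln(dn)-(m_{00}+m_{01})\ln(1-p)-(m_{01}+m_{11})\ln p,
\end{align*}
so a short rearrangement together with $2m\sim dn$ (and hence $dn-m\sim m$) yields
\begin{align*}
\frac{\exp(dn)\kappa(\vm)}{\Lambda(\vm)}\sim 2\sqrt{2\pi m_{01}}\,\br{\eul n/d}^m\exp\bc{\frac{dn}{2}\KL{(dn)^{-1}\vm}{\vmu}}.
\end{align*}

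Separately, a direct Stirling computation gives $\bink{\bink n 2}{m}\sim(n^2/2)^m\exp(-d/2-d^2/4)/m!$: the $-d/2$ correction comes from $(1-1/n)^m=\exp(-m/n+O(1/n))$, while the $-d^2/4$ correction stems from the quadratic Taylor term of $\sum_{j<m}\ln(1-j/\bink n 2)$. A further Stirling expansion of $m!$ rewrites this as $\bink{\bink n 2}{m}\sim(\eul n/d)^m\exp(-d/2-d^2/4)/\sqrt{2\pi m}$.

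Substituting all three asymptotics into the identity of \Lem~\ref{Lemma_entropy} and comparing with~\eqref{eq_Cor_master}, the two factors $(\eul n/d)^m$ cancel against each other (one from $\exp(dn)\kappa/\Lambda$ on the left, one from $\bink{\bink n 2}{m}\exp(d/2+d^2/4)$ on the right), the two KL-divergences align as required, and the remaining polynomial prefactors in $n$, $m$, $m_{01}$ and $2\pi$ collapse to exactly $\sqrt 2\,d/\sqrt{pq(1-q)}$ via $m\sim dn/2$ and $m_{01}\sim 2m\mu_{01}=dnp(1-p)$. The $\lceil\cdot\rceil$ rounding in $m=\lceil dn/2\rceil$ contributes only $1+O(1/n)$ multiplicative errors that are absorbed by $\sim$. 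I expect the main (entirely routine) work to be this disciplined bookkeeping of sub-exponential factors; no conceptual obstacle arises.
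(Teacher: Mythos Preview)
Your proposal is correct and follows essentially the same route as the paper: apply Stirling to $\eta(\vn)$, to $\kappa(\vm)/\Lambda(\vm)$, and to $\bink{\bink n2}{m}$, and collect the sub-exponential prefactors. The only organisational difference is that you expand $\frac{dn}{2}\KL{(dn)^{-1}\vm}{\vmu}$ directly from the definition, whereas the paper detours through the multinomial $\bink{2m}{\vm}$ and the identity $\Lambda(\vm)=d^{2m}\prod_{ab}\mu_{ab}^{m_{ab}/2}$; both arrive at the same place, and your bookkeeping is if anything a little cleaner.
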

\begin{proof}
Let us begin by approximating the very last factor.
Invoking Stirling's formula, we find
\begin{align}\label{stirl11}
\bink{\bink n2}{m}
&\sim \sqrt{\frac{\bink n2}{2\pi m\br{\bink n2-m}}}
\br{\frac{n(n-1)}{2m}}^m\br{1+\frac{m}{\bink n2-m}}^{\bink n2-m}.
\end{align}
Since $m=\lceil dn/2 \rceil$ we obtain
\begin{align}\label{stirl12}
\br{\frac{n(n-1)}{2m}}^m &\sim \br{\frac{n^2}{2m}}^m \exp\br{-\frac d2}.
\end{align}
Further, the approximation $\ln(1+x)=x-\frac 12x^2 +O(x^3)$ shows that
\begin{align}\label{stirl13}
\br{1+\frac{m}{\bink n2-m}}^{\bink n2-m}
&\sim\exp\br{m-\frac{d^2}4}.
\end{align}
Plugging \eqref{stirl12} and \eqref{stirl13} into
\eqref{stirl11} we obtain
\begin{align}\label{stirl1}
\bink{\bink{n}2}{m}
&
\sim (2\pi m)^{-1/2}\br{\frac{n\eul}{d}}^{m}
\exp\br{-\frac d2-\frac{d^2}4}.
\end{align}
One more application of Stirling's formula and the fact that 
$m=\lceil dn/2 \rceil$  yield
	\begin{align}\label{stirl}
	\sqrt{(2m)!}&\sim \sqrt{2}(\pi m)^{1/4}\br{\frac{dn}{\eul}}^{m}.
	\end{align}
Moreover, combining \eqref{stirl} and \eqref{stirl1} we find
\begin{align}\label{eq_entr2}
\frac{\sqrt{(2m)!}}{d^{dn}}\bink{\bink{n}2}{m}^{-1}
\sim 2(\pi m)^{3/4}\exp\br{\frac d2+\frac{d^2}4}
\exp(-dn).
\end{align}

We proceed to expand $|\Gamma_{n,m}(\vN,\vM)|$ asymptotically.
Let $H$ denote the entropy function defined in 
\eqref{eqHDef}.
By Stirling's formula, our assumption on $\vN$ and the definitions (\ref{eq_bar}) of $\nu_0,\nu_\star,\nu_1$, 
	\begin{align*}
	\bink n{\vn}&\sim(2\pi)^{-1}\sqrt{\frac{n}{n_0n_\star n_1}}\exp(n H(n^{-1}\vn))\sim\frac{\exp(n H(n^{-1}\vn))}{2\pi n\sqrt{\nu_0\nu_1\nu_\star}}
		\sim\frac{\exp\bc{-nH(n^{-1}\vn)}}{2\pi n\sqrt{p^2q(1-p)(1-q)}}.
	\end{align*}
Hence,
	\begin{align}\label{KLn}
	\eta(\vn)
	&\sim\frac{\exp\bc{-n\KL{n^{-1}\vn}{\vnu}}}{2\pi n
	\sqrt{p^2q(1-p)(1-q)}}.
	\end{align}

Further,  (\ref{eq!!}) and Stirling's formula yield
	$$\frac{(m_{ab}-1)!!}{\sqrt{ m_{ab}!}} = \br{2/(\pi m_{ab})}^{1/4}\br{1+O\br{n^{-1}}}\qquad\mbox{for all }a,b\in\{0,1\}.$$
Thus, by (\ref{eq_bar}) and the assumption on $\vM$
	\begin{align}\nonumber
	\kappa(\vm)&=(m_{00}-1)!!(m_{11}-1)!!m_{01}!\sim\sqrt\frac{2}{\pi}\cdot\sqrt{m_{00}!m_{01}!m_{10}!m_{11}!}\cdot(m_{00}m_{11})^{-1/4}\\
		&\sim\sqrt\frac{2}{\pi p(1-p)m}\cdot\sqrt{m_{00}!m_{01}!m_{10}!m_{11}!}\enspace.\label{eqKLn2}
	\end{align}
Since $\Lambda(\vm)=d^{dn}\prod_{a,b}\mu_{ab}^{m_{ab}/2}$, the definition (\ref{eq_bar}) of the $\mu_{ab}$ and (\ref{eqKLn2}) yield
\begin{align}
\frac{\kappa(\vm)}{\Lambda(\vm)}
&\sim\frac{\sqrt{(2m)!}}{d^{dn}\sqrt{\pi m p(1-p)}} 
\bink{2m}{\vm}^{-1/2}\prod_{a,b}\mu_{ab}^{-m_{ab}/2}.
\label{eq_entr1}
\end{align}	
Further, applying Stirling's formula and using the assumption on $\vM$, we obtain
\begin{align}\label{KLm}
\bink{2m}{\vm}\prod_{a,b}\mu_{ab}^{m_{ab}}
&\sim\frac{ \exp\br{-2m\KL{(2m)^{-1}\vm}{\vmu}}}{(4\pi m)^{3/2} p^2(1-p)^2}.
\end{align}
Thus, combining  \eqref{eq_entr2}, \eqref{eq_entr1} and \eqref{KLm}, we obtain
\begin{align}\label{kappalambda}
\frac{\kappa(\vm)}{\Lambda(\vm)}
&\sim 2^{5/2}\pi m \sqrt{p(1-p)} \exp\br{-dn + m \KL{(2m)^{-1}\vm}{\vmu}
+\frac d2+\frac{d^2}4}
\bink{\bink n2}{m}.
\end{align}
Plugging in \eqref{kappalambda} and  \eqref{KLn} into \eqref{eq_Lemma_entropy} completes the proof.
\end{proof}

\Cor~\ref{Cor_master} provides an explicit formula for $|\Gamma_{n,m}(\vN,\vM)|$, apart from the conditional probability $u(\vn,\vm)=\pr\brk{\hvm=\vm|\hvn=\vn}.$
As a next step we will derive an explicit expression for  $u(\vn,\vm)$.
To this end we introduce the matrices
\begin{align}\label{eqSigmaMatrix}
	\Sigma
	&=\frac1d
	\begin{pmatrix}
	(1-p)^2&0&0&0\\
	0&p(1-p)&0&0\\
	0&0&p(1-p)\bc{1+\bar q\br{dp(1-\bar q)-(k-1)}}&0\\
  0&0&0&  p^2\brk{1-\frac {dp}{1-q}+d(p+(1-p)\bar q)}
	\end{pmatrix}
\end{align}
and
\begin{align}\label{eqLMatrix}
	L&
	=\begin{pmatrix}
	1-p & 0 & 0\\
	0 & 1-p & 1-p\\
	p(1-\bar q) & (k-1)/d & 0\\
	0 & 0 & p/(1-q)\end{pmatrix}.
\end{align}

\begin{lemma}\label{Prop_degrees}
Let $k\geq 3, d>d_k$ and let $\xi>0$.
{Then $\Sigma$ is regular.}
Moreover, let $\vn=(n_0,n_\star, n_1)$ be such that $n_0+n_\star+ n_1=n$ and $|n_\star -n\nu_\star|+|n_1-n\nu_1|\leq \xi\sqrt n$.
Then uniformly for all $\vm \in\mathbb N^4$,
\begin{align*}
	u(\vn,\vm)
    &=\frac{1}
   {(2\pi n)^2d^4\sqrt{\det{\Sigma}}}
    \exp\br{-\frac{n}{2}
    \bck{
	\begin{pmatrix}  L^*\Sigma^{-1}L & 
	-L^*\Sigma^{-1}\\-\Sigma^{-1}L &
	\Sigma^{-1} \end{pmatrix}
   \bink{\Delta(\vn)}{\Delta(\vm)}, \bink{\Delta(\vn)}{\Delta(\vm)}}}+\mbox{o}(n^{-2})
\end{align*}
where
\begin{align}\label{eq_Prop_degrees_DELTA}
\Delta(\vn)
&=\br{\frac{n_0}{n}-\nu_0,\frac{n_\star}{n}-\nu_\star,\frac{n_1}{n}-\nu_1}\trans,&
\Delta(\vm)&=\br{
\frac{m_{00}}{2m}-\mu_{00},\frac{m_{01}}{2m}-\mu_{01},
\frac{m_{10}}{2m}-\mu_{10},\frac{m_{11}}{2m}-\mu_{11}}\trans.
\end{align}
\end{lemma}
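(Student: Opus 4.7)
The plan is to apply the local limit theorem for sums of independent vectors, Theorem~\ref{Thm_McD}. Given $\hvn=\vn$, step~(2) of \texttt{Forge} expresses $\hat\vm=\sum_{v=1}^n X_v$ as a sum of independent $\NN^4$-valued contributions $X_v$ whose distribution depends only on the type of $v$: $X_v=(\chi_{00}(v),0,\chi_{10}(v),0)$ if $v\in\hat\cN_0$, $X_v=(0,\chi_{01}(v),k-1,0)$ if $v\in\hat\cN_\star$, and $X_v=(0,\chi_{01}(v),0,\chi_{11}(v))$ if $v\in\hat\cN_1$. Crucially, the two non-trivial coordinates of each $X_v$ are independent by construction. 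Using the elementary identities $\Erw\brk{\Po_{\leq k-2}(dp)}=dp(1-\bar q)$ and $\Erw\brk{\Po_{\geq k}(dp)}=dp/(1-q)$ (the first following from (\ref{eq:qbarexp})), one verifies directly that $\Erw\brk{\hat\vm\mid\hvn=\vn}=dn\cdot L(\vn/n)$ with $L$ as in (\ref{eqLMatrix}); in particular $L\vnu=\vmu$.

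For the conditional covariance matrix, independence of the two nontrivial coordinates of each $X_v$ makes $\Cov\brk{X_v}$ diagonal for every type. Summing over vertices produces a diagonal matrix with entries $n_0\Var\brk{\chi_{00}}$, $(n_\star+n_1)\Var\brk{\chi_{01}}$, $n_0\Var\brk{\chi_{10}}$ and $n_1\Var\brk{\chi_{11}}$. The Poisson identity $\Erw\brk{\Po(\lambda)(\Po(\lambda)-1)}=\lambda^2$ yields the truncated Poisson variances; a short calculation gives $\Var\brk{\chi_{10}}=dp\bc{1+\bar q\br{dp(1-\bar q)-(k-1)}}$, and the analogous computation for $\Var\brk{\chi_{11}}$ reproduces the fourth diagonal entry of $d^2\Sigma$. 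Combining these evaluations with the hypothesis $|n_\star-n\nu_\star|+|n_1-n\nu_1|\le\xi\sqrt n$ (which also controls $n_0=n-n_\star-n_1$) and the definitions of $\nu_0,\nu_\star,\nu_1$ in (\ref{eq_bar}) gives $\Cov\brk{\hat\vm\mid\hvn=\vn}=d^2 n\Sigma+O(\sqrt n)$. Regularity of $\Sigma$ is then immediate, since $\Sigma$ is diagonal with each entry a strictly positive (non-degenerate) variance divided by $d$.

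Having identified the mean and covariance, the multivariate central limit theorem applied to the independent summands $X_v$ yields $n^{-1/2}(\hat\vm-dnL(\vn/n))\to N(0,d^2\Sigma)$ in distribution. To invoke Theorem~\ref{Thm_McD}, the anti-concentration hypothesis must be checked, which is non-trivial because some $X_v$ are deterministic in certain coordinates (for instance in coordinates $1$, $3$ and $4$ when $v\in\hat\cN_\star$). I would circumvent this by grouping vertices into bundles containing one representative from each of the three types and applying the theorem to the $\lfloor n/3\rfloor$ i.i.d.\ super-samples so obtained; each bundle has strictly positive probability at $\vec k$ and at $\vec k+\vecone_r$ for every $r\in\{1,\dots,4\}$, using $k\ge 3$ to ensure that $\chi_{10}\sim\Po_{\leq k-2}(dp)$ has positive mass at consecutive integers. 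Theorem~\ref{Thm_McD} then delivers
\begin{align*}
u(\vn,\vm)=\frac{1}{(2\pi n)^2 d^4\sqrt{\det\Sigma}}\exp\br{-\frac n2\scal{\Sigma^{-1}\br{\Delta(\vm)-L\Delta(\vn)}}{\Delta(\vm)-L\Delta(\vn)}}+o(n^{-2}),
\end{align*}
where the substitution $\vm/(dn)-L(\vn/n)=\Delta(\vm)-L\Delta(\vn)$ uses the identity $L\vnu=\vmu$. Expanding the quadratic form by bilinearity into the terms $\scal{L^*\Sigma^{-1}L\Delta(\vn)}{\Delta(\vn)}-2\scal{\Sigma^{-1}L\Delta(\vn)}{\Delta(\vm)}+\scal{\Sigma^{-1}\Delta(\vm)}{\Delta(\vm)}$ produces the block-matrix expression in the lemma statement.

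The main obstacle I anticipate is securing the asymptotic formula \emph{uniformly} in $\vn$: as $\vn$ varies over the admissible region, the distributions of the $X_v$ change via the proportion of vertices of each type, and so do the constants appearing in Theorem~\ref{Thm_McD}. Continuity of these constants in $\vn/n$, together with the fact that $\vn/n$ remains within $O(n^{-1/2})$ of $\vnu$, should deliver the required uniform bound, but this requires a careful pass from the local limit of a fixed sequence to a uniform estimate over the family.
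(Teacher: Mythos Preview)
Your proposal is correct and follows essentially the same route as the paper: identify $\hat\vm$ as a sum of independent $\NN^4$-valued vectors given the types, compute the type-wise means and (diagonal) variances to obtain $\Erw[\hat\vm\mid\hvn=\vn]=dnL(\vn/n)$ and $\Cov[\hat\vm\mid\hvn=\vn]\sim d^2 n\Sigma$, and then invoke Theorem~\ref{Thm_McD}. If anything you are more scrupulous than the paper, which applies Theorem~\ref{Thm_McD} without explicitly addressing the coordinate-wise anti-concentration hypothesis or the uniformity in $\vn$; your bundling workaround is the standard fix (though note you get $\min(n_0,n_\star,n_1)=\Theta(n)$ bundles rather than $\lfloor n/3\rfloor$, with the leftover summands harmlessly absorbed).
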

\begin{proof}
Given $\hat\cN_0,\hat\cN_\star,\hat\cN_1$, we can characterise the distributions of the random variables $\hat d_{a,b}(v)$ from step (1) of {\tt Forge} as follows in terms of the $\lambda_{00},\ldots,\lambda_{11}$ from (\ref{eqlambda}):
	\begin{align*}
	\hat d_{00}(v)&\stacksign d=\Po(\lambda_{00}),
		&\hat d_{01}(v)&=0,&\hat d_{10}(v)&\stacksign d=\Po_{\leq k-2}(\lambda_{10}),
		&\hat d_{11}(v)&=0&\mbox{given }v\in\hat\cN_0,\\
	\hat d_{00}(v)&=0,
		&\hat d_{01}(v)&\stacksign d=\Po(\lambda_{01}),&\hat d_{10}(v)&=k-1,
		&\hat d_{11}(v)&\stacksign d=0&\mbox{given }v\in\hat\cN_\star,\\	
	\hat d_{00}(v)&=0,
		&\hat d_{01}(v)&\stacksign d=\Po(\lambda_{01}),&\hat d_{10}(v)&=0,
		&\hat d_{11}(v)&\stacksign d=\Po_{\geq k}(\lambda_{11})&\mbox{given }v\in\hat\cN_1.
	\end{align*}
Hence, for an arbitrary $v\in[n]$  and $x\in\{0,\star,1\}$ let
	$$\hat{\vec a}_x=(\Erw[\hat d_{00}(v)|v\in\hat\cN_x],\Erw[\hat d_{01}(v)|v\in\hat\cN_x],\Erw[\hat d_{10}(v)|v\in\hat\cN_x],\Erw[\hat d_{11}(v)|v\in\hat\cN_x])\trans$$
and $\hat{\vec{a}}=\sum_{x\in\{0,\star,1\}}\frac{\hat n_x}n\hat{\vec{a}}_x$.
Further, let
	$$\hat D_x=\begin{pmatrix}
			\Var[\hat d_{00}(v)|v\in\hat\cN_x]&0&0&0\\
			0&\Var[\hat d_{01}(v)|v\in\hat\cN_x]&0&0\\
			0&0&\Var[\hat d_{10}(v)|v\in\hat\cN_x]&0\\
				0&0&0&\Var[\hat d_{11}(v)|v\in\hat\cN_x]
			\end{pmatrix}$$
and $\hat D =\sum_{x\in\{0,\star,1\}}\nu_x\hat D_x$. 
{By definition of $\hat d_{ab}(v), a,b\in \{0,1\}$
we obtain that $\hat D$ is regular.} 
Further,
because the random variables $(\hat d_{a,b}(v))_{v,a,b}$ are mutually independent,
given $\{\hat{\vn}=\vn\}$ the sequence $n^{-1/2}(\hat{\vm}-n\hat{\vec{a}})$
converges in distribution to a multivariate normal distribution with covariance matrix $\hat D$ and mean
$(0,0,0,0)$.
Indeed, \Thm~\ref{Thm_McD} implies that uniformly for all $\vec m \in\mathbb N^4$,
\begin{align}\label{McD}
 \pr\brk{\hat{\vm}=\vm|\hat{\vn}=\vn} 
 &= \frac{
 \exp\br{-\frac n2 \bck{{\hat D}^{-1}\br{\vm/n
 -\hat{\vec{a}}},
 \br{\vm/n-\hat{\vec{a}}}}}}{(2\pi n)^2\sqrt{\det \hat D}}
 + \mbox{o}\br{n^{-2}}.
\end{align}

Hence, to complete the proof we just need to calculate $\hat{\vec a}$ and $\hat D$ explicitly.
We claim that
\begin{align}\label{eqhatveca}
 \hat{\vec a}_0 
 =
 \begin{pmatrix}
 d(1-p) \\0\\ dp(1-\bar q)\\ 0
 \end{pmatrix},
\qquad
 \hat{\vec a}_\star 
 =
 \begin{pmatrix}
 0\\ d(1-p)\\ k-1\\ 0
 \end{pmatrix},
\qquad 
 \hat{\vec a}_1 
 =
 \begin{pmatrix}
 0\\ d(1-p)\\ 0\\ dp/(1-q)
 \end{pmatrix}.
\end{align}
Indeed, remembering (\ref{eqlambda}), we see that
\begin{align}\label{McD_E1}
 \Erw\brk{\Po(\lambda_{00})} &= \lambda_{00}=d(1-p), 
 &\Erw\brk{\Po(\lambda_{01})} &= \lambda_{01}=d(1-p).
\end{align} 
Furthermore, remembering (\ref{eq:q}) and (\ref{eq:qbar}),
\begin{align}
 \Erw\brk{\Po_{\leq k-2}(\lambda_{10})}
 =\frac{1}{1-p}\sum_{i\leq k-2} \frac{i(dp)^i}{i!\exp(dp)}
 =\frac{dp}{1-p}\pr\brk{\Po(dp) \leq k-3}
 =dp(1-\bar q),\label{McD_E2b}\\
 \Erw\brk{\Po_{\geq k}(\lambda_{11})}
 =\frac{1}{p(1-q)}\sum_{i\geq k} \frac{i(dp)^i}{i!\exp(dp)}
 =\frac{dp}{p(1-q)}\pr\brk{\Po(dp) \geq k-1}
 =\frac{dp}{1-q}\label{McD_E3a}
\end{align}
and (\ref{eqhatveca}) is immediate from (\ref{McD_E1})--(\ref{McD_E3a}).
Moving on to the covariance matrix $\hat D$,  we clearly have
\begin{align}\label{McD_Var1}
 \Var\brk{\Po(\lambda_{00})} &= \lambda_{00}=d(1-p), 
 &\Var\brk{\Po(\lambda_{01})} &= \lambda_{01}=d(1-p).
\end{align} 
Moreover, by the definition (\ref{eq:qbar}) of $\bar q$,
	\begin{equation}\label{McD_Var2}
	\pr\brk{\Po(dp)=k-2}=(1-p)\bar q.
	\end{equation}
Furthermore, 
	\begin{equation}\label{McD_Var3}
	\pr\brk{\Po(dp)=k-3}=\frac{k-2}{dp}\pr\brk{\Po(dp)=k-2}.
	\end{equation}
Hence, using 	\eqref{McD_Var2} we obtain 
\begin{align}\nonumber
 \Erw\brk{\hat d_{10}(v)(\hat d_{10}(v)-1)|v\in\hat\cN_0}
& = \frac{1}{1-p}\sum_{i\leq k-2} i(i-1)\frac{(dp)^i}{i!\exp(dp)}
  = \frac{(dp)^2}{1-p}\pr\brk{\Po(dp)\leq k-4}\\
 & = \frac{(dp)^2}{1-p}\br{1-p-(1-p)\bar q-(1-p)\bar q\frac{k-2}{pd}}
  = (dp)^2\br{1-\bar q-\bar q\frac{k-2}{pd}}.\label{McD_Var4}
\end{align}
Similarly, by \eqref{McD_Var3} 
\begin{align}\label{McD_Var5}
{\Erw\brk{\hat d_{11}(v)(\hat d_{11}(v)-1)|v\in\hat\cN_1}}
& =\frac{(dp)^2}{p(1-q)}\pr\brk{\Po(dp)\geq k-2}
 =\frac{d^2p}{1-q}\br{p+(1-p)\bar q}.
\end{align}
Combining \eqref{McD_E2b} and \eqref{McD_Var4} as well as \eqref{McD_E3a} 
and \eqref{McD_Var5} and using that $\Var(X)=\Erw(X)-\Erw(X)^2+ \Erw(X(X-1))$, we obtain
\begin{align}\label{McD_Var6a}
\Var (\hat d_{10}(v)|v\in\hat\cN_0)
& = dp(1-\bar q(k-1)) + (dp)^2\bar q (1+\bar q),\\
\Var(\hat d_{11}(v)|v\in\hat\cN_1)&=\frac{dp}{1-q}-\br{\frac{dp}{1-q}}^2+pd^2\frac{p+(1-p)\bar q}{1-q}.\label{McD_Var6b}
\end{align}
Combining \eqref{McD_Var1}, (\ref{McD_Var6a}) and \eqref{McD_Var6b}, we obtain
\begin{align*}
 \hat D_0&=\begin{pmatrix}d(1-p)&0&0&0\\0&0&0&0\\0&0&dp(1-\bar q(k-1)) + (dp)^2\bar q (1-\bar q)&0\\0&0&0&0
 	\end{pmatrix},&
 \hat D_\star&=\begin{pmatrix}
 	0&0&0&0\\0&d(1-p)&0&0\\0&0&0&0\\0&0&0&0
	\end{pmatrix},\\
 \hat D_1&=\begin{pmatrix}
 	0&0&0&0\\
	0&d(1-p)&0&0\\
	0&0&0&0\\
	0&0&0&\frac{dp}{1-q}-\br{\frac{dp}{1-q}}^2+pd^2\frac{p+(1-p)\bar q}{1-q}
 	\end{pmatrix}.
\end{align*}
Finally, we verify that the matrices $\Sigma,L$ from (\ref{eqSigmaMatrix}) and (\ref{eqLMatrix}) satisfy
$\vm/n-\hat{\vec{a}} 
=d\br{\Delta(\vm)-L\Delta(\vn)}$
and $\hat D=\sum_x\nu_x\hat D_x=d^2\Sigma.$
{Since $\hat D$ is regular, we obtain that $\Sigma$
is regular.}
Hence,
\begin{equation}\label{McD_matrix}
\bck{{\hat D}^{-1}\br{\vm/n
 -\hat{\vec{a}}},
 \br{\vm/n-\hat{\vec{a}}}}
 =
     \bck{
	\begin{pmatrix}  L^*\Sigma^{-1}L & 
	-L^*\Sigma^{-1}\\-\Sigma^{-1}L &
	\Sigma^{-1} \end{pmatrix}
\begin{pmatrix}   
   \Delta(\vn)\\
   \Delta(\vm)
   \end{pmatrix},\begin{pmatrix}   
   \Delta(\vn)\\
   \Delta(\vm)
   \end{pmatrix}}.
\end{equation}
Plugging in \eqref{McD_matrix} in \eqref{McD}, we obtain the assertion because $\det\hat D= d^8 \det\Sigma.$
\end{proof}

\begin{proof}[Proof of \Prop~\ref{Prop_entropy}.]
We are going to prove \Prop~\ref{Prop_entropy}
by combining \Cor~\ref{Cor_master}
and \Lem~\ref{Prop_degrees}.
To this end, we remember the Taylor expansion of  the Kullback-Leibler divergence $\KL{\cdot}{\cdot}$ from  \eqref{eq_Cor_master}.
Using (\ref{eqDiffKL}), we see that the first derivative of $\KL{\cdot}{\vnu}$ vanishes at the point $\vnu$, where the global minimum of $0$ is attained, 
and similarly $\KL{\cdot}{\vmu}$ attains its global minimum of $0$ at $\vmu$.
Expanding the Kullback-Leibler divergence to the second order, we obtain with $\Delta(\vn)$, $\Delta(\vm)$ from (\ref{eq_Prop_degrees_DELTA}) that
\begin{align}\label{eq_Taynu}
 \KL{n^{-1}\vn}{\vnu}
 = \frac 12 \bck{\diag(\vnu)^{-1}\Delta(\vn),\Delta(\vn)}
 +O\br{n^{-3/2} },\\
\label{eq_Taymu}
\KL{(dn)^{-1}\vm}{\vmu}
 = \frac 12 \bck{\diag(\vmu)^{-1}\Delta(\vm),\Delta(\vm)}
 +O\br{n^{-3/2} }.
\end{align}
Further,
\begin{align}\label{eq_scalar}
    &\bck{
	\begin{pmatrix}  L^*\Sigma^{-1}L & 
	-L^*\Sigma^{-1}\\-\Sigma^{-1}L &
	\Sigma^{-1} \end{pmatrix}
   \bink{\Delta(\vn)}{\Delta(\vm)}, \bink{\Delta(\vn)}{\Delta(\vm)}}
   +\bck{\diag(\vnu)^{-1}\Delta(\vn),\Delta(\vn)}
   -\frac d2 \bck{\diag(\vmu)^{-1}\Delta(\vm),\Delta(\vm)} \nonumber \\
   &\quad\quad=
   \bck{
	\begin{pmatrix}  L^*\Sigma^{-1}L+\diag(\vnu)^{-1}& -L^*\Sigma^{-1}\\-\Sigma^{-1}L & \Sigma^{-1}-\frac d2 \diag(\vmu)^{-1}\end{pmatrix}
 \bink{\Delta(\vn)}{\Delta(\vm)},\bink{\Delta(\vn)}{\Delta(\vm)}}.
\end{align}
Combining (\ref{eq_Taynu}), (\ref{eq_Taymu}) and (\ref{eq_scalar})  {with \Cor~\ref{Cor_master} and \Lem~\ref{Prop_degrees}}, we obtain
\begin{align}\label{eq_matrixfinal}
\frac{ |\Gamma_{n,m}(\vN,\vM)|}{ \bink{\bink n2}{m}} &\sim
 \frac{C}{n^2}\cdot
 \exp\br{-\frac n2 \bck{
 \begin{pmatrix}  L^*\Sigma^{-1}L+\diag(\vnu)^{-1}& -L^*\Sigma^{-1}\\-\Sigma^{-1}L & \Sigma^{-1}-\frac d2 \diag(\vmu)^{-1}\end{pmatrix}
 \bink{\Delta(\vn)}{\Delta(\vm)},\bink{\Delta(\vn)}{\Delta(\vm)}}},\quad\mbox{with}\\
 C&=\frac{\sqrt{2}\zeta}{(2\pi)^2d^3\sqrt{pq(1-q)\det\Sigma}}\enspace.\nonumber
\end{align}
To proceed, let
	\begin{align*}
	T
	&=\begin{pmatrix*}[r]
	-1 & -1 & 0 & 0\\
	1 & 0 & 0 & 0\\
	0 & 1 & 0 & 0\\
	0 & 0 & -2 & -1\\
	0 & 0 & 1 & 0\\
	0 & 0 & 1 & 0
	\\0 & 0 & 0 & 1\\
	\end{pmatrix*}.
	\end{align*}
Then the vector $\bink{\Delta(\vn)}{\Delta(\vm)}$ can be written as $T\Delta(\vN,\vM)$,  with $\Delta(\vN,\vM)$ from (\ref{eqDELTANM}).
{
By means of a computer algebra system
\footnote{We use 
the free open-source mathematics software system SageMath.
An  executable code file and PDF version of the 
source code
are provided at 
{\tt http://www.uni-frankfurt.de/53778787}. 
SageMath worksheets can be executed using the online platform
CoCalc, see {\tt https://cocalc.com/}.} 
we verify that 
	$$C=\frac{1}{2\pi^2d^2\sqrt{\det(Q)}}.$$
Using \Lem~\ref{Prop_degrees} this implies that $Q$ is a regular matrix.
Finally,
 calculating the entries of the matrix on the right hand side explicitly (for which once more we use a computer algebra system), 
we see that the matrix $Q$ from (\ref{eqQmatrix}) satisfies
	\begin{align*}
Q^{-1}
	&=T^*
	\begin{bmatrix}  L^*\Sigma^{-1}L +\diag(\vnu)^{-1} & -L^*
	\Sigma^{-1}\\-\Sigma^{-1}L & \Sigma^{-1} - \frac d2 
	\diag(\vmu)^{-1}\end{bmatrix}T.
	\end{align*}}
Hence, (\ref{eq_matrixfinal}) can be written as
\begin{align*}
\frac{ |\Gamma_{n,m}(\vN,\vM)|}{ \bink{\bink n2}{m}} &\sim
 \frac{1}{2\pi^2d^2n^2\sqrt{\det(Q)}}
 \exp\br{-\frac n2 \bck{
 Q^{-1}\Delta(\vN,\vM),\Delta(\vN,\vM)}},
\end{align*}
as desired.
\end{proof}

\end{document}